\documentclass[10pt,a4paper,reqno]{amsart}

\usepackage[utf8]{inputenc}
\usepackage[T1]{fontenc}
\usepackage[english]{babel}
\usepackage{lmodern}
\usepackage{microtype}
\usepackage[a4paper,margin=2cm]{geometry}
\usepackage{amssymb,mathtools,mathrsfs}
\usepackage{bm}
\usepackage{enumitem}
\usepackage{needspace}
\usepackage{array}
\usepackage{tikz}
\usepackage{xcolor}
\usepackage{cite}
\usepackage[hypertexnames=false]{hyperref}
\hypersetup{
 colorlinks=true,
 linkcolor=blue!55!black,
 citecolor=blue!55!black,
 urlcolor=blue!65!black,
 pdftitle={Bulk--surface spectral homogenization with indefinite mass in periodic perforated domains},
 pdfauthor={Hermann DOUANLA},
 pdfsubject={Periodic homogenization of an indefinite bulk--surface spectral problem},
 pdfkeywords={periodic homogenization, indefinite spectral problem, perforated domain, Steklov boundary condition, sign-changing density, spectral convergence}
}

\allowdisplaybreaks
\numberwithin{equation}{section}

\newtheorem{theorem}{Theorem}[section]
\newtheorem{proposition}[theorem]{Proposition}
\newtheorem{lemma}[theorem]{Lemma}
\newtheorem{corollary}[theorem]{Corollary}
\theoremstyle{definition}
\newtheorem{definition}[theorem]{Definition}

\theoremstyle{remark}
\newtheorem{remark}[theorem]{Remark}

\newcommand{\eps}{\varepsilon}
\newcommand{\Om}{\Omega}
\newcommand{\OmE}{\Omega_\eps}
\newcommand{\Gam}{\Gamma}
\newcommand{\GamE}{\Gamma_\eps}
\newcommand{\Dep}{D_\eps}
\newcommand{\Ys}{Y^{\ast}}
\newcommand{\R}{\mathbb{R}}
\newcommand{\aep}{a_\eps}
\newcommand{\be}{b_\eps}
\newcommand{\ce}{c_\eps}
\newcommand{\Ah}{A^{\mathrm{hom}}}
\newcommand{\Peps}{P_\eps}
\newcommand{\tA}{\widetilde A}
\newcommand{\tAh}{\widetilde A^{\mathrm{hom}}}
\newcommand{\tq}{\widetilde q}
\newcommand{\trho}{\widetilde\rho}
\newcommand{\tmeff}{\widetilde m_{\mathrm{eff}}}
\newcommand{\mminus}{m_{-}}
\newcommand{\thetaone}{\theta_1^-}
\newcommand{\lambdaone}{\lambda_1^-}
\newcommand{\norm}[1]{\left\lVert #1\right\rVert}
\newcommand{\abs}[1]{\left\lvert #1\right\rvert}
\newcommand{\dist}{\operatorname{dist}}
\newcommand{\Div}{\operatorname{div}}
\newcommand{\Ran}{\operatorname{Ran}}
\newcommand{\ind}{\mathbf 1}
\newcommand{\Keps}{\mathcal K_\eps}

\title[Bulk--surface spectral homogenization]{Bulk--surface spectral homogenization with indefinite mass in periodic perforated domains}
\author{Hermann DOUANLA}
\dedicatory{\upshape Department of Mathematics, Faculty of Science, University of Yaound\'e I,\\ P.O. Box 812, Yaound\'e, Cameroon\\[2pt] E-mail address: \texttt{hermann.douanla@facsciences-uy1.cm}}
\subjclass[2020]{35B27, 35P15, 35J25, 35B40, 47A75}
\keywords{Periodic homogenization; indefinite spectral problem; perforated domain; Steklov boundary condition; sign-changing density; spectral convergence}
\date{}

\begin{document}

\begin{abstract}
We study the homogenization of an elliptic eigenvalue problem in a periodically perforated domain, where the spectral parameter
appears both in the equation, through a bulk density $q$, and in a  Steklov condition on the boundaries of the holes, through a surface
density $\rho$, and where the resulting bulk--surface spectral measure $q\,dy+\rho\,dS_y$ is 
indefinite. We show that the asymptotic behavior of the spectrum depends essentially on whether the combined mean
$m_{\rm eff}=\int_{Y^*}q\,dy+\int_\Gamma\rho\,dS_y$ is positive, negative, or zero. In all three cases, we prove convergence of eigenvalue clusters with multiplicity and of eigenspaces and, under additional regularity,
$O(\varepsilon)$ estimates for the shifted or rescaled eigenvalues, spectral
projectors, and aligned eigenspaces.
\end{abstract}
\maketitle

\section{Introduction}\label{sec:introduction}

\subsection{Motivation and problem setting}\label{subsec:intro-setting}

Eigenvalue problems in which the spectral parameter multiplies a weight of
variable sign arise from the linearization of reaction--diffusion models. A
motivating prototype is the thermal stability of porous catalysts
\cite{Aris1975,FrankKamenetskii1969}. Heat is released or absorbed both by
reactions in the solid matrix and by reactions on the pore walls.
Linearization of the heat balance about the ambient state suggests a spectral
problem of the form
\[
 -\Div(A\nabla u)=\lambda\,q\,u\quad\text{in the solid},\qquad
 A\nabla u\cdot n=\lambda\,\rho\,u\quad\text{on the pore walls},
\]
where $\lambda$ measures the strength of the reactions and $q$, $\rho$ are
the normalized linearized heat-release rates in the solid and on the walls.
They are positive where the reaction is exothermic and negative where it is
endothermic or where heat is removed. In linearized thermal-stability models, the first positive eigenvalue
may serve as an instability threshold. Indefinite weights
also arise in spatial ecology, where the growth rate is positive in favorable
and negative in unfavorable habitats \cite{CantrellCosner2003,HessKato1980}.
Spectral conditions on
interfaces also describe sloshing and the vibration of fluids in periodic
structures \cite{ConcaPlanchardVanninathan1995}. These examples fit into a common
\emph{bulk--surface} spectral framework, in which one of the two components
may be absent; in the catalyst, both are present and may change sign.

When the medium is periodically perforated at a small scale $\eps$, the
fine structure makes a direct analysis impractical and raises the question
of the effective behavior of the spectrum as $\eps\to0$. For positive
weights this is a classical homogenization problem. For indefinite weights
the answer is more subtle: the spectrum has two branches, and it is not
clear a priori at which scale each branch lives, nor which cell mechanism
governs it. Within this idealized model, this is the question of how a fine-scale arrangement of
exothermic and endothermic sites, in the solid and on the pore walls, affects
the runaway threshold. Our results give a
precise answer in terms of the combined mean $m_{\rm eff}$ defined in
\eqref{eq:intro-meff} below. In particular, the bulk and surface means may cancel each other and thereby change the spectral regime, an effect that cannot be
seen when only one of the two densities is present. The purpose of this paper is to study the asymptotic spectral behavior when the
bulk and the surface mechanisms act simultaneously, with weights of
arbitrary sign, in a periodic geometry where the perforations are allowed to meet the
outer boundary.

Throughout the paper, $\Om\subset\R^N$, $N\ge2$, is a bounded Lipschitz
domain. The periodicity cell is $Y=(0,1)^N$, and $T\Subset Y$ is a Lipschitz
domain with boundary $\Gam=\partial T$. We write $\Ys=Y\setminus\overline T$
and assume that $\Ys$ is connected. The unit normal $n$ on $\Gam$ is the
exterior normal to $\Ys$, hence it points into the hole $T$. Set
\begin{equation}
 \mathcal T:=\bigcup_{k\in\mathbb Z^N}(k+\overline T),\qquad
 \mathcal{W}=\R^N\setminus\mathcal T.
 \notag
\end{equation}
The set $\mathcal W$ is open, connected, and $Y$-periodic. For $0<\eps<1$ we define
\begin{equation}
 \OmE:=\Om\cap\eps\mathcal{W},
 \label{eq:Omegaeps-geometry}
\end{equation}
\begin{equation}
 I_\eps^{\rm int}:=\bigl\{k\in\mathbb Z^N:
        \eps(k+\overline Y)\subset\Om\bigr\},\qquad
 \GamE:=\bigcup_{k\in I_\eps^{\rm int}}\eps(k+\Gam),
 \label{eq:Gammaeps-cut}
\end{equation}
and 
\begin{equation}
 \Dep:=\partial\OmE\setminus\GamE.
 \label{eq:Deps-cut}
\end{equation}
No boundary--hole separation and no restriction to a special vanishing sequence are imposed; all limits are taken as $\eps\downarrow0$.
The geometry is shown in Figure~\ref{fig:geometry} below.

\begin{figure}[ht]
\centering
\begin{tikzpicture}[scale=1]
\begin{scope}
\clip (0,0) ellipse (3 and 2);
\fill[black!6] (-3,-2) rectangle (3,2);
\fill[black!18] (-3.00,-1.50) rectangle (-2.50,-1.00);
\fill[black!18] (-3.00,-1.00) rectangle (-2.50,-0.50);
\fill[black!18] (-3.00,-0.50) rectangle (-2.50,0.00);
\fill[black!18] (-3.00,0.00) rectangle (-2.50,0.50);
\fill[black!18] (-3.00,0.50) rectangle (-2.50,1.00);
\fill[black!18] (-3.00,1.00) rectangle (-2.50,1.50);
\fill[black!18] (-2.50,-1.50) rectangle (-2.00,-1.00);
\fill[black!18] (-2.50,1.00) rectangle (-2.00,1.50);
\fill[black!18] (-2.00,-2.00) rectangle (-1.50,-1.50);
\fill[black!18] (-2.00,-1.50) rectangle (-1.50,-1.00);
\fill[black!18] (-2.00,1.00) rectangle (-1.50,1.50);
\fill[black!18] (-2.00,1.50) rectangle (-1.50,2.00);
\fill[black!18] (-1.50,-2.00) rectangle (-1.00,-1.50);
\fill[black!18] (-1.50,1.50) rectangle (-1.00,2.00);
\fill[black!18] (-1.00,-2.00) rectangle (-0.50,-1.50);
\fill[black!18] (-1.00,1.50) rectangle (-0.50,2.00);
\fill[black!18] (-0.50,-2.00) rectangle (0.00,-1.50);
\fill[black!18] (-0.50,1.50) rectangle (0.00,2.00);
\fill[black!18] (0.00,-2.00) rectangle (0.50,-1.50);
\fill[black!18] (0.00,1.50) rectangle (0.50,2.00);
\fill[black!18] (0.50,-2.00) rectangle (1.00,-1.50);
\fill[black!18] (0.50,1.50) rectangle (1.00,2.00);
\fill[black!18] (1.00,-2.00) rectangle (1.50,-1.50);
\fill[black!18] (1.00,1.50) rectangle (1.50,2.00);
\fill[black!18] (1.50,-2.00) rectangle (2.00,-1.50);
\fill[black!18] (1.50,-1.50) rectangle (2.00,-1.00);
\fill[black!18] (1.50,1.00) rectangle (2.00,1.50);
\fill[black!18] (1.50,1.50) rectangle (2.00,2.00);
\fill[black!18] (2.00,-1.50) rectangle (2.50,-1.00);
\fill[black!18] (2.00,1.00) rectangle (2.50,1.50);
\fill[black!18] (2.50,-1.50) rectangle (3.00,-1.00);
\fill[black!18] (2.50,-1.00) rectangle (3.00,-0.50);
\fill[black!18] (2.50,-0.50) rectangle (3.00,0.00);
\fill[black!18] (2.50,0.00) rectangle (3.00,0.50);
\fill[black!18] (2.50,0.50) rectangle (3.00,1.00);
\fill[black!18] (2.50,1.00) rectangle (3.00,1.50);
\draw[black!30,very thin] (-3,-2) grid[step=0.5] (3,2);
\filldraw[fill=white,draw=blue!70!black,thick] (-2.25,-0.75) circle (0.15);
\filldraw[fill=white,draw=blue!70!black,thick] (-2.25,-0.25) circle (0.15);
\filldraw[fill=white,draw=blue!70!black,thick] (-2.25,0.25) circle (0.15);
\filldraw[fill=white,draw=blue!70!black,thick] (-2.25,0.75) circle (0.15);
\filldraw[fill=white,draw=blue!70!black,thick] (-1.75,-0.75) circle (0.15);
\filldraw[fill=white,draw=blue!70!black,thick] (-1.75,-0.25) circle (0.15);
\filldraw[fill=white,draw=blue!70!black,thick] (-1.75,0.25) circle (0.15);
\filldraw[fill=white,draw=blue!70!black,thick] (-1.75,0.75) circle (0.15);
\filldraw[fill=white,draw=blue!70!black,thick] (-1.25,-1.25) circle (0.15);
\filldraw[fill=white,draw=blue!70!black,thick] (-1.25,-0.75) circle (0.15);
\filldraw[fill=white,draw=blue!70!black,thick] (-1.25,-0.25) circle (0.15);
\filldraw[fill=white,draw=blue!70!black,thick] (-1.25,0.25) circle (0.15);
\filldraw[fill=white,draw=blue!70!black,thick] (-1.25,0.75) circle (0.15);
\filldraw[fill=white,draw=blue!70!black,thick] (-1.25,1.25) circle (0.15);
\filldraw[fill=white,draw=blue!70!black,thick] (-0.75,-1.25) circle (0.15);
\filldraw[fill=white,draw=blue!70!black,thick] (-0.75,-0.75) circle (0.15);
\filldraw[fill=white,draw=blue!70!black,thick] (-0.75,-0.25) circle (0.15);
\filldraw[fill=white,draw=blue!70!black,thick] (-0.75,0.25) circle (0.15);
\filldraw[fill=white,draw=blue!70!black,thick] (-0.75,0.75) circle (0.15);
\filldraw[fill=white,draw=blue!70!black,thick] (-0.75,1.25) circle (0.15);
\filldraw[fill=white,draw=blue!70!black,thick] (-0.25,-1.25) circle (0.15);
\filldraw[fill=white,draw=blue!70!black,thick] (-0.25,-0.75) circle (0.15);
\filldraw[fill=white,draw=blue!70!black,thick] (-0.25,-0.25) circle (0.15);
\filldraw[fill=white,draw=blue!70!black,thick] (-0.25,0.25) circle (0.15);
\filldraw[fill=white,draw=blue!70!black,thick] (-0.25,0.75) circle (0.15);
\filldraw[fill=white,draw=blue!70!black,thick] (-0.25,1.25) circle (0.15);
\filldraw[fill=white,draw=blue!70!black,thick] (0.25,-1.25) circle (0.15);
\filldraw[fill=white,draw=blue!70!black,thick] (0.25,-0.75) circle (0.15);
\filldraw[fill=white,draw=blue!70!black,thick] (0.25,-0.25) circle (0.15);
\filldraw[fill=white,draw=blue!70!black,thick] (0.25,0.25) circle (0.15);
\filldraw[fill=white,draw=blue!70!black,thick] (0.25,0.75) circle (0.15);
\filldraw[fill=white,draw=blue!70!black,thick] (0.25,1.25) circle (0.15);
\filldraw[fill=white,draw=blue!70!black,thick] (0.75,-1.25) circle (0.15);
\filldraw[fill=white,draw=blue!70!black,thick] (0.75,-0.75) circle (0.15);
\filldraw[fill=white,draw=blue!70!black,thick] (0.75,-0.25) circle (0.15);
\filldraw[fill=white,draw=blue!70!black,thick] (0.75,0.25) circle (0.15);
\filldraw[fill=white,draw=blue!70!black,thick] (0.75,0.75) circle (0.15);
\filldraw[fill=white,draw=blue!70!black,thick] (0.75,1.25) circle (0.15);
\filldraw[fill=white,draw=blue!70!black,thick] (1.25,-1.25) circle (0.15);
\filldraw[fill=white,draw=blue!70!black,thick] (1.25,-0.75) circle (0.15);
\filldraw[fill=white,draw=blue!70!black,thick] (1.25,-0.25) circle (0.15);
\filldraw[fill=white,draw=blue!70!black,thick] (1.25,0.25) circle (0.15);
\filldraw[fill=white,draw=blue!70!black,thick] (1.25,0.75) circle (0.15);
\filldraw[fill=white,draw=blue!70!black,thick] (1.25,1.25) circle (0.15);
\filldraw[fill=white,draw=blue!70!black,thick] (1.75,-0.75) circle (0.15);
\filldraw[fill=white,draw=blue!70!black,thick] (1.75,-0.25) circle (0.15);
\filldraw[fill=white,draw=blue!70!black,thick] (1.75,0.25) circle (0.15);
\filldraw[fill=white,draw=blue!70!black,thick] (1.75,0.75) circle (0.15);
\filldraw[fill=white,draw=blue!70!black,thick] (2.25,-0.75) circle (0.15);
\filldraw[fill=white,draw=blue!70!black,thick] (2.25,-0.25) circle (0.15);
\filldraw[fill=white,draw=blue!70!black,thick] (2.25,0.25) circle (0.15);
\filldraw[fill=white,draw=blue!70!black,thick] (2.25,0.75) circle (0.15);
\filldraw[fill=white,draw=red!75!black,thick] (-2.75,-1.25) circle (0.15);
\filldraw[fill=white,draw=red!75!black,thick] (-2.75,-0.75) circle (0.15);
\filldraw[fill=white,draw=red!75!black,thick] (-2.75,-0.25) circle (0.15);
\filldraw[fill=white,draw=red!75!black,thick] (-2.75,0.25) circle (0.15);
\filldraw[fill=white,draw=red!75!black,thick] (-2.75,0.75) circle (0.15);
\filldraw[fill=white,draw=red!75!black,thick] (-2.75,1.25) circle (0.15);
\filldraw[fill=white,draw=red!75!black,thick] (-2.25,-1.25) circle (0.15);
\filldraw[fill=white,draw=red!75!black,thick] (-2.25,1.25) circle (0.15);
\filldraw[fill=white,draw=red!75!black,thick] (-1.75,-1.75) circle (0.15);
\filldraw[fill=white,draw=red!75!black,thick] (-1.75,-1.25) circle (0.15);
\filldraw[fill=white,draw=red!75!black,thick] (-1.75,1.25) circle (0.15);
\filldraw[fill=white,draw=red!75!black,thick] (-1.75,1.75) circle (0.15);
\filldraw[fill=white,draw=red!75!black,thick] (-1.25,-1.75) circle (0.15);
\filldraw[fill=white,draw=red!75!black,thick] (-1.25,1.75) circle (0.15);
\filldraw[fill=white,draw=red!75!black,thick] (-0.75,-1.75) circle (0.15);
\filldraw[fill=white,draw=red!75!black,thick] (-0.75,1.75) circle (0.15);
\filldraw[fill=white,draw=red!75!black,thick] (-0.25,-1.75) circle (0.15);
\filldraw[fill=white,draw=red!75!black,thick] (-0.25,1.75) circle (0.15);
\filldraw[fill=white,draw=red!75!black,thick] (0.25,-1.75) circle (0.15);
\filldraw[fill=white,draw=red!75!black,thick] (0.25,1.75) circle (0.15);
\filldraw[fill=white,draw=red!75!black,thick] (0.75,-1.75) circle (0.15);
\filldraw[fill=white,draw=red!75!black,thick] (0.75,1.75) circle (0.15);
\filldraw[fill=white,draw=red!75!black,thick] (1.25,-1.75) circle (0.15);
\filldraw[fill=white,draw=red!75!black,thick] (1.25,1.75) circle (0.15);
\filldraw[fill=white,draw=red!75!black,thick] (1.75,-1.75) circle (0.15);
\filldraw[fill=white,draw=red!75!black,thick] (1.75,-1.25) circle (0.15);
\filldraw[fill=white,draw=red!75!black,thick] (1.75,1.25) circle (0.15);
\filldraw[fill=white,draw=red!75!black,thick] (1.75,1.75) circle (0.15);
\filldraw[fill=white,draw=red!75!black,thick] (2.25,-1.25) circle (0.15);
\filldraw[fill=white,draw=red!75!black,thick] (2.25,1.25) circle (0.15);
\filldraw[fill=white,draw=red!75!black,thick] (2.75,-1.25) circle (0.15);
\filldraw[fill=white,draw=red!75!black,thick] (2.75,-0.75) circle (0.15);
\filldraw[fill=white,draw=red!75!black,thick] (2.75,-0.25) circle (0.15);
\filldraw[fill=white,draw=red!75!black,thick] (2.75,0.25) circle (0.15);
\filldraw[fill=white,draw=red!75!black,thick] (2.75,0.75) circle (0.15);
\filldraw[fill=white,draw=red!75!black,thick] (2.75,1.25) circle (0.15);
\end{scope}
\begin{scope}
\begin{scope}[even odd rule]\clip (-3.5,-2.5) rectangle (3.5,2.5) (-2.75,-1.25) circle (0.15) (-2.75,-0.75) circle (0.15) (-2.75,-0.25) circle (0.15) (-2.75,0.25) circle (0.15) (-2.75,0.75) circle (0.15) (-2.75,1.25) circle (0.15) (-2.25,-1.25) circle (0.15) (-2.25,1.25) circle (0.15) (-1.75,-1.75) circle (0.15) (-1.75,-1.25) circle (0.15) (-1.75,1.25) circle (0.15) (-1.75,1.75) circle (0.15) (-1.25,-1.75) circle (0.15) (-1.25,1.75) circle (0.15) (-0.75,-1.75) circle (0.15) (-0.75,1.75) circle (0.15) (-0.25,-1.75) circle (0.15) (-0.25,1.75) circle (0.15) (0.25,-1.75) circle (0.15) (0.25,1.75) circle (0.15) (0.75,-1.75) circle (0.15) (0.75,1.75) circle (0.15) (1.25,-1.75) circle (0.15) (1.25,1.75) circle (0.15) (1.75,-1.75) circle (0.15) (1.75,-1.25) circle (0.15) (1.75,1.25) circle (0.15) (1.75,1.75) circle (0.15) (2.25,-1.25) circle (0.15) (2.25,1.25) circle (0.15) (2.75,-1.25) circle (0.15) (2.75,-0.75) circle (0.15) (2.75,-0.25) circle (0.15) (2.75,0.25) circle (0.15) (2.75,0.75) circle (0.15) (2.75,1.25) circle (0.15);
\draw[red!75!black,very thick] (0,0) ellipse (3 and 2);
\end{scope}
\end{scope}
\node at (2.55,1.75) {$\Omega$};
\begin{scope}[shift={(6.6,-1.2)}]
\fill[black!6] (0,0) rectangle (2.4,2.4);
\draw[thick] (0,0) rectangle (2.4,2.4);
\filldraw[fill=white,draw=blue!70!black,thick] (1.2,1.2) circle (0.72);
\node at (1.2,1.2) {$T$};
\node at (0.35,2.1) {$Y^*$};
\node[blue!70!black] at (2.05,0.35) {$\Gamma$};
\draw[->,thick] ({1.2+0.72*cos(35)},{1.2+0.72*sin(35)}) -- ({1.2+0.3*cos(35)},{1.2+0.3*sin(35)});
\node at (1.42,1.62) {\small $n$};
\node[below] at (1.2,0) {$Y=(0,1)^N$};
\end{scope}
\begin{scope}[shift={(3.55,0.1)}]
\draw[blue!70!black,thick] (0,0.3) circle (0.1); \node[right] at (0.2,0.3) {\small $\Gamma_\eps$ (Steklov)};
\draw[red!75!black,thick] (0,-0.1) circle (0.1); \node[right] at (0.2,-0.1) {\small $D_\eps$ (Dirichlet)};
\fill[black!18] (-0.1,-0.6) rectangle (0.1,-0.4); \node[right] at (0.2,-0.5) {\small boundary cells};
\end{scope}
\end{tikzpicture}
\caption{The cut-perforation geometry. Holes of complete cells
$\eps(k+Y)\subset\Om$ carry the Steklov condition ($\GamE$); the outer
boundary and all holes of cells meeting $\partial\Om$, including those cut
by $\partial\Om$, carry the Dirichlet condition ($\Dep$); the part of
$\partial\Om$ lying inside a hole does not belong to $\partial\OmE$. Right:
the reference cell.}
\label{fig:geometry}
\end{figure}

In $\OmE$ we consider the spectral problem
\begin{equation}
\left\{
\begin{aligned}
 -\Div\!\left(A\!\left(\frac{x}{\eps}\right)\nabla u_\eps\right)
 &=\lambda_\eps q\!\left(\frac{x}{\eps}\right)u_\eps
 &&\text{in }\OmE,\\
 A\!\left(\frac{x}{\eps}\right)\nabla u_\eps\cdot n
 &=\eps\lambda_\eps\rho\!\left(\frac{x}{\eps}\right)u_\eps
 &&\text{on }\GamE,\\
 u_\eps&=0 &&\text{on }\Dep
\end{aligned}
\right.
\tag{$\mathscr P_\eps$}\label{eq:intro-problem}
\end{equation}
with the following hypotheses.

\Needspace{12\baselineskip}
\begin{enumerate}[label=\textup{(H\arabic*)},ref=H\arabic*,leftmargin=1.2cm]
 \item\label{hyp:diffusion} \textbf{Diffusion.}
 The matrix $A\in L^\infty(\mathcal{W})^{N\times N}$ is real, symmetric and $Y$-periodic, and 
 there exist $0<\alpha_0\le\beta_0<\infty$ such that for almost every $y\in \Ys$
 \begin{equation}
  \alpha_0|\xi|^2\le A(y)\xi\cdot\xi\le\beta_0|\xi|^2\qquad \text{for all} \ \xi\in\R^N.
 \notag
 \end{equation}

 \item\label{hyp:spectral-densities} \textbf{Densities and combined indefiniteness.}
 The densities $q\in L^\infty(\mathcal{W})$ and $\rho\in L^\infty(\partial\mathcal{W})$ are
 real, $Y$-periodic and the signed measure $q\,dy+\rho\,dS_\Gam$ on $\overline{\Ys}$ has
 nontrivial positive and negative parts; equivalently,
 \begin{equation}
 \bigl|\{q>0\}\bigr|
  +\mathcal H^{N-1}\bigl(\{\rho>0\}\bigr)>0,\qquad \bigl|\{q<0\}\bigr|
  +\mathcal H^{N-1}\bigl(\{\rho<0\}\bigr)>0,
 \notag
 \end{equation}
 the volume measures being taken in $\Ys$ and the surface measures on
 $\Gam$. The two signs may be carried by either component; both the purely Steklov case $q\equiv0$ and the purely bulk case $\rho\equiv0$ are included.

 \item\label{hyp:cell-regularity} \textbf{Cell regularity.}
 For some $\alpha_Y\in(0,1)$,
 \[
 \tag{$\mathrm{H3}_A$}
  \Gam\in C^{2,\alpha_Y},\qquad
  A\in C^{0,\alpha_Y}_\#(\overline\Ys)^{N\times N},
 \]
 (that is, $A$ is H\"older continuous on $\overline{\mathcal W}$)
 and
 \[
 \tag{$\mathrm{H3}_m$}
  \rho\in C^{0,\alpha_Y}(\Gam).
 \]

 \item\label{hyp:macro-regularity} \textbf{Macroscopic regularity.}
 Either \textup{(a)} $\partial\Om\in C^{1,1}$, or \textup{(b)} $\Om$ is
 convex.
\end{enumerate}

Hypothesis \textup{(H4)} is used to obtain the $H^2$-regularity of the
constant-coefficient Dirichlet problem, which holds for $C^{1,1}$ domains
\cite[Chapter~2]{Grisvard1985} and for convex domains
\cite[Theorem~3.2.1.2]{Grisvard1985} (after a linear change of variables
reducing $\Ah$ to the identity, which preserves convexity). Unless stated
otherwise, constants depend on the fixed geometry, on the ellipticity and
regularity data, and on a fixed spectral cluster, but never on $\eps$. 
The qualitative results use \textup{(H1)--(H2)} on the regular branch and \textup{(H1)--(H3)} on the factorized branch and in the critical regime; all $O(\eps)$ estimates use \textup{(H1)--(H4)}.

Since $\eps|\GamE|$ remains of
order one, the factor $\eps$ in the Steklov condition puts the bulk and
surface contributions at the same macroscopic order. The analysis below shows that the quantity governing
the spectral structure is the combined effective mass
\begin{equation}
 m_{\mathrm{eff}}
 :=\int_{\Ys}q(y)\,dy+\int_\Gam\rho(y)\,dS_y.
 \label{eq:intro-meff}
\end{equation}
Its sign, and especially its cancellation, does not merely modify the coefficients of the limit problem: it changes the spectral scale and the cell mechanism responsible for homogenization.

\subsection{Literature review}\label{subsec:intro-literature}

Periodic homogenization by two-scale convergence goes back to Nguetseng
\cite{Nguetseng1989} and Allaire \cite{Allaire1992}; standard references for
periodic and perforated media include
\cite{CioranescuDonato1999,CioranescuSaintJeanPaulin1979,JikovKozlovOleinik1994,OleinikShamaevYosifian1992}.
Two-scale convergence on periodic hypersurfaces was developed by Neuss-Radu
\cite{NeussRadu1996} and by Allaire--Damlamian--Hornung
\cite{AllaireDamlamianHornung1996}; the equivalent unfolding approach in
domains with holes is in \cite{CioranescuDamlamianDonatoGrisoZaki2012}.

Spectral homogenization goes back to Kesavan \cite{Kesavan1979} and, in
perforated domains, to Vanninathan \cite{Vanninathan1981}; Steklov
eigenvalues in periodic structures are studied in
\cite{ConcaPlanchardVanninathan1995,Douanla2010}, and quantitative estimates for a
perforated Steklov problem were obtained by Pastukhova
\cite{Pastukhova1995}. For indefinite weights, a quadratic operator pencil already appears in the
zero-mean locally periodic case \cite{PankratovaPiatnitski2011}. In a recent
contribution, Aiyappan--Chattaraj--Pettersson
\cite{AiyappanChattarajPettersson2026} study a locally periodic indefinite
bulk spectrum in a thin cylinder, where factorization is coupled with
dimension reduction and localization. Recent work on Steklov homogenization
includes \cite{DonatoPerezMartinez2026}. These settings do not contain a
simultaneous signed bulk--surface mass. The works closest to the present one
are the following.

Nazarov--Pankratova--Piatnitski \cite{NazarovPankratovaPiatnitski2011} treat
a purely bulk indefinite weight without holes, and Chiad\`o
Piat--Nazarov--Piatnitski \cite{ChiadoNazarovPiatnitski2012} a purely surface
indefinite weight on holes that may be cut by $\partial\Om$; both describe the
two-sided spectrum in three regimes and obtain $O(\eps^{1/2})$
estimates. The bulk and surface indefinite weights were treated separately,
by two-scale convergence and without rates, in \cite{Douanla2013Volume} and
\cite{Douanla2013Steklov}, for holes that do not meet $\partial\Om$.
Girouard--Henrot--Lagac\'e \cite{GirouardHenrotLagace2021},
Khrabustovskyi--Taskinen \cite{KhrabustovskyiTaskinen2026} and
Huang--Venkatraman \cite{HuangVenkatraman2026} consider a positive Steklov
parameter on small holes at the critical size, where the surface mass
produces a positive bulk weight in the limit and only a one-sided spectrum
appears; \cite{GirouardHenrotLagace2021} obtains convergence to a
dynamical-boundary spectrum, interpolating between the Steklov and Neumann
problems, \cite{KhrabustovskyiTaskinen2026} operator-norm resolvent
convergence with quantitative spectral estimates, and
\cite{HuangVenkatraman2026} optimal eigenpair rates and a second-order term
through an indefinite Coulomb-type interaction between boundary and bulk
densities. In all these works the
spectral mass is carried either by the bulk or by the perforation boundaries.
Here it is carried by a single signed measure $q\,dy+\rho\,dS_\Gam$, acting
simultaneously in the bulk and on the perforation boundaries at the same
scale and with arbitrary signs. The novelty of the paper lies in three
consequences of this structure: (i) a mixed bulk--surface compensation, which
allows the critical regime $m_{\rm eff}=0$ to arise through bulk--surface
cancellation even when neither density has zero mean; (ii) the compatibility of the exact ground-state factorization, of
the compensation identity $b_\eps=\eps c_\eps$
(Section~\ref{sec:critical-regime}) and of an exact Dirichlet layer with a
cut-perforation geometry in which holes may meet $\partial\Om$; and (iii) the
resulting $O(\eps)$ fixed-cluster estimates for eigenvalues, spectral
projectors and aligned eigenspaces in all three regimes, compared with the
$O(\eps^{1/2})$ rates obtained in
\cite{ChiadoNazarovPiatnitski2012,NazarovPankratovaPiatnitski2011}.

\subsection{Main results}\label{subsec:intro-main-results}

Let $\Ah$ be the homogenized matrix of $A$ in the perforated cell (see
\eqref{eq:Ah}), and let
\begin{equation}
 -\Div(\Ah\nabla\phi)=\mu\phi\quad\text{in }\Om,
 \qquad \phi=0\quad\text{on }\partial\Om,
 \notag
\end{equation}
have eigenvalues $0<\mu_1^0\le\mu_2^0\le\cdots\to+\infty$, repeated
according to multiplicity. By Proposition~\ref{prop:prelim-two-sign-spectrum},
under \textup{(H1)--(H2)} and for every $\eps\in(0,1)$ such that $\Om$ contains at
least one complete cell (in particular for $\eps<\eps_\Om$, see
\eqref{eq:eps-Omega}), the spectrum of \eqref{eq:intro-problem} consists of
two sequences $0<\lambda_{\eps,1}^+\le\lambda_{\eps,2}^+\le\cdots\to+\infty$
and $0>\lambda_{\eps,1}^-\ge\lambda_{\eps,2}^-\ge\cdots\to-\infty$.
A \emph{cluster} is a set of indices $k,\ldots,k+r-1$ with
$\mu_{k-1}^0<\mu_k^0=\cdots=\mu_{k+r-1}^0<\mu_{k+r}^0$ (with $\mu_0^0:=0$); all statements are
made for whole clusters, and ``aligned eigenspaces'' refers to convergence
of eigenfunctions after multiplication by an orthogonal $r\times r$ matrix.
The cell mass form is
\begin{equation}
 \mathfrak b_Y(\phi,\psi)
 :=\int_{\Ys}q\phi\psi\,dy+\int_\Gam\rho\phi\psi\,dS_y,
 \qquad \phi,\psi\in H^1_\#(\Ys).
 \label{eq:intro-cell-mass-form}
\end{equation}

For a $Y$-periodic function $g$ we write $g^\eps(x):=g(x/\eps)$, and
$V_\eps$ denotes the space of functions in $H^1(\OmE)$ vanishing on $\Dep$,
normed by $\|\nabla v\|_{L^2(\OmE)}$ (see \eqref{eq:Veps}).

We first state the regular branch.

\begin{theorem}
\label{thm:intro-regular}
Assume \textup{(H1)--(H2)} and $m_{\rm eff}>0$. For every fixed $k\ge1$,
\begin{equation}
 \lambda_{\eps,k}^{+}\longrightarrow
 \lambda_k^0:=\frac{\mu_k^0}{m_{\rm eff}},
 \notag
\end{equation}
every cluster of multiplicity $r$ attracts exactly $r$ microscopic
eigenvalues, and the aligned eigenspaces converge. Under \textup{(H1)--(H4)}, for every cluster $\{k,\ldots,k+r-1\}$,
\begin{equation}
 \max_{1\le j\le r}
 \left|\lambda_{\eps,k+j-1}^+-\frac{\mu_k^0}{m_{\rm eff}}\right|
 \le C_k\eps,
 \notag
\end{equation}
and the spectral projectors and aligned eigenspaces satisfy $O(\eps)$
estimates with respect to explicit first-order reconstructions
(Theorems~\ref{thm:qual-positive} and \ref{thm:quantitative-positive-cluster}).
\end{theorem}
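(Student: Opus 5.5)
The plan is to recast \eqref{eq:intro-problem} as a compact self-adjoint operator problem in $V_\eps$ and then prove convergence of the inverse operators in norm, in the spirit of Kesavan and Vanninathan. Define $\aep(u,v)=\int_{\OmE}A^\eps\nabla u\cdot\nabla v\,dx$ and the indefinite mass form $\be(u,v)=\int_{\OmE}q^\eps uv\,dx+\eps\int_{\GamE}\rho^\eps uv\,dS$. Let $\Keps:V_\eps\to V_\eps$ be given by $\aep(\Keps u,v)=\be(u,v)$. The trace inequality $\eps\|v\|^2_{L^2(\GamE)}\le C(\|v\|^2_{L^2(\OmE)}+\eps^2\|\nabla v\|^2_{L^2(\OmE)})$, applied cell by cell, shows that $\be$ is bounded uniformly in $\eps$; together with Rellich, this makes $\Keps$ compact and self-adjoint for $\aep$. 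Its positive eigenvalues are exactly $1/\lambda^+_{\eps,k}$, as in Proposition~\ref{prop:prelim-two-sign-spectrum}. The limit operator $K_0$ on $H^1_0(\Om)$ is defined by $\int_\Om\Ah\nabla K_0u\cdot\nabla v=m_{\rm eff}\int_\Om uv$. Since $m_{\rm eff}>0$, its spectrum is $\{m_{\rm eff}/\mu_k^0\}$. The two operators live on different spaces, so I will compare them through an extension operator $P_\eps:V_\eps\to H^1_0(\Om)$ that is uniformly bounded. This requires care near the cut cells, where I will extend by zero on Dirichlet holes after a standard extension in the interior cells.

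For the qualitative part, take $u_\eps\rightharpoonup u$ weakly, with $P_\eps u_\eps\to u$ in $L^2$. The key convergence is $\be(u_\eps,v_\eps)\to m_{\rm eff}\int uv$. For the bulk term this follows from weak-$*$ convergence of $q^\eps\ind_{\OmE}$ to $\int_{\Ys}q$ paired with the strongly converging product. For the surface term I will use the periodic-surface two-scale lemma (Allaire--Damlamian--Hornung): $\eps\int_{\GamE}\rho^\eps\varphi\,dS\to\int_\Gam\rho\int_\Om\varphi$, combined with the $\eps$-trace estimate so that $H^1$-bounded sequences can replace smooth ones. The boundary strip of cut cells has measure $O(\eps)$ and contributes nothing in the limit. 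Combined with standard homogenization of $\aep$ in perforated domains, this gives $\|P_\eps\Keps u_\eps-K_0u\|\to0$ whenever $u_\eps\rightharpoonup u$, i.e.\ compact (collectively compact) convergence. The Vanninathan/Osborn spectral framework then gives convergence of positive eigenvalues with multiplicity, exactly $r$ eigenvalues attracted per cluster, and convergence of spectral projectors. From the projector convergence, aligned eigenspace convergence follows by a polar decomposition.

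For the $O(\eps)$ estimate under (H1)--(H4), I will estimate $\|P_\eps\Keps\Peps^{*}-K_0\|$ on the range of the limit spectral projector. Take $u_0=K_0f$ with $f$ a limit eigenfunction, which lies in $H^2$ by (H4). I will build the first-order approximation $w_\eps=\eta_\eps\bigl(u_0+\eps\chi(x/\eps)\cdot\nabla u_0+\eps\theta(x/\eps)f\bigr)$. Here $\chi$ are the perforated cell correctors. The new corrector $\theta\in H^1_\#(\Ys)$ solves the cell problem $-\Div(A\nabla\theta)=q-m_{\rm eff}/|\Ys|$, with conormal datum $\rho$ on $\Gam$. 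This problem is solvable precisely because the combined mean balances, and its role is to absorb the oscillation of the mass $q\,dy+\rho\,dS$. The cutoff $\eta_\eps$ vanishes in an $O(\eps)$ boundary layer, which enforces the Dirichlet condition on $\Dep$. Hölder regularity (H3) gives $\chi,\theta\in W^{1,\infty}$. A standard flux-corrector (divergence-free skew matrix) argument then bounds the residual $\aep(w_\eps,v)-\be(f,v)$ by $C\eps^{1/2}\|v\|$ coming from the layer. Improving this to $C\eps$ is the main obstacle. I would handle it via a duality argument, testing against a second reconstruction of the adjoint problem, together with the exact Dirichlet layer, so that the layer error enters quadratically. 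With the operator estimate at $O(\eps)$ on the cluster, standard min--max and projector perturbation (Kato/Osborn) yield the stated eigenvalue, projector and eigenspace bounds.
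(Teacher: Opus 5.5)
Your qualitative half is workable. The mass convergence $\be(u_\eps,v_\eps)\to m_{\rm eff}\int_\Om uv\,dx$, combined with $\|\Keps g\|_{a_\eps}^2=\be(g,\Keps g)$, yields both non-pollution and approximate eigenvectors, which is what the paper packages in Theorem~\ref{thm:abstract-fixed-cluster}. Because the spaces vary and $P_\eps$ does not preserve $a_\eps$-norms, you need a varying-space principle of that kind; Osborn's fixed-space theory, or $L^2$ frameworks that rely on a positive weight, do not apply directly.

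The quantitative half contains a step that fails: the term $\eps\theta(x/\eps)f$ in $w_\eps$ is at the wrong order. For $m_{\rm eff}\neq0$ and $\lambda=O(1)$, the rescaled centered measure $(q^\circ-m_{\rm eff})\,dy+\rho\,dS_\Gam$ is already $O(\eps)$ in the dual energy norm. A $\theta$-type corrector therefore belongs to the second-order term $\eps^2\theta^\eps f$. With your choice, the rescaled cell equation gives $\aep(\eps\theta^\eps f,v)=\int_{\OmE}(A\nabla_y\theta)^\eps f\cdot\nabla v\,dx+O(\eps)\|v\|_{V_\eps}$. Testing with $v=\kappa_\eps\,\eps\theta^\eps f$, where $\kappa_\eps$ is a cut-off vanishing on the boundary cells, gives $\aep(w_\eps,v)-\be(f,v)\to\int_{\Ys}A\nabla_y\theta\cdot\nabla_y\theta\,dy\,\|f\|_{L^2(\Om)}^2$. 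This limit is positive because \textup{(H2)} forces $\theta$ to be nonconstant. So the residual is $O(1)$, not $O(\eps^{1/2})$, and $w_\eps$ has the wrong two-scale gradient.

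A mass corrector at order $\eps$ is correct only in the critical regime, where $\lambda\sim\Lambda/\eps$ multiplies the mass (the paper's $\eps\Lambda\chi^0u$). In the regular branch the paper uses no mass corrector. It uses your cell problem (with $A$ replaced by the identity, plus a potential for $\ind_{\Ys}/|\Ys|-1$) only to write the centered measure as $-\Div_yF^{\mathrm m}$ with $F^{\mathrm m}\in L^\infty$. This gives $|\be(u,v)-m_{\rm eff}\int_\Om u\,P_\eps v\,dx|\le C\eps\|u\|_{H^1}\|v\|_{V_\eps}$ (Corollary~\ref{cor:mass-smooth-u}).

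Your route from $\eps^{1/2}$ to $\eps$ is also incomplete. With the cut-off $\eta_\eps$, the residual in the dual of $(V_\eps,a_\eps)$ is genuinely of order $\eps^{1/2}$, because the quasimode misses the Dirichlet layer, whose energy is of order $\eps^{1/2}$. No duality applied to that residual can give $O(\eps)$ projector and eigenspace bounds in the $a_\eps$-norm. That norm is where $\Keps$ is self-adjoint and where the residual must be small; comparing $P_\eps\Keps P_\eps^*$ with $K_0$ does not control the spectrum of $\Keps$.

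The paper drops the cut-off. It adds the exact $A^\eps$-harmonic layer $\beta_\eps[u]$, which cancels $u+\eps\sum_j(\chi^j)^\eps\partial_ju$ on all of $\Dep$, cut holes included (Lemma~\ref{lem:common-boundary-layer}). Then:
\begin{itemize}
\item $\aep(\beta_\eps[u],v)=0$;
\item the diffusion residual is $O(\eps)$ by the flux corrector tested against $P_\eps v\in H_0^1(\Om)$;
\item the layer enters only through $\be(\beta_\eps[u],v)$, which is $O(\eps)$ because $\|\beta_\eps[u]\|_{L^2(\OmE)}+\eps^{1/2}\|\beta_\eps[u]\|_{L^2(\GamE)}\le C\eps\|u\|_{H^2}$.
\end{itemize}

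Your ``quadratic'' duality belongs at that last step. One writes $\|\delta\|_2^2=-\int_{\OmE}A^\eps\nabla\zeta\cdot\nabla z\,dx$, where $\zeta$ is an explicit lifting supported in $\{d<R\eps\}$, $\delta=\beta_\eps[u]-\zeta$, and $z$ solves $\aep(z,\varphi)=\int_{\OmE}\delta\varphi\,dx$. This closes only with the strip estimate $\|\nabla z\|_{L^2(\OmE\cap\{d<L\eps\})}\le C\eps^{1/2}\|\delta\|_{L^2(\OmE)}$ in the cut geometry (Lemma~\ref{lem:common-RBL}). That estimate is proved by a cut-off expansion of $z$, presumably your ``second reconstruction'', and your sketch does not supply it. With it, the quasimode residual is $O(\eps)$ in the full dual norm and the Gram matrix is $I_r+O(\eps)$, and Theorem~\ref{thm:abstract-fixed-cluster} gives all the stated rates.
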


We next state the factorized branch.

\begin{theorem}
\label{thm:intro-factorized}
Assume \textup{(H1)--(H3)} and $m_{\rm eff}>0$. The cell problem
\begin{equation}
 \int_{\Ys}A\nabla_y\theta\cdot\nabla_y\psi\,dy
 =\lambda\,\mathfrak b_Y(\theta,\psi)
 \qquad\forall\psi\in H^1_\#(\Ys)
 \notag
\end{equation}
has a simple first negative eigenvalue $\lambda_1^-<0$, with an
eigenfunction $\theta_1^-\in C^{1,\alpha'}(\overline{\Ys})$,
$0<\alpha'<\alpha_Y$, such that $\min_{\overline{\Ys}}\theta_1^->0$. With $\Theta_\eps=(\theta_1^-)^\eps$ one
has the exact factorization
\begin{equation}
 a_\eps(\Theta_\eps v,\Theta_\eps w)
 -\frac{\lambda_1^-}{\eps^2}b_\eps(\Theta_\eps v,\Theta_\eps w)
 =\int_{\OmE}\bigl((\theta_1^-)^2A\bigr)^\eps\nabla v\cdot\nabla w\,dx
 \qquad\forall v,w\in V_\eps,
 \notag
\end{equation}
where $a_\eps$ and $b_\eps$ are the energy and mass forms \eqref{eq:ae}--\eqref{eq:be}.
Let $\widetilde\mu_k^0$ be the eigenvalues of
$-\Div(\widetilde A^{\rm hom}\nabla\cdot)$ in $H_0^1(\Om)$, where
$\widetilde A^{\rm hom}$ is the homogenized matrix of
$\widetilde A=(\theta_1^-)^2A$ in $\Ys$, and
$\widetilde m_{\rm eff}:=\int_{\Ys}(\theta_1^-)^2q\,dy+\int_\Gam(\theta_1^-)^2\rho\,dS_y<0$.
Then
\begin{equation}
 \lambda_{\eps,k}^{-}-\frac{\lambda_1^-}{\eps^2}
 \longrightarrow \frac{\widetilde\mu_k^0}{\widetilde m_{\rm eff}},
 \notag
\end{equation}
with preservation of multiplicity and convergence of the aligned
transformed eigenspaces. Under \textup{(H1)--(H4)}, for every cluster
$\{k,\ldots,k+r-1\}$ of $(\widetilde\mu_j^0)_{j\ge1}$ (defined as above
with $\mu_j^0$ replaced by $\widetilde\mu_j^0$),
\begin{equation}
 \max_{1\le j\le r}\left|\lambda_{\eps,k+j-1}^{-}
 -\frac{\lambda_1^-}{\eps^2}
 -\frac{\widetilde\mu_k^0}{\widetilde m_{\rm eff}}\right|
 \le C_k\eps,
 \label{eq:intro-negative-rate}
\end{equation}
together with $O(\eps)$ projector and eigenspace estimates; the original
modes carry the factor $(\theta_1^-)^\eps$
(Theorems~\ref{neg:thm:global-factorization}, \ref{neg:thm:qualitative}
and \ref{neg:thm:quantitative-cluster}).
\end{theorem}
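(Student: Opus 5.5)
We first treat the cell problem, then derive the factorization identity, and then transfer the regular-branch results to the factorized problem.

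\textbf{The cell problem.} On $H^1_\#(\Ys)$ consider the Rayleigh quotient $\int_{\Ys}A\nabla\theta\cdot\nabla\theta\,dy/\mathfrak b_Y(\theta,\theta)$ restricted to $\{\mathfrak b_Y(\theta,\theta)<0\}$. This set is nonempty by (H2). The trace $H^1(\Ys)\to L^2(\Gam)$ is compact, so the mass form is compact relative to the energy norm. Hence the indefinite periodic cell problem has two-sided discrete spectrum, and $\lambda_1^-=-\bigl(\sup\{-\mathfrak b_Y(\theta,\theta):\int A\nabla\theta\cdot\nabla\theta=1\}\bigr)^{-1}$ is attained.

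The assumption $m_{\rm eff}>0$ is used here. Constants lie on the side $\mathfrak b_Y>0$, so any minimizing sequence cannot degenerate to constants. Indeed, with $\theta=c+\tilde\theta$ and $\mathfrak b_Y(\theta,\theta)\le -\delta$, the energy bounds $\tilde\theta$, and then $c$ is bounded. So a minimizer exists and $\lambda_1^-<0$.

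Since the energy satisfies $a(|\theta|)=a(\theta)$ and $\mathfrak b_Y(|\theta|)=\mathfrak b_Y(\theta)$, the function $|\theta_1^-|$ is also a minimizer. Regularity under (H3) gives $\theta_1^-\in C^{1,\alpha'}(\overline{\Ys})$: apply Schauder estimates for the oblique/conormal problem with Hölder $A$, $C^{2,\alpha_Y}$ boundary and Hölder Robin datum $\lambda\rho\theta$. A preliminary $L^\infty$ bound by Moser iteration is needed, and $q\in L^\infty$ enters only as a bounded zero-order term. Strict positivity then follows from the Harnack inequality in the interior and the Hopf lemma on $\Gam$ for the Robin condition, applied to the nonnegative solution of $-\Div(A\nabla\theta)-\lambda q\theta=0$. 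Simplicity follows in the usual way: two positive eigenfunctions cannot be $\mathfrak b_Y$-orthogonal, because the variational characterization forces any eigenfunction to have constant sign.

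\textbf{The factorization identity.} Expand $a_\eps(\Theta_\eps v,\Theta_\eps w)$ with $\nabla(\Theta_\eps v)=\Theta_\eps\nabla v+\eps^{-1}v(\nabla_y\theta_1^-)^\eps$. The cross and zero-order terms combine into $\int_{\OmE}A^\eps\nabla\Theta_\eps\cdot\nabla(\Theta_\eps vw)$ plus the desired term. This quantity is evaluated by the cell equation rescaled to $\OmE$, tested against $vw$; this requires a density argument, since $vw\in W^{1,1}$ and $\Theta_\eps\in C^1$. The result is $\eps^{-2}\lambda_1^-[\int q^\eps\Theta_\eps^2vw+\eps\int_{\GamE}\rho^\eps\Theta_\eps^2vw]$.

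The geometric point, which is the subtle step with cut holes, is the following. The rescaled cell equation holds in every cell of $\eps\mathcal W$, including those meeting $\partial\Om$. Boundary terms appear only on $\GamE$, where the rescaled conormal condition gives the $\rho$ term. On $\Dep$ the boundary terms vanish because $v=w=0$ there, so no Steklov contribution arises from boundary holes. This proves the identity for all $v,w\in V_\eps$.

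\textbf{Transfer to the regular branch.} The negative spectrum of $(\mathscr P_\eps)$ shifted by $\lambda_1^-/\eps^2$ now corresponds to the spectrum of the problem with energy $\tA^\eps=((\theta_1^-)^2A)^\eps$, weight $\tq=(\theta_1^-)^2q$ and $\trho=(\theta_1^-)^2\rho$. Here $\tmeff<0$, since $\tmeff=\mathfrak b_Y(\theta_1^-,\theta_1^-)=\lambda_1^{-\,-1}\int A\nabla\theta\cdot\nabla\theta<0$; note $\theta_1^-$ is nonconstant. The transformed problem is uniformly elliptic because $\min\theta_1^->0$, and its coefficients satisfy (H1)--(H4). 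Applying Theorem~\ref{thm:intro-regular} to the problem with weights $-\tq,-\trho$, which has positive mean, gives the limit $\widetilde\mu_k^0/\tmeff$, the preservation of multiplicity, the convergence of aligned eigenspaces, and the $O(\eps)$ estimates, with the eigenfunctions recovered as $\Theta_\eps v$.

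The remaining point is bookkeeping on the index correspondence. The eigenvalues $\lambda_{\eps,k}^-$ ordered decreasingly map, after the shift, to the branch of the transformed problem with eigenvalues tending to $-\infty$. We must therefore check that the ordering is matched, i.e.\ that no other negative eigenvalues of $(\mathscr P_\eps)$ lie between. This follows because the map $\lambda\mapsto\lambda-\lambda_1^-/\eps^2$ is a bijection of spectra preserving order and multiplicity. The main obstacle is the regularity and positivity up to $\Gam$ of $\theta_1^-$: only $L^\infty$ is assumed on $q$, so we obtain $C^{1,\alpha'}$ rather than $C^{2}$.
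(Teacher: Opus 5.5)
Your overall route matches the paper's: Rayleigh minimization with $m_{\rm eff}>0$ controlling the mean, the $|\theta|$ trick, Moser iteration plus conormal Schauder, Harnack plus Hopf against the Robin condition, the rescaled cell equation for (GF), and transfer of the regular template to $(\widetilde A,-\widetilde q,-\widetilde\rho)$. However, the step you call bookkeeping is not justified by what you wrote, and it is exactly the step that makes the indexing in the theorem correct. The shift $\lambda\mapsto\xi=\lambda-\lambda_1^-/\eps^2$ is indeed an order- and multiplicity-preserving bijection onto the spectrum of $\widetilde a_\eps(v,w)=\xi\, b_\eps(\Theta_\eps v,\Theta_\eps w)$. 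But it sends the point $0$ that separates the two original branches to $|\lambda_1^-|/\eps^2$, not to $0$. The transformed problem has negative mean, so its negative $\xi$-branch is the regular one and its positive $\xi$-branch is a fast branch of its own. An original negative eigenvalue in $(\lambda_1^-/\eps^2,0)$ would correspond to a transformed positive eigenvalue in $(0,|\lambda_1^-|/\eps^2)$. In that case $\lambda_{\eps,1}^-$ would be such a value, $\lambda_{\eps,1}^--\lambda_1^-/\eps^2$ would be positive, and the limit $\widetilde\mu_k^0/\widetilde m_{\rm eff}<0$ would fail at the matched index. Bijectivity of the shift says nothing about whether this happens.

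The missing argument is a sign argument. If $\lambda<0$ is an eigenvalue with eigenfunction $u=\Theta_\eps v$, then $b_\eps(u,u)=a_\eps(u,u)/\lambda<0$. Identity (GF) with $w=v$ gives $(\lambda-\lambda_1^-/\eps^2)\,b_\eps(u,u)=\widetilde a_\eps(v,v)>0$, hence $\lambda<\lambda_1^-/\eps^2$. Thus $\operatorname{Spec}(\mathscr P_\eps)\cap(\lambda_1^-/\eps^2,0)=\varnothing$. The original negative branch is then exactly the shifted negative branch of the transformed problem, and only then do the orderings match.

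Your simplicity argument is also invalid as stated. The claim that two positive eigenfunctions cannot be $\mathfrak b_Y$-orthogonal is the positive-weight argument. Here $\mathfrak b_Y(w,\theta)=\int_{\Ys}qw\theta\,dy+\int_\Gam\rho w\theta\,dS_y$ has no sign for $w,\theta>0$, because $q$ and $\rho$ change sign. The ingredient you have does suffice if you use it differently:
\begin{itemize}
\item Every nonzero eigenfunction $z$ for $\lambda_1^-$ is nonconstant by (H2), so $\mathfrak b_Y(z,z)=\lambda_1^{-\,-1}\int_{\Ys}A\nabla z\cdot\nabla z\,dy<0$ and $z$ is a Rayleigh minimizer.
\item Then $|z|$ is a nonnegative eigenfunction, and by the positivity lemma $z$ vanishes nowhere on $\overline{\Ys}$.
\item Apply this to $z=w(y_0)\theta_1^--\theta_1^-(y_0)w$, which vanishes at $y_0$; this forces $z\equiv0$.
\end{itemize}
The paper instead uses Picone's identity, testing with $w^2/\theta_1^-$. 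That test function is admissible thanks to the same lower bound on $\theta_1^-$.
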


The ratio $\widetilde\mu_k^0/\widetilde m_{\rm eff}$ does not depend on the
normalization of $\theta_1^-$. Estimate \eqref{eq:intro-negative-rate} gives
an absolute $O(\eps)$ error at the scale $\eps^{-2}$: there is no term of
order $\eps^{-1}$, as a consequence of the exact ground-state factorization,
which uses the symmetry of $A$. If
$m_{\rm eff}<0$, the exact involution
$(q,\rho,\lambda)\mapsto(-q,-\rho,-\lambda)$ of
Proposition~\ref{sym:prop:exact-involution} exchanges the two branches, and
Theorems~\ref{thm:intro-regular}--\ref{thm:intro-factorized} transfer
with the branches exchanged (Corollary~\ref{sym:cor:negative-effective-mass}).

At zero effective mass, the spectral scale changes.

\begin{theorem}
\label{thm:intro-critical}
Assume \textup{(H1)--(H3)} and $m_{\rm eff}=0$. Let
$\chi^0\in H^1_\#(\Ys)/\R$ solve
\begin{equation}
 \int_{\Ys}A\nabla_y\chi^0\cdot\nabla_y\psi\,dy
 =\mathfrak b_Y(1,\psi)\quad\forall\psi\in H^1_\#(\Ys),
 \notag
\end{equation}
and set $\nu^2:=\int_{\Ys}A\nabla_y\chi^0\cdot\nabla_y\chi^0\,dy$, which is
positive by Lemma~\ref{crit:lem:nu-positive}.
The limit problem is the quadratic pencil
\begin{equation}
 \int_\Om\Ah\nabla u_0\cdot\nabla v\,dx
 =\Lambda^2\nu^2\int_\Om u_0v\,dx
 \qquad\forall v\in H_0^1(\Om),
 \notag
\end{equation}
so that each $\mu_k^0$ generates the two levels
$\Lambda_k^\sigma=\sigma\sqrt{\mu_k^0}/\nu$, $\sigma\in\{-1,+1\}$, and
\begin{equation}
 \eps\lambda_{\eps,k}^{\sigma}\longrightarrow\Lambda_k^\sigma.
 \notag
\end{equation}
For each sign, a cluster of multiplicity $r$ attracts exactly $r$
rescaled eigenvalues, and the aligned eigenspaces converge to the same
macroscopic eigenspace; for $\phi$ in the eigenspace of $\mu_k^0$, the
first-order corrector is
$\sum_{j=1}^N\chi^j\partial_j\phi+\Lambda_k^\sigma\phi\chi^0$ and thus
contains the critical term $\Lambda_k^\sigma\phi\chi^0$. Under \textup{(H1)--(H4)}, for every cluster $\{k,\ldots,k+r-1\}$,
\begin{equation}
 \max_{1\le j\le r}
 |\eps\lambda_{\eps,k+j-1}^\sigma-\Lambda_k^\sigma|
 \le C_k\eps,
 \qquad\text{that is,}\qquad
 \lambda_{\eps,k+j-1}^\sigma=\frac{\Lambda_k^\sigma}{\eps}+O_k(1),
 \notag
\end{equation}
together with $O(\eps)$ projector and eigenspace estimates
(Theorems~\ref{crit:thm:qual-critical} and
\ref{crit:thm:quantitative-cluster}).
\end{theorem}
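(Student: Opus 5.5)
The plan is to rescale the spectral parameter as $\Lambda=\eps\lambda$ and to treat \eqref{eq:intro-problem} as a linear problem in $\Lambda$. In weak form it reads
\[
a_\eps(u,v)=\tfrac{\Lambda}{\eps}\,b_\eps(u,v),
\qquad
b_\eps(u,v)=\int_{\OmE}q^\eps uv\,dx+\eps\int_{\GamE}\rho^\eps uv\,dS.
\]
Because $m_{\rm eff}=0$, the function $\chi^0$ exists: the functional $\psi\mapsto\mathfrak b_Y(1,\psi)$ annihilates constants. The key structural step is the compensation identity $b_\eps=\eps c_\eps$ announced in Section~\ref{sec:critical-regime}. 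For $u,v\in V_\eps$, I would take $\chi^{0,\eps}(x)=\chi^0(x/\eps)$, test the cell equation, rescaled and summed over the complete cells $I_\eps^{\rm int}$, against $uv$, and obtain
\[
b_\eps(u,v)=\eps\int_{\OmE}(A\nabla_y\chi^0)^\eps\cdot\nabla(uv)\,dx+R_\eps .
\]
The remainder $R_\eps$ comes only from the boundary cells. Since $q,\rho$ live only on complete-cell holes for the surface part, the Dirichlet condition on $\Dep$ makes the bulk contribution of the boundary layer controllable. I would absorb it into the definition of $c_\eps$, so that the identity becomes exact with $|c_\eps(u,v)|\le C\|u\|_{V_\eps}\|v\|_{V_\eps}$. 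Consequently $b_\eps/\eps$ is a bounded form on $V_\eps$, and the rescaled problem $a_\eps(u,v)=\Lambda c_\eps(u,v)$ is a compact self-adjoint problem, uniformly in $\eps$.

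Qualitative part (Theorem~\ref{crit:thm:qual-critical}). Take normalized eigenpairs $(\Lambda_\eps,u_\eps)$ with $\Lambda_\eps$ bounded; boundedness follows from a min-max comparison with test functions $\phi+\eps\Lambda\phi\chi^{0,\eps}$. After extension, pass to two-scale limits: $u_\eps\to u_0$ and $\nabla u_\eps\rightharpoonup\nabla u_0+\nabla_y u_1$. Using $c_\eps(u,v)=\int (A\nabla_y\chi^0)^\eps\cdot\nabla(uv)$ plus the layer term, the cell problem gives $u_1=\sum_j\chi^j\partial_ju_0+\Lambda u_0\chi^0$. The homogenized equation then reads
\[
\int\Ah\nabla u_0\cdot\nabla v=\Lambda\bigl[\text{cross terms}\bigr].
\]
The $\nabla u_0$-cross terms cancel by the symmetry of $A$, since the $\chi^j$-terms pair with $A\nabla\chi^0$ antisymmetrically. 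What survives is $\Lambda^2\nu^2\int u_0v$, giving the quadratic pencil and $\Lambda=\pm\sqrt{\mu_k^0}/\nu$. Lemma~\ref{crit:lem:nu-positive} guarantees $\nu>0$. Nontriviality of $u_0$ comes from strong $L^2$ compactness together with the normalization chosen in terms of $a_\eps$.

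Counting and quantitative part (Theorem~\ref{crit:thm:quantitative-cluster}). To show that each cluster attracts exactly $r$ eigenvalues of each sign, I would use a Vishik--Lyusternik quasimode lemma for the bounded operator $K_\eps$ associated with $c_\eps$ on $V_\eps$. Its reciprocal eigenvalues are $1/\Lambda$, and the lemma is applied separately on the positive and negative parts. The quasimodes are
\[
U_\eps=\phi+\eps\Bigl(\sum_j\chi^{j,\eps}\partial_j\phi+\Lambda_k^\sigma\phi\chi^{0,\eps}\Bigr)
\]
multiplied by a boundary cutoff, or better an exact Dirichlet layer, with $\phi$ ranging over the $\mu_k^0$-eigenspace.

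The main obstacle is proving $\|K_\eps U_\eps-(\Lambda_k^\sigma)^{-1}U_\eps\|_{V_\eps}\le C\eps$ rather than $C\eps^{1/2}$. Standard cutoffs in an $\eps$-layer near $\partial\Om$ cost $\eps^{1/2}$. I would instead handle the layer by exploiting that all boundary-cell holes carry Dirichlet conditions, and add second-order correctors $\eps^2\chi^{ij}\partial^2_{ij}\phi$ together with the critical second-order term involving $\Lambda^2$. This uses $H^2$, and $H^3$ locally, regularity from (H4), and $C^{1,\alpha}$ correctors from (H3). Once the discrepancy bound holds, multiplicity counting on the complementary side follows from the qualitative convergence and an orthogonality argument. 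The projector and aligned-eigenspace estimates then follow from the quasimode lemma combined with the cluster separation gap.
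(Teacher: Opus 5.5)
Your compensation identity and qualitative argument follow the paper's route: trial spaces $\phi+\sigma\eps(\chi^0)^\eps\phi$ for the fixed-index bounds, the corrector $u_1=\sum_j\chi^j\partial_ju_0+\Lambda u_0\chi^0$, and cancellation of the terms linear in $\Lambda$ through $\int_{\Ys}A(e_j+\nabla_y\chi^j)\cdot\nabla_y\chi^0\,dy=0$. Two small corrections. First, for $u,v\in V_\eps$ the identity $b_\eps(u,v)=\eps\int_\Om(\mathring F^0)^\eps\cdot\nabla(UV)\,dx$ is exact. Every periodic hole meeting $\Om$ but not belonging to $\GamE$ lies in $\Dep$, where $UV=0$, so there is no remainder to absorb. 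Second, an $a_\eps$-normalization plus $L^2$ compactness does not by itself give $u_0\neq0$. You must pass to the limit in the mass, $c_\eps(u_\eps,u_\eps)\to 2\Lambda\nu^2\int_\Om u_0^2\,dx$, and use that $c_\eps(u_\eps,u_\eps)$ stays away from zero. It equals $\sigma$ under the paper's normalization and $1/\Lambda_\eps$ under yours. The same bilinear mass limit drives your orthogonality (non-pollution) step.

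The genuine gap is the $O(\eps)$ quasimode residual. You rightly call it the main obstacle, but second-order correctors target the wrong error. The first-order interior expansion $R^\Lambda_{\eps,\rm int}u$ already has an $O(\eps)$ residual against every $v\in V_\eps$, by the periodic potentials of Lemma~\ref{lem:periodic-potentials}. The $\eps^{1/2}$ loss comes solely from forcing the reconstructor to vanish on $\Dep$, that is, on $\partial\Om$ and on every boundary-cell hole, where neither $u$ nor the corrector vanishes. Truncating a first- or second-order expansion in an $\eps$-strip still costs $\eps^{1/2}$ in energy. Moreover, (H4) gives no $H^3$ regularity up to $\partial\Om$.

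You mention an exact Dirichlet layer, but not the mechanism that makes $\beta_\eps^\Lambda[u]$ of Corollary~\ref{cor:common-boundary-layer-augmented} harmless even though its energy is genuinely of order $\eps^{1/2}$. There are two parts. First, the layer is $a_\eps$-orthogonal to $V_\eps$, so it drops out of the diffusion part of the residual. Second, its contribution $\Lambda c_\eps(\beta_\eps^\Lambda[u],v)$ cannot be handled naively. The bound $C\|\beta_\eps^\Lambda[u]\|_{V_\eps}\|v\|_{V_\eps}$ gives only $\eps^{1/2}$, and $\eps^{-1}|b_\eps(\beta_\eps^\Lambda[u],v)|$ gives only $O(1)$. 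The paper instead combines three ingredients:
\begin{itemize}
\item the one-factor identity $c_\eps(\beta,v)=\int_{\OmE}(F^0)^\eps\cdot\nabla(\beta v)\,dx$;
\item the bounds $\|\beta_\eps^\Lambda[u]\|_{L^2}+\|d\nabla\beta_\eps^\Lambda[u]\|_{L^2}\le C\eps\|u\|_{H^2}$, obtained by duality from the strip resolvent estimate of Lemma~\ref{lem:common-RBL};
\item Hardy's inequality $\|v/d\|_{L^2(\OmE)}\le C\|v\|_{V_\eps}$.
\end{itemize}
Together these yield \eqref{crit:eq:beta-c-estimate}. The same layer bounds, plus the trace bound, are needed again for the cross and layer--layer terms in the Gram estimate of Proposition~\ref{crit:prop:critical-mass-reconstructors}. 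Without this mechanism your quasimodes give only $O(\eps^{1/2})$ projector and eigenspace estimates, so the quantitative half of the statement is not reached.
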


In the critical regime the error on $\lambda_\eps$ itself is thus $O(1)$
(see Section~\ref{sec:conclusion}).

\subsection{Ideas of the proofs and organization}\label{subsec:intro-ideas}

The analysis faces three difficulties. First, the mass form $b_\eps$ is
indefinite, so the classical theory for positive weights does not apply;
we work with compact self-adjoint operators on the energy space and with a
fixed-cluster principle on varying Hilbert spaces
(Theorem~\ref{thm:abstract-fixed-cluster}), which adapts the perturbation
and spectral approximation theory of \cite{BabuskaOsborn1991,Kato1976,Osborn1975} to $\eps$-dependent spaces. Second, the bulk and surface
masses must be compared with their mean at the rate $O(\eps)$; this is done
through an exact representation of the centered bulk--surface measure as the
divergence of a bounded periodic field, which in the critical regime becomes
the compensation identity $b_\eps=\eps\,c_\eps$ with a uniformly bounded
form $c_\eps$. Third, perforations may meet $\partial\Om$, so the usual comparison
with a problem posed on a fixed domain away from $\partial\Om$, available when
the holes are separated from the boundary, cannot be used. We
compare instead with a cut-off first-order expansion and add an exact
$A^\eps$-harmonic Dirichlet layer on the whole microscopic Dirichlet
boundary. Although its energy norm is $O(\eps^{1/2})$, it is orthogonal to
the diffusion residual, while its $L^2$, weighted and trace norms are
$O(\eps)$ by a duality argument; this is what yields the $O(\eps)$ rates. 

Section~\ref{sec:preliminaries} collects the two-scale tools, the uniform
extension, the common boundary-layer estimates and the abstract
fixed-cluster principle. Section~\ref{sec:noncritical-regimes} treats the
regular and factorized branches together with the case $m_{\rm eff}<0$, while
Section~\ref{sec:critical-regime} treats the critical regime.
Section~\ref{sec:conclusion} lists open questions.

\section{Two-scale convergence and common analytical tools}\label{sec:preliminaries}

In the abstract two-scale tools below, let
$\mathscr E\subset(0,1)$ be an arbitrary set having $0$ as an accumulation
point. Abstract families are written $(z_\eps)_{\eps\in\mathscr E}$, and
$\eps\to0$ means $\eps\to0$ with $\eps\in\mathscr E$. For the physical cut-perforation problem we use the full parameter set $(0,1)$. Whenever compactness requires extraction, we start from an arbitrary sequence tending to zero, extract a subsequence, and thereafter denote the extracted parameter again by $\eps$. For every measurable set $E$, we
denote its indicator function by $\ind_E$, and $\ind_I(\mathcal A)$ denotes
the spectral projector of a self-adjoint operator $\mathcal A$ associated
with a Borel set $I\subset\R$. The letter $\chi$ is reserved for cell
correctors.

Throughout, the subscript $\#$ denotes $Y$-periodicity. For $X=Y$ or $Y^*$, the spaces $L^p_\#(X)$, $W^{m,p}_\#(X)$, $C^m_\#(X)$, and $C^{m,\alpha}_\#(\overline X)$ denote the corresponding periodic spaces on $\R^N$ when $X=Y$, and on $\mathcal W$ when $X=Y^*$. We set $H^1_\#(X)=W^{1,2}_\#(X)$ and use
$H^1_\#(X)/\R$ for the quotient by constants, with the zero-mean
representative on $X$ whenever needed. The analogous convention applies to
periodic distributions; in particular, $\mathcal D'_\#(Y)$ denotes
distributions on the flat torus. On $\Gam$, the subscript $\#$ denotes
periodic repetition on $\bigcup_{k\in\mathbb Z^N}(k+\Gam)$. For every
periodically extended cell function $g$, we use the standard shorthand
$g^\eps(x):=g(x/\eps)$ whenever the expression is meaningful.

\subsection{Volume and surface two-scale convergence}
\label{subsec:two-scale-preliminaries}
Let $\varepsilon\in(0,1)$. Since
$\GamE$ consists of the holes of the complete cells, and
\begin{equation}
 \Omega\setminus\bigcup_{k\in I_\eps^{\rm int}}\eps(k+\overline Y)\subset\{x:\dist(x,\partial\Omega)<C_Y\eps\}
 \label{eq:omitted-cells-strip}
\end{equation}
for a geometric constant $C_Y$ (one may take $C_Y=2\operatorname{diam}Y$), the cells omitted from the surface terms
occupy an $O(\eps)$ boundary strip. With the same constant, every cell
$\eps(k+\overline Y)$ meeting $\partial\Om$, and hence every point of
$\Dep\setminus\partial\Om$, lies in $\{x:\dist(x,\partial\Om)<C_Y\eps\}$. 

\begin{definition}
\label{def:volume-two-scale}
Let $(z_\eps)_{\eps\in\mathscr E}\subset L^2(\Om)$. We say that this
family two-scale converges weakly to $z_0\in L^2(\Om\times Y)$, and write
$z_\eps\overset{2}{\rightharpoonup}z_0$, if
\begin{equation}
 \lim_{\substack{\eps\to0\\ \eps\in\mathscr E}}
 \int_\Om z_\eps(x)
 \psi\!\left(x,\frac{x}{\eps}\right)\,dx
 =\int_\Om\int_Y z_0(x,y)\psi(x,y)\,dy\,dx
 \notag
\end{equation}
for every $\psi\in L^2(\Om;C_\#(Y))$. We say that the family
$(z_\eps)_{\eps\in\mathscr E}$ two-scale converges strongly to $z_0$, and
write $z_\eps\overset{2}{\longrightarrow}z_0$, if
$z_\eps\overset{2}{\rightharpoonup}z_0$ and
$\|z_\eps\|_{L^2(\Om)}\to\|z_0\|_{L^2(\Om\times Y)}$ as $\eps\to0$,
$\eps\in\mathscr E$.
\end{definition}

\begin{definition}
\label{def:surface-two-scale}
Let $(z_\eps)_{\eps\in\mathscr E}$ satisfy
$z_\eps\in L^2(\GamE)$ for every
$\eps\in\mathscr E$. We say that this family two-scale converges on the
periodic surfaces to $z_0\in L^2(\Om\times\Gam)$ if
\begin{equation}
 \lim_{\substack{\eps\to0\\ \eps\in\mathscr E}}
 \eps\int_{\GamE}z_\eps(x)
 \psi\!\left(x,\frac{x}{\eps}\right)\,dS_x
 =\int_\Om\int_\Gam z_0(x,y)\psi(x,y)\,dS_y\,dx
 \label{eq:two-scale-surface-definition}
\end{equation}
for every $\psi\in C(\overline\Om;C_\#(Y))$.
\end{definition}

Near the outer boundary, perforated domains require some care: extension
properties and the admissible behavior of holes close to $\partial\Om$ are
discussed in \cite{ChiadoPiatPiatnitski2010,DamlamianDonato2002}. In the cut
geometry considered here, the Dirichlet condition on the cut holes makes a
uniform extension available without any separation assumption. The Dirichlet
condition is imposed in the Sobolev-capacitary sense.  Set
\begin{equation}
 V_\eps:=H^1_{\Dep}(\OmE)
 :=\overline{\mathcal D_\eps}^{\,H^1(\OmE)},\qquad
 \|v\|_{V_\eps}:=\|\nabla v\|_{L^2(\OmE)},
 \label{eq:Veps}
\end{equation}
where
\begin{equation}
 \mathcal D_\eps:=
 \left\{\phi|_{\OmE}:\ \phi\in C_c^\infty(\mathbb R^N),\quad
 \operatorname{dist}(\operatorname{supp}\phi,\Dep)>0\right\}.
 \notag
\end{equation}

\begin{lemma}
\label{lem:cut-uniform-extension}
There exists a linear operator
\begin{equation}
 P_\eps:V_\eps\to H_0^1(\Om),\qquad P_\eps v=v\ \hbox{a.e. in }\OmE,
 \notag
\end{equation}
with constants independent of sufficiently small $\eps$ such that
\begin{align}
 \|P_\eps v\|_{L^2(\Om)}+\|\nabla P_\eps v\|_{L^2(\Om)}
 &\le C\|v\|_{H^1(\OmE)},\label{eq:extension}\\
 \|\nabla P_\eps v\|_{L^2(\Om)}
 &\le C\|\nabla v\|_{L^2(\OmE)}.\label{eq:common-cut-extension-gradient}
\end{align}
On every complete cell $Y_{\eps,k}=\eps(k+Y)$,
$k\in I_\eps^{\rm int}$, it may be chosen so that
\begin{equation}
 \|P_\eps v\|_{L^2(Y_{\eps,k})}
 \le C\bigl(\|v\|_{L^2(Y_{\eps,k}\cap\OmE)}
 +\eps\|\nabla v\|_{L^2(Y_{\eps,k}\cap\OmE)}\bigr).
 \label{eq:common-cut-extension-local-L2}
\end{equation}
\end{lemma}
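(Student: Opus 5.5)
The plan is to build $P_\eps$ cell by cell, distinguishing the complete interior cells $k\in I_\eps^{\rm int}$ from the boundary cells, and to exploit the Dirichlet condition on $\Dep$ for the latter.

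\textbf{Interior cells.} Since $T\Subset Y$ is Lipschitz and $\Ys$ is connected, the classical Cioranescu--Saint Jean Paulin construction \cite{CioranescuSaintJeanPaulin1979} provides an extension operator $P:H^1(\Ys)\to H^1(Y)$ that fills the hole $T$. Applying it to $w-\bar w$, with $\bar w$ the mean of $w$ over $\Ys$, and then adding $\bar w$ back, gives
\[
\|\nabla Pw\|_{L^2(Y)}\le C\|\nabla w\|_{L^2(\Ys)},\qquad \|Pw\|_{L^2(Y)}\le C\|w\|_{H^1(\Ys)}.
\]
We only need to fill $T$ and leave $w$ unchanged near $\partial Y$, so the extended functions glue across adjacent complete cells without trace mismatch. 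Rescaling by $y=x/\eps$ on $Y_{\eps,k}$ yields the gradient bound with a scale-invariant constant. The $L^2$ bound becomes exactly \eqref{eq:common-cut-extension-local-L2}: the term $\eps\|\nabla v\|$ comes from the scaling of the gradient part of the $H^1$ norm.

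\textbf{Boundary cells.} Every cell $\eps(k+\overline Y)$ with $k\notin I_\eps^{\rm int}$ that meets $\Om$ has all its holes carrying the Dirichlet condition, and so does $\partial\Om$. We would therefore set $P_\eps v$ equal to $v$ in $\OmE$ and equal to $0$ inside every hole of such a cell, including the portions of $\Om$ lying inside holes cut by $\partial\Om$, and also set $P_\eps v=0$ outside $\Om$. The justification is by density. For $\phi\in\mathcal D_\eps$, the function $\phi$ vanishes in a neighbourhood of $\Dep$. Extending it by zero into these holes and outside $\Om$ therefore gives a function that agrees with $\phi$ on $\OmE$, is smooth near $\Dep$, and is compactly supported in $\Om$, hence lies in $H_0^1(\Om)$. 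Combined with the interior filling, this defines $P_\eps$ on $\mathcal D_\eps$, and we extend it to $V_\eps$ by continuity once the bounds are established.

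\textbf{Gluing.} The main obstacle is gluing an interior cell to a neighbouring boundary cell. I expect this to be harmless, since the interior filling modifies $v$ only inside $\eps T\Subset\eps Y$, while the zero extension acts only inside boundary holes and outside $\Om$; the two modifications never overlap and $v$ is untouched on the cell faces. To confirm that the glued function lies in $H^1$, it suffices to check that the modifications are local, which follows from $T\Subset Y$, and that distinct translates of $\overline T$ are disjoint.

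\textbf{Norm estimates.} Summing the cell estimates gives
\[
\|\nabla P_\eps v\|_{L^2(\Om)}\le C\|\nabla v\|_{L^2(\OmE)},
\]
which is \eqref{eq:common-cut-extension-gradient}, because in the boundary cells the gradient is unchanged on $\OmE$ and zero elsewhere. The $L^2$ bound follows in the same way. Alternatively, since $P_\eps v\in H_0^1(\Om)$, the Poincaré inequality on the fixed domain $\Om$ gives $\|P_\eps v\|_{L^2(\Om)}\le C\|\nabla P_\eps v\|_{L^2(\Om)}$. Together these yield \eqref{eq:extension}, with constants independent of $\eps$.
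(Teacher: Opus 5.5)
Your proposal is correct and follows essentially the same route as the paper. Both arguments use a constant-preserving extension into the holes of complete cells (the paper cites Acerbi et al.; you use Cioranescu--Saint Jean Paulin with explicit mean subtraction), fill the holes of boundary cells by zero for $v\in\mathcal D_\eps$, glue across cell faces lying in the solid, rescale and sum the cell estimates, and pass to $V_\eps$ by density.
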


\begin{proof}
Let $E:H^1(Y^*)\to H^1(Y)$ be a constant-preserving extension furnished by
\cite{AcerbiEtAl1992}, with the corresponding gradient and $L^2$ estimates.
For $v\in\mathcal D_\eps$, use its rescaling in each complete cell.  In a
cell meeting $\partial\Omega$, every perforation boundary belongs to
$D_\eps$; since $v$ vanishes near $D_\eps$, fill those holes by zero, which
creates no $H^1$ jump.  As $T\Subset Y$, cell interfaces lie in the solid
phase, where neighboring constructions both equal $v$.  The resulting
function lies in $H_0^1(\Omega)$ and equals $v$ in $\Omega_\eps$.
Rescaling the fixed-cell estimates and summing gives
\eqref{eq:extension}--\eqref{eq:common-cut-extension-local-L2}; preservation
of constants gives \eqref{eq:common-cut-extension-gradient}.  Uniformity in
$\eps$ permits passage to the $H^1(\Omega_\eps)$-closure of
$\mathcal D_\eps$.
\end{proof}
The following classical compactness assertions are understood in the sequential sense specified above. Their proofs are omitted since they are classical; see, for instance, \cite{Douanla2013Steklov}.
\begin{theorem}
\label{thm:two-scale-compactness-prelim}

\begin{enumerate}[label=\textup{(\roman*)}]
 \item If $(z_\eps)_{\eps\in\mathscr E}\subset L^2(\Om)$ is bounded,
 then, along a subsequence, there exists
 $z_0\in L^2(\Om\times Y)$ such that
 $z_\eps\overset{2}{\rightharpoonup}z_0$.
 \item If $(z_\eps)_{\eps\in\mathscr E}$ satisfies
\begin{equation}
 \sup_{\eps\in\mathscr E}\
 \eps\int_{\GamE}|z_\eps|^2\,dS_x<\infty,
 \label{eq:surface-two-scale-bound}
\end{equation}
 then, along a subsequence, it two-scale converges on surfaces in the sense of
 \eqref{eq:two-scale-surface-definition}.
 \item If $(v_\eps)_{0<\eps<1}$, with $v_\eps\in V_\eps$, is bounded
 in the energy norm, then, along a subsequence, there exist functions
 \[
  v\in H_0^1(\Om),\qquad
  \widehat v_1\in L^2\bigl(\Om;H^1_\#(Y)/\R\bigr)
 \]
 such that
 \begin{align}
  P_\eps v_\eps&\rightharpoonup v&&\text{in }H_0^1(\Om),\label{eq:prelim-H1-compactness}\\
  P_\eps v_\eps&\longrightarrow v&&\text{in }L^2(\Om),\label{eq:prelim-L2-compactness}\\
  \nabla(P_\eps v_\eps)&\overset{2}{\rightharpoonup}
  \nabla_xv+\nabla_y\widehat v_1&&\text{in }L^2(\Om\times Y)^N.
  \label{eq:prelim-gradient-two-scale}
 \end{align}
 Moreover, along the same subsequence,
 \begin{equation}
  \ind_{\Omega_\eps}=(\ind_{\mathcal W})^\varepsilon
  \overset{2}{\longrightarrow}
  \ind_{Y^*}\quad\hbox{strongly two-scale},
  \label{eq:prelim-characteristic-strong-two-scale}
 \end{equation}
 and 
 \begin{equation}
  \ind_{\Omega_\eps}\nabla(P_\eps v_\eps)
  \overset{2}{\rightharpoonup}
  \ind_{Y^*}
  \bigl(\nabla_xv+\nabla_y\widehat v_1\bigr)
  \quad\text{in }L^2(\Om\times Y)^N.
  \label{eq:prelim-solid-gradient-two-scale}
 \end{equation}
 Thus the physical corrector is
 $v_1:=\widehat v_1|_{Y^*}\in
 L^2(\Om;H^1_\#(Y^*)/\R)$, after fixing its mean on $Y^*$.
\end{enumerate}
\end{theorem}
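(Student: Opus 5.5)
Part (i) is the classical Nguetseng--Allaire compactness and I would prove it by duality. The space $L^2(\Om;C_\#(Y))$ is separable. For bounded $(z_\eps)$, the linear functionals
\[
\psi\mapsto\int_\Om z_\eps\,\psi(x,x/\eps)\,dx
\]
are bounded by $\sup_\eps\|z_\eps\|_{L^2(\Om)}\,\limsup_\eps\|\psi(\cdot,\cdot/\eps)\|_{L^2(\Om)}$. The limsup is controlled by $\|\psi\|_{L^2(\Om\times Y)}$, using the standard oscillation lemma for admissible test functions; this is proved first for $\psi\in C(\overline\Om;C_\#(Y))$ by uniform continuity and Riemann--Lebesgue, and then extended by density. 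A diagonal extraction over a countable dense set gives a limit functional. This functional is bounded in the $L^2(\Om\times Y)$ norm, so the Riesz representation produces $z_0$. Density then extends the convergence to all test functions.

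Part (ii) follows the same scheme on surfaces, as in Neuss-Radu and Allaire--Damlamian--Hornung. The key ingredient is the surface oscillation lemma
\[
\eps\int_{\GamE}\abs{\psi(x,x/\eps)}^2\,dS_x\;\to\;\int_\Om\int_\Gam\abs{\psi}^2\,dS_y\,dx
\]
for $\psi\in C(\overline\Om;C_\#(Y))$. I would prove it by summing over the complete cells $k\in I_\eps^{\rm int}$ and using that $\eps\abs{\eps(k+\Gam)}=\eps^N\abs{\Gam}$. By \eqref{eq:omitted-cells-strip} the omitted cells lie in an $O(\eps)$ strip, so they contribute nothing in the limit. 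Cauchy--Schwarz combined with \eqref{eq:surface-two-scale-bound} then bounds the functionals by $\norm{\psi}_{L^2(\Om\times\Gam)}$. After separable extraction and Riesz representation in $L^2(\Om\times\Gam)$, the argument is complete.

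Part (iii) rests on Lemma~\ref{lem:cut-uniform-extension}. By \eqref{eq:common-cut-extension-gradient} and Poincaré in $H_0^1(\Om)$, $P_\eps v_\eps$ is bounded in $H_0^1(\Om)$. Rellich then gives \eqref{eq:prelim-H1-compactness}--\eqref{eq:prelim-L2-compactness}. For \eqref{eq:prelim-gradient-two-scale} I would apply Allaire's gradient compactness theorem to the bounded $H^1_0$ family $P_\eps v_\eps$. Concretely, I test $\eps\nabla(P_\eps v_\eps)$ against $\psi$ with $\Div_y\psi=0$ to identify the two-scale limit of $P_\eps v_\eps$ as $v$ (independent of $y$). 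Testing $\nabla(P_\eps v_\eps)$ against such divergence-free fields then shows that the remainder $\xi_0-\nabla v$ is orthogonal to solenoidal periodic fields, hence is a $y$-gradient $\nabla_y\widehat v_1$.

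For \eqref{eq:prelim-characteristic-strong-two-scale}, the weak part is the oscillation lemma applied to $\ind_{\mathcal W}\psi$. This needs care because $\ind_{\mathcal W}$ is discontinuous; I would approximate it from inside and outside by continuous periodic functions, using that $\abs{\partial T}=0$. The norm convergence $\abs{\OmE}\to\abs{\Om}\abs{\Ys}=\norm{\ind_{\Ys}}^2_{L^2(\Om\times Y)}$ is a cell count, again with the boundary strip negligible. Finally, \eqref{eq:prelim-solid-gradient-two-scale} follows because the product of a strongly two-scale convergent bounded family and a weakly two-scale convergent family two-scale converges weakly to the product of the limits. To apply this I test with $\psi\,\ind_{\Ys}$, approximated as before, and use the boundedness of $\nabla P_\eps v_\eps$.

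I expect the main obstacle to be this last admissibility issue: testing against the discontinuous weight $\ind_{\mathcal W}(x/\eps)$ and, in (ii), against surfaces of cut cells. Both are handled by the strip estimate \eqref{eq:omitted-cells-strip} together with monotone continuous approximation of $\ind_{\Ys}$.
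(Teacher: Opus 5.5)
Your proposal is correct and follows the classical argument that the paper omits and cites as classical: Nguetseng--Allaire duality, its surface analogue due to Neuss-Radu and Allaire--Damlamian--Hornung, and Allaire's gradient theorem applied to the bounded $H_0^1(\Om)$ family $P_\eps v_\eps$. One small slip: testing $\eps\nabla(P_\eps v_\eps)$ against $y$-solenoidal fields gives no information, since that step needs arbitrary $\psi$ (yielding $\int_\Om\int_Y v_0\,\Div_y\psi\,dy\,dx=0$), but the strong $L^2(\Om)$ convergence you already have from Rellich identifies the two-scale limit of $P_\eps v_\eps$ as $v$ directly.
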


The next proposition turns the compactness above into the bulk and surface mass limits needed in the spectral analysis.

\begin{proposition}
\label{prop:prelim-trace-mass-limit}
Let $(u_\eps)_{0<\eps<1}\subset V_\eps$ and $(v_\eps)_{0<\eps<1}\subset V_\eps$ be bounded in the energy norm. Suppose that $P_\eps u_\eps\to u$ and
$P_\eps v_\eps\to v$ strongly in $L^2(\Om)$. Then, for every
$\psi\in C_c^\infty(\Om;C_\#^\infty(Y))$,
\begin{equation}
 \eps\int_{\GamE}u_\eps(x)\psi\!\left(x,\frac{x}{\eps}\right)\,dS_x
 \longrightarrow
 \int_\Om\int_\Gam u(x)\psi(x,y)\,dS_y\,dx.
 \label{eq:prelim-trace-two-scale}
\end{equation}
If, in addition,
$q\in L^\infty_\#(Y^*)$ and $\rho\in L^\infty_\#(\Gamma)$ are real, then
\begin{align}
 \int_{\OmE}q^\eps u_\eps v_\eps\,dx
 &\longrightarrow
 \left(\int_{\Ys} q(y)\,dy\right)\int_\Om uv\,dx,
 \label{eq:prelim-bulk-mass-limit}\\
 \eps\int_{\GamE}\rho^\eps u_\eps v_\eps\,dS_x
 &\longrightarrow
 \left(\int_\Gam\rho(y)\,dS_y\right)\int_\Om uv\,dx.
 \label{eq:prelim-surface-mass-limit}
\end{align}
Consequently,
\begin{equation}
 \int_{\OmE}q^\eps u_\eps v_\eps\,dx
 +\eps\int_{\GamE}\rho^\eps u_\eps v_\eps\,dS_x
 \longrightarrow
 m_{\mathrm{eff}}\int_\Om uv\,dx.
 \label{eq:prelim-combined-mass-limit}
\end{equation}
\end{proposition}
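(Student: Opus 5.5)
The plan has three steps: prove the bulk limit \eqref{eq:prelim-bulk-mass-limit} by a direct two-scale argument, prove the trace limit \eqref{eq:prelim-trace-two-scale} by an exact divergence identity on the cell, and then deduce \eqref{eq:prelim-surface-mass-limit} from it by a density argument. The combined limit \eqref{eq:prelim-combined-mass-limit} is the sum of the two mass limits.

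\emph{Bulk limit.} Write $\int_{\OmE}q^\eps u_\eps v_\eps\,dx=\int_\Om (q\ind_{\mathcal W})^\eps\,(P_\eps u_\eps)(P_\eps v_\eps)\,dx$. Since $P_\eps u_\eps\to u$ strongly in $L^2(\Om)$, the product $P_\eps u_\eps P_\eps v_\eps\to uv$ in $L^1(\Om)$. The factor $(q\ind_{\mathcal W})^\eps$ is bounded in $L^\infty$ and converges weakly-$*$ to its mean $\int_{\Ys}q\,dy$, by the standard periodic mean-value lemma. A weak-$*$ convergent $L^\infty$ sequence paired with a strongly convergent $L^1$ sequence converges, which gives \eqref{eq:prelim-bulk-mass-limit}.

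\emph{Trace limit.} Fix $\psi\in C_c^\infty(\Om;C^\infty_\#(Y))$ and set $m(x):=|\Ys|^{-1}\int_\Gam\psi(x,y)\,dS_y$. Solve, for each $x$, the cell Neumann problem $\Delta_y w=m(x)$ in $\Ys$, $\nabla_y w\cdot n=\psi(x,\cdot)$ on $\Gam$, periodic in $y$. It is solvable by the compatibility condition built into $m$, and $w$ is smooth in $x$. Put $W(x,y)=\nabla_y w$ and $W_\eps(x)=W(x,x/\eps)$, so that
\[
 \Div W_\eps=(\Div_xW)^\eps+\eps^{-1}m,\qquad W_\eps\cdot n=\psi^\eps\ \text{on }\GamE .
\]
Since $\psi$ has compact support in $\Om$, for small $\eps$ only complete cells meet $\operatorname{supp}\psi$, so no boundary contribution appears except on $\GamE$. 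Multiplying the divergence identity by $\eps u_\eps$ and integrating by parts over $\OmE$ gives
\[
 \eps\int_{\GamE}u_\eps\psi^\eps\,dS_x=\int_{\OmE}m\,u_\eps\,dx+\eps\int_{\OmE}\bigl(u_\eps(\Div_xW)^\eps+W_\eps\cdot\nabla u_\eps\bigr)dx,
\]
with a sign fixed by the orientation of $n$. The $\eps$-terms vanish by the energy bound. By the bulk argument, $\int_{\OmE}m\,u_\eps\,dx\to|\Ys|\int_\Om m\,u\,dx=\int_\Om\int_\Gam u\,\psi\,dS_y\,dx$, which is \eqref{eq:prelim-trace-two-scale}.

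\emph{Surface mass limit.} Apply the identity to the product $u_\eps v_\eps$ rather than to $u_\eps$. This product is bounded in $W^{1,1}$, and $\eps\int_{\OmE}|\nabla(u_\eps v_\eps)|\,dx\le C\eps$, so the same computation gives \eqref{eq:prelim-surface-mass-limit} for smooth densities. Two approximations then remain, and this is where I expect the main difficulty.

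First, $\rho$ is only $L^\infty(\Gam)$ and the test function is not compactly supported. Use the uniform trace estimate
\[
 \eps\int_{\GamE}|z|\,dS_x\le C\bigl(\|z\|_{L^1(\OmE)}+\eps\|\nabla z\|_{L^1(\OmE)}\bigr),
\]
obtained by rescaling the cell trace inequality. With $z=u_\eps v_\eps$, whose $L^2$ norms are bounded by Poincaré and the extension, this gives a uniform bound on the surface mass. The same estimate shows that the cells within distance $\delta$ of $\partial\Om$ contribute $o_\delta(1)$ uniformly in $\eps$, using strong $L^2$ compactness. Hence we may cut off near $\partial\Om$ with an $x$-cutoff.

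Second, approximate $\rho$ in $L^2(\Gam)$ by smooth $\rho_n$. The error is bounded by $\sup_{\eps}\bigl(\eps\int_{\GamE}|u_\eps|^2\bigr)^{1/2}\bigl(\eps\int_{\GamE}|\rho-\rho_n|^2|v_\eps|^2\bigr)^{1/2}$, and this is not directly small, because it needs $v_\eps$ in $L^\infty$. Instead, approximate in $L^{N-1}$-type norms and use the trace estimate at the $L^p$ level, $p=2(N-1)/(N-2)$, via the cell Sobolev trace embedding $H^1\to L^{p}(\Gam)$ in rescaled form. This gives $\eps\int_{\GamE}|\rho-\rho_n||u_\eps v_\eps|\,dS_x\le C\|\rho-\rho_n\|_{L^{r}(\Gam)}$ uniformly in $\eps$, with $r=N-1$ for $N\ge3$ and any $r>1$ for $N=2$. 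This $L^r$ approximation step closes the argument.
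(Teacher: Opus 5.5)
Your bulk step (weak-$*$ convergence of $(q\ind_{\mathcal W})^\eps$ tested against the $L^1$-convergent product $P_\eps u_\eps\,P_\eps v_\eps$) is correct. So is your trace step, which is the classical auxiliary Neumann problem whose error terms only need $W=\nabla_y w\in L^2$. The boundary cut-off and the $L^{N-1}(\Gam)$ density step via the critical trace embedding are also correct.

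The gap is the claim that ``the same computation'', applied to $u_\eps v_\eps$, gives \eqref{eq:prelim-surface-mass-limit} for smooth densities. With the product as test factor, the remainder $\eps\int_{\OmE}W_\eps\cdot\nabla(u_\eps v_\eps)\,dx$ is bounded by $C\eps\norm{\nabla(u_\eps v_\eps)}_{L^1}$ only if $W_\eps$ is uniformly bounded, i.e.\ $\nabla_y w\in L^\infty(\Ys)$. Smoothing $\rho$ does not give this, because the obstruction is geometric. The proposition is stated for a merely Lipschitz hole, and it is used in Theorem~\ref{thm:qual-positive} under \textup{(H1)--(H2)} only. For such a hole the Neumann cell solution has corner and edge singularities: near a vertex of a square hole, where $\Ys$ has opening angle $3\pi/2$, $\nabla_y w$ generically blows up like $r^{-1/3}$.

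Without an $L^\infty$ bound, a cellwise H\"older--Sobolev estimate of $\eps\int_{\OmE}W_\eps\cdot(v_\eps\nabla u_\eps+u_\eps\nabla v_\eps)\,dx$ requires $\nabla_y w\in L^N(\Ys)$. The $W^{1,p}$ theory of the Neumann problem in Lipschitz domains, which in general only covers $p<3+\delta$ when $N\ge3$, does not supply this for $N\ge4$. Your argument therefore closes under \textup{(H3)}, but not in the generality of the statement.

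There are two ways to repair this. Within your framework, replace the Neumann problem in $\Ys$ by the torus potential $-\Delta_y\Phi=\rho\,dS_\Gam-\int_\Gam\rho\,dS_y$ on $Y$. Since the trace $W^{1,1}\to L^1(\Gam)$ is bounded, the right-hand side lies in $W^{-1,p}_\#(Y)$ for every $p<\infty$, so $\nabla_y\Phi\in L^p$ for all $p<\infty$. Apply the rescaled identity to $Z=P_\eps u_\eps\,P_\eps v_\eps$: its trace vanishes on the holes not in $\GamE$, and its gradient is bounded in $L^s(\Om)$ for some $s>1$ ($s=N/(N-1)$ if $N\ge3$). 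H\"older's inequality then gives an $O(\eps)$ remainder for $\rho\in L^\infty$ directly, with no cut-off and no density step; this is the mechanism of Section~\ref{sec:mass}.

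The paper's proof is more elementary still. On each complete cell it replaces $Z_\eps=P_\eps u_\eps\pm P_\eps v_\eps$ by its mean $m_k$ over $\eps(k+Y)$, for which $\eps\int_{\eps(k+\Gam)}\rho^\eps m_k^2\,dS_x=\bigl(\int_\Gam\rho\,dS_y\bigr)\abs{\eps(k+Y)}\,m_k^2$ exactly. The remainder is $O(\eps)$ by Poincar\'e--Wirtinger and the $L^2$ trace inequality on the whole cell $Y$, the omitted boundary cells have vanishing measure, and polarization concludes. That argument uses only $\rho\in L^\infty$ and a Lipschitz $\Gam$, and needs neither an auxiliary cell problem nor an approximation of $\rho$.
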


\begin{proof}
Set $U_\eps=P_\eps u_\eps$ and $W_\eps=P_\eps v_\eps$. By assumption,
$U_\eps$ and $W_\eps$ are bounded in $H_0^1(\Om)$ and
\[
 U_\eps\to u,\qquad W_\eps\to v
 \quad\text{strongly in }L^2(\Om).
\]

The trace convergence \eqref{eq:prelim-trace-two-scale} follows from
\cite[Proposition~2.8]{Douanla2013Steklov}, applied to $U_\eps$. In the
present geometry $\GamE$ contains the surfaces of complete cells only. The
omitted cells lie in the boundary strip \eqref{eq:omitted-cells-strip}; since
$\psi$ has compact support in $\Om$, they do not meet
$\operatorname{supp}\psi$ for all sufficiently small $\eps$. Since the trace
of $U_\eps$ on $\GamE$ is $u_\eps$, this proves
\eqref{eq:prelim-trace-two-scale}.

For the bulk term, extend $q$ by zero to $T$, denote the resulting
$Y$-periodic function by $q^\circ$, and set
\[
 \bar q:=\int_Y q^\circ(y)\,dy=\int_{\Ys} q(y)\,dy,
 \qquad h:=q^\circ-\bar q.
\]
Then $\int_Y h\,dy=0$, and \cite[Lemma~4.4]{ChiadoNazarovPiatnitski2012}
gives
\[
 \left|\int_\Om h^\eps U_\eps W_\eps\,dx\right|
 \le C\eps\|U_\eps\|_{H_0^1(\Om)}\|W_\eps\|_{H_0^1(\Om)}=o(1).
\]
Consequently,
\begin{equation*}
 \int_{\OmE}q^\eps u_\eps v_\eps\,dx
 =\int_\Om(q^\circ)^\eps U_\eps W_\eps\,dx
 =\bar q\int_\Om U_\eps W_\eps\,dx+o(1)
 \longrightarrow \bar q\int_\Om uv\,dx,
\end{equation*}
which is \eqref{eq:prelim-bulk-mass-limit}.

For the surface term, set $Y_{\eps,k}:=\eps(k+Y)$,
$K_\eps:=\bigcup_{k\in I_\eps^{\rm int}}Y_{\eps,k}$ and
$\bar\rho:=\int_\Gam\rho(y)\,dS_y$. For $Z\in H^1(\Om)$ and
$k\in I_\eps^{\rm int}$ let $m_k(Z)$ be the mean value of $Z$ over
$Y_{\eps,k}$, and let $M_\eps Z$ be the function on $K_\eps$ equal to
$m_k(Z)$ on each $Y_{\eps,k}$. The trace and Poincar\'e--Wirtinger
inequalities in $H^1(Y)$ give
$\|g-\bar g\|_{L^2(\Gam)}+\|g-\bar g\|_{L^2(Y)}\le C\|\nabla_yg\|_{L^2(Y)}$,
$\bar g:=\int_Yg\,dy$; the change of variables $x=\eps(k+y)$ yields
\begin{equation}
 \eps\int_{\eps(k+\Gam)}|Z-m_k(Z)|^2\,dS_x
 +\int_{Y_{\eps,k}}|Z-m_k(Z)|^2\,dx
 \le C\eps^2\int_{Y_{\eps,k}}|\nabla Z|^2\,dx.
 \label{eq:prelim-cell-average}
\end{equation}
Let $(Z_\eps)$ be bounded in $H_0^1(\Om)$ with $Z_\eps\to z$ strongly in
$L^2(\Om)$, and write $m_k=m_k(Z_\eps)$. Since
$\eps\int_{\eps(k+\Gam)}\rho^\eps\,dS_x=\eps^N\bar\rho=\bar\rho\,|Y_{\eps,k}|$,
\begin{equation*}
 \eps\int_{\GamE}\rho^\eps Z_\eps^2\,dS_x
 =\bar\rho\int_{K_\eps}(M_\eps Z_\eps)^2\,dx+R_\eps,\qquad
 R_\eps:=\sum_{k\in I_\eps^{\rm int}}\eps\int_{\eps(k+\Gam)}
 \rho^\eps(Z_\eps-m_k)(Z_\eps+m_k)\,dS_x.
\end{equation*}
By \eqref{eq:prelim-cell-average} and
$\eps\int_{\eps(k+\Gam)}m_k^2\,dS_x=|\Gam|\,\eps^Nm_k^2
\le|\Gam|\int_{Y_{\eps,k}}Z_\eps^2\,dx$, the Cauchy--Schwarz inequality gives
\[
 |R_\eps|\le C\|\rho\|_{L^\infty(\Gam)}\,
 \eps\|\nabla Z_\eps\|_{L^2(\Om)}\|Z_\eps\|_{H^1(\Om)}
 \le C\eps.
\]
Moreover, \eqref{eq:prelim-cell-average} gives
$\|M_\eps Z_\eps-Z_\eps\|_{L^2(K_\eps)}\le C\eps\|\nabla Z_\eps\|_{L^2(\Om)}\to0$,
while $|\Om\setminus K_\eps|\to0$ by \eqref{eq:omitted-cells-strip} and
$Z_\eps^2\to z^2$ in $L^1(\Om)$ imply
$\int_{\Om\setminus K_\eps}Z_\eps^2\,dx\to0$. Hence
$\int_{K_\eps}(M_\eps Z_\eps)^2\,dx\to\int_\Om z^2\,dx$, and
\begin{equation}
 \eps\int_{\GamE}\rho^\eps Z_\eps^2\,dS_x
 \longrightarrow
 \left(\int_\Gam\rho(y)\,dS_y\right)\int_\Om z^2\,dx.
 \label{eq:prelim-surface-diagonal-average}
\end{equation}
No sign condition on $\rho$ or $\bar\rho$ is used.
Apply \eqref{eq:prelim-surface-diagonal-average} to
$Z_\eps=U_\eps+W_\eps$ and $Z_\eps=U_\eps-W_\eps$. Since the traces of
$U_\eps$ and $W_\eps$ on $\GamE$ are $u_\eps$ and $v_\eps$, respectively,
polarization gives
\begin{equation*}
 \eps\int_{\GamE}\rho^\eps u_\eps v_\eps\,dS_x
 =\frac14\eps\int_{\GamE}\rho^\eps
 \bigl[(U_\eps+W_\eps)^2-(U_\eps-W_\eps)^2\bigr] \,dS_x
 \longrightarrow
 \left(\int_\Gam\rho(y)\,dS_y\right)\int_\Om uv\,dx,
\end{equation*}
which proves \eqref{eq:prelim-surface-mass-limit}. Finally,
\eqref{eq:prelim-combined-mass-limit} follows by adding
\eqref{eq:prelim-bulk-mass-limit} and \eqref{eq:prelim-surface-mass-limit}.
\end{proof}

\subsection{Uniform preliminary estimates and variational setting}
\label{subsec:uniform-preliminaries}

The cut geometry has two features that must be kept in mind: $\OmE$ need not
be Lipschitz near $\partial\Om$ (holes may meet $\partial\Om$ tangentially),
and it need not be connected (a hole close to $\partial\Om$ may cut off a
small piece of solid). Neither feature is used anywhere: all compactness and
coercivity properties of $V_\eps$ are obtained through the extension
$P_\eps$ of Lemma~\ref{lem:cut-uniform-extension}, which requires neither.
In particular, \eqref{eq:common-cut-extension-gradient} and the Poincar\'e
inequality in $H_0^1(\Om)$ yield the uniform Poincar\'e inequality
\begin{equation}
 \norm{v}_{L^2(\OmE)}\le C\norm{v}_{V_\eps}.
 \label{eq:Poincare}
\end{equation}
The rescaled trace estimate, obtained cell by cell on the complete cells,
holds for every $v\in H^1(\OmE)$:
\begin{equation}
 \eps\norm{v}_{L^2(\GamE)}^2
 \le C\left(\norm{v}_{L^2(\OmE)}^2
 +\eps^2\norm{\nabla v}_{L^2(\OmE)}^2\right).
 \label{eq:scaled-trace}
\end{equation}
Hence
\begin{equation}
 \norm{v}_{L^2(\OmE)}
 +\eps^{1/2}\norm{v}_{L^2(\GamE)}
 \le C\norm{v}_{V_\eps},
 \qquad v\in V_\eps.
 \label{eq:mass-energy-bound}
\end{equation}
With $d(x)=\operatorname{dist}(x,\partial\Om)$, Hardy's inequality applied to $P_\eps v\in H_0^1(\Om)$ and restricted to $\OmE$ gives
\begin{equation}
 \left\|\frac{v}{d}\right\|_{L^2(\OmE)}
 \le C\norm{v}_{V_\eps},\qquad v\in V_\eps.
 \notag
\end{equation}
We first record the needed boundary-strip estimates.

\begin{lemma}
\label{lem:strips}
Let $\Om$ be a bounded Lipschitz domain. There exist $t_0>0$ and $c_1,C>0$
such that, for $0<t<t_0$,
\begin{align}
 \norm{f}_{L^2(\{d<t\})}^2
 &\le Ct\,\norm{f}_{L^2(\Om)}\norm{f}_{H^1(\Om)},
 &&f\in H^1(\Om),
 \label{eq:strip-trace-type}\\
 \norm{u}_{L^2(\{d<t\})}
 &\le Ct\,\norm{\nabla u}_{L^2(\{d<c_1t\})},
 &&u\in H_0^1(\Om).
 \label{eq:strip-hardy-local}
\end{align}
Consequently, for fixed $L>0$ and $\eps$ small, every
$u\in H^2(\Om)\cap H_0^1(\Om)$ satisfies
\begin{equation}
 \norm{\nabla u}_{L^2(\{d<L\eps\})}
 \le C_L\eps^{1/2}\norm{u}_{H^2(\Om)},
 \label{eq:strip-grad-u}
\end{equation}
\begin{equation}
 \norm{u}_{L^2(\{d<L\eps\})}
 \le C_L\eps\norm{\nabla u}_{L^2(\{d<c_1L\eps\})}
 \le C_L\eps^{3/2}\norm{u}_{H^2(\Om)}.
 \label{eq:strip-L2-u}
\end{equation}
\end{lemma}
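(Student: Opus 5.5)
The plan is to localize near $\partial\Om$ using the Lipschitz structure. By compactness, $\partial\Om$ is covered by finitely many cylinders in which, after a rotation, $\Om$ is the supergraph $\{x_N>g(x')\}$ of a Lipschitz function $g$. In such a cylinder, $d(x)$ is comparable to $x_N-g(x')$ with constants depending only on the Lipschitz constant. Hence there is $c_0\ge1$ with
\[
\{d<t\}\cap U\subset\{0<x_N-g(x')<c_0t\}\subset\{d<c_0^2t\}\cap U .
\]
A partition of unity $\{\zeta_i\}$ subordinate to the cover, with $t_0$ smaller than the Lebesgue number, reduces every estimate to a single graph chart. The multiplication by $\zeta_i$ costs only a constant in the $H^1$ norms.

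\textbf{Estimate \eqref{eq:strip-trace-type}.} In a chart, set $s=x_N-g(x')$. For a.e.\ $x'$ and $0<s<t$, and with $h>0$ fixed, the fundamental theorem of calculus applied to $f^2$ along vertical lines gives
\[
|f(x',g+s)|^2\le |f(x',g+\tau)|^2+2\int_0^{h}|f||\partial_Nf|\,d\sigma \qquad\text{for }\tau\in(0,h).
\]
Average this over $\tau\in(0,h)$ and integrate over $s\in(0,t)$ and over $x'$. The result is $\|f\|^2_{L^2(\{d<t\})}\le Ct(\|f\|^2_{L^2}+\|f\|_{L^2}\|\nabla f\|_{L^2})$.

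This has the form $Ct\|f\|_{L^2}\|f\|_{H^1}$, since $\|f\|_{L^2}^2\le\|f\|_{L^2}\|f\|_{H^1}$. The smooth-to-$H^1$ passage is by density of $C^\infty(\overline\Om)$ in $H^1(\Om)$ for Lipschitz domains.

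\textbf{Estimate \eqref{eq:strip-hardy-local}.} For $u\in H^1_0$, extended by zero, we have $u(x',g)=0$. This gives
\[
|u(x',g+s)|\le\int_0^s|\partial_Nu|\,d\sigma\le s^{1/2}\Bigl(\int_0^{s}|\partial_N u|^2d\sigma\Bigr)^{1/2}.
\]
Squaring and integrating over $s<c_0t$ yields $\int_0^{c_0t}|u|^2ds\le Ct^2\int_0^{c_0t}|\nabla u|^2d\sigma$. The region $\{0<s<c_0t\}$ lies in $\{d<c_0^2t\}$, so $c_1=c_0^2$ works.

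The only delicate point is the localization. Cutting off $u$ with $\zeta_i$ adds a term $u\nabla\zeta_i$ that is not a gradient in the strip. I will avoid cutting $u$ and instead apply the pointwise inequality directly on vertical segments inside each chart. The strip $\{d<t\}$ is then covered by the union of the chart strips, the cover being independent of $t<t_0$. The integrals of $|\nabla u|^2$ over overlapping segments are summed, with multiplicity bounded by the number of charts. This is the main (mild) obstacle: ensuring the vertical segments stay inside $\{d<c_1t\}$, which the comparability of $d$ and $s$ guarantees.

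\textbf{Consequences.} For \eqref{eq:strip-grad-u}, apply \eqref{eq:strip-trace-type} with $f=\partial_ju$ and $t=L\eps$:
\[
\|\nabla u\|^2_{L^2(\{d<L\eps\})}\le CL\eps\|\nabla u\|_{L^2}\|u\|_{H^2}\le C_L\eps\|u\|_{H^2}^2 .
\]
For \eqref{eq:strip-L2-u}, the first inequality is \eqref{eq:strip-hardy-local} with $t=L\eps$. The second inequality is \eqref{eq:strip-grad-u} with $L$ replaced by $c_1L$, which gives the factor $\eps\cdot\eps^{1/2}$. Both require $\eps$ small enough that $c_1L\eps<t_0$.
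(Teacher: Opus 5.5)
Your proposal is correct and follows essentially the paper's own argument. It uses Lipschitz graph charts in which $d$ is comparable to the vertical distance $s=x_N-g(x')$, the averaged fundamental theorem of calculus along vertical segments for \eqref{eq:strip-trace-type}, the zero trace plus Cauchy--Schwarz on each segment for \eqref{eq:strip-hardy-local} (summing over charts rather than cutting off $u$, as the paper does), and then application to $\partial_ju$ for the consequences. The only cosmetic difference is that $c_1=c_0=\sqrt{1+L_g^2}$ already suffices, since $d\le s$ trivially gives $\{0<s<c_0t\}\subset\{d<c_0t\}$.
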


\begin{proof}
Cover $\partial\Om$ by finitely many Lipschitz graph cylinders.  Since the
cover is finite, it is enough to work in one of them.  After a rigid motion
we may write, for some fixed $s_0>0$,
\[
 \Om\cap Q
 =\{(x',g(x')+s):x'\in Q',\ 0<s<s_0\},
\]
where $g$ is Lipschitz with constant at most $L_g$.  Shrinking $t_0$ if
necessary, there is $c_1\ge1$, depending only on $L_g$, such that
\[
 \{d<t\}\cap Q
 \subset E_t:=\{(x',g(x')+s):x'\in Q',\ 0<s<c_1t\}
 \subset\{d<c_1t\}\cap Q
\]
for $0<t<t_0$.  We may take, for instance,
$c_1=\sqrt{1+L_g^2}$.

Fix $x'$ and set
\[
 h(s):=f(x',g(x')+s),\qquad 0<s<s_0.
\]
For $0<s,\sigma<s_0$, the fundamental theorem of calculus gives
\[
 |h(s)|^2
 \le |h(\sigma)|^2
     +2\int_0^{s_0}|h(r)|\,|h'(r)|\,dr .
\]
Averaging with respect to $\sigma\in(0,s_0)$ yields
\[
 |h(s)|^2
 \le \frac1{s_0}\int_0^{s_0}|h(r)|^2\,dr
     +2\int_0^{s_0}|h(r)|\,|h'(r)|\,dr .
\]
Integrating this inequality for $0<s<c_1t$ gives
\[
 \int_0^{c_1t}|h(s)|^2\,ds
 \le Ct\left(
       \int_0^{s_0}|h|^2\,ds
       +\int_0^{s_0}|h|\,|h'|\,ds
      \right).
\]
Since the change of variables
$(x',s)\mapsto(x',g(x')+s)$ has unit Jacobian, integration in $x'$ and
Cauchy--Schwarz imply
\[
 \|f\|_{L^2(E_t)}^2
 \le Ct\left(
       \|f\|_{L^2(\Om\cap Q)}^2
       +\|f\|_{L^2(\Om\cap Q)}
        \|\nabla f\|_{L^2(\Om\cap Q)}
      \right) \le Ct\,\|f\|_{L^2(\Om\cap Q)}
          \|f\|_{H^1(\Om\cap Q)} .
\]
Summing over the finitely many boundary charts proves
\eqref{eq:strip-trace-type}.

Now let $u\in H_0^1(\Om)$.  Its trace vanishes on the graph, so for almost
every $x'$ the corresponding slice
$h(s)=u(x',g(x')+s)$ satisfies $h(0)=0$.  Hence
\[
 h(s)=\int_0^s h'(r)\,dr,
\]
and therefore, by Cauchy--Schwarz,
\[
 |h(s)|^2
 \le s\int_0^s|h'(r)|^2\,dr
 \le c_1t\int_0^{c_1t}|h'(r)|^2\,dr .
\]
Integrating once more in $s$ gives
\[
 \int_0^{c_1t}|h(s)|^2\,ds
 \le (c_1t)^2\int_0^{c_1t}|h'(s)|^2\,ds .
\]
After integration in $x'$ and summation over the boundary charts,
\[
 \|u\|_{L^2(\{d<t\})}^2
 \le Ct^2\|\nabla u\|_{L^2(\{d<c_1t\})}^2,
\]
which is \eqref{eq:strip-hardy-local} after taking square roots.

Finally, let $u\in H^2(\Om)\cap H_0^1(\Om)$ and put $t=L\eps$.  Applying
\eqref{eq:strip-trace-type} to each $\partial_j u$ and summing over
$j=1,\ldots,N$ yields
\[
 \|\nabla u\|_{L^2(\{d<L\eps\})}^2
 \le C_L\eps
   \sum_{j=1}^N
   \|\partial_j u\|_{L^2(\Om)}
   \|\partial_j u\|_{H^1(\Om)} \le C_L\eps\,\|u\|_{H^2(\Om)}^2.
\]
Taking square roots proves \eqref{eq:strip-grad-u}.  Using
\eqref{eq:strip-hardy-local} with $t=L\eps$ and then
\eqref{eq:strip-grad-u} with $L$ replaced by $c_1L$, we obtain
\[
 \|u\|_{L^2(\{d<L\eps\})}
 \le C_L\eps\,
      \|\nabla u\|_{L^2(\{d<c_1L\eps\})}
 \le C_L\eps^{3/2}\|u\|_{H^2(\Om)},
\]
which is \eqref{eq:strip-L2-u}.
\end{proof}

We next record the needed cell-regularity consequences.

\begin{lemma}
\label{lem:cell-regularity-H4}
Let $0<\alpha'<\alpha_Y$ and assume
$\Gamma\in C^{2,\alpha_Y}$. The following statements hold with
constants depending only on the fixed cell data. In \textup{(ii)} assume
\textup{(H1)}--\textup{(H3)}; in \textup{(iii)} assume \textup{(H1)}
and $\mathrm{(H3}_A\mathrm{)}$.
\begin{enumerate}[label=\textup{(\roman*)}]
 \item If $B\in C^{0,\alpha_Y}_\#(\overline{Y^*})^{N\times N}$ is uniformly
 elliptic and $\chi_B^j\in H^1_\#(Y^*)/\R$ solves the periodic
 conormal corrector problem, then
 $\chi_B^j\in C^{1,\alpha'}(\overline{Y^*})$. Under \textup{(H1)} and
 $\mathrm{(H3}_A\mathrm{)}$, this applies to the classical correctors
 $\chi^j$; after the ground-state transform it applies to
 $\widetilde\chi^j$ once $\widetilde A$ satisfies the same assumptions.
 \item Every weak solution of the periodic Robin cell equation
 \[
 -\Div_y(A\nabla_y\theta)=\lambda q\theta\quad\hbox{in }Y^*,
 \qquad
 A\nabla_y\theta\cdot n=\lambda\rho\theta\quad\hbox{on }\Gamma
 \]
 belongs to $C^{1,\alpha'}(\overline{Y^*})$.
 \item Let $g\in L^\infty_\#(Y^*)$ and
 $h\in C^{0,\alpha_Y}_\#(\Gamma)$ satisfy
 $\int_{Y^*}g\,dy+\int_\Gamma h\,dS_y=0$.  The mean-zero periodic solution $\zeta$ of
 \[
  \int_{Y^*}A\nabla_y\zeta\cdot\nabla_y\psi\,dy
  =\int_{Y^*}g\psi\,dy+\int_\Gamma h\psi\,dS_y
  \qquad\forall\psi\in H^1_\#(Y^*)
 \]
 belongs to $C^{1,\alpha'}(\overline{Y^*})$; consequently the zero extension
 of $A\nabla_y\zeta$ belongs to $L^\infty_\#(Y)^N$.  This generic statement
 will be applied to the critical compensator introduced in
 Section~\ref{sec:critical-regime}.
\end{enumerate}
\end{lemma}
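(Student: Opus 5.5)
All three statements follow one scheme: lift the weak solution from $H^1$ to $C^{1,\alpha'}$ by localized Schauder theory for conormal (oblique-type) problems in divergence form. The ingredients are interior $C^{1,\alpha}$ estimates for $-\Div(B\nabla u)=\Div F+f$ with H\"older $B,F$, and the corresponding boundary estimates for the conormal condition $B\nabla u\cdot n=g$ on a $C^{1,\alpha}$ (here even $C^{2,\alpha_Y}$) boundary piece with H\"older data $g$ (Giaquinta--Martinazzi, Lieberman). Periodicity removes the outer boundary of $Y$ from consideration. The solution is a periodic function on the connected open set $\mathcal W$, whose only boundary is the periodic copy of $\Gam$. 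Since $T\Subset Y$, finitely many balls (interior balls and balls centred on $\Gam$) cover a neighbourhood of $\overline{\Ys}$ in $\overline{\mathcal W}$. This reduces everything to local estimates near interior points and near $\Gam$. All constants depend only on $\alpha_0,\beta_0$, the H\"older norms of the coefficients and the $C^{2,\alpha_Y}$ character of $\Gam$. The loss $\alpha'<\alpha_Y$ is needed only for the bootstrap in (ii) and for compact embeddings.

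For (i), write the corrector problem in weak form: $-\Div_y(B(\nabla_y\chi^j_B+e_j))=0$ in $\mathcal W$, $B(\nabla_y\chi_B^j+e_j)\cdot n=0$ on $\partial\mathcal W$. Equivalently, $-\Div(B\nabla\chi)=\Div(Be_j)$ with conormal datum $-Be_j\cdot n$. Both $Be_j\in C^{0,\alpha_Y}$ and $n\in C^{1,\alpha_Y}$, so the divergence-form $C^{1,\alpha}$ Schauder estimate with conormal boundary condition applies directly and gives $\chi_B^j\in C^{1,\alpha_Y}\subset C^{1,\alpha'}$. For $\widetilde A=(\theta_1^-)^2A$, part (ii) together with positivity of $\theta_1^-$ (proved later) gives $\theta_1^-\in C^{1,\alpha'}$. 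Hence $\widetilde A\in C^{0,\alpha'}$ is uniformly elliptic, and the same statement applies with the exponent $\alpha'$ (a slightly smaller exponent can be used if needed).

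For (ii) and (iii), proceed by bootstrap.

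\textbf{Step 1.} First obtain $\theta\in L^\infty(\Ys)$ and a H\"older bound. Treat $\lambda q\theta$ as a bulk source and $\lambda\rho\theta$ as a Neumann datum, and apply De Giorgi--Nash--Moser iteration for the conormal problem. This requires $q\in L^\infty$, $\rho\in L^\infty$, and a trace term handled by the trace inequality on $\Gam$. The result is $\theta\in C^{0,\beta}(\overline{\Ys})$ for some $\beta>0$.

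\textbf{Step 2.} With $\theta$ H\"older, the boundary datum $\lambda\rho\theta$ lies in $C^{0,\min(\beta,\alpha_Y)}(\Gam)$. The bulk source $\lambda q\theta$ lies in $L^\infty$; write it as $\Div F$ with $F\in C^{0,\gamma}$ for every $\gamma<1$, by solving an auxiliary Newtonian potential problem locally (an $L^\infty$ source is always admissible in the $C^{1,\alpha}$ theory). The Schauder estimate then gives $\theta\in C^{1,\min(\beta,\alpha')}$. In particular $\theta$ is Lipschitz, so $\rho\theta\in C^{0,\alpha_Y}(\Gam)$, and a second application gives $\theta\in C^{1,\alpha'}$.

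For (iii) the argument is identical and shorter, since $g$ and $h$ are given data. The compatibility condition makes the problem solvable, and Lax--Milgram gives a unique mean-zero solution $\zeta\in H^1_\#$ on the quotient. Local boundedness and one Schauder step yield $\zeta\in C^{1,\alpha'}(\overline{\Ys})$. Then $A\nabla_y\zeta$ is bounded on $\Ys$, and its zero extension to $T$ lies in $L^\infty_\#(Y)^N$.

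The main obstacle is Step 1 of (ii): the coupled bulk/Robin-type spectral boundary term with only $L^\infty$ coefficients $q,\rho$ and no sign. The plan is to cite the Moser iteration with boundary integrals (Lieberman's oblique-derivative theory, or Nittka's $L^\infty$/H\"older results for Robin problems). Because the Robin coefficient is sign-indefinite, the lower-order terms must be absorbed via the trace inequality with $\delta$-smallness. Everything else is a standard citation of Schauder estimates on the $C^{2,\alpha_Y}$ boundary $\Gam$.
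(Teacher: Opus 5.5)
Your proposal is correct and follows essentially the same route as the paper. The paper uses a localized weak conormal Schauder estimate (Lieberman) with $G=Be_j$ for (i). For (ii) it runs a Moser iteration in which the trace inequality with $\delta$-smallness absorbs the sign-indefinite boundary term, obtains H\"older continuity (Nittka), and then applies the Schauder estimate twice. For (iii) a single Schauder step suffices.

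The one difference is cosmetic. The paper feeds the $L^\infty$ source directly into a version of the estimate that allows $f\in L^p$ with $p>N$. You instead rewrite the source as $\Div F$, which adds the harmless H\"older term $F\cdot n$ to the conormal datum.
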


\begin{proof}
Because $T\Subset Y$, periodicity reduces the issue to interior estimates and
local estimates near the compact boundary $\Gamma$.  We use the following
consequence of the weak conormal Schauder estimate
\cite[Theorem~5.54]{Lieberman2013}: if
\[
 -\Div(B\nabla w)=f+\Div G,
 \qquad (B\nabla w+G)\cdot n=h,
\]
with $B\in C^{0,\alpha}$ uniformly elliptic, $f\in L^p$, $p>N$,
$G\in C^{0,\alpha}$ and $h\in C^{0,\alpha}$, then locally up to the
boundary
\begin{equation}
 \|w\|_{C^{1,\beta}}
 \le C\bigl(\|w\|_{L^2}+\|f\|_{L^p}
 +\|G\|_{C^{0,\alpha}}+\|h\|_{C^{0,\alpha}}\bigr),
 \qquad \beta=\min\{\alpha,1-N/p\}.
 \label{eq:cell-conormal-Schauder-local}
\end{equation}
A finite covering gives the corresponding estimate on $\overline{Y^*}$.

For \textup{(i)}, the corrector equation can be written as
\[
 -\Div(B\nabla\chi_B^j)=\Div(Be_j),
 \qquad (B\nabla\chi_B^j+Be_j)\cdot n=0\quad\text{on }\Gamma.
\]
Thus \eqref{eq:cell-conormal-Schauder-local} applies with
$f=0$, $G=Be_j$, $h=0$.  Since $B\in C^{0,\alpha_Y}$, choosing
$p>N$ so that $1-N/p>\alpha'$ yields
$\chi_B^j\in C^{1,\alpha'}(\overline{Y^*})$.  This gives the assertions for
$\chi^j$ and, after the ground-state transform, for $\widetilde\chi^j$.

For \textup{(ii)}, let $m\ge2$ and test the weak Robin equation with
$|\theta|^{m-2}\theta$ (by truncation).  Ellipticity gives
\begin{align}
 \alpha_0(m-1)\int_{Y^*}|\theta|^{m-2}|\nabla\theta|^2\,dy
 &\le C_\lambda\left(
      \int_{Y^*}|\theta|^m\,dy+\int_\Gamma|\theta|^m\,dS_y\right).
 \label{eq:cell-robin-moser-1}
\end{align}
With $z=|\theta|^{m/2}$,
$|\nabla z|^2=(m^2/4)|\theta|^{m-2}|\nabla\theta|^2$, so
\begin{equation}
 \frac{4\alpha_0(m-1)}{m^2}\|\nabla z\|_2^2
 \le C_\lambda\bigl(\|z\|_2^2+\|z\|_{L^2(\Gamma)}^2\bigr).
 \label{eq:cell-robin-moser-2}
\end{equation}
Using the trace inequality
$\|z\|_{L^2(\Gamma)}^2\le\delta\|\nabla z\|_2^2+C_\delta\|z\|_2^2$
and choosing $\delta\asymp m^{-1}$, we absorb the gradient term and obtain
\begin{equation}
 \|\nabla z\|_2\le C_\lambda m\|z\|_2.
 \label{eq:cell-robin-moser-3}
\end{equation}
Let $\kappa=N/(N-2)$ if $N>2$, and take any $\kappa>1$ if $N=2$.
Sobolev embedding then gives
\begin{equation}
 \|\theta\|_{L^{m\kappa}(Y^*)}
 \le (C_\lambda m)^{2/m}\|\theta\|_{L^m(Y^*)}.
 \label{eq:cell-robin-moser-iteration}
\end{equation}
Iterating with $m_{j+1}=\kappa m_j$, $m_0=2$, yields
$\theta\in L^\infty(Y^*)$.

Hence
\[
 f_0:=\lambda q\theta\in L^\infty(Y^*),
 \qquad
 A\nabla\theta\cdot n+(-\lambda\rho)\theta=0
 \quad\text{on }\Gamma.
\]
Since the Robin coefficient $-\lambda\rho$ is bounded,
\cite[Theorem~3.14]{Nittka2011} gives
$\theta\in C^{0,\gamma}(\overline{Y^*})$ for some $\gamma>0$.  Therefore
$\lambda\rho\theta\in C^{0,\gamma_1}(\Gamma)$ with
$\gamma_1=\min\{\gamma,\alpha_Y\}$.  Applying
\eqref{eq:cell-conormal-Schauder-local} with
$f=f_0$, $G=0$, $h=\lambda\rho\theta$ first gives
$\theta\in C^{1,\beta_1}$ for some $\beta_1>0$.  In particular
$\theta|_\Gamma$ is Lipschitz, so
$\rho\theta\in C^{0,\alpha_Y}(\Gamma)$.  A second application, with
$p>N$ chosen so that $1-N/p>\alpha'$, gives
\[
 \theta\in C^{1,\alpha'}(\overline{Y^*}).
\]

Finally, in \textup{(iii)} the weak equation is exactly
\[
 -\Div(A\nabla\zeta)=g\quad\text{in }Y^*,
 \qquad A\nabla\zeta\cdot n=h\quad\text{on }\Gamma,
\]
with periodicity and zero mean.  Since $g\in L^\infty\subset L^p$ for every
finite $p$ and $h\in C^{0,\alpha_Y}(\Gamma)$,
\eqref{eq:cell-conormal-Schauder-local}, with $p>N$ and
$1-N/p>\alpha'$, gives
$\zeta\in C^{1,\alpha'}(\overline{Y^*})$.  Thus
$A\nabla\zeta\in L^\infty(Y^*)^N$, and its zero extension to $T$, followed
by periodic extension, belongs to $L^\infty_\#(Y)^N$.
\end{proof}

\subsection{A common outer-strip resolvent estimate}
\label{subsec:common-RBL}

Near a Dirichlet boundary, the first-order two-scale expansion only gives an
$O(\eps^{1/2})$ energy error, and it is classical that adding a
boundary-layer corrector restores $O(\eps)$ estimates, both for
boundary-value problems and for eigenvalues of fixed index
\cite{AllaireAmar1999,MoskowVogelius1997,OleinikShamaevYosifian1992}; see
also \cite{KenigLinShen2012,KenigLinShen2013,ShenBoundary2017,ShenMonograph2018,ShenPerforated2023}
and the operator estimates of
\cite{PakhninSuslina2013,Suslina2013,ZhikovPastukhova2016}. In the cut
geometry the boundary layer must also cancel the corrector on the holes that
meet $\partial\Om$. We isolate the only boundary estimate needed for this in
all three quantitative branches. Fix $\alpha\in(0,1)$ and let $B=B(y)$ be any real, symmetric,
$Y$-periodic matrix in $\mathcal W$ satisfying
\begin{equation}
 \alpha_B|\xi|^2\le B(y)\xi\cdot\xi\le\beta_B|\xi|^2,
 \qquad
 B\in C^{0,\alpha}_\#(\overline{Y^*})^{N\times N}.
 \label{eq:common-B-assumptions}
\end{equation}
For $j=1,\ldots,N$, let $\chi_B^j\in
H^1_\#(\Ys)/\R$ solve
\begin{equation}
 \int_{\Ys}B(y)(e_j+\nabla_y\chi_B^j)\cdot\nabla_y\psi\,dy=0
 \qquad\forall\psi\in H^1_\#(\Ys)/\R,
 \label{eq:common-B-cell}
\end{equation}
and define
\begin{equation}
 B^{\rm hom}_{ij}
 :=\int_{\Ys}
 B(y)(e_j+\nabla_y\chi_B^j)\cdot e_i\,dy.
 \label{eq:common-B-hom}
\end{equation}
We now fix an explicit whole-cell representative of each corrector.  Let
$\widehat\chi_B^j\in W^{1,\infty}_\#(Y)$ equal $\chi_B^j$ in $Y^*$ and,
inside $T$, let it be the harmonic extension of the trace
$\chi_B^j|_\Gamma$:
\begin{equation}
 \Delta_y\widehat\chi_B^j=0\quad\hbox{in }T,
 \qquad
 \widehat\chi_B^j=\chi_B^j\quad\hbox{on }\Gamma.
 \label{eq:common-corrector-extension}
\end{equation}
The additive constant is fixed by the zero-mean representative of
$\chi_B^j$ in $Y^*$.  The boundary regularity and the cell Schauder
estimate give the asserted $W^{1,\infty}$ regularity.  Only the restriction
to $Y^*$ enters the flux, since the zero extension $B_0=\ind_{\mathcal W} B$
vanishes in $T$; the harmonic choice in \eqref{eq:common-corrector-extension}
merely makes the notation intrinsic.
On $V_\eps$ set
\begin{equation}
 a_{\eps,B}(u,v)
 :=\int_{\OmE}B^\eps\nabla u\cdot\nabla v\,dx.
 \notag
\end{equation}

Throughout this subsection, $\mathcal F_\eps$ denotes the Friedrichs
smoothing operator at scale $\eps$, with a nonnegative kernel supported in
the ball of radius $\eps/2$ centered at the origin. We use the standard
estimates
\[
 \|g-\mathcal F_\eps g\|_{L^2(\R^N)}
 \le C\eps\|\nabla g\|_{L^2(\R^N)},
 \qquad
 \|\mathcal F_\eps g\|_{L^2(\R^N)}\le C\|g\|_{L^2(\R^N)},
 \qquad g\in H^1(\R^N);
\]
they apply in particular to every $g\in H_0^1(\Omega)$ extended by zero.

The residual estimates use the following periodic-potential representation.

\begin{lemma}
\label{lem:periodic-potentials}
\begin{enumerate}[label=\textup{(\alph*)}]
 \item Let the field $G\in L^\infty_\#(Y)^N$ have zero mean on $Y$ and
 satisfy $\Div_yG=0$ in $\mathcal D'_\#(Y)$. There exists an antisymmetric
 tensor $\Psi=(\Psi_{i\ell})$, with entries in $L^\infty_\#(Y)$, such that
 $G_i=\sum_{\ell=1}^N\partial_{y_\ell}\Psi_{i\ell}$, $1\le i\le N$, and $\Psi_{i\ell}=-\Psi_{\ell i}$.
 \item Let $h\in L^\infty_\#(Y)$ satisfy $\int_Yh\,dy=0$. There exists
 $H\in L^\infty_\#(Y)^N$ such that $h=\Div_yH$.
 \item In the situation of \textup{(a)}, for $w\in H^1(\Om)$ and
 $\psi\in H_0^1(\Om)$,
 \begin{equation}
  \Bigl|\int_\Om G^\eps\cdot\nabla\psi\;w\,dx\Bigr|
  \le\eps\|\Psi\|_\infty\|\nabla w\|_{L^2(\Om)}\|\nabla\psi\|_{L^2(\Om)}.
 \notag
 \end{equation}
 \item In the situation of \textup{(b)}, for $\varphi\in W_0^{1,1}(\Om)$,
 \begin{equation}
  \Bigl|\int_\Om h^\eps\varphi\,dx\Bigr|
  \le\eps\|H\|_\infty\|\nabla\varphi\|_{L^1(\Om)}.
 \notag
 \end{equation}
\end{enumerate}
The norms of $\Psi$ and $H$ are bounded by a constant depending only on $N$
and on $\|G\|_\infty$, respectively $\|h\|_\infty$.
\end{lemma}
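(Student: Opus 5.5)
We prove (b) and (d) first, then (a) and (c).

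For \textup{(b)}, we solve the periodic Poisson problem on the flat torus. Let $\phi\in H^1_\#(Y)/\R$ satisfy $\Delta_y\phi=h$, which is solvable because $\int_Yh\,dy=0$. Since $h\in L^\infty\subset L^p$ for every $p<\infty$, Calder\'on--Zygmund estimates on the torus give $\phi\in W^{2,p}_\#(Y)$. Choosing $p>N$, Morrey's embedding gives $\nabla\phi\in C^{0,1-N/p}_\#(Y)\subset L^\infty_\#(Y)^N$, with $\|\nabla\phi\|_\infty\le C(N)\|h\|_\infty$. We set $H:=\nabla_y\phi$, so that $\Div_yH=h$.

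For \textup{(d)}, we write $h^\eps(x)=\eps\,\Div_x\bigl(H^\eps(x)\bigr)$ in $\mathcal D'(\R^N)$. For $\varphi\in C_c^\infty(\Om)$ we integrate by parts to get
\[
\int_\Om h^\eps\varphi\,dx=-\eps\int_\Om H^\eps\cdot\nabla\varphi\,dx,
\]
and the bound $\eps\|H\|_\infty\|\nabla\varphi\|_{L^1}$ follows. Density of $C_c^\infty(\Om)$ in $W_0^{1,1}(\Om)$ extends it to all $\varphi\in W_0^{1,1}(\Om)$; both sides are continuous in the $W^{1,1}$ norm because $h^\eps$ is bounded.

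For \textup{(a)}, we use the standard construction with Fourier series or, equivalently, potentials. We solve $\Delta_y\Phi_i=G_i$ on the torus, which is possible because $G$ has zero mean, and take $\Phi_i$ with zero mean. As in \textup{(b)}, $\nabla\Phi_i\in L^\infty$ with $\|\nabla\Phi_i\|_\infty\le C\|G\|_\infty$. We then set
\[
\Psi_{i\ell}:=\partial_\ell\Phi_i-\partial_i\Phi_\ell,
\]
which is antisymmetric and bounded. We compute
\[
\sum_\ell\partial_\ell\Psi_{i\ell}=\Delta\Phi_i-\partial_i\Div\Phi .
\]
Here $\Delta\Div\Phi=\Div G=0$ and $\Div\Phi$ is periodic with zero mean, so $\Div\Phi=0$ and therefore $\sum_\ell\partial_\ell\Psi_{i\ell}=G_i$.

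The main obstacle is the $L^\infty$ (rather than BMO) bound on $\nabla\Phi$. This is where the $W^{2,p}$, $p>N$, argument is essential: $L^\infty$ data only gives $W^{2,p}$ for every finite $p$, not $W^{2,\infty}$, but first derivatives are still bounded by Morrey's embedding.

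For \textup{(c)}, we have $G^\eps_i=\eps\sum_\ell\partial_{x_\ell}(\Psi_{i\ell}^\eps)$. By density it suffices to take $\psi\in C_c^\infty(\Om)$ and $w\in C^\infty(\overline\Om)$. Integration by parts gives
\[
\int_\Om G^\eps\cdot\nabla\psi\,w\,dx=-\eps\sum_{i,\ell}\int_\Om\Psi_{i\ell}^\eps\,\partial_\ell(\partial_i\psi\,w)\,dx .
\]
Expanding $\partial_\ell(\partial_i\psi\,w)=\partial_\ell\partial_i\psi\,w+\partial_i\psi\,\partial_\ell w$, the second-derivative term $\sum_{i,\ell}\Psi_{i\ell}^\eps\,\partial_\ell\partial_i\psi\,w$ vanishes by antisymmetry of $\Psi$. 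The remaining term is bounded by $\eps\|\Psi\|_\infty\|\nabla w\|_{L^2}\|\nabla\psi\|_{L^2}$, up to the dimensional constant that is absorbed into the norm convention for $\Psi$. No boundary terms arise because $\psi$ has compact support, and density in $H^1(\Om)\times H_0^1(\Om)$ concludes, since both sides are continuous there.
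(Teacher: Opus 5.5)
Your proposal is correct and follows the paper's proof essentially step for step. It uses the same torus Poisson problems with Calder\'on--Zygmund $W^{2,p}$, $p>N$, estimates and Sobolev embedding for (a)--(b), the same $\Psi_{i\ell}=\partial_\ell\Phi_i-\partial_i\Phi_\ell$ with $\Div\Phi=0$ from harmonicity and zero mean, and the same integration by parts with antisymmetric cancellation plus density for (c)--(d). The dimensional constant you mention in (c) disappears if $\|\Psi\|_\infty$ is read as the essential supremum of the pointwise operator norm, since $\sum_{i,\ell}\Psi_{i\ell}\,\partial_i\psi\,\partial_\ell w=\nabla\psi\cdot\Psi\nabla w$.
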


\begin{proof}
For \textup{(a)}, solve $\Delta_y\Phi_i=G_i$ on the torus with
$\int_Y\Phi_i\,dy=0$ and set
$\Psi_{i\ell}=\partial_{y_\ell}\Phi_i-\partial_{y_i}\Phi_\ell$. Since
$\Delta_y(\Div_y\Phi)=\Div_yG=0$ and $\Div_y\Phi$ has zero mean,
$\Div_y\Phi=0$, whence $\sum_{\ell=1}^N\partial_{y_\ell}\Psi_{i\ell}=G_i$. For
\textup{(b)}, solve $\Delta_y\varphi=h$ with zero mean and set
$H=\nabla_y\varphi$. In both cases the Calder\'on--Zygmund estimates give
$\Phi,\varphi\in W^{2,p}_\#(Y)$ for every $p<\infty$, and Sobolev
embedding with $p>N$ gives the $L^\infty$ bounds.

For \textup{(c)}, let first $\psi\in C_c^\infty(\Om)$. Since
$G_i^\eps=\eps\sum_{\ell=1}^N\partial_{x_\ell}(\Psi_{i\ell})^\eps$ and $w\,\partial_i\psi$
has compact support,
\[
 \int_\Om G^\eps\cdot\nabla\psi\;w\,dx
 =\sum_{i=1}^N\int_\Om G_i^\eps\,w\,\partial_i\psi\,dx
 =-\eps\sum_{i=1}^N\sum_{\ell=1}^N\int_\Om(\Psi_{i\ell})^\eps
 \bigl(\partial_\ell w\,\partial_i\psi+w\,\partial_{\ell i}\psi\bigr)\,dx,
\]
and the double sum containing $\partial_{\ell i}\psi$ vanishes by antisymmetry of $\Psi$.
The resulting bound passes to $\psi\in H_0^1(\Om)$ by density. For
\textup{(d)}, $h^\eps=\eps\Div_x(H^\eps)$, and integration by parts against
$\varphi\in W_0^{1,1}(\Om)$ gives the claim.
\end{proof}

We next control the boundary cut-off error.

\begin{lemma}
\label{lem:common-cutoff-stability}
Assume $\Gamma\in C^{2,\alpha}$ and let $B$ satisfy
\eqref{eq:common-B-assumptions}. Let $M\ge1$ be fixed and let
$\kappa_{\eps,M}\in C_c^\infty(\Omega)$ satisfy
\[
 0\le\kappa_{\eps,M}\le1,\qquad
 \kappa_{\eps,M}=0\ \hbox{if }d<M\eps,\qquad
 \kappa_{\eps,M}=1\ \hbox{if }d>2M\eps,
 \qquad |\nabla\kappa_{\eps,M}|\le C_M\eps^{-1}.
\]
For $v\in H^2(\Omega)\cap H_0^1(\Omega)$ set
\[
 K_j=\mathcal F_\eps(\kappa_{\eps,M}\partial_jv),\quad 1\le j\le N,\qquad
 U_\eps^{(1)}=v+\eps\sum_{j=1}^N(\widehat\chi_B^j)^\eps K_j,
\]
and $K:=(K_1,\ldots,K_N)$. Then, for every $\psi\in H_0^1(\Omega)$,
\begin{equation}
 \left|\int_\Omega B^{\rm hom}\nabla v\cdot\nabla\psi\,dx
 -\int_\Omega B_0^\eps\nabla U_\eps^{(1)}\cdot\nabla\psi\,dx\right|
 \le C_{B,M}\left(\|\nabla v\|_{L^2(\{d<3M\eps\})}
 +\eps\|\nabla^2v\|_{L^2(\Omega)}\right)\|\nabla\psi\|_2.
 \label{eq:common-cutoff-stability}
\end{equation}
\end{lemma}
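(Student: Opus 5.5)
We compute $\nabla U_\eps^{(1)}$ directly and split the flux defect into three contributions. By the chain rule,
\[
 \nabla U_\eps^{(1)}=\nabla v+\sum_{j}(\nabla_y\widehat\chi_B^j)^\eps K_j+\eps\sum_j(\widehat\chi_B^j)^\eps\nabla K_j ,
\]
so that
\[
 B_0^\eps\nabla U_\eps^{(1)}=\sum_j\bigl(B_0(e_j+\nabla_y\widehat\chi_B^j)\bigr)^\eps K_j+B_0^\eps(\nabla v-K)+\eps\sum_j B_0^\eps(\widehat\chi_B^j)^\eps\nabla K_j .
\]
Set $G^j:=B_0(e_j+\nabla_y\widehat\chi_B^j)-B^{\rm hom}e_j$. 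Then
\[
 B^{\rm hom}\nabla v-B_0^\eps\nabla U_\eps^{(1)}=B^{\rm hom}(\nabla v-K)-B_0^\eps(\nabla v-K)-\sum_j(G^j)^\eps K_j-\eps\sum_jB_0^\eps(\widehat\chi_B^j)^\eps\nabla K_j .
\]
Lemma~\ref{lem:cell-regularity-H4}(i) and the harmonic extension \eqref{eq:common-corrector-extension} give $\widehat\chi_B^j\in W^{1,\infty}_\#(Y)$, so $B_0$, $\widehat\chi_B^j$ and $G^j$ are bounded by constants depending only on $B$.

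\emph{Terms with $\nabla v-K$.} Write $\nabla v-K=(1-\kappa_{\eps,M})\nabla v+(\kappa_{\eps,M}\nabla v-\mathcal F_\eps(\kappa_{\eps,M}\nabla v))$. The first piece is supported in $\{d<2M\eps\}$. For the second, the Friedrichs estimate gives the bound $C\eps\|\nabla(\kappa_{\eps,M}\nabla v)\|_2$, which is at most
\[
 C\eps\|\nabla^2v\|_2+C_M\|\nabla v\|_{L^2(\{M\eps<d<2M\eps\})}.
\]
Hence both of the first two terms are bounded by the right-hand side of \eqref{eq:common-cutoff-stability}, multiplied by $\|\nabla\psi\|_2$. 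The same argument handles the last term, because
\[
 \eps\|\nabla K_j\|_2\le C\eps\|\nabla(\kappa_{\eps,M}\partial_jv)\|_2
\]
by $L^2$-boundedness of $\mathcal F_\eps$ commuting with derivatives.

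\emph{Oscillating term.} This is the main step. $G^j$ has zero mean on $Y$ by \eqref{eq:common-B-hom}, since $B_0$ vanishes in $T$. It is also divergence free on the torus: the cell problem \eqref{eq:common-B-cell} with periodic test functions gives $\int_Y G^j\cdot\nabla\phi=0$ for all $\phi\in H^1_\#(Y)$, because $B_0$ vanishes in $T$ and test functions restrict to $H^1_\#(\Ys)$. Lemma~\ref{lem:periodic-potentials}(a),(c) then applies with $w=K_j$. This requires $K_j\in H^1(\Omega)$, which holds since $\kappa_{\eps,M}\partial_jv$ is supported at distance $\ge M\eps>\eps/2$ from $\partial\Omega$, so the mollified function is still compactly supported in $\Omega$. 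We obtain
\[
 \Bigl|\int_\Omega (G^j)^\eps K_j\cdot\nabla\psi\,dx\Bigr|\le\eps\|\Psi^j\|_\infty\|\nabla K_j\|_2\|\nabla\psi\|_2 ,
\]
and $\eps\|\nabla K_j\|_2$ was already estimated above.

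Collecting the three bounds, all strip contributions lie in $\{d<2M\eps\}\subset\{d<3M\eps\}$, which proves \eqref{eq:common-cutoff-stability}. The only delicate points are the divergence-free property of the zero-extended flux across $\Gamma$, where the conormal condition is used, and keeping track that $\mathcal F_\eps$ enlarges supports only by $\eps/2$.
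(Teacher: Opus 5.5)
Your proposal is correct and follows the paper's proof essentially step for step. It uses the same add-and-subtract of $B_0^\eps K$ producing the three terms, Lemma~\ref{lem:periodic-potentials}(a),(c) with $w=K_j$ for the zero-mean divergence-free flux $G^j$, and the cut-off/Friedrichs bound on $\|\nabla v-K\|_2+\eps\|\nabla K\|_2$. Your explicit splitting $\nabla v-K=(1-\kappa_{\eps,M})\nabla v+\bigl(\kappa_{\eps,M}\nabla v-\mathcal F_\eps(\kappa_{\eps,M}\nabla v)\bigr)$ and your weak-formulation check that $\Div_yG^j=0$ on the torus simply spell out steps the paper states briefly.
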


\begin{proof}
With the extension
\eqref{eq:common-corrector-extension}, set
$G_B^j:=B_0(e_j+\nabla_y\widehat\chi_B^j)-B^{\rm hom}e_j$. The cell
equation and the zero conormal flux on $\Gamma$ give $\Div_yG_B^j=0$ in
$\mathcal D'_\#(Y)$, and $\int_YG_B^j\,dy=0$ by \eqref{eq:common-B-hom}.
Expanding $\nabla U_\eps^{(1)}$ and adding and subtracting
$\sum_{j=1}^NB_0^\eps e_jK_j=B_0^\eps K$ gives
\begin{align}
 &\int_\Omega B^{\rm hom}\nabla v\cdot\nabla\psi\,dx
 -\int_\Omega B_0^\eps\nabla U_\eps^{(1)}\cdot\nabla\psi\,dx =\int_\Omega(B^{\rm hom}-B_0^\eps)
 (\nabla v-K)\cdot\nabla\psi\,dx
 -\sum_{j=1}^N\int_\Omega(G_B^j)^\eps\cdot\nabla\psi\;K_j\,dx\notag\\
 &\qquad\quad-\eps\sum_{j=1}^N\int_\Omega B_0^\eps(\widehat\chi_B^j)^\eps
 \nabla K_j\cdot\nabla\psi\,dx.\label{eq:common-I123-explicit}
\end{align}
Lemma~\ref{lem:periodic-potentials}\textup{(c)} with $w=K_j$, for each $j$, bounds the
second term by $C\eps\|\nabla K\|_2\|\nabla\psi\|_2$. The support
properties of $1-\kappa_{\eps,M}$, the Friedrichs estimate and
$|\nabla\kappa_{\eps,M}|\le C_M\eps^{-1}$ give
\begin{equation}
 \|\nabla v-K\|_{L^2(\Omega)}
 +\eps\|\nabla K\|_{L^2(\Omega)}
 \le C_M\left(
 \|\nabla v\|_{L^2(\{d<3M\eps\})}
 +\eps\|\nabla^2v\|_{L^2(\Omega)}\right),
 \notag
\end{equation}
which bounds the three terms of \eqref{eq:common-I123-explicit} and proves
\eqref{eq:common-cutoff-stability}.
\end{proof}

The strip source satisfies the following uniform resolvent estimate.

\begin{lemma}
\label{lem:common-RBL}
Let $L>0$ be fixed and let $B$ satisfy
\eqref{eq:common-B-assumptions}. For $f\in L^2(\OmE)$, let
$z_{\eps,B}\in V_\eps$ be the solution of
\begin{equation}
 \int_{\OmE}
 B^\eps\nabla z_{\eps,B}\cdot\nabla\varphi\,dx
 =\int_{\OmE}f\varphi\,dx
 \qquad\forall\varphi\in V_\eps.
 \label{eq:common-B-resolvent}
\end{equation}
Assume \textup{(H4)} and $\Gamma\in C^{2,\alpha}$. Then there exist
$\eps_{B,L}>0$ and $C_{B,L}>0$, independent of
$0<\eps<\eps_{B,L}$ and of $f$, such that
\begin{equation}
 \|\nabla z_{\eps,B}\|_{L^2(\OmE\cap\{d<L\eps\})}
 \le C_{B,L}\eps^{1/2}
 \|f\|_{L^2(\OmE)}.
 \label{eq:common-RBL}
\end{equation}
In particular, under \textup{(H1)} and \textup{(H3)} the lemma applies to
$B=A$. Under \textup{(H1)--(H3)} and $m_{\mathrm{eff}}>0$, it also applies
after the negative ground-state transform to
$B=\widetilde A=(\theta_1^-)^2A$.
\end{lemma}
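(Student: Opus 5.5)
The estimate \eqref{eq:common-RBL} says the gradient of the resolvent solution carries only an $O(\eps^{1/2})$ share of its energy in the strip $\{d<L\eps\}$. The plan is to compare $z_{\eps,B}$ with the macroscopic solution $z_0\in H_0^1(\Om)$ of
\[
-\Div(B^{\rm hom}\nabla z_0)=\widetilde f\quad\hbox{in }\Om,
\]
where $\widetilde f$ is the zero extension of $f$ to $\Om$ (the factor $|\Ys|$ in the density is absorbed into the constant, or handled by using $f\ind_{\OmE}$ directly). By \textup{(H4)} and the remark after it, $\|z_0\|_{H^2(\Om)}\le C\|f\|_{L^2(\OmE)}$. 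Lemma~\ref{lem:strips}, estimate \eqref{eq:strip-grad-u}, then gives
\[
\|\nabla z_0\|_{L^2(\{d<C\eps\})}\le C\eps^{1/2}\|f\|_2 .
\]
It therefore suffices to prove $\|\nabla(z_{\eps,B}-w_\eps)\|_{L^2(\OmE)}\le C\eps^{1/2}\|f\|_2$ for a suitable $w_\eps\in V_\eps$ whose strip gradient is also $O(\eps^{1/2})$.

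For $w_\eps$ I take the cut-off first-order expansion of Lemma~\ref{lem:common-cutoff-stability}, $w_\eps:=U_\eps^{(1)}$ built from $v=z_0$ with a fixed $M$ chosen so that the cut-off region $\{d>M\eps\}$ contains no cell meeting $\partial\Om$ (e.g.\ $M\ge C_Y$). Then $w_\eps=z_0$ near $\Dep$, so $w_\eps$ (restricted) lies in $V_\eps$. Its strip gradient is controlled as follows:
\begin{itemize}
\item on $\{d<M\eps\}$ it equals $\nabla z_0$;
\item elsewhere in the strip it is bounded by $C(\|\nabla z_0\|_{L^2(\{d<3M\eps\})}+\eps\|\nabla^2 z_0\|_2)$, using $\widehat\chi_B^j\in W^{1,\infty}$ (Lemma~\ref{lem:cell-regularity-H4}(i)) together with the bound on $\nabla v-K$ and $\eps\nabla K$ from the proof of Lemma~\ref{lem:common-cutoff-stability}.
\end{itemize}
Both pieces are $O(\eps^{1/2})\|f\|_2$ by \eqref{eq:strip-grad-u}.

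For the energy error, set $e_\eps=z_{\eps,B}-w_\eps\in V_\eps$ and test \eqref{eq:common-B-resolvent} with $\varphi=e_\eps$. This gives
\[
a_{\eps,B}(e_\eps,e_\eps)=\int_{\OmE}f e_\eps-\int_{\OmE}B^\eps\nabla w_\eps\cdot\nabla e_\eps .
\]
Take $\psi=P_\eps e_\eps\in H_0^1(\Om)$ from Lemma~\ref{lem:cut-uniform-extension}. Since $B_0^\eps$ vanishes in the holes, the last integral equals $\int_\Om B_0^\eps\nabla U_\eps^{(1)}\cdot\nabla\psi$. By \eqref{eq:common-cutoff-stability} this differs from $\int_\Om B^{\rm hom}\nabla z_0\cdot\nabla\psi=\int_\Om\widetilde f\psi=\int_{\OmE}fe_\eps$ by at most $C\eps^{1/2}\|f\|_2\|\nabla\psi\|_2$. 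Using \eqref{eq:common-cut-extension-gradient} and coercivity then yields $\|\nabla e_\eps\|_{L^2(\OmE)}\le C\eps^{1/2}\|f\|_2$, and combining this with the strip bound for $w_\eps$ proves \eqref{eq:common-RBL}.

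The main obstacle is matching the source term. Because $f$ lives only on $\OmE$, the correct macroscopic right-hand side needs care: $\int_\Om\widetilde f\psi$ must equal $\int_{\OmE}f e_\eps$ exactly. With the zero extension and $\psi=P_\eps e_\eps$ this holds, since $P_\eps e_\eps=e_\eps$ on $\OmE$. The potential issue is that $z_0$ then solves a problem whose source is not $|\Ys|$-normalized. That is harmless here, since only the $O(\eps^{1/2})$ bound is claimed and no actual homogenization limit is needed; $H^2$ regularity of $z_0$ holds for any $L^2$ source.

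The second delicate point is admissibility of $w_\eps$ in $V_\eps$ in the capacitary sense near cut holes. Since $w_\eps=z_0$ there and $z_0\in H_0^1(\Om)$ need not vanish on the holes of boundary cells, I would also multiply $z_0$ by a cut-off equal to $0$ on $\{d<C_Y\eps\}$. Its extra gradient is bounded using \eqref{eq:strip-L2-u}: $\eps^{-1}\|z_0\|_{L^2(\{d<C\eps\})}\le C\eps^{1/2}\|f\|_2$, which preserves the rate. The final statements for $B=A$ and $B=\widetilde A$ follow once $(\theta_1^-)^2A$ is checked to satisfy \eqref{eq:common-B-assumptions}, using $\theta_1^-\in C^{1,\alpha'}$ with positive minimum (Lemma~\ref{lem:cell-regularity-H4}(ii)) and working with a slightly smaller Hölder exponent $\alpha'$.
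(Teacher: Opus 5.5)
Your proposal is correct and follows essentially the paper's argument. You compare with the $H^2$ homogenized resolvent of the zero-extended source, use the cut-off first-order expansion of Lemma~\ref{lem:common-cutoff-stability}, and add a strip lifting: your cut-off of $z_0$ on $\{d<C_Y\eps\}$ is exactly the paper's $\zeta_{\eps,B}=-\vartheta_\eps v_{\eps,B}$, and it is genuinely needed, because your first claim that $w_\eps=z_0$ near $\Dep$ gives membership in $V_\eps$ is false on the cut holes. The only difference is cosmetic: the paper takes $M_L>L+4$ so that the oscillating corrector vanishes on the target strip, whereas you bound its gradient there through the estimates on $K$ (taking $M$ slightly larger than $C_Y$ to absorb the $\eps/2$ spread of the smoothing). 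Both routes give the $O(\eps^{1/2})$ rate.
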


\begin{proof}
We give the proof for a generic $B$. A function of $H_0^1(\Omega)$ need
not vanish on the portions of periodic holes which meet $\partial\Omega$,
so the degenerate fixed-domain equation used in the separated geometry is
not available. We instead compare the microscopic resolvent with a cut-off
first-order expansion and cancel the additional Dirichlet trace by a lifting
supported in an $O(\eps)$ strip.

We first compare with the homogenized resolvent and record the corresponding boundary-strip estimates.
Extend $f$ by zero from $\OmE$ to $\Omega$ and denote the
extension by $\widetilde f_\eps$. Let
$v_{\eps,B}\in H_0^1(\Omega)$ solve
\begin{equation}
 \int_\Omega B^{\rm hom}\nabla v_{\eps,B}\cdot\nabla\psi\,dx
 =\int_\Omega\widetilde f_\eps\psi\,dx
 \qquad\forall\psi\in H_0^1(\Omega).
 \label{eq:common-cut-hom-resolvent}
\end{equation}
The macroscopic regularity in \textup{(H4)} gives
\begin{equation}
 \|v_{\eps,B}\|_{H^2(\Omega)}
 \le C_B\|f\|_{L^2(\OmE)}.
 \label{eq:common-cut-H2}
\end{equation}
For every fixed $R>0$, the standard boundary-strip estimates for
$H^2(\Omega)\cap H_0^1(\Omega)$ yield
\begin{align}
 \|\nabla v_{\eps,B}\|_{L^2(\{d<R\eps\})}
 &\le C_{B,R}\eps^{1/2}
       \|f\|_{L^2(\OmE)},
 \label{eq:common-cut-strip-grad}\\
 \|v_{\eps,B}\|_{L^2(\{d<R\eps\})}
 &\le C_{B,R}\eps^{3/2}
       \|f\|_{L^2(\OmE)}.
 \label{eq:common-cut-strip-L2}
\end{align}
These are \eqref{eq:strip-grad-u}--\eqref{eq:strip-L2-u}.

We next choose the cut-off so that the oscillating corrector is switched off before the boundary cells.
Recall that every cell meeting $\partial\Omega$ is contained in
$\{d<C_Y\eps\}$, with $C_Y$ as in \eqref{eq:omitted-cells-strip}. Choose
\begin{equation}
 M_L>\max\{L+4,C_Y+4\}.
 \notag
\end{equation}
For sufficiently small $\eps$, let
$\kappa_{\eps,L}\in C_c^\infty(\Omega)$ satisfy
\begin{equation}
 0\le\kappa_{\eps,L}\le1,\qquad
 \kappa_{\eps,L}=0\ \hbox{in }\{d<M_L\eps\},\qquad
 \kappa_{\eps,L}=1\ \hbox{in }\{d>2M_L\eps\},\qquad
 |\nabla\kappa_{\eps,L}|\le C_L\eps^{-1},
 \notag
\end{equation}
and set
\begin{equation}
 K_{\eps,B,j}:=\mathcal F_\eps(\kappa_{\eps,L}\partial_jv_{\eps,B}),\quad 1\le j\le N,
 \qquad
 U_{\eps,B}^{(1)}:=v_{\eps,B}
   +\eps\sum_{j=1}^N(\widehat\chi_B^j)^\eps K_{\eps,B,j}.
 \notag
\end{equation}
Because the smoothing kernel is supported in a ball of radius $\eps/2$,
$K_{\eps,B,j}=0$ in $\{d<(M_L-1)\eps\}$. In particular the oscillating
corrector vanishes on every portion of a perforation meeting
$\partial\Omega$.

To enforce the additional Dirichlet condition on the cut perforations, we introduce a lifting supported in the boundary strip.
Choose $R_L$ with $C_Y+1<R_L<M_L-2$ and
$\vartheta_\eps\in W^{1,\infty}(\Omega)$ such that
\begin{equation}
 0\le\vartheta_\eps\le1,\qquad
 \vartheta_\eps=1\ \hbox{in }\{d<(C_Y+1)\eps\},\qquad
 \vartheta_\eps=0\ \hbox{in }\{d>R_L\eps\},\qquad
 |\nabla\vartheta_\eps|\le C_L\eps^{-1}.
 \notag
\end{equation}
Define on $\OmE$
\begin{equation}
 \zeta_{\eps,B}:=-\vartheta_\eps v_{\eps,B}.
 \notag
\end{equation}
On $\partial\Omega$ the zero trace of $v_{\eps,B}$ gives zero trace of
$U_{\eps,B}^{(1)}$. On a cut perforation one has
$\vartheta_\eps=1$ and $K_{\eps,B,j}=0$ for $1\le j\le N$, hence
$U_{\eps,B}^{(1)}+\zeta_{\eps,B}=0$ near every such piece. By the capacitary definition \eqref{eq:Veps}, this proves
\begin{equation}
 U_{\eps,B}:=U_{\eps,B}^{(1)}+\zeta_{\eps,B}
 \in V_\eps.
 \notag
\end{equation}
Using \eqref{eq:common-cut-strip-grad},
\eqref{eq:common-cut-strip-L2}, and
$|\nabla\vartheta_\eps|\le C_L\eps^{-1}$ gives
\begin{equation}
 \|\nabla\zeta_{\eps,B}\|_{L^2(\OmE)}
 \le C_{B,L}\eps^{1/2}
       \|f\|_{L^2(\OmE)}.
 \label{eq:common-cut-lifting-energy}
\end{equation}

We then estimate the residual of the truncated first-order expansion.
Lemma~\ref{lem:common-cutoff-stability}, applied with $M=M_L$ and
$v=v_{\eps,B}$, together with \eqref{eq:common-cut-H2} and
\eqref{eq:common-cut-strip-grad}, gives for every $\psi\in H_0^1(\Omega)$
\begin{equation}
 \left|\int_\Omega B^{\rm hom}\nabla v_{\eps,B}\cdot\nabla\psi\,dx
 -\int_\Omega B_0^\eps\nabla U_{\eps,B}^{(1)}\cdot\nabla\psi\,dx\right|
 \le C_{B,L}\eps^{1/2}\|f\|_2\|\nabla\psi\|_2,
 \label{eq:common-cut-flux-residual-rate}
\end{equation}
where $B_0=\ind_{\mathcal W} B$ and $\|f\|_2=\|f\|_{L^2(\OmE)}$.

The preceding estimates now yield a global $H^1$ bound for the corrector error.
Set
\begin{equation}
 W_{\eps,B}:=z_{\eps,B}-U_{\eps,B}
 \in V_\eps.
 \notag
\end{equation}
For arbitrary $\varphi\in V_\eps$, take
$\psi=P_\eps\varphi$ in
\eqref{eq:common-cut-hom-resolvent}. Since
$P_\eps\varphi=\varphi$ in
$\OmE$ and $\widetilde f_\eps=0$ outside this set,
\[
 \int_{\OmE}f\varphi\,dx
 =\int_\Omega B^{\rm hom}\nabla v_{\eps,B}\cdot
                  \nabla(P_\eps\varphi)\,dx.
\]
Moreover, since $\OmE=\Omega\cap\eps\mathcal W$,
\[
 \int_{\OmE}
 B^\eps\nabla U_{\eps,B}^{(1)}\cdot\nabla\varphi\,dx
 =\int_\Omega B_0^\eps\nabla U_{\eps,B}^{(1)}\cdot
                    \nabla(P_\eps\varphi)\,dx.
\]
Subtracting the variational equations, and then using
\eqref{eq:common-cut-flux-residual-rate},
\eqref{eq:common-cut-extension-gradient}, and
\eqref{eq:common-cut-lifting-energy}, yields
\begin{equation}
 \left|\int_{\OmE}
 B^\eps\nabla W_{\eps,B}\cdot\nabla\varphi\,dx\right|
 \le C_{B,L}\eps^{1/2}\|f\|_2
       \|\nabla\varphi\|_{L^2(\OmE)}.
 \notag
\end{equation}
Taking $\varphi=W_{\eps,B}$ and using ellipticity gives
\begin{equation}
 \|\nabla W_{\eps,B}\|_{L^2(\OmE)}
 \le C_{B,L}\eps^{1/2}\|f\|_2.
 \label{eq:common-cut-W-energy}
\end{equation}

Finally, we restrict the estimate to the target boundary strip.
By the choice of $M_L$ and the support of the smoothing kernel,
$K_{\eps,B,j}=0$ ($1\le j\le N$) on $\{d<L\eps\}$. Therefore in the target strip
\[
 z_{\eps,B}=W_{\eps,B}+v_{\eps,B}+\zeta_{\eps,B}.
\]
Combining \eqref{eq:common-cut-W-energy},
\eqref{eq:common-cut-strip-grad}, and
\eqref{eq:common-cut-lifting-energy} proves \eqref{eq:common-RBL}.
\end{proof}

This yields the exact Dirichlet layer.

\begin{lemma}
\label{lem:common-boundary-layer}
Assume \textup{(H4)} and $\Gamma\in C^{2,\alpha}$, and let $B$ satisfy
\eqref{eq:common-B-assumptions}, with correctors $\chi_B^j$ from
\eqref{eq:common-B-cell}. For $u\in H^2(\Omega)\cap H_0^1(\Omega)$ set
\begin{equation}
 R_{\eps,\mathrm{int}}^B u
 :=u+\eps\sum_{j=1}^N(\chi_B^j)^\eps\partial_j u
 \quad\hbox{in }\OmE.
 \notag
\end{equation}
Let $\beta_{\eps,B}[u]\in H^1(\OmE)$ be the unique
function satisfying
\begin{equation}
 \beta_{\eps,B}[u]+R_{\eps,\mathrm{int}}^B u
 \in V_\eps,
 \qquad
 \int_{\OmE}
 B^\eps\nabla\beta_{\eps,B}[u]\cdot\nabla v\,dx=0
 \quad\forall v\in V_\eps.
 \notag
\end{equation}
Thus the exact Dirichlet datum on all of $D_\eps$ is
\begin{equation}
 \beta_{\eps,B}[u]
 =-u-\eps\sum_{j=1}^N(\chi_B^j)^\eps\partial_j u
 \quad\hbox{on }D_\eps
 \notag
\end{equation}
in the sense of the capacitary definition \eqref{eq:Veps}; on the exterior
part $\partial\Omega$ this reduces to the familiar oscillatory datum because
$u=0$. Then, uniformly for sufficiently small $\eps$,
\begin{align}
 \|\nabla\beta_{\eps,B}[u]\|_{L^2(\OmE)}
 &\le C_B\eps^{1/2}\|u\|_{H^2(\Omega)},
 \label{eq:common-beta-energy}\\
 \|\beta_{\eps,B}[u]\|_{L^2(\OmE)}
 +\|d\nabla\beta_{\eps,B}[u]\|_{L^2(\OmE)}
 +\eps^{1/2}\|\beta_{\eps,B}[u]\|_{L^2(\GamE)}
 &\le C_B\eps\|u\|_{H^2(\Omega)}.
 \label{eq:common-BL}
\end{align}
In particular, under \textup{(H1)} and \textup{(H3)} the lemma applies to
$B=A$, and under \textup{(H1)--(H3)} and $m_{\mathrm{eff}}>0$ it applies to
$B=\widetilde A=(\theta_1^-)^2A$.
\end{lemma}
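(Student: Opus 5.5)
Write $\beta=\beta_{\eps,B}[u]$. I will prove the energy estimate \eqref{eq:common-beta-energy} by comparing $\beta$ with an explicit lifting. The $L^2$, trace and weighted bounds \eqref{eq:common-BL} will then come from a duality argument built on Lemma~\ref{lem:common-RBL}.

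\emph{Energy bound.} Take a cut-off $\vartheta_\eps$ as in the proof of Lemma~\ref{lem:common-RBL}: it equals $1$ in $\{d<(C_Y+1)\eps\}$, vanishes in $\{d>R\eps\}$, and satisfies $|\nabla\vartheta_\eps|\le C\eps^{-1}$. Set $\widetilde\beta:=-\vartheta_\eps R^B_{\eps,\mathrm{int}}u$. Then $\widetilde\beta+R^B_{\eps,\mathrm{int}}u=(1-\vartheta_\eps)R^B_{\eps,\mathrm{int}}u$ vanishes near $D_\eps$, so it belongs to $V_\eps$. Since $\beta$ minimizes the $B^\eps$-energy among functions with this Dirichlet datum, $\|\nabla\beta\|_{L^2(\OmE)}\le C\|\nabla\widetilde\beta\|_{L^2(\OmE)}$. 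We have
\[
\nabla\widetilde\beta=-\nabla\vartheta_\eps\bigl(u+\eps\textstyle\sum_j(\chi_B^j)^\eps\partial_ju\bigr)-\vartheta_\eps\nabla R^B_{\eps,\mathrm{int}}u .
\]
Because $\chi_B^j\in W^{1,\infty}$ by Lemma~\ref{lem:cell-regularity-H4}(i), every term is bounded in the strip $\{d<R\eps\}$ by
\[
\eps^{-1}|u|+|\nabla u|+\eps|\nabla^2u| .
\]
Lemma~\ref{lem:strips}, namely \eqref{eq:strip-grad-u}--\eqref{eq:strip-L2-u}, then gives \eqref{eq:common-beta-energy}.

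\emph{$L^2$ bound by duality.} Take $f\in L^2(\OmE)$ and let $z=z_{\eps,B}\in V_\eps$ solve \eqref{eq:common-B-resolvent}. Write $\beta=\widetilde\beta+w$ with $w\in V_\eps$. Since $\beta$ is $B^\eps$-harmonic against $V_\eps$ and $B$ is symmetric,
\[
\int_{\OmE}f\beta=\int_{\OmE}f\widetilde\beta+\int_{\OmE}B^\eps\nabla z\cdot\nabla w
=\int_{\OmE}f\widetilde\beta-\int_{\OmE}B^\eps\nabla z\cdot\nabla\widetilde\beta .
\]
The second equality uses $a_{\eps,B}(z,\beta)=0$. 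The first term is bounded by $\|f\|\,\|\widetilde\beta\|_{L^2}\le C\eps^{3/2}\|f\|\,\|u\|_{H^2}$. The second term is supported in $\{d<R\eps\}$, so it is bounded by
\[
\|\nabla z\|_{L^2(\{d<R\eps\})}\,\|\nabla\widetilde\beta\|\le C\eps^{1/2}\|f\|\cdot\eps^{1/2}\|u\|_{H^2},
\]
using Lemma~\ref{lem:common-RBL} with $L=R$. Taking the supremum over $f$ gives $\|\beta\|_{L^2(\OmE)}\le C\eps\|u\|_{H^2}$.

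\emph{Trace bound.} The trace term is a cell-by-cell application of \eqref{eq:scaled-trace}. I would apply it locally, away from $\{d<C\eps\}$, to a function that is $B^\eps$-harmonic there. Interior cells carry $\GamE$, which is where a Caccioppoli estimate is needed. The goal is $\eps\|\beta\|_{L^2(\GamE)}^2\le C(\|\beta\|^2+\eps^2\|\nabla\beta\|_{L^2(\text{interior})}^2)$. I need the interior gradient to be $O(\eps^{1/2})$ at worst, which gives $\eps^{1/2}\|\beta\|_{L^2(\GamE)}\le C\eps\|u\|_{H^2}$ after multiplying the squared estimate out. Indeed $\eps\cdot\eps^{1/2}\le\eps$, so the crude energy bound \eqref{eq:common-beta-energy} suffices here.

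\emph{Weighted bound.} For the weighted gradient I would use a Caccioppoli inequality with the weight $d^2$. I test the harmonicity of $\beta$ with $d^2\eta_\eps\beta$, where $\eta_\eps$ cuts off the strip $\{d<C\eps\}$ so that the test function lies in $V_\eps$. This yields
\[
\|d\nabla\beta\|_{L^2(\{d>2C\eps\})}\le C\|\beta\|_{L^2}+C\eps\|\nabla\beta\|_{L^2(\{d<3C\eps\})}.
\]
The piece near the boundary is bounded trivially by $C\eps\|\nabla\beta\|\le C\eps^{3/2}$. This is the main obstacle I expect: making $d^2\eta_\eps\beta$ an admissible test function in the capacitary space. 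I would try a product rule in $\mathcal D_\eps$ together with density, using the splitting $\beta=\widetilde\beta+w$ so that only $w\in V_\eps$ needs to be tested and $\widetilde\beta$ is handled explicitly, since it is supported in the strip.
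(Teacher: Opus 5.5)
Your proposal is correct and follows essentially the paper's proof. You use the same explicit lifting $-\vartheta_\eps R^B_{\eps,\mathrm{int}}u$ for the energy bound and the same duality with Lemma~\ref{lem:common-RBL} for the $L^2$ bound (you apply it to $\beta$ directly, the paper to $\delta_{\eps,B}=\beta-\zeta_{\eps,B}$). The surface term follows from the plain global rescaled trace inequality, with no Caccioppoli estimate needed, as you eventually note. For the weighted bound, the admissibility issue you flag disappears if you test the equation $a_{\eps,B}(w,v)=-a_{\eps,B}(\widetilde\beta,v)$ for $w=\beta-\widetilde\beta\in V_\eps$ with $d^2w\in V_\eps$, as the paper does. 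Alternatively, let your cut-off $\eta_\eps$ vanish on $\{d<R\eps\}$, so that $d^2\eta_\eps\beta=d^2\eta_\eps w$. Your Caccioppoli route then closes in the same way.
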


\begin{proof}
Every point of $D_\eps$ which does not belong to $\partial\Omega$ lies in
$\{d<C_Y\eps\}$, with $C_Y$ as in \eqref{eq:omitted-cells-strip}. Choose fixed numbers
$C_Y+1<R_0<R_1$ and a cut-off $\vartheta_\eps\in W^{1,\infty}(\Omega)$ such
that
\begin{equation}
 0\le\vartheta_\eps\le1,\qquad
 \vartheta_\eps=1\ \hbox{in }\{d<(C_Y+1)\eps\},\qquad
 \vartheta_\eps=0\ \hbox{in }\{d>R_0\eps\},\qquad
 |\nabla\vartheta_\eps|\le C\eps^{-1}.
 \notag
\end{equation}
Define the explicit lifting
\begin{equation}
 \zeta_{\eps,B}[u]
 :=-\vartheta_\eps R_{\eps,\mathrm{int}}^B u
 \quad\hbox{in }\OmE.
 \notag
\end{equation}
Since $\vartheta_\eps=1$ on the cut-hole part of $D_\eps$, while
$u=0$ on $\partial\Omega$ and the trace of the right-hand side is exactly
$-R_{\eps,\mathrm{int}}^B u$ there, one has
\begin{equation}
 \zeta_{\eps,B}[u]+R_{\eps,\mathrm{int}}^B u
 \in V_\eps.
 \notag
\end{equation}
The cell regularity encoded in \eqref{eq:common-B-assumptions} and
Lemma~\ref{lem:cell-regularity-H4} gives
$\chi_B^j,\nabla_y\chi_B^j\in L^\infty(Y^*)$. Using the strip estimates
\eqref{eq:strip-grad-u}--\eqref{eq:strip-L2-u} with $L=R_1$ and the identity
\[
 \nabla R_{\eps,\mathrm{int}}^B u
 =\nabla u
 +\sum_{j=1}^N(\nabla_y\chi_B^j)^\eps\partial_j u
 +\eps\sum_{j=1}^N(\chi_B^j)^\eps\nabla\partial_j u,
\]
we obtain
\begin{align}
 \|\nabla\zeta_{\eps,B}[u]\|_{L^2(\OmE)}
 &\le C_B\eps^{1/2}\|u\|_{H^2(\Omega)},
 \label{eq:common-zeta-energy}\\
 \|\zeta_{\eps,B}[u]\|_{L^2(\OmE)}
 +\|d\nabla\zeta_{\eps,B}[u]\|_{L^2(\OmE)}
 &\le C_B\eps^{3/2}\|u\|_{H^2(\Omega)}.
 \label{eq:common-zeta-bounds}
\end{align}
Set
\[
 \delta_{\eps,B}:=\beta_{\eps,B}[u]-\zeta_{\eps,B}[u]
 \in V_\eps.
\]
Then
\begin{equation}
 \int_{\OmE}
 B^\eps\nabla\delta_{\eps,B}\cdot\nabla v\,dx
 =-\int_{\OmE}
 B^\eps\nabla\zeta_{\eps,B}[u]\cdot\nabla v\,dx
 \qquad\forall v\in V_\eps.
 \label{eq:common-delta-equation}
\end{equation}
Taking $v=\delta_{\eps,B}$ and using ellipticity proves
\eqref{eq:common-beta-energy}.

For the improved $L^2$ estimate, let $z_{\eps,B}\in V_\eps$
solve \eqref{eq:common-B-resolvent} with
$f=\delta_{\eps,B}$. By symmetry and
\eqref{eq:common-delta-equation},
\[
 \|\delta_{\eps,B}\|_2^2
 =-\int_{\OmE}
 B^\eps\nabla\zeta_{\eps,B}[u]\cdot\nabla z_{\eps,B}\,dx.
\]
The support of $\nabla\zeta_{\eps,B}[u]$ is contained in
$\{d<R_1\eps\}$, so Lemma~\ref{lem:common-RBL} and
\eqref{eq:common-zeta-energy} give
\[
 \|\delta_{\eps,B}\|_2^2
 \le C_B
 \bigl(\eps^{1/2}\|u\|_{H^2}\bigr)
 \bigl(\eps^{1/2}\|\delta_{\eps,B}\|_2\bigr).
\]
Hence
\begin{equation}
 \|\delta_{\eps,B}\|_{L^2(\OmE)}
 \le C_B\eps\|u\|_{H^2(\Omega)}.
 \label{eq:common-delta-L2}
\end{equation}
Together with \eqref{eq:common-zeta-bounds} this yields the asserted
$L^2$ estimate for $\beta_{\eps,B}[u]$.

For the weighted gradient, $d$ is Lipschitz and
$d^2\delta_{\eps,B}\in V_\eps$. Testing
\eqref{eq:common-delta-equation} with this function and using
$|\nabla d|\le1$ a.e. gives, with
$X=\|d\nabla\delta_{\eps,B}\|_2$,
$X_1=\|\delta_{\eps,B}\|_2$, and
$X_2=\|\nabla\zeta_{\eps,B}[u]\|_2$,
\[
 \alpha_BX^2
 \le C_BX_1X+C_B\eps X_2X+C_B\eps X_2X_1,
\]
because $d\le R_1\eps$ on the support of
$\nabla\zeta_{\eps,B}[u]$. Young's inequality, together with
\eqref{eq:common-delta-L2} and \eqref{eq:common-zeta-energy}, gives
\begin{equation}
 \|d\nabla\delta_{\eps,B}\|_2
 \le C_B\eps\|u\|_{H^2(\Omega)}.
 \notag
\end{equation}
Combining this with \eqref{eq:common-zeta-bounds} proves the weighted part
of \eqref{eq:common-BL}.

Finally, by the definition of $I_\eps^{\rm int}$, every component
$\eps(k+\Gamma)$ of $\GamE$ is contained in a complete cell
$\eps(k+Y)\subset\Omega$. The rescaled cell trace inequality therefore
applies on each such cell with a constant depending only on $(Y,T)$.
Summing over the disjoint complete cells gives
\[
 \eps\|\beta_{\eps,B}[u]\|_{L^2(\GamE)}^2
 \le C\left(
 \|\beta_{\eps,B}[u]\|_{L^2(\OmE)}^2
 +\eps^2\|\nabla\beta_{\eps,B}[u]\|_{L^2(\OmE)}^2
 \right).
\]
Using \eqref{eq:common-beta-energy} and the $L^2$ estimate just proved
completes \eqref{eq:common-BL}.
\end{proof}

\begin{corollary}
\label{cor:common-boundary-layer-augmented}
Assume the hypotheses of Lemma~\ref{lem:common-boundary-layer}.  Let
$\chi_B^0\in W^{1,\infty}_\#(Y^*)$, let $|\Lambda|\le R$, and define
\[
 R_{\eps,\mathrm{int}}^{B,\Lambda}u
 :=u+\eps\sum_{j=1}^N(\chi_B^j)^\eps\partial_j u
   +\eps\Lambda(\chi_B^0)^\eps u.
\]
There is a unique $B^\eps$-harmonic layer
$\beta_{\eps,B}^{\Lambda}[u]$ such that
\[
 \beta_{\eps,B}^{\Lambda}[u]
 +R_{\eps,\mathrm{int}}^{B,\Lambda}u\in V_\eps,
\]
and, uniformly for $|\Lambda|\le R$,
\begin{align*}
 \|\nabla\beta_{\eps,B}^{\Lambda}[u]\|_{L^2(\OmE)}
 &\le C_{B,R}\eps^{1/2}\|u\|_{H^2(\Omega)},\\
 \|\beta_{\eps,B}^{\Lambda}[u]\|_{L^2(\OmE)}
 +\|d\nabla\beta_{\eps,B}^{\Lambda}[u]\|_{L^2(\OmE)}
 +\eps^{1/2}\|\beta_{\eps,B}^{\Lambda}[u]\|_{L^2(\GamE)}
 &\le C_{B,R}\eps\|u\|_{H^2(\Omega)}.
\end{align*}
\end{corollary}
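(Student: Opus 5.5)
The plan is to repeat the proof of Lemma~\ref{lem:common-boundary-layer} verbatim. The augmented interior expansion $R_{\eps,\mathrm{int}}^{B,\Lambda}u$ differs from $R_{\eps,\mathrm{int}}^Bu$ only by the extra term $\eps\Lambda(\chi_B^0)^\eps u$. This term has exactly the same structure as the corrector terms, with $\chi_B^0$ playing the role of some $\chi_B^j$ and $u$ playing the role of $\partial_ju$. Since $u$ is even one derivative better than $\partial_ju$, all strip estimates are at least as good.

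\emph{Uniqueness and existence.} Uniqueness follows from ellipticity of $a_{\eps,B}$ on $V_\eps$: the difference of two layers lies in $V_\eps$ and is $B^\eps$-harmonic, hence zero. For existence, with $\vartheta_\eps$ as in the proof of Lemma~\ref{lem:common-boundary-layer}, I would set
\[
 \zeta^\Lambda_{\eps,B}[u]:=-\vartheta_\eps R_{\eps,\mathrm{int}}^{B,\Lambda}u .
\]
Then $\zeta^\Lambda_{\eps,B}[u]+R_{\eps,\mathrm{int}}^{B,\Lambda}u=(1-\vartheta_\eps)R_{\eps,\mathrm{int}}^{B,\Lambda}u$. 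This function vanishes on $\partial\Omega$, because $u=0$ there and every term carries a factor $u$ or $\partial_ju$ multiplied by a bounded function. It also vanishes near the cut holes, where $\vartheta_\eps=1$. So it lies in $V_\eps$ by the capacitary definition. The layer is then $\beta^\Lambda_{\eps,B}[u]=\zeta^\Lambda_{\eps,B}[u]+\delta^\Lambda$, where $\delta^\Lambda\in V_\eps$ solves the analogue of \eqref{eq:common-delta-equation}, obtained by Lax--Milgram.

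\emph{Bounds on the lifting.} The additional gradient contribution is
\[
 \nabla\bigl(\eps\Lambda(\chi_B^0)^\eps u\bigr)=\Lambda(\nabla_y\chi_B^0)^\eps u+\eps\Lambda(\chi_B^0)^\eps\nabla u .
\]
Since $\chi_B^0\in W^{1,\infty}$, this is bounded pointwise by $C|\Lambda|(|u|+\eps|\nabla u|)$. On the strip $\{d<R_1\eps\}$, \eqref{eq:strip-L2-u} and \eqref{eq:strip-grad-u} give $\|u\|_{L^2}\le C\eps^{3/2}\|u\|_{H^2}$ and $\eps\|\nabla u\|_{L^2}\le C\eps^{3/2}\|u\|_{H^2}$. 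The term $\nabla\vartheta_\eps\,\eps\Lambda(\chi_B^0)^\eps u$ is bounded by $C|\Lambda|\,|u|$, which is also $O(\eps^{3/2})$ there. Hence \eqref{eq:common-zeta-energy}--\eqref{eq:common-zeta-bounds} hold for $\zeta^\Lambda_{\eps,B}[u]$, with constants multiplied by $1+R$. The $L^2$ and weighted bounds of $\zeta^\Lambda$ itself are even smaller.

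\emph{Bounds on the correction and conclusion.} The remaining steps are identical to those in Lemma~\ref{lem:common-boundary-layer}.
\begin{enumerate}[label=\textup{(\roman*)}]
 \item The energy bound for $\delta^\Lambda$ follows by testing the equation with $\delta^\Lambda$ itself.
 \item The $L^2$ bound follows by duality with $z_{\eps,B}$ solving \eqref{eq:common-B-resolvent} with $f=\delta^\Lambda$. This uses Lemma~\ref{lem:common-RBL}, which is independent of $\Lambda$, and the fact that $\nabla\zeta^\Lambda_{\eps,B}[u]$ is supported in $\{d<R_1\eps\}$.
 \item The weighted bound follows by testing with $d^2\delta^\Lambda$.
 \item The trace bound follows from the rescaled cell trace inequality.
\end{enumerate}
All constants depend on $\Lambda$ only through $\|\zeta^\Lambda\|$, so they are uniform for $|\Lambda|\le R$. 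The only point needing a little care is checking that the lifting lies in $V_\eps$ near points where $\partial\Omega$ meets cut holes. There, it suffices that $1-\vartheta_\eps$ vanishes on $\{d<(C_Y+1)\eps\}$, exactly as before. I expect no genuine obstacle.
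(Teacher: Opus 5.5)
Your proposal is correct and uses the same machinery as the paper: the cut-off lifting, the energy test, duality through Lemma~\ref{lem:common-RBL}, the test function $d^2\delta$, and the rescaled trace inequality. The only difference is organizational: the paper writes $\beta_{\eps,B}^{\Lambda}[u]=\beta_{\eps,B}[u]+\Lambda\gamma_\eps[u]$ by linearity and shows that the extra layer $\gamma_\eps[u]$ is even smaller ($O(\eps^{3/2})$ in energy and $O(\eps^2)$ in the other norms), whereas you rerun the lemma once with the augmented lifting.

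One small correction: $(1-\vartheta_\eps)R_{\eps,\mathrm{int}}^{B,\Lambda}u$ vanishes near $\partial\Omega$ because $\vartheta_\eps=1$ on $\{d<(C_Y+1)\eps\}$, not because each term carries a factor $u$ or $\partial_ju$. The traces $\partial_ju|_{\partial\Omega}$ need not vanish, which is why the exterior datum is oscillatory; your closing remark already gives the correct reason.
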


\begin{proof}
By linearity,
\[
 \beta_{\eps,B}^{\Lambda}[u]
 =\beta_{\eps,B}[u]+\Lambda\gamma_\eps[u],
\]
where $\gamma_\eps[u]$ is $B^\eps$-harmonic and satisfies
\[
 \gamma_\eps[u]+\eps(\chi_B^0)^\eps u\in V_\eps.
\]
Set
\[
 \zeta_\eps^0:=-\vartheta_\eps\,\eps(\chi_B^0)^\eps u.
\]
Then $\zeta_\eps^0=-\eps(\chi_B^0)^\eps u$ on $D_\eps$,
$\operatorname{supp}\zeta_\eps^0\subset\{d<R_0\eps\}$, and
\begin{align*}
 \nabla\zeta_\eps^0
 &=-\eps(\chi_B^0)^\eps u\,\nabla\vartheta_\eps
   -\vartheta_\eps(\nabla_y\chi_B^0)^\eps u
   -\eps\vartheta_\eps(\chi_B^0)^\eps\nabla u.
\end{align*}
Using $\eps|\nabla\vartheta_\eps|\le C$ and
$\chi_B^0,\nabla_y\chi_B^0\in L^\infty$, we obtain
\begin{align*}
 \|\nabla\zeta_\eps^0\|_{L^2(\OmE)}
 &\le C\Bigl(
 \|u\|_{L^2(\{d<R_0\eps\})}
 +\eps\|\nabla u\|_{L^2(\{d<R_0\eps\})}
 \Bigr) \le C\Bigl(
 \eps^{3/2}\|u\|_{H^2(\Omega)}
 +\eps\,\eps^{1/2}\|u\|_{H^2(\Omega)}
 \Bigr)\\
 &\le C\eps^{3/2}\|u\|_{H^2(\Omega)},
\end{align*}
where Lemma~\ref{lem:strips} was used in the second line.
Similarly,
\[
 \|\zeta_\eps^0\|_{L^2(\OmE)}
 \le C\eps\|u\|_{L^2(\{d<R_0\eps\})}
 \le C\eps^{5/2}\|u\|_{H^2(\Omega)},\qquad \|d\nabla\zeta_\eps^0\|_{L^2(\OmE)}
 \le R_0\eps\|\nabla\zeta_\eps^0\|_{L^2(\OmE)}
 \le C\eps^{5/2}\|u\|_{H^2(\Omega)}.
\]

Let $\delta_\eps^0:=\gamma_\eps[u]-\zeta_\eps^0\in V_\eps$. Since
$\gamma_\eps[u]$ is $B^\eps$-harmonic,
\[
 \int_{\OmE}B^\eps\nabla\delta_\eps^0\cdot\nabla v\,dx
 =-\int_{\OmE}B^\eps\nabla\zeta_\eps^0\cdot\nabla v\,dx
 \qquad\forall v\in V_\eps.
\]
Taking $v=\delta_\eps^0$ and using ellipticity gives
\begin{align*}
 \alpha_B\|\nabla\delta_\eps^0\|_{L^2(\OmE)}^2
 &\le \beta_B
 \|\nabla\zeta_\eps^0\|_{L^2(\OmE)}
 \|\nabla\delta_\eps^0\|_{L^2(\OmE)},
\end{align*}
so that
\[
 \|\nabla\delta_\eps^0\|_{L^2(\OmE)}
 \le C\|\nabla\zeta_\eps^0\|_{L^2(\OmE)}
 \le C\eps^{3/2}\|u\|_{H^2(\Omega)}.
\]
Hence
\[
 \|\nabla\gamma_\eps[u]\|_{L^2(\OmE)}
 \le
 \|\nabla\delta_\eps^0\|_{L^2(\OmE)}
 +\|\nabla\zeta_\eps^0\|_{L^2(\OmE)} \le C\eps^{3/2}\|u\|_{H^2(\Omega)}.
\]
For the $L^2$ estimate, let $z_\eps^0\in V_\eps$ solve
\eqref{eq:common-B-resolvent} with $f=\delta_\eps^0$. By symmetry and the
preceding variational identity,
\[
 \|\delta_\eps^0\|_{L^2(\OmE)}^2
 =-\int_{\OmE}B^\eps\nabla\zeta_\eps^0\cdot\nabla z_\eps^0\,dx \le \beta_B
 \|\nabla\zeta_\eps^0\|_{L^2(\{d<R_0\eps\})}
 \|\nabla z_\eps^0\|_{L^2(\{d<R_0\eps\})}.
\]
Lemma~\ref{lem:common-RBL} gives
\[
 \|\nabla z_\eps^0\|_{L^2(\{d<R_0\eps\})}
 \le C\eps^{1/2}\|\delta_\eps^0\|_{L^2(\OmE)},
\]
and therefore
\[
 \|\delta_\eps^0\|_{L^2(\OmE)}^2
 \le C\eps^{3/2}\|u\|_{H^2}
          \eps^{1/2}\|\delta_\eps^0\|_{L^2(\OmE)} \le C\eps^2\|u\|_{H^2}
          \|\delta_\eps^0\|_{L^2(\OmE)}.
\]
Hence
\[
 \|\delta_\eps^0\|_{L^2(\OmE)}
 \le C\eps^2\|u\|_{H^2(\Omega)},
\]
and thus
\[
 \|\gamma_\eps[u]\|_{L^2(\OmE)}
 \le \|\delta_\eps^0\|_{L^2(\OmE)}
      +\|\zeta_\eps^0\|_{L^2(\OmE)} \le C(\eps^2+\eps^{5/2})\|u\|_{H^2}
 \le C\eps^2\|u\|_{H^2}.
\]

For the weighted gradient, test the variational identity with
$d^2\delta_\eps^0\in V_\eps$. Since $|\nabla d|\le1$ a.e. and
$d\le R_0\eps$ on $\operatorname{supp}\nabla\zeta_\eps^0$, setting
\[
 X:=\|d\nabla\delta_\eps^0\|_2,
 \qquad X_1:=\|\delta_\eps^0\|_2,
 \qquad X_2:=\|\nabla\zeta_\eps^0\|_2,
\]
gives
\[
 \alpha_BX^2
 \le C_BX_1X+C_B\eps X_2X+C_B\eps X_2X_1.
\]
Using
\[
 X_1\le C\eps^2\|u\|_{H^2},
 \qquad
 X_2\le C\eps^{3/2}\|u\|_{H^2},
\]
and Young's inequality,
\begin{align*}
 X^2
 &\le \frac12X^2
   +C\bigl(\eps^4+\eps^5+\eps^{9/2}\bigr)\|u\|_{H^2}^2,
\end{align*}
so
\[
 \|d\nabla\delta_\eps^0\|_2
 \le C\eps^2\|u\|_{H^2}.
\]
Consequently,
\[
 \|d\nabla\gamma_\eps[u]\|_2
 \le \|d\nabla\delta_\eps^0\|_2
      +\|d\nabla\zeta_\eps^0\|_2 \le C(\eps^2+\eps^{5/2})\|u\|_{H^2}
 \le C\eps^2\|u\|_{H^2}.
\]

Finally, the rescaled trace inequality on the complete cells yields
\[
 \eps\|\gamma_\eps[u]\|_{L^2(\GamE)}^2
 \le C\left(
 \|\gamma_\eps[u]\|_{L^2(\OmE)}^2
 +\eps^2\|\nabla\gamma_\eps[u]\|_{L^2(\OmE)}^2
 \right) \le C\left(\eps^4+\eps^5\right)\|u\|_{H^2}^2,
\]
whence
\[
 \eps^{1/2}\|\gamma_\eps[u]\|_{L^2(\GamE)}
 \le C\eps^2\|u\|_{H^2(\Omega)}.
\]
Finally, since $|\Lambda|\le R$,
\begin{align*}
 \|\nabla\beta_{\eps,B}^{\Lambda}[u]\|_{L^2}
 &\le \|\nabla\beta_{\eps,B}[u]\|_{L^2}
 +R\|\nabla\gamma_\eps[u]\|_{L^2}
 \le C_{B,R}\eps^{1/2}\|u\|_{H^2},\\
 \|\beta_{\eps,B}^{\Lambda}[u]\|_{L^2}
 +\|d\nabla\beta_{\eps,B}^{\Lambda}[u]\|_{L^2}
 +\eps^{1/2}\|\beta_{\eps,B}^{\Lambda}[u]\|_{L^2(\GamE)}
 &\le C_{B,R}(\eps+\eps^2)\|u\|_{H^2}
 \le C_{B,R}\eps\|u\|_{H^2}.
\end{align*}
\end{proof}

We also need compatible fixed-domain extensions.

\begin{lemma}
\label{lem:common-reconstructor-extension}
Assume the hypotheses of Lemma~\ref{lem:common-boundary-layer}.  Let
$u\in H^2(\Omega)\cap H_0^1(\Omega)$.  For $|\Lambda|\le R$, let
\[
 R_{\eps,\mathrm{int}}^{B,\Lambda}u
 =u+\eps\sum_{j=1}^N(\chi_B^j)^\eps\partial_j u
  +\eps\Lambda(\chi_B^0)^\eps u,
\]
where the last term may be omitted, and let
\[
 \mathcal R_{\eps,B}^{\Lambda}u
 :=R_{\eps,\mathrm{int}}^{B,\Lambda}u
   +\beta_{\eps,B}^{\Lambda}[u]\in V_\eps
\]
be the corresponding exact-layer reconstructor.  Then
\begin{equation}
 \|P_\eps(\mathcal R_{\eps,B}^{\Lambda}u)-u\|_{L^2(\Omega)}
 \le C_{B,R}\eps\|u\|_{H^2(\Omega)},
 \label{eq:common-reconstructor-extension}
\end{equation}
uniformly for $|\Lambda|\le R$.  For the non-augmented reconstructor the
same estimate holds with a constant independent of $R$.
\end{lemma}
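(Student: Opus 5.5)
We split $P_\eps(\mathcal R_{\eps,B}^{\Lambda}u)-u$ into the extension of the oscillating corrector and the extension of the exact layer. Since $P_\eps$ is linear and $\mathcal R_{\eps,B}^{\Lambda}u\in V_\eps$, but the two pieces $R_{\eps,\mathrm{int}}^{B,\Lambda}u$ and $\beta_{\eps,B}^{\Lambda}[u]$ need not lie in $V_\eps$ separately, we do not apply $P_\eps$ to each of them directly. Instead we use the explicit liftings from the proofs of Lemma~\ref{lem:common-boundary-layer} and Corollary~\ref{cor:common-boundary-layer-augmented}. Write
\[
\mathcal R_{\eps,B}^{\Lambda}u=(1-\vartheta_\eps)R_{\eps,\mathrm{int}}^{B,\Lambda}u+\delta_{\eps},
\qquad \delta_\eps:=\delta_{\eps,B}+\Lambda\delta_\eps^0\in V_\eps,
\]
which holds because $\beta^\Lambda_{\eps,B}[u]=\zeta_{\eps,B}[u]+\Lambda\zeta^0_\eps+\delta_\eps$ and $\zeta_{\eps,B}[u]+\Lambda\zeta_\eps^0=-\vartheta_\eps R^{B,\Lambda}_{\eps,\mathrm{int}}u$. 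Both summands lie in $V_\eps$: the first vanishes near all of $D_\eps$ off $\partial\Om$ and has zero trace on $\partial\Om$. Hence $P_\eps$ may be applied to each.

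For the layer part, apply the local $L^2$ bound \eqref{eq:common-cut-extension-local-L2} on complete cells. On incomplete cells use the construction of Lemma~\ref{lem:cut-uniform-extension}, which fills holes by zero, so there $P_\eps\delta_\eps=\delta_\eps$ on the solid and $0$ in the holes. This gives
\[
\|P_\eps\delta_\eps\|_{L^2(\Om)}\le C\bigl(\|\delta_\eps\|_{L^2(\OmE)}+\eps\|\nabla\delta_\eps\|_{L^2(\OmE)}\bigr).
\]
From the two proofs just cited, $\|\delta_\eps\|_{L^2}\le C\eps\|u\|_{H^2}$ by \eqref{eq:common-delta-L2} together with the $\eps^2$ bound for $\delta^0_\eps$. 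In addition, $\|\nabla\delta_\eps\|_{L^2}\le C\eps^{1/2}\|u\|_{H^2}$, so the gradient term contributes $\eps^{3/2}$. The layer part is therefore $O(\eps)$, uniformly for $|\Lambda|\le R$.

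For the interior part, set $w_\eps:=(1-\vartheta_\eps)R_{\eps,\mathrm{int}}^{B,\Lambda}u$. Then
\[
w_\eps-u=-\vartheta_\eps u+(1-\vartheta_\eps)\,\eps\Bigl(\sum_j(\chi_B^j)^\eps\partial_ju+\Lambda(\chi_B^0)^\eps u\Bigr)\quad\text{in }\OmE .
\]
Since the correctors are bounded, its $L^2(\OmE)$ norm is at most $C\|u\|_{L^2(\{d<R_0\eps\})}+C\eps(1+R)\|u\|_{H^1}\le C_R\eps\|u\|_{H^2}$, using \eqref{eq:strip-L2-u}. Its gradient is $O(1+R)\|u\|_{H^2}$ in $L^2$: the oscillating term $(\nabla_y\chi^j)^\eps\partial_ju$ is $O(1)$, and the cut-off terms are controlled by \eqref{eq:strip-grad-u}--\eqref{eq:strip-L2-u}. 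The difficulty is that $P_\eps w_\eps-u$ is not the extension of $w_\eps-u$, because $u$ is defined in the holes and is not, in general, the extension of $u|_{\OmE}$. We handle this cell by cell. On a complete cell where $\vartheta_\eps=0$, the function $P_\eps w_\eps-u$ equals $E$-type extension of $w_\eps-u$ plus $(P_\eps u|_{\OmE}-u)$ (by linearity of the cell extension, after replacing $P_\eps$ on $\mathcal D_\eps$-type functions by its cellwise formula). The first term is bounded via \eqref{eq:common-cut-extension-local-L2} by $C(\|w_\eps-u\|_{L^2}+\eps\|\nabla(w_\eps-u)\|_{L^2})=O(\eps)$. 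The second term is bounded by a cellwise Poincar\'e comparison: both $P_\eps(u|_{\OmE})$ and $u$ are $H^1$ functions on the cell agreeing on $Y^*$-part, so by scaled Poincar\'e their difference in the hole is $\le C\eps\|\nabla u\|+C\eps\|\nabla P_\eps u\|$ on that cell, summing to $C\eps\|u\|_{H^1}$. On the strip $\{d<(R_0+C_Y)\eps\}$ we bound everything crudely: $\|u\|_{L^2(\text{strip})}=O(\eps^{3/2})$, and $P_\eps w_\eps$ is bounded there by the same local estimates applied to $w_\eps$, which is itself $O(\eps)$ in $L^2$ of the strip.

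The main obstacle is this last step: making rigorous that $P_\eps$ acts cellwise linearly on $H^1$ functions that are not in $V_\eps$, so that $P_\eps(u|_{\OmE})$ can be compared with $u$. We would try first to reuse the explicit formula from the proof of Lemma~\ref{lem:cut-uniform-extension} directly on $w_\eps$ and on $u$, which is valid away from cells meeting $\partial\Om$. Summing the $O(\eps)$ contributions gives \eqref{eq:common-reconstructor-extension}, with constants depending on $R$ only through the $\Lambda\chi_B^0$ terms; without the augmented term the constants are therefore independent of $R$.
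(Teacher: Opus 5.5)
Your proposal is correct and is essentially the paper's argument. On complete cells you use the rescaled constant-preserving extension, split off $E_{\eps,k}(u|_{\eps(k+Y^*)})-u$ and control it by a scaled Poincar\'e inequality; on the cells meeting $\partial\Omega$ you use zero filling together with the strip bound for $u$. The paper does not need your preliminary splitting $(1-\vartheta_\eps)R_{\eps,\mathrm{int}}^{B,\Lambda}u+\delta_\eps$: it applies the cellwise formula directly to $\mathcal R_{\eps,B}^{\Lambda}u\in V_\eps$, using only $\|\mathcal R_{\eps,B}^{\Lambda}u-u\|_{L^2(\OmE)}+\eps\|\nabla(\mathcal R_{\eps,B}^{\Lambda}u-u)\|_{L^2(\OmE)}\le C\eps\|u\|_{H^2(\Omega)}$. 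The obstacle you flag does not arise, because $E_{\eps,k}$ is linear and bounded on all of $H^1(\eps(k+Y^*))$, so only cellwise extensions of $u$ are required, never a global $P_\eps(u|_{\OmE})$.
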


\begin{proof}
Write $R_\eps=\mathcal R_{\eps,B}^{\Lambda}u$.  The corrector bounds and
Lemma~\ref{lem:common-boundary-layer}, or its augmented version, give
\begin{equation}
 \|R_\eps-u\|_{L^2(\OmE)}
 +\eps\|\nabla(R_\eps-u)\|_{L^2(\OmE)}
 \le C_{B,R}\eps\|u\|_{H^2(\Omega)}.
 \label{eq:common-reconstructor-solid-error}
\end{equation}

We estimate the extension separately on complete cells and on the boundary
cells.  Let $Y_{\eps,k}=\eps(k+Y)$ with
$k\in I_\eps^{\rm int}$.  Denote by $E_{\eps,k}$ the local cell extension
used in the construction of $P_\eps$.  Since it preserves constants,
scaled Poincar\'e and \eqref{eq:common-cut-extension-local-L2} imply
\begin{equation}
 \|E_{\eps,k}(R_\eps)-u\|_{L^2(Y_{\eps,k})}
 \le C\Bigl(
 \|R_\eps-u\|_{L^2(Y_{\eps,k}\cap\OmE)}
 +\eps\|\nabla(R_\eps-u)\|_{L^2(Y_{\eps,k}\cap\OmE)}\Bigr) +C\eps\|\nabla u\|_{L^2(Y_{\eps,k})}.
 \notag
\end{equation}
Indeed, the last term is obtained by applying the constant-preserving local
extension to $u|_{Y_{\eps,k}\cap\OmE}$ and comparing it with $u$ on the
whole cell modulo the cell average.  Squaring, summing over the disjoint
complete cells, and using \eqref{eq:common-reconstructor-solid-error} gives
an $O(\eps)\|u\|_{H^2}$ bound on their union.

All cells not in $I_\eps^{\rm int}$ which meet $\Omega$ are contained in a
strip $S_{C_Y\eps}:=\{d<C_Y\eps\}$, with $C_Y$ as in \eqref{eq:omitted-cells-strip}.
On their physical solid part, $P_\eps R_\eps=R_\eps$; inside their cut
Dirichlet perforation pieces the chosen continuation is zero.  Hence
\[
 \|P_\eps R_\eps-u\|_{L^2(S_{C_Y\eps})}
 \le \|R_\eps-u\|_{L^2(\OmE\cap S_{C_Y\eps})}
      +\|u\|_{L^2(S_{C_Y\eps})} \le C_{B,R}\eps\|u\|_{H^2(\Omega)},
\]
where the last inequality uses \eqref{eq:common-reconstructor-solid-error}
and Hardy's inequality for $u\in H_0^1(\Omega)$.  Combining the two regions
proves \eqref{eq:common-reconstructor-extension}.
\end{proof}

For later use we introduce the energy and mass forms
\begin{align}
 a_\eps(u,v)
 &:=\int_{\OmE}A^\eps\nabla u\cdot\nabla v\,dx,
 \label{eq:ae}\\
 b_\eps(u,v)
 &:=\int_{\OmE}q^\eps uv\,dx
 +\eps\int_{\GamE}\rho^\eps uv\,dS_x,
 \label{eq:be}
\end{align}
for $u,v\in V_\eps$.
The variational form of the microscopic problem is
\begin{equation}
 a_\eps(u_\eps,v)=\lambda_\eps b_\eps(u_\eps,v)
 \qquad\forall v\in V_\eps.
 \label{eq:spectral-micro}
\end{equation}
The form $a_\eps$ is coercive on $V_\eps$, while \eqref{eq:Poincare} and \eqref{eq:scaled-trace} give
\begin{equation}
 |b_\eps(u,v)|\le C\norm{u}_{V_\eps}\norm{v}_{V_\eps}.
 \label{eq:b-bounded}
\end{equation}
For each fixed $\eps$, the operator $\Keps$ defined below is compact. Indeed,
$v\mapsto v|_{\OmE}$ is compact from $V_\eps$ into $L^2(\OmE)$, since it
factors through $P_\eps:V_\eps\to H_0^1(\Om)$ and the Rellich embedding
$H_0^1(\Om)\hookrightarrow L^2(\Om)$; and the trace $V_\eps\to L^2(\GamE)$
is compact because $\GamE$ is the union of finitely many surfaces
$\eps(k+\Gam)$, $k\in I_\eps^{\rm int}$, each of which is the boundary of the
hole of a complete cell, so that the trace can be taken in the Lipschitz
domain $\eps(k+\Ys)$, where it is compact. No regularity of $\OmE$ near
$\partial\Om$ is involved.

\subsection{Spectral setting and two-sided microscopic spectrum}
\label{subsec:prelim-spectral-existence}

Equip $V_\eps$ with the scalar product $a_\eps$. By the Riesz representation theorem, define
\begin{equation}
 a_\eps(\Keps f,v)=b_\eps(f,v),
 \qquad f,v\in V_\eps.
 \notag
\end{equation}
The operator $\Keps$ is compact and self-adjoint in $(V_\eps,a_\eps)$. If $\tau_\eps\ne0$ is an eigenvalue of $\Keps$, then
\begin{equation}
 \lambda_\eps=\tau_\eps^{-1}
 \label{eq:prelim-lambda-tau}
\end{equation}
is an eigenvalue of \eqref{eq:spectral-micro}, with the same eigenspace and multiplicity.

Recall the cell mass form $\mathfrak b_Y$ from \eqref{eq:intro-cell-mass-form}. We first verify the two-sided cell spectrum.

\begin{proposition}
\label{prop:prelim-two-sign-spectrum}
Assume \textup{(H2)}. Then $\mathfrak b_Y$ is indefinite: there exist
$\psi_+,\psi_-\in H^1_\#(\Ys)$ such that
\begin{equation}
 \mathfrak b_Y(\psi_+,\psi_+)>0,
 \qquad
 \mathfrak b_Y(\psi_-,\psi_-)<0.
 \notag
\end{equation}
If, in addition, \textup{(H1)} holds, then for every
$\eps\in(0,1)$ such that $I_\eps^{\rm int}\ne\varnothing$, the spectrum of
\eqref{eq:spectral-micro} consists of two infinite real sequences, counted with
multiplicity,
\begin{equation}
 0<\lambda_{\eps,1}^+\le\lambda_{\eps,2}^+\le\cdots\longrightarrow+\infty,
 \qquad
 0>\lambda_{\eps,1}^-\ge\lambda_{\eps,2}^-\ge\cdots\longrightarrow-\infty.
 \label{eq:double-spectrum}
\end{equation}
If $B(x_0,r_\Om)\subset\Om$, then $I_\eps^{\rm int}\ne\varnothing$ for
\begin{equation}
 0<\eps<\eps_\Om:=r_\Om/\sqrt N,
 \label{eq:eps-Omega}
\end{equation}
since every point of the closed cell $\eps(k+\overline Y)$ containing $x_0$
lies at distance at most $\eps\sqrt N<r_\Om$ from $x_0$.
\end{proposition}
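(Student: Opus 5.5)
The argument has three parts: indefiniteness of $\mathfrak b_Y$ on the cell, transfer of this indefiniteness to $b_\eps$ on $V_\eps$, and a spectral theorem for $\Keps$.

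\emph{Indefiniteness of $\mathfrak b_Y$.} By (H2), the positive part of the signed measure $\mu=q\,dy+\rho\,dS_\Gam$ on $\overline{\Ys}$ is nontrivial. If $|\{q>0\}|>0$, pick a Lebesgue point $y_0\in\Ys$ of $q\ind_{\{q>0\}}$ with density $1$ for $\{q>0\}$. If $\mathcal H^{N-1}(\{\rho>0\})>0$, pick a surface Lebesgue point $y_0\in\Gam$ of $\rho$ with $\rho(y_0)>0$. In either case take bumps $\psi_\delta\in C_c^\infty(B(y_0,\delta))$ with $0\le\psi_\delta\le1$ and $\psi_\delta=1$ on $B(y_0,\delta/2)$, restricted to $\Ys$ and periodized; for $\delta$ small these lie in $H^1_\#(\Ys)$. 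The dominant contribution to $\mathfrak b_Y(\psi_\delta,\psi_\delta)$ must come from the positive set, so the two cases need different comparisons:
\begin{itemize}
\item In the bulk case $y_0\in\Ys$, for $\delta<\dist(y_0,\Gam)$ there is no surface term, and the Lebesgue-point property gives $\int q\psi_\delta^2\ge c\delta^N$ with $c>0$.
\item In the surface case $y_0\in\Gam$, the surface term is $\ge c\delta^{N-1}$, while the bulk term is $O(\delta^N)$.
\end{itemize}
Hence $\mathfrak b_Y(\psi_\delta,\psi_\delta)>0$ for $\delta$ small, giving $\psi_+$. The same argument on the negative parts gives $\psi_-$.

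\emph{Transfer to $b_\eps$.} Fix a complete cell $\eps(k+Y)$, $k\in I_\eps^{\rm int}$. The bumps above can be chosen supported in a compact subset of $Y$, because $T\Subset Y$ and $y_0$ may be taken in the interior of $Y$ off a null set. The rescaled function $v_\pm(x)=\psi_\pm((x-\eps k)/\eps)$, extended by zero outside the cell, then lies in $\mathcal D_\eps\subset V_\eps$. A change of variables gives
\[
 b_\eps(v_\pm,v_\pm)=\eps^N\mathfrak b_Y(\psi_\pm,\psi_\pm),
\]
since the surface factor $\eps$ compensates the scaling of $dS$. So $b_\eps$ takes both signs on $V_\eps$.

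\emph{Spectrum of $\Keps$.} $\Keps$ is compact and self-adjoint on $(V_\eps,a_\eps)$. Its kernel is $\{f:b_\eps(f,\cdot)=0\}$, and zero eigenvalues are discarded in \eqref{eq:prelim-lambda-tau}. The spectral theorem gives a sequence of eigenvalues accumulating only at $0$. The nonzero positive eigenvalues of $\Keps$ are then $\tau_{\eps,1}^+\ge\tau_{\eps,2}^+\ge\cdots\downarrow0$, and it remains to show there are infinitely many of them. Since $a_\eps(\Keps f,f)=b_\eps(f,f)$, the max–min principle shows that the number of positive eigenvalues of $\Keps$ equals the maximal dimension of a subspace on which $b_\eps$ is positive definite.

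That subspace is infinite-dimensional. Choose disjoint balls $B(y_j,\delta_j)$ inside $Y$ with each $y_j$ a positive Lebesgue/surface point. Lebesgue points are dense in the support of $\mu^+$, so infinitely many exist, and disjoint bumps $\psi_j$ with $\mathfrak b_Y(\psi_j,\psi_j)>0$ can be built. These are $\mathfrak b_Y$-orthogonal because their supports are disjoint, so any finite span of the rescaled $\psi_j$ is $b_\eps$-positive. Thus $\Keps$ has infinitely many positive eigenvalues tending to $0$, and $\lambda^+_{\eps,j}=1/\tau^+_{\eps,j}\to+\infty$. The negative branch is symmetric.

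\emph{Main obstacle.} The delicate point is the Lebesgue-point construction in the surface case. One must make sure the bulk term cannot cancel the surface term at scale $\delta$, which is handled by the order comparison $\delta^{N-1}$ versus $\delta^N$. One must also produce infinitely many disjoint such points near $\Gam$, which follows from $\mathcal H^{N-1}(\{\rho>0\})>0$, since such a set is not finite.

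The final claim is elementary: the cell $\eps(k+\overline Y)$ containing $x_0$ has diameter $\eps\sqrt N<r_\Om$, so it lies in $B(x_0,r_\Om)\subset\Om$, and $I_\eps^{\rm int}\ne\varnothing$.
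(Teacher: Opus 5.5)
Your proof is correct, and its global structure is the paper's. You build cell directions with pairwise disjoint supports, rescale them into one complete cell $\eps(k+Y)$ so that $b_\eps(v,v)=\eps^N\mathfrak b_Y(\psi,\psi)$ and cross terms vanish, count the positive and negative Morse indices of the compact operator $\Keps$ by max--min, and pass to $\lambda=1/\tau$.

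Where you differ is in how the cell directions are built. The paper works at a fixed scale:
\begin{itemize}
\item In the bulk case it approximates in $L^2$ the indicators of disjoint compact pieces of $\{\sigma q\ge\delta_\sigma\}$ by functions in $C_c^\infty(\Ys)$.
\item In the surface case it takes smooth localized traces $h$ with $\sigma\int_\Gam\rho h^2\,dS_y>0$. It lifts them by a bounded right inverse of the trace and multiplies by $\eta(d_\Gam/\delta)$. The bulk contribution is then $o(1)$ as $\delta\downarrow0$, while the surface contribution stays fixed.
\item The $m$ disjoint directions come from nonatomicity and inner regularity.
\end{itemize}
You instead shrink a ball bump at a Lebesgue point and compare the orders $\delta^{N-1}$ (surface) and $\delta^N$ (bulk). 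This avoids trace liftings and the density of traces in $L^2(\Gam)$. In exchange it needs Lebesgue differentiation for $\mathcal H^{N-1}$ on $\Gam$ and the bound $\mathcal H^{N-1}(\Gam\cap B(y_0,r))\simeq r^{N-1}$ for small $r$. Both hold because $\Gam$ is Lipschitz (surface measure is Ahlfors regular), and you should state them.

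Two points need tightening.
\begin{itemize}
\item In the bulk case, a point of density one for $\{q>0\}$ that is a Lebesgue point of $q\ind_{\{q>0\}}$ does not by itself give $\int q\psi_\delta^2\,dy\ge c\delta^N$, because the Lebesgue value of $q^+$ there could be $0$. Take instead a Lebesgue point of $q$ with $q(y_0)>0$; almost every point of $\{q>0\}$ qualifies.
\item For infinite Morse index you need neither density of good points in the support of $\mu^+$ nor an infinite disjoint family. For each fixed $m$, pick $m$ distinct good points (a set of positive measure is infinite). Then take radii small enough for both positivity and pairwise disjointness. This is exactly what the paper does for each $m$.
\end{itemize}
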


\begin{proof}
We prove a slightly stronger statement that is convenient for the microscopic
problem: for every $m\ge1$ and each sign $\sigma\in\{-1,+1\}$ there are
functions
\[
 \psi_{\sigma,1},\ldots,\psi_{\sigma,m}\in H^1_\#(\Ys)
\]
with pairwise disjoint supports within the fixed sign family, all supported
away from $\partial Y$, such that
\begin{equation}
 \sigma\,\mathfrak b_Y(\psi_{\sigma,\ell},
 \psi_{\sigma,\ell})>0,
 \qquad \ell=1,\ldots,m.
 \label{eq:cell-many-sign-directions}
\end{equation}

Fix $\sigma\in\{-1,+1\}$. By \textup{(H2)}, at least one of the sets
\[
 E_\sigma^q:=\{\sigma q>0\}\subset\Ys,
 \qquad
 E_\sigma^\Gamma:=\{\sigma\rho>0\}\subset\Gam
\]
has positive measure in its natural dimension.

If $|E_\sigma^q|>0$, there exists $\delta_\sigma>0$ such that
$\{\sigma q\ge\delta_\sigma\}$ has positive measure. Since Lebesgue
measure is nonatomic, split this set into $m$ pairwise disjoint measurable
pieces of positive measure. By inner regularity choose pairwise disjoint
compact subsets $K_{\sigma,\ell}\Subset\Ys$ of positive measure. Choose
pairwise disjoint open neighborhoods compactly contained in $\Ys$ and
approximate the corresponding characteristic functions in $L^2$ by
functions $\psi_{\sigma,\ell}\in C_c^\infty(\Ys)$. Their traces on
$\Gam$ vanish and, for sufficiently accurate approximations,
\[
 \sigma\,\mathfrak b_Y(\psi_{\sigma,\ell},
 \psi_{\sigma,\ell})
 =\sigma\int_{\Ys} q(y)\,|\psi_{\sigma,\ell}(y)|^2\,dy>0.
\]

If instead $\mathcal H^{N-1}(E_\sigma^\Gamma)>0$, choose
$\delta_\sigma>0$ such that
$\{\sigma\rho\ge\delta_\sigma\}$ has positive surface measure. Surface
measure on the Lipschitz manifold $\Gam$ is nonatomic, so we may split this
set into $m$ pairwise disjoint measurable pieces of positive surface measure
and, by inner regularity, choose compact subsets
$K_{\sigma,\ell}^\Gamma$ of positive surface measure. Surround them by
pairwise disjoint relatively open sets $U_{\sigma,\ell}\subset\Gam$.
Because traces on the Lipschitz boundary $\Gam$ of ambient
$C_c^\infty$ functions are dense in $L^2(\Gam)$, and the approximation can
be localized in the relatively open set $U_{\sigma,\ell}$, we may choose
$h_{\sigma,\ell}\in H^{1/2}(\Gam)$ as such a smooth ambient trace, with
$\operatorname{supp}h_{\sigma,\ell}\Subset U_{\sigma,\ell}$, so that
\begin{equation}
 \sigma\int_\Gam\rho(y)\,|h_{\sigma,\ell}(y)|^2\,dS_y>0.
 \label{eq:surface-sign-traces}
\end{equation}
Let $E_\Gamma:H^{1/2}(\Gam)\to H^1(\Ys)$ be a bounded right inverse of the
trace map. Because $\overline T\Subset Y$, choose pairwise disjoint ambient
neighborhoods of the $U_{\sigma,\ell}$ whose closures stay away from
$\partial Y$, together with Lipschitz cutoffs $\xi_{\sigma,\ell}$ supported
in these neighborhoods and equal to $1$ near the supports of the
corresponding traces. Let
$d_\Gamma(y)=\operatorname{dist}(y,\Gam)$ and fix
$\eta\in W^{1,\infty}([0,\infty))$ with $\eta(0)=1$ and
$\eta(t)=0$ for $t\ge1$. Set
\[
 \psi_{\sigma,\ell}^{\delta}(y)
 :=\xi_{\sigma,\ell}(y)E_\Gamma h_{\sigma,\ell}(y)
   \eta\!\left(\frac{d_\Gamma(y)}{\delta}\right).
\]
For every $\delta>0$ this belongs to $H^1_\#(\Ys)$, is supported away from
$\partial Y$, and has trace $h_{\sigma,\ell}$ on $\Gam$. Moreover,
\[
 \|\psi_{\sigma,\ell}^{\delta}\|_{L^2(\Ys)}\longrightarrow0
 \qquad(\delta\downarrow0),
\]
because its support is contained in a shrinking solid-side neighborhood of
$\Gam$. Since $q\in L^\infty(\Ys)$,
\[
 \int_{\Ys} q|\psi_{\sigma,\ell}^{\delta}|^2\,dy=o(1),
\]
whereas the surface term is the fixed quantity in
\eqref{eq:surface-sign-traces}. Hence, for sufficiently small $\delta$,
\eqref{eq:cell-many-sign-directions} holds. The ambient neighborhoods are
disjoint, so the supports are pairwise disjoint within this sign family. We
fix such a $\delta$ and drop it from the notation.

Applying this construction separately to $\sigma=+1$ and $\sigma=-1$
proves the indefiniteness of $\mathfrak b_Y$. Notice that the two signs may
come from different components of the combined measure. In particular, the
argument includes both limiting cases $\rho\equiv0$ with sign-changing $q$
and $q\equiv0$ with sign-changing $\rho$, as well as the case where the two
signs are carried by different components of the combined measure.

We now pass to the microscopic problem. Fix $m\ge1$ and $\eps$ with
$I_\eps^{\rm int}\ne\varnothing$, and choose $k\in I_\eps^{\rm int}$. The
cell functions are supported in a compact subset of $Y$ disjoint from
$\partial Y$; the rescaled functions below therefore vanish near
$\partial(\eps(k+Y))$, hence near $\Dep$, and belong to $V_\eps$. For the cell functions constructed above define
\[
 z_{\eps,\sigma,\ell}(x)
 :=\begin{cases}
 \psi_{\sigma,\ell}(x/\eps-k),&x\in\eps(k+\Ys),\\
 0,&x\in\OmE\setminus\eps(k+\Ys).
 \end{cases}
\]
A change of variables gives
\begin{equation}
 b_\eps(z_{\eps,\sigma,\ell},z_{\eps,\sigma,\ell})
 =\eps^N\mathfrak b_Y(\psi_{\sigma,\ell},\psi_{\sigma,\ell}),
 \notag
\end{equation}
and disjoint supports within each sign family give vanishing cross terms.
Thus $b_\eps$ is positive definite on one $m$-dimensional span and negative
definite on the other. Since $m$ is arbitrary, the positive and negative
Morse indices of the compact self-adjoint operator $\Keps$ are both
infinite. The spectral theorem yields infinitely many eigenvalues of each
sign for $\Keps$, accumulating only at $0$; taking reciprocals gives
\eqref{eq:double-spectrum}.
\end{proof}

\begin{remark}[Large $\eps$]
\label{rem:large-eps}
The condition $I_\eps^{\rm int}\ne\varnothing$ cannot be dropped. If $\Om$
contains no complete cell, then $\GamE=\varnothing$, $\Dep$ contains every
hole, and $b_\eps(v,v)=\int_{\OmE}q^\eps v^2\,dx$. A sign carried only by $\rho$
then produces no eigenvalue of that sign; in the purely Steklov case
$q\equiv0$ the problem has no eigenvalue at all. All results of the paper
are asymptotic and are stated for $\eps<\eps_\Om$ or smaller.
\end{remark}

We next record normalization and exclusion of zero.

\begin{proposition}
\label{prop:prelim-signed-normalization}
Assume \textup{(H1)}, and let
$q\in L^\infty_\#(\Ys)$ and $\rho\in L^\infty_\#(\Gam)$ be real; no sign
condition on the combined spectral measure is required in this proposition.
Let $(\lambda_\eps,u_\eps)$ be any nonzero eigencouple of
\eqref{eq:spectral-micro}. Then $b_\eps(u_\eps,u_\eps)\ne0$ and $u_\eps$ may be normalized by
\begin{equation}
 b_\eps(u_\eps,u_\eps)=\operatorname{sign}(\lambda_\eps).
 \label{eq:prelim-signed-normalization}
\end{equation}
With this normalization,
\begin{align}
 a_\eps(u_\eps,u_\eps)&=|\lambda_\eps|,
 \label{eq:prelim-energy-identity}\\
 \frac{|\lambda_\eps|}{\beta_0}
 \le\|u_\eps\|_{V_\eps}^2
 &\le\frac{|\lambda_\eps|}{\alpha_0},
 \label{eq:prelim-eigenfunction-energy-estimate}\\
 \|u_\eps\|_{L^2(\OmE)}
 +\eps^{1/2}\|u_\eps\|_{L^2(\GamE)}
 &\le C|\lambda_\eps|^{1/2}.
 \label{eq:prelim-eigenfunction-L2-trace-estimate}
\end{align}
There exists $c_*>0$, independent of sufficiently small $\eps$, such that every nonzero microscopic eigenvalue satisfies
\begin{equation}
 |\lambda_\eps|\ge c_*.
 \notag
\end{equation}
\end{proposition}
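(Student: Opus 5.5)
The plan is to test the eigenvalue equation \eqref{eq:spectral-micro} with $v=u_\eps$ itself and read off all claims from the resulting identity, using only the uniform bounds of Subsection~\ref{subsec:uniform-preliminaries}.

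First, since $0$ is not an eigenvalue of \eqref{eq:spectral-micro} ($a_\eps$ is coercive, so $a_\eps(u,v)=0$ for all $v$ forces $u=0$), we have $\lambda_\eps\ne0$. Testing with $v=u_\eps$ gives $a_\eps(u_\eps,u_\eps)=\lambda_\eps b_\eps(u_\eps,u_\eps)$. By (H1) and $u_\eps\ne0$ in $V_\eps$, the left side is at least $\alpha_0\|u_\eps\|_{V_\eps}^2>0$. Hence $b_\eps(u_\eps,u_\eps)\ne0$ and it has the sign of $\lambda_\eps$. Since $b_\eps$ is a quadratic form, rescaling $u_\eps$ by $|b_\eps(u_\eps,u_\eps)|^{-1/2}$ yields \eqref{eq:prelim-signed-normalization}. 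With this normalization, the identity reads $a_\eps(u_\eps,u_\eps)=\lambda_\eps\operatorname{sign}(\lambda_\eps)=|\lambda_\eps|$, which is \eqref{eq:prelim-energy-identity}.

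Next, the two-sided ellipticity bound $\alpha_0\|\nabla u\|^2\le a_\eps(u,u)\le\beta_0\|\nabla u\|^2$ gives \eqref{eq:prelim-eigenfunction-energy-estimate} directly. Combining it with \eqref{eq:mass-energy-bound} gives \eqref{eq:prelim-eigenfunction-L2-trace-estimate}, with $C$ equal to the constant of \eqref{eq:mass-energy-bound} divided by $\alpha_0^{1/2}$. The only uniformity needed here is that of \eqref{eq:mass-energy-bound}, which is already established through the extension $P_\eps$ and the cellwise trace inequality.

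Finally, for the lower bound on $|\lambda_\eps|$, use \eqref{eq:b-bounded}: $1=|b_\eps(u_\eps,u_\eps)|\le C_b\|u_\eps\|_{V_\eps}^2\le C_b|\lambda_\eps|/\alpha_0$. Hence $|\lambda_\eps|\ge c_*:=\alpha_0/C_b$. The constant $C_b$ depends on $\|q\|_\infty$, $\|\rho\|_\infty$ and the uniform Poincaré and scaled-trace constants \eqref{eq:Poincare}--\eqref{eq:scaled-trace}, all independent of $\eps$ for $\eps$ small. There is no genuine obstacle in this argument. The one point to check is that the constants in \eqref{eq:Poincare} and \eqref{eq:scaled-trace} are uniform in $\eps$ even though $\OmE$ may be non-Lipschitz or disconnected near $\partial\Om$. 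This holds because both estimates were derived via $P_\eps$ (Lemma~\ref{lem:cut-uniform-extension}) and via complete cells only, and no sign structure of $q,\rho$ enters.
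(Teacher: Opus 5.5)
Your proposal is correct and follows the paper's proof essentially step for step: you test with $u_\eps$, use coercivity to get $b_\eps(u_\eps,u_\eps)\ne0$, rescale, and then apply ellipticity and \eqref{eq:mass-energy-bound}. The only difference is in the lower bound. You argue directly with the quadratic form, $1=|b_\eps(u_\eps,u_\eps)|\le C_b\|u_\eps\|_{V_\eps}^2\le C_b|\lambda_\eps|/\alpha_0$, whereas the paper phrases the same estimate as $\|\Keps\|_{\mathcal L(V_\eps,a_\eps)}\le C_0$, hence $|\tau_\eps|\le C_0$; the two are equivalent.
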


\begin{proof}
Testing \eqref{eq:spectral-micro} with $u_\eps$ gives
\[
 a_\eps(u_\eps,u_\eps)=\lambda_\eps b_\eps(u_\eps,u_\eps).
\]
Coercivity of $a_\eps$ shows that $b_\eps(u_\eps,u_\eps)$ cannot vanish. Multiplication of $u_\eps$ by a scalar gives \eqref{eq:prelim-signed-normalization}, and then \eqref{eq:prelim-energy-identity}. Ellipticity yields \eqref{eq:prelim-eigenfunction-energy-estimate}, while \eqref{eq:mass-energy-bound} gives \eqref{eq:prelim-eigenfunction-L2-trace-estimate}.

From \eqref{eq:b-bounded} and ellipticity,
\[
 |b_\eps(u,v)|
 \le C_0a_\eps(u,u)^{1/2}a_\eps(v,v)^{1/2}
\]
with $C_0$ independent of $\eps$. Hence $\|\Keps\|_{\mathcal L(V_\eps,a_\eps)}\le C_0$. If $\tau_\eps\ne0$ is an eigenvalue of $\Keps$, then $|\tau_\eps|\le C_0$, and \eqref{eq:prelim-lambda-tau} gives $|\lambda_\eps|\ge C_0^{-1}=:c_*$.
\end{proof}

All fixed-index bounds use the following min--max principle.

\begin{lemma}
\label{lem:signed-courant-fischer}
Let $(H,a)$ be a real Hilbert space and let $c$ be a bounded symmetric
bilinear form on $H$ whose associated operator $\mathcal K$,
$a(\mathcal Kf,v)=c(f,v)$ for $f,v\in H$, is compact. Order the eigenvalues of
$a(u,v)=\Lambda c(u,v)$ for all $v\in H$, with $u\in H\setminus\{0\}$, as
$0<\Lambda_1^+\le\Lambda_2^+\le\cdots$ and
$0>\Lambda_1^-\ge\Lambda_2^-\ge\cdots$. Then, for $\sigma\in\{-1,+1\}$ and
every $k$ for which $\Lambda_k^\sigma$ exists,
\begin{equation}
 \sigma\Lambda_k^\sigma
 =\min_{\substack{L\subset H,\ \dim L=k\\ \sigma c|_L>0}}
   \ \max_{0\ne v\in L}\frac{a(v,v)}{\sigma c(v,v)}.
 \notag
\end{equation}
In particular, if $L\subset H$ is $k$-dimensional and
$\sigma c(v,v)\ge\gamma\|v\|^2_*$, $a(v,v)\le M\|v\|_*^2$ on $L$ for some
norm $\|\cdot\|_*$, then $\sigma\Lambda_k^\sigma\le M/\gamma$.
\end{lemma}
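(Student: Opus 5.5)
The plan is to reduce the signed min--max to the classical Courant--Fischer principle for the compact self-adjoint operator $\mathcal K$ on $(H,a)$. Since $c$ is bounded and symmetric, $\mathcal K$ is self-adjoint in $(H,a)$, and compactness is assumed. If $a(u,v)=\Lambda c(u,v)$ for all $v$ with $u\ne0$, then $a(u,v)=\Lambda a(\mathcal Ku,v)$, so $\mathcal Ku=\Lambda^{-1}u$; here $\Lambda\neq0$ because $a(u,u)>0$. Conversely, every nonzero eigenvalue $\tau$ of $\mathcal K$ gives $\Lambda=\tau^{-1}$. Hence the eigenvalues $\sigma\Lambda_k^\sigma$ are exactly $1/\tau_k^\sigma$, where $\tau_1^\sigma\ge\tau_2^\sigma\ge\cdots>0$ are the eigenvalues of $\sigma\mathcal K$ that are positive, counted with multiplicity.

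Next I apply the max--min form of Courant--Fischer for the compact self-adjoint operator $\sigma\mathcal K$ at its positive eigenvalues:
\[
 \tau_k^\sigma=\max_{\dim L=k}\ \min_{0\ne v\in L}\frac{\sigma c(v,v)}{a(v,v)},
\]
valid whenever at least $k$ positive eigenvalues exist. I will include the short justification. For ``$\ge$'', take $L$ to be the span of the first $k$ eigenvectors and use $a$-orthogonality, which gives $\sigma c(v,v)\ge\tau_k^\sigma a(v,v)$ on $L$. For ``$\le$'', any $k$-dimensional $L$ meets the $a$-orthogonal complement of the first $k-1$ eigenvectors nontrivially. On that complement the spectral theorem for $\sigma\mathcal K$ gives $\sigma c(v,v)\le\tau_k^\sigma a(v,v)$, since the remaining spectrum is at most $\tau_k^\sigma$; the nonpositive part only helps. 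The maximum is attained and is positive, so we may restrict to subspaces $L$ with $\sigma c|_L>0$. On such $L$ the Rayleigh quotients are positive, and by finite-dimensional compactness the minimum over $L$ is a positive minimum.

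Taking reciprocals then turns the max--min into a min--max. For positive functions, $1/\max_L\min_v f=\min_L\max_v 1/f$. Subspaces $L$ on which $\sigma c$ fails to be positive definite have $\min_v\sigma c/a\le0$, so they never realize the maximum and are correctly excluded. This yields the displayed formula for $\sigma\Lambda_k^\sigma$.

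For the ``in particular'' part, observe that on $L$ the two bounds give $a(v,v)/(\sigma c(v,v))\le M/\gamma$. Also $\sigma c|_L>0$ because $\gamma>0$ and $\|\cdot\|_*$ is a norm. Plugging $L$ into the min--max gives $\sigma\Lambda_k^\sigma\le M/\gamma$.

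I do not expect a real obstacle. The one point needing care is the existence of $\Lambda_k^\sigma$. The converse direction also shows that any $k$-dimensional $L$ with $\sigma c|_L>0$ forces $\sigma\mathcal K$ to have at least $k$ positive eigenvalues. This follows from the same intersection argument: otherwise $L$ meets the complement of the positive eigenvectors, where $\sigma c\le0$. So the min--max is meaningful exactly when $\Lambda_k^\sigma$ exists.
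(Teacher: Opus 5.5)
Your proposal is correct and takes the same route as the paper: its proof is the one-line reduction to the Courant--Fischer principle for the positive eigenvalues of the compact self-adjoint operator $\sigma\mathcal K$ on $(H,a)$, followed by taking reciprocals via $\Lambda=1/\tau$. You supply the details the paper leaves implicit. Your observation that a $k$-dimensional $L$ with $\sigma c|_L>0$ already forces $\Lambda_k^\sigma$ to exist is a worthwhile addition for the ``in particular'' clause.
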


\begin{proof}
This is the Courant--Fischer principle for the positive eigenvalues of the
compact self-adjoint operator $\sigma\mathcal K$ on $(H,a)$, combined with
$\Lambda=1/\tau$.
\end{proof}

\subsection{A common fixed-cluster principle on varying spaces}
\label{subsec:common-rank-principle}

The approximation of isolated eigenvalue clusters of compact self-adjoint operators, including multiplicity, spectral projectors and eigenspaces, is classical; see, for instance, Kato \cite{Kato1976}, Osborn \cite{Osborn1975}, and Babu\v{s}ka--Osborn \cite{BabuskaOsborn1991}. In the present problem, however, both the Hilbert spaces and the operators depend on $\eps$. We therefore establish a fixed-cluster principle for this varying-space setting. It combines non-pollution, finite-dimensional quasimodes and Gram-matrix control to yield preservation of multiplicity together with quantitative estimates for eigenvalues, spectral projectors and aligned eigenspaces.

\begin{theorem}
\label{thm:abstract-fixed-cluster}
For $0<\eps<1$, let $(H_\eps,(\cdot,\cdot)_\eps)$ be a real
Hilbert space and let $\mathcal A_\eps$ be compact and self-adjoint.  Let
$E_0\subset X$ be an $r$-dimensional subspace of a fixed Hilbert space $X$,
let $(\mathcal I_\eps)_{0<\eps<1}$ satisfy
$\mathcal I_\eps:H_\eps\to X$, and fix
$\tau_0\ne0$ and $d_0$ such that $0<d_0<|\tau_0|$, and set
$I_0=(\tau_0-d_0,\tau_0+d_0)$.  Set
$\Pi_\eps=\ind_{I_0}(\mathcal A_\eps)$.  Assume:

\begin{enumerate}[label=\textup{(\roman*)},leftmargin=1.0cm]
\item \emph{Non-pollution.} For every sequence $\eps_n\to0$ and every
family $p_{\eps_n,1},\ldots,p_{\eps_n,s}$ of $(\cdot,\cdot)_{\eps_n}$-orthonormal
eigenvectors of $\mathcal A_{\eps_n}$ with eigenvalues in $I_0$, there is a
subsequence along which each $\mathcal I_{\eps_n}p_{\eps_n,i}$ converges
strongly in $X$, and the limits are linearly independent elements of $E_0$.

\item There are families of linear maps
$(J_\eps)_{0<\eps<1}$, $J_\eps:E_0\to H_\eps$, and numbers
$(\delta_\eps)_{0<\eps<1}$ with $\delta_\eps\to0$, together
with a norm $\|\cdot\|_0$ on $E_0$ and constants $c_0,C_0>0$, such that
\begin{equation}
 c_0\|\phi\|_0\le \|J_\eps\phi\|_\eps\le C_0\|\phi\|_0,\qquad \|(\mathcal A_\eps-\tau_0I)J_\eps\phi\|_\eps
 \le\delta_\eps\|J_\eps\phi\|_\eps.
 \label{eq:abstract-rank-residual}
\end{equation}
\end{enumerate}
Assume in addition that $E_0$ carries an inner product
$(\cdot,\cdot)_0$ and
\begin{equation}
 |(J_\eps\phi,J_\eps\psi)_\eps-(\phi,\psi)_0|
 \le C_G\delta_\eps\|\phi\|_0\|\psi\|_0.
 \label{eq:abstract-Gram-assumption}
\end{equation}
Then, for all small $\eps$,
\begin{equation}
 \operatorname{rank}\Pi_\eps=r,
 \qquad
 \|\Pi_\eps-Q_\eps\|_{\mathcal L(H_\eps)}
 \le \frac{\delta_\eps}{d_0},
 \notag
\end{equation}
where $Q_\eps$ is the orthogonal projector onto $J_\eps E_0$.  If
$\tau_{\eps,1},\ldots,\tau_{\eps,r}$ are the eigenvalues of
$\mathcal A_\eps$ in $I_0$, then
\begin{equation}
 \max_{1\le i\le r}|\tau_{\eps,i}-\tau_0|\le2\delta_\eps.
 \label{eq:abstract-complete-rates}
\end{equation}
Consequently, for every $C^1$ function $g$ near $\overline I_0$,
\begin{equation}
 \max_{1\le i\le r}|g(\tau_{\eps,i})-g(\tau_0)|\le C_g\delta_\eps.
 \label{eq:abstract-transformed-eigenvalues}
\end{equation}
If $(\phi_i)_{i=1}^r$ is an orthonormal basis of $E_0$ and
$(p_{\eps,i})_{i=1}^r$ an orthonormal eigenbasis of $\Ran\Pi_\eps$, there
exists a family $(\mathsf O_\eps)_{0<\eps<1}\subset O(r)$ such that, with
$\Phi_{\eps,i}=\sum_{j=1}^r(\mathsf O_\eps)_{ij}\phi_j$,
\begin{equation}
 \max_{1\le i\le r}\|p_{\eps,i}-J_\eps\Phi_{\eps,i}\|_\eps
 \le C\delta_\eps.
 \label{eq:abstract-orthogonal-alignment}
\end{equation}
The same conclusions hold qualitatively if the two-sided bound in
condition \textup{(ii)} remains uniform, while its relative residual is
$o(1)$ uniformly on the unit sphere of $E_0$ and the Gram defect in
\eqref{eq:abstract-Gram-assumption} is $o(1)$ there.  In that case the
projector, eigenvalue, and orthogonal-alignment errors above are all $o(1)$.

Let, moreover, $\Sigma_0\subset\mathbb R\setminus\{0\}$ be a closed
discrete subset of $\mathbb R\setminus\{0\}$ (the limit spectrum) with
$\tau_0\in\Sigma_0$, let $\mathcal U$ be the open half-line containing
$\tau_0$, and let $\mathcal J$ be a bounded open interval such that
\[
 \overline{\mathcal J}\subset\mathcal U,\qquad
 \overline{\mathcal J}\cap\Sigma_0=\{\tau_0\}.
\]
If condition \textup{(i)} holds, with $E_0=\{0\}$, for every open
interval $I$ such that $\overline I\subset\mathcal U\setminus\Sigma_0$,
then
\[
 \operatorname{rank}\ind_{\mathcal J}(\mathcal A_\eps)=r
\]
for all sufficiently small $\eps$.
\end{theorem}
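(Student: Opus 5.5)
The plan is to prove the quantitative statements in the order rank, projector, eigenvalues, alignment, and then the localisation statement. Throughout, the only spectral input is the elementary consequence of the spectral theorem for the compact self-adjoint $\mathcal A_\eps$:
\[
 \|(I-\Pi_\eps)x\|_\eps\le d_0^{-1}\|(\mathcal A_\eps-\tau_0I)x\|_\eps \qquad (x\in H_\eps),
\]
valid because $|\tau-\tau_0|\ge d_0$ on the spectrum outside $I_0$.

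\emph{Rank and projector.} First, the lower bound in \eqref{eq:abstract-rank-residual} makes $J_\eps$ injective, so $\dim J_\eps E_0=r$. Let $x\in H_\eps$ and write $Q_\eps x=J_\eps\phi$. The display above and the residual bound give
\[
 \|(I-\Pi_\eps)Q_\eps x\|_\eps\le (\delta_\eps/d_0)\|J_\eps\phi\|_\eps\le(\delta_\eps/d_0)\|x\|_\eps .
\]
For small $\eps$ this is $<1$, so $\Pi_\eps$ is injective on $\Ran Q_\eps$, and hence $\operatorname{rank}\Pi_\eps\ge r$.

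For the reverse inequality I argue by contradiction. Suppose $\operatorname{rank}\Pi_{\eps_n}\ge r+1$ along some sequence $\eps_n\to0$. Then there are $r+1$ orthonormal eigenvectors with eigenvalues in $I_0$. By non-pollution (i), along a subsequence their images under $\mathcal I_{\eps_n}$ converge to $r+1$ linearly independent elements of the $r$-dimensional space $E_0$, which is impossible. Hence $\operatorname{rank}\Pi_\eps=r$ for small $\eps$.

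For two orthogonal projectors of equal finite rank one has $\|\Pi-Q\|=\|(I-\Pi)Q\|=\|(I-Q)\Pi\|$. This is the standard gap identity, provable by comparing the singular values of $\Pi Q$ on $\Ran Q$ and of $Q\Pi$ on $\Ran\Pi$. It yields $\|\Pi_\eps-Q_\eps\|\le\delta_\eps/d_0$.

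\emph{Eigenvalues.} The map $\phi\mapsto\Pi_\eps J_\eps\phi$ is a bijection from $E_0$ onto $\Ran\Pi_\eps$. Set $y=\Pi_\eps J_\eps\phi$. Since $\Pi_\eps$ commutes with $\mathcal A_\eps$,
\[
 \|(\mathcal A_\eps-\tau_0)y\|_\eps=\|\Pi_\eps(\mathcal A_\eps-\tau_0)J_\eps\phi\|_\eps\le\delta_\eps\|J_\eps\phi\|_\eps .
\]
On the other hand, $\|y\|_\eps\ge(1-\delta_\eps/d_0)\|J_\eps\phi\|_\eps$. Therefore the restriction of $\mathcal A_\eps-\tau_0$ to the invariant subspace $\Ran\Pi_\eps$ has norm at most $\delta_\eps/(1-\delta_\eps/d_0)\le2\delta_\eps$ for small $\eps$. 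This is exactly \eqref{eq:abstract-complete-rates}. Estimate \eqref{eq:abstract-transformed-eigenvalues} then follows from the mean value theorem, since every $\tau_{\eps,i}$ lies in $I_0$.

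\emph{Alignment.} Let $G_\eps$ be the Gram matrix $\bigl((J_\eps\phi_i,J_\eps\phi_j)_\eps\bigr)_{i,j}$. By \eqref{eq:abstract-Gram-assumption}, $\|G_\eps-I\|\le C\delta_\eps$, so $\|G_\eps^{-1/2}-I\|\le C\delta_\eps$. The vectors $e_{\eps,i}=\sum_j(G_\eps^{-1/2})_{ij}J_\eps\phi_j$ form an orthonormal basis of $J_\eps E_0$ with $\|e_{\eps,i}-J_\eps\phi_i\|_\eps\le C\delta_\eps$.

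Next consider $M_\eps=\bigl((p_{\eps,i},e_{\eps,j})_\eps\bigr)_{i,j}$. The projector estimate gives $\|M_\eps^{\mathsf T}M_\eps-I\|\le C\delta_\eps^2$, since $M_\eps^{\mathsf T}M_\eps$ represents $Q_\eps\Pi_\eps Q_\eps$ on $\Ran Q_\eps$. Take $\mathsf O_\eps$ to be the orthogonal polar factor of $M_\eps$. Then
\[
 p_{\eps,i}=\Pi_\eps p_{\eps,i}=\sum_j (M_\eps)_{ij}e_{\eps,j}+(I-Q_\eps)p_{\eps,i},
\]
where the last term has norm at most $\delta_\eps/d_0$, and $\|M_\eps-\mathsf O_\eps\|\le C\delta_\eps$. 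Replacing $e_{\eps,j}$ by $J_\eps\phi_j$ costs a further $O(\delta_\eps)$, and \eqref{eq:abstract-orthogonal-alignment} follows.

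\emph{Qualitative version.} Every step above is linear in $\delta_\eps$ and in the Gram defect, and uses the two-sided bound only through uniform constants. The same argument therefore gives $o(1)$ errors when the residual and the Gram defect are only $o(1)$ uniformly on the unit sphere of $E_0$.

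\emph{Localisation on $\mathcal J$.} Shrink $d_0$ if needed so that $\overline I_0\subset\mathcal J$. Choose $d_0'<d_0$; the rank argument above, run with $d_0'$ in place of $d_0$, applies equally, and $(\tau_0-d_0',\tau_0+d_0')\subset I_0$. The set $\overline{\mathcal J}\setminus(\tau_0-d_0',\tau_0+d_0')$ is compact and lies in $\mathcal U\setminus\Sigma_0$. Cover it by finitely many open intervals $I$ whose closures lie in $\mathcal U\setminus\Sigma_0$.

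For each such $I$, condition (i) with $E_0=\{0\}$ forbids eigenvalues of $\mathcal A_\eps$ in $I$ for small $\eps$. Indeed, a single unit eigenvector would have images whose limit is a linearly independent, hence nonzero, element of $\{0\}$, which is absurd. Arguing along sequences, this holds uniformly for small $\eps$ on each of the finitely many intervals. Hence $\ind_{\mathcal J}(\mathcal A_\eps)=\ind_{(\tau_0-d_0',\tau_0+d_0')}(\mathcal A_\eps)$ has rank $r$.

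I expect the main care to be needed in the alignment step. The sharp constant $\delta_\eps/d_0$ in the projector estimate relies on the gap identity $\|\Pi-Q\|=\|(I-\Pi)Q\|$ for projectors of equal rank, and the rank equality must therefore be established first.
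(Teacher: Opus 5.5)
Your proposal is correct and follows the paper's proof essentially step by step: non-pollution gives the upper rank bound, the spectral-gap estimate on $J_\eps E_0$ gives the lower rank bound and, via the equal-rank gap identity, the projector bound $\delta_\eps/d_0$; alignment comes from $G_\eps^{-1/2}$ orthonormalization and the polar factor of the cross-Gram matrix, and localisation from a finite covering of $\overline{\mathcal J}$ minus a small interval around $\tau_0$. The only deviation is cosmetic: for \eqref{eq:abstract-complete-rates} you bound the norm of $\mathcal A_\eps-\tau_0$ on the invariant subspace $\Ran\Pi_\eps$ directly, whereas the paper reruns the rank argument on $(\tau_0-2\delta_\eps,\tau_0+2\delta_\eps)$; both yield the same $2\delta_\eps$ bound.
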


\begin{proof}
Condition (i) excludes more than $r$ eigenvectors in $I_0$ for small
$\eps$: otherwise $r+1$ orthonormal eigenvectors would have $r+1$ linearly
independent limits in the $r$-dimensional space $E_0$.  Conversely,
if $\operatorname{rank}\Pi_\eps<r$, some nonzero
$z\in J_\eps E_0$ satisfies $\Pi_\eps z=0$, and the spectral theorem gives
\[
 \|(\mathcal A_\eps-\tau_0I)z\|_\eps\ge d_0\|z\|_\eps,
\]
contradicting condition (ii) for small $\eps$.  Thus the rank is exactly $r$.  The
same spectral estimate gives
$\|(I-\Pi_\eps)Q_\eps\|\le\delta_\eps/d_0$; since the two projectors are
orthogonal and have the same finite rank, both this quantity and
$\|\Pi_\eps-Q_\eps\|$ equal the sine of the largest principal angle
between their ranges.

Applying the preceding rank argument to
$(\tau_0-2\delta_\eps,\tau_0+2\delta_\eps)$ shows that all $r$
eigenvalues in $I_0$ lie in that smaller interval, proving
\eqref{eq:abstract-complete-rates}; \eqref{eq:abstract-transformed-eigenvalues}
follows from the mean-value theorem.  Finally,
\eqref{eq:abstract-Gram-assumption} gives a frame
$J_\eps\phi_i$ with Gram matrix $I_r+O(\delta_\eps)$.  After the standard
$G_\eps^{-1/2}$ orthonormalization, the projector estimate bounds the
principal angles between this frame and $\Ran\Pi_\eps$ by
$O(\delta_\eps)$.  The polar factor of the cross-Gram matrix then yields
$\mathsf O_\eps\in O(r)$ and \eqref{eq:abstract-orthogonal-alignment}.  The same
argument with $o(1)$ errors gives the qualitative version.

For the last assertion, choose an interval $I_0$ as above with
$\overline I_0\subset\mathcal J$.  By the preceding argument, the rank of
$\ind_{I_0}(\mathcal A_\eps)$ is $r$ for small $\eps$.
The compact set $\overline{\mathcal J}\setminus I_0$ is contained in
$\mathcal U\setminus\Sigma_0$, which is open; hence it is covered by
finitely many open intervals $I_1,\ldots,I_M$ with
$\overline{I_m}\subset\mathcal U\setminus\Sigma_0$.  Condition
\textup{(i)} with $E_0=\{0\}$ gives $\ind_{I_m}(\mathcal A_\eps)=0$
for small $\eps$ and $1\le m\le M$.  Hence no additional eigenvalue
enters $\mathcal J$, and therefore the rank of
$\ind_{\mathcal J}(\mathcal A_\eps)$ equals $r$.
\end{proof}

\begin{remark}[Boundary-compatible diagonal recoveries]
\label{rem:boundary-compatible-diagonal}
Whenever a qualitative recovery uses profiles
$\phi_i^{(m)}\in C_c^\infty(\Omega)$, the diagonal $m=m(\eps)\to\infty$ is
chosen sufficiently slowly that
\[
 \eps\,\Big/\min_{1\le i\le r}\operatorname{dist}
 (\operatorname{supp}\phi_i^{(m(\eps))},\partial\Omega)\longrightarrow0.
\]
Then the profile and all its derivatives vanish throughout every boundary
cell for sufficiently small $\eps$, so the corresponding interior recovery
belongs to $V_\eps$.  This additional diagonal constraint is compatible
with all finitely many approximation and Gram-matrix requirements used
below.
\end{remark}

\section{Noncritical regimes: regular and factorized branches}
\label{sec:noncritical-regimes}

This section proves Theorems~\ref{thm:intro-regular} and \ref{thm:intro-factorized} and treats the case $m_{\rm eff}<0$.

\subsection{Regular positive branch: qualitative homogenization}
\label{subsec:regular-positive-qualitative}

We begin with regular-branch fixed-index bounds.

\begin{proposition}
\label{prop:prelim-positive-fixed-index}
Assume \textup{(H1)--(H2)} and $m_{\mathrm{eff}}>0$. For every fixed $k\ge1$ there exist $C_k>0$ and $\eps_k>0$ such that
\begin{equation}
 c_*\le\lambda_{\eps,j}^+\le C_k,
 \qquad j=1,\ldots,k,
 \quad 0<\eps<\eps_k,
 \notag
\end{equation}
where $c_*$ is the constant in Proposition~\ref{prop:prelim-signed-normalization}. For eigenfunctions normalized by
$b_\eps(u_{\eps,j}^+,u_{\eps,j}^+)=1$,
\begin{equation}
 \|u_{\eps,j}^+\|_{V_\eps}
 +\|P_\eps u_{\eps,j}^+\|_{H_0^1(\Om)}
 +\eps^{1/2}\|u_{\eps,j}^+\|_{L^2(\GamE)}
 \le C_k,
 \qquad j=1,\ldots,k.
 \notag
\end{equation}
\end{proposition}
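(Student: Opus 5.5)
The lower bound $c_*\le\lambda^+_{\eps,j}$ is immediate from Proposition~\ref{prop:prelim-signed-normalization}. The substance is the upper bound $\lambda^+_{\eps,k}\le C_k$, which we obtain from the signed Courant--Fischer principle (Lemma~\ref{lem:signed-courant-fischer}) with $\sigma=+1$, $H=V_\eps$, $a=a_\eps$, $c=b_\eps$. This requires, for every small $\eps$, a $k$-dimensional subspace $L_\eps\subset V_\eps$ on which $b_\eps$ is uniformly positive and $a_\eps$ is uniformly bounded, relative to a fixed norm.

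To build $L_\eps$, fix $k$ linearly independent functions $\phi_1,\dots,\phi_k\in C_c^\infty(\Om)$, for instance with disjoint supports, and let $L_\eps$ be the span of their restrictions to $\OmE$. Since each $\phi_i$ has compact support in $\Om$ and every point of $\Dep$ lies in $\partial\Om$ or in the strip $\{d<C_Y\eps\}$, the restrictions belong to $\mathcal D_\eps\subset V_\eps$ for $\eps<\eps_k$, where $\eps_k$ is chosen so that $C_Y\eps_k$ is below the distance of the supports to $\partial\Om$. The energy bound is immediate:
\[
a_\eps(v,v)\le\beta_0\|\nabla\phi\|_{L^2(\Om)}^2\le M\|\phi\|_*^2,
\]
where $\|\cdot\|_*$ is any fixed norm on the finite-dimensional space $E=\operatorname{span}\{\phi_i\}$.

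For the mass term, fix a basis and consider the Gram matrices $G_\eps=(b_\eps(\phi_i,\phi_l))_{i,l}$. Apply Proposition~\ref{prop:prelim-trace-mass-limit} with the constant families $u_\eps=\phi_i|_{\OmE}$, $v_\eps=\phi_l|_{\OmE}$. These are bounded in energy, and $P_\eps u_\eps\to\phi_i$ strongly in $L^2$: indeed $P_\eps u_\eps=\phi_i$ wherever $\phi_i$ is used, or one may simply note that $P_\eps u_\eps$ is bounded in $H^1_0$ and agrees with $\phi_i$ on $\OmE$, whose indicator converges weakly to $|\Ys|>0$. Combined with the uniqueness of limits, this gives the identification. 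The proposition then yields
\[
G_\eps\to m_{\rm eff}\bigl(\textstyle\int_\Om\phi_i\phi_l\,dx\bigr)_{i,l},
\]
which is positive definite because $m_{\rm eff}>0$. Since this is a finite-dimensional convergence, uniformly on the unit sphere of $E$, we get $b_\eps(v,v)\ge\tfrac12 m_{\rm eff}\|\phi\|_{L^2}^2=:\gamma\|\phi\|_*^2$ for $\eps<\eps_k$, after possibly shrinking $\eps_k$. Lemma~\ref{lem:signed-courant-fischer} then gives $\lambda^+_{\eps,j}\le\lambda^+_{\eps,k}\le M/\gamma=:C_k$ for $j\le k$. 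The existence of $\lambda^+_{\eps,k}$ is guaranteed by Proposition~\ref{prop:prelim-two-sign-spectrum}.

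For the eigenfunction bounds under the normalization $b_\eps(u,u)=1$, estimate \eqref{eq:prelim-eigenfunction-energy-estimate} gives $\|u^+_{\eps,j}\|_{V_\eps}^2\le C_k/\alpha_0$. Next, \eqref{eq:common-cut-extension-gradient} together with Poincar\'e in $H^1_0(\Om)$ bounds $\|P_\eps u\|_{H^1_0}$, and \eqref{eq:mass-energy-bound} bounds $\eps^{1/2}\|u\|_{L^2(\GamE)}$.

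The only delicate point is the identification of the strong $L^2$ limit of $P_\eps(\phi|_{\OmE})$, which is needed to invoke Proposition~\ref{prop:prelim-trace-mass-limit}. To bypass it, one can avoid that proposition and use Lemma~\ref{lem:periodic-potentials}(d) directly on $\phi_i\phi_l\in C_c^\infty$. The bulk part $\int(q^\circ)^\eps\phi_i\phi_l$ differs from $\bar q\int\phi_i\phi_l$ by $O(\eps)$. The surface part is handled by the cell-averaging argument in the proof of Proposition~\ref{prop:prelim-trace-mass-limit}, applied to the fixed smooth $Z=\phi_i\pm\phi_l$, which gives convergence to $\bar\rho\int\phi_i\phi_l$. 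This makes the Gram-matrix convergence explicit, at rate $O(\eps)$.
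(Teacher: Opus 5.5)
Your proposal is correct and follows the paper's proof essentially step by step: a fixed $k$-dimensional trial space in $C_c^\infty(\Om)$ whose restrictions lie in $V_\eps$ for small $\eps$, convergence of the mass matrix to $m_{\mathrm{eff}}\int_\Om\phi_i\phi_l\,dx$ via Proposition~\ref{prop:prelim-trace-mass-limit}, the trivial energy bound, Lemma~\ref{lem:signed-courant-fischer} with $\sigma=+1$, and finally the energy identity combined with the extension and trace estimates. You flag one point, $P_\eps(\phi|_{\OmE})\to\phi$ in $L^2(\Om)$, and your indicator-averaging argument settles it correctly; your other remark that ``$P_\eps u_\eps=\phi_i$'' is not literally true inside the holes of complete cells, but it is not needed, and the direct $O(\eps)$ route via Lemma~\ref{lem:periodic-potentials}(d) is valid but unnecessary here.
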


\begin{proof}
Choose a $k$-dimensional subspace $L_k\subset C_c^\infty(\Om)$. Since the union of the supports of a fixed basis of $L_k$ has positive distance from $\partial\Omega$, for all sufficiently small $\eps$ the restrictions of all functions in $L_k$ vanish throughout every boundary cell and therefore belong to $V_\eps$. By Proposition~\ref{prop:prelim-trace-mass-limit}, uniformly on the unit sphere of $L_k$,
\[
 b_\eps(\phi|_{\OmE},\psi|_{\OmE})
 \longrightarrow m_{\mathrm{eff}}\int_\Om\phi\psi\,dx.
\]
Thus the mass matrix on $L_k$ is positive definite for all sufficiently small $\eps$, with its smallest eigenvalue bounded below independently of $\eps$. On the same finite-dimensional space, ellipticity and smoothness of the basis imply a uniform upper bound for the energy matrix. Lemma~\ref{lem:signed-courant-fischer} with $\sigma=+1$ and $L=L_k$ gives $\lambda_{\eps,k}^+\le C_k$, and monotonicity gives the same bound for $1\le j\le k$. Proposition~\ref{prop:prelim-signed-normalization} supplies the lower bound $c_*$. The eigenfunction estimate follows from \eqref{eq:prelim-energy-identity}, the extension estimate \eqref{eq:extension}, and the scaled trace estimate.
\end{proof}

These bounds yield the qualitative limit.

\begin{theorem}
\label{thm:qual-positive}
Assume \textup{(H1)--(H2)} and $m_{\mathrm{eff}}>0$. For $j=1,\ldots,N$, let
$\chi^j\in H^1_\#(\Ys)/\R$ solve
\begin{equation}
 \int_{\Ys}A(y)(e_j+\nabla_y\chi^j)\cdot\nabla_y\psi\,dy=0
 \qquad\forall\psi\in H^1_\#(\Ys)/\R,
 \label{eq:cell-chi}
\end{equation}
and define
\begin{equation}
 B^j=A(e_j+\nabla_y\chi^j),
 \qquad
 \Ah e_j=\int_{\Ys}B^j(y)\,dy.
 \label{eq:Ah}
\end{equation}
Then $\Ah$ is symmetric positive definite. Let
\begin{equation}
 -\Div(\Ah\nabla\phi)=\mu\phi\quad\text{in }\Om,
 \qquad \phi=0\quad\text{on }\partial\Om,
 \label{eq:hom-Dirichlet-mu}
\end{equation}
have eigenvalues
$0<\mu_1^0\le\mu_2^0\le\cdots\to+\infty$, repeated according to multiplicity, and set
\begin{equation}
 \lambda_k^0:=\frac{\mu_k^0}{m_{\mathrm{eff}}}.
 \notag
\end{equation}
Then, for every fixed $k\ge1$,
\begin{equation}
 \lambda_{\eps,k}^+\longrightarrow\lambda_k^0.
 \label{eq:qual-lambda}
\end{equation}

Suppose that $\mu_k^0=\cdots=\mu_{k+r-1}^0=: \mu^0$ and set
\begin{equation}
 E_k^0:=\ker\bigl(-\Div(\Ah\nabla\cdot)-\mu^0I\bigr),
 \qquad \dim E_k^0=r.
 \notag
\end{equation}
Let $\mathcal J_k^+\Subset(0,+\infty)$ be any open interval such that
\begin{equation}
 \lambda_k^0\in \mathcal J_k^+,
 \qquad
 \overline{\mathcal J_k^+}\cap\{\lambda_j^0:j\ge1\}=\{\lambda_k^0\}.
 \notag
\end{equation}
Choose a basis $\{\phi_k,\ldots,\phi_{k+r-1}\}$ of $E_k^0$ such that
\begin{equation}
 m_{\mathrm{eff}}\int_\Om\phi_{k+i-1}\phi_{k+j-1}\,dx=\delta_{ij},
 \qquad 1\le i,j\le r,
 \notag
\end{equation}
and microscopic eigenfunctions such that
\begin{equation}
 b_\eps(u_{\eps,k+i-1}^+,u_{\eps,k+j-1}^+)=\delta_{ij},
 \qquad 1\le i,j\le r.
 \notag
\end{equation}
For all sufficiently small $\eps$, exactly $r$ positive microscopic
eigenvalues, counted with multiplicity, belong to $\mathcal J_k^+$. Equivalently,
the isolated microscopic cluster converging to $\lambda_k^0$ has rank
$r$ and no additional eigenvalue enters it. Moreover, there exists a family
of orthogonal matrices
\begin{equation}
 (\mathsf O_{\eps,k}^+)_{0<\eps<1},\qquad
 \mathsf O_{\eps,k}^+=(\mathsf o_{ij}^{\eps,+})_{1\le i,j\le r}\in O(r)
 \notag
\end{equation}
such that
\begin{equation}
 \max_{1\le i\le r}
 \left\|P_\eps u_{\eps,k+i-1}^+
 -\sum_{j=1}^{r}\mathsf o_{ij}^{\eps,+}\phi_{k+j-1}\right\|_{L^2(\Om)}
 \longrightarrow0.
 \notag
\end{equation}
After extraction of a subfamily, without relabeling, suppose that
$\mathsf O_{\eps,k}^+\to(\mathsf o_{ij}^+)\in O(r)$ and
$\phi_i^*:=\sum_{j=1}^{r}\mathsf o_{ij}^+\phi_{k+j-1}$, then
 \begin{align}
 \ind_{\OmE}P_\eps u_{\eps,k+i-1}^+
 &\stackrel{2}{\rightharpoonup}
 \ind_{Y^*}\phi_i^*,\notag\\
 \ind_{\OmE}\nabla P_\eps u_{\eps,k+i-1}^+
 &\stackrel{2}{\rightharpoonup}
 \ind_{Y^*}\left(\nabla_x\phi_i^*
 +\sum_{\ell=1}^N\nabla_y\chi^\ell\,\partial_\ell\phi_i^*\right),\notag
 \end{align}
for $i=1,\ldots,r$.
\end{theorem}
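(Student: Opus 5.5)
The plan is to apply the abstract fixed-cluster principle, Theorem~\ref{thm:abstract-fixed-cluster}, in its qualitative form. We take $H_\eps=(V_\eps,a_\eps)$, $\mathcal A_\eps=\Keps$, $X=L^2(\Om)$, $\mathcal I_\eps=P_\eps$, $\tau_0=1/\lambda_k^0$ and $E_0=E_k^0$. The limit spectrum is $\Sigma_0=\{1/\lambda_j^0\}$ on the half-line $\mathcal U=(0,\infty)$. Symmetry and positive definiteness of $\Ah$ are classical: they follow from the cell problem \eqref{eq:cell-chi} via $\Ah\xi\cdot\xi=\int_{\Ys}A(\xi+\nabla_y\chi^\xi)\cdot(\xi+\nabla_y\chi^\xi)\,dy$, where $\chi^\xi:=\sum_j\xi_j\chi^j$. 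Positivity then uses connectedness of $\Ys$, since $\xi\cdot y+\chi^\xi$ cannot be constant unless $\xi=0$.

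\emph{Step 1 (non-pollution).} Take $a_\eps$-orthonormal eigenvectors $p_{\eps,i}$ of $\Keps$ with eigenvalues $\tau_{\eps,i}\to\tau_i\in\overline I$. They are bounded in $V_\eps$, so Theorem~\ref{thm:two-scale-compactness-prelim}(iii) gives, along a subsequence, $P_\eps p_{\eps,i}\to p_i$ strongly in $L^2$ and $\nabla P_\eps p_{\eps,i}\stackrel{2}{\rightharpoonup}\nabla p_i+\nabla_y\widehat p_{1,i}$. We pass to the limit in $\tau_{\eps,i}a_\eps(p_{\eps,i},v)=b_\eps(p_{\eps,i},v)$ in two ways:
\begin{itemize}
\item with oscillating test functions $\eps\psi_1(x)\psi(x/\eps)$, to identify $\widehat p_{1,i}=\sum_\ell\chi^\ell\partial_\ell p_i$;
\item with $v=\phi\in C_c^\infty(\Om)$, which lies in $V_\eps$ for small $\eps$.
\end{itemize}
For the mass side we use Proposition~\ref{prop:prelim-trace-mass-limit}. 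This gives $\tau_i\int\Ah\nabla p_i\cdot\nabla\phi=m_{\rm eff}\int p_i\phi$.

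The delicate point is showing that the limits are nonzero and independent. For this we will show $b_\eps(p_{\eps,i},p_{\eps,j})\to m_{\rm eff}(p_i,p_j)_{L^2}$ by \eqref{eq:prelim-combined-mass-limit}, while $b_\eps(p_{\eps,i},p_{\eps,j})=\tau_{\eps,i}\delta_{ij}\to\tau_i\delta_{ij}$ with $\tau_i\ge\inf I>0$. Hence the Gram matrix $m_{\rm eff}(p_i,p_j)$ is a nonsingular diagonal matrix. If $\overline I\cap\Sigma_0=\varnothing$, a nonzero $p_i$ would be a Dirichlet eigenfunction of $-\Div(\Ah\nabla\cdot)$ with $\mu=m_{\rm eff}/\tau_i$, which is a contradiction. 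Therefore there are no eigenvalues, which is the $E_0=\{0\}$ case. If $I=I_0$ is small around $\tau_0$, then $p_i\in E_k^0$ and the limits are independent.

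\emph{Step 2 (quasimodes).} For $\phi\in E_k^0$ we approximate by $\phi^{(m)}\in C_c^\infty(\Om)$, chosen along the diagonal of Remark~\ref{rem:boundary-compatible-diagonal}. We then set $J_\eps\phi=\phi^{(m)}+\eps\sum_\ell(\chi^\ell)^\eps\partial_\ell\phi^{(m)}$, which lies in $V_\eps$, and normalize so that $\|J_\eps\phi\|_\eps^2\to\int\Ah\nabla\phi\cdot\nabla\phi$. The residual is controlled by duality: for $v\in V_\eps$,
\[
 a_\eps((\Keps-\tau_0)J_\eps\phi,v)=b_\eps(J_\eps\phi,v)-\tau_0a_\eps(J_\eps\phi,v).
\]
We need this to be $o(1)\|v\|_{V_\eps}$ uniformly. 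For the second term, Lemma~\ref{lem:common-cutoff-stability}, or directly the divergence-free potential of Lemma~\ref{lem:periodic-potentials}, reduces $a_\eps(J_\eps\phi,v)$ to $\int\Ah\nabla\phi\cdot\nabla P_\eps v+o(1)\|v\|$. For the first term, $b_\eps(J_\eps\phi,v)$ must be compared with $m_{\rm eff}\int\phi P_\eps v$ \emph{uniformly} in $v$. This is not just along sequences, so we need an estimate rather than a limit. We obtain it from the cell-averaging bound \eqref{eq:prelim-cell-average} and \cite[Lemma~4.4]{ChiadoNazarovPiatnitski2012}, which give an $O(\eps)\|\phi^{(m)}\|_{W^{1,\infty}}\|v\|_{V_\eps}$ bound. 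These two combine to zero via $\tau_0\mu^0=m_{\rm eff}$ in the equation. The Gram defect $a_\eps(J_\eps\phi,J_\eps\psi)\to\int\Ah\nabla\phi\cdot\nabla\psi=\mu^0 m_{\rm eff}^{-1}\cdot m_{\rm eff}(\phi,\psi)$ is handled similarly.

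\emph{Step 3 (conclusion).} Theorem~\ref{thm:abstract-fixed-cluster} yields the rank $r$ in $\mathcal J_k^+$ (via $\tau=1/\lambda$), eigenvalue convergence, and orthogonal alignment in $V_\eps$. Induction over clusters then gives \eqref{eq:qual-lambda} for each index. Alignment in energy transfers to $L^2$ via $P_\eps$ and \eqref{eq:common-cut-extension-gradient}, the Poincar\'e inequality, and $P_\eps J_\eps\phi\to\phi$. Renormalizing from $a_\eps$-orthonormal to $b_\eps$-orthonormal costs a factor $\tau_{\eps}^{-1/2}\to(\lambda_k^0)^{1/2}$, consistent with $m_{\rm eff}$-normalized $\phi$. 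Finally, the two-scale statements follow from Step 1's identification of the corrector together with \eqref{eq:prelim-characteristic-strong-two-scale}--\eqref{eq:prelim-solid-gradient-two-scale}.

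I expect the main obstacle to be the uniform residual bound for the mass term in Step 2, together with the identification in Step 1 near $\partial\Om$, where holes are cut. The choice of compactly supported recoveries avoids the boundary entirely, which is why the qualitative version suffices.
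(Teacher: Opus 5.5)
Your route is the paper's: Theorem~\ref{thm:abstract-fixed-cluster} on $(V_\eps,a_\eps)$ with $X=L^2(\Om)$ and $\mathcal I_\eps=P_\eps$, non-pollution from $b_\eps(p_{\eps,i},p_{\eps,j})=\tau_{\eps,i}\delta_{ij}$ together with Proposition~\ref{prop:prelim-trace-mass-limit}, compactly supported diagonal recoveries $\phi^{(m)}+\eps\sum_\ell(\chi^\ell)^\eps\partial_\ell\phi^{(m)}$, and the limit energy as inner product on $E_k^0$. You depart from the paper only in how you show that the dual-norm defect of the recoveries vanishes. There the diffusion half does not work as written.

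The theorem assumes only \textup{(H1)--(H2)}. So $\nabla_y\chi^j$ is merely in $L^2$, and the zero-extended fluxes $B_0^j=\ind_{\Ys}A(e_j+\nabla_y\chi^j)$ need not be bounded. Lemma~\ref{lem:common-cutoff-stability} requires $\Gamma\in C^{2,\alpha}$ and a H\"older matrix, since it uses $\widehat\chi_B^j\in W^{1,\infty}$. Lemma~\ref{lem:periodic-potentials}\textup{(a),(c)} is stated for $G\in L^\infty$, which is what produces $\Psi\in L^\infty$. Both are \textup{(H3)}-level inputs, so neither can be invoked here.

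Your mass half, by contrast, is fine under \textup{(H2)}:
\begin{itemize}
\item Lemma~\ref{lem:periodic-potentials}\textup{(b),(d)} needs only $q\in L^\infty$.
\item The cell averaging \eqref{eq:prelim-cell-average} needs only $\rho\in L^\infty$, and the omitted boundary strip is harmless because $\phi^{(m)}$ vanishes there.
\item The corrector contributions $\eps b_\eps((\chi^\ell)^\eps\partial_\ell\phi^{(m)},v)$ are $O(\eps C_m)\|v\|_{V_\eps}$ with only $\chi^\ell\in H^1_\#(\Ys)$.
\end{itemize}

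Two repairs are available.

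\emph{Keep your explicit route, with the potential built in $L^2$.} Solving $\Delta_y\Phi_i=G_i$ on the torus for $G=B_0^j-\Ah e_j\in L^2_\#(Y)^N$ gives an antisymmetric $\Psi\in H^1_\#(Y)$. The integration by parts of Lemma~\ref{lem:periodic-potentials}\textup{(c)}, with the regularity moved onto the smooth factor $w=\partial_j\phi^{(m)}$, then gives the bound $C\eps\|\Psi\|_{L^2(Y)}\|\nabla\partial_j\phi^{(m)}\|_{L^\infty}\|\nabla P_\eps v\|_{L^2}$. This term and $\eps\int_{\OmE}A^\eps(\chi^j)^\eps\nabla\partial_j\phi^{(m)}\cdot\nabla v\,dx$ are both $O(\eps\|\phi^{(m)}\|_{W^{2,\infty}})\|v\|_{V_\eps}$. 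They vanish if $m(\eps)\to\infty$ slowly enough, in addition to the constraint of Remark~\ref{rem:boundary-compatible-diagonal}.

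\emph{Or argue as the paper does, by contradiction.} Suppose the defect bound fails along some family with $\|v_\eps\|_{a_\eps}=1$, and extract the two-scale limits $(v_0,v_1)$. The recovery gradient converges strongly two-scale to $\nabla\phi^{(m)}+\sum_\ell\nabla_y\chi^\ell\partial_\ell\phi^{(m)}$. The cell problem \eqref{eq:cell-chi} and Proposition~\ref{prop:prelim-trace-mass-limit} then identify the limit defect as $\int_\Om\Ah\nabla\phi^{(m)}\cdot\nabla v_0\,dx-\lambda_k^0m_{\rm eff}\int_\Om\phi^{(m)}v_0\,dx$. This is $O(\|\phi^{(m)}-\phi\|_{H^1})$, and the argument uses nothing beyond \textup{(H1)--(H2)}.

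The repaired explicit route gives an $O(\eps C_m)$ defect for each fixed $m$, but the diagonal discards that rate anyway. The paper's argument gives no rate but needs only the minimal hypotheses.

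Two smaller points:
\begin{itemize}
\item Define $J_\eps$ on a fixed basis of $E_k^0$ and extend linearly, since Theorem~\ref{thm:abstract-fixed-cluster} needs linear maps.
\item Placing the cluster at the indices $k,\ldots,k+r-1$ requires counting from below. That count uses the uniform bound $\|\Keps\|\le C$ from Proposition~\ref{prop:prelim-signed-normalization}, so that finitely many $E_0=\{0\}$ intervals cover the part of $(0,C]$ above the cluster.
\end{itemize}
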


\begin{proof}
We first establish compactness for eigenpairs with bounded index.
Proposition~\ref{prop:prelim-positive-fixed-index} gives, for every fixed $k$, constants independent of $\eps$ such that the first $k$ positive eigenvalues stay in a compact subset of $(0,+\infty)$. Let
$(\lambda_\eps,u_\eps)_{0<\eps<1}$ be any family of such
eigencouples with index bounded by $k$, normalized by
$b_\eps(u_\eps,u_\eps)=1$. The spectral identity yields
$a_\eps(u_\eps,u_\eps)=\lambda_\eps$, so
$(P_\eps u_\eps)_{0<\eps<1}$ is bounded in
$H_0^1(\Om)$. Theorem~\ref{thm:two-scale-compactness-prelim} gives, after extraction,
\[
 P_\eps u_\eps\rightharpoonup u\ \hbox{in }H_0^1(\Om),\qquad
 P_\eps u_\eps\to u\ \hbox{in }L^2(\Om),
\]
and, after zero extension from the solid phase,
\[
 \ind_{\OmE}\nabla P_\eps u_\eps
 \stackrel{2}{\rightharpoonup}
 \ind_{Y^*}(\nabla_xu+\nabla_yu_1)
 \quad\text{in }L^2(\Om\times Y)^N.
\]

We next identify the problem satisfied by every limit pair. Let
$\varphi_0\in C_c^\infty(\Omega)$ and
$\varphi_1\in C_c^\infty(\Omega;C_\#^\infty(Y^*))$.  Their common
macroscopic support has positive distance from $\partial\Omega$; hence, for
all sufficiently small $\eps$, the oscillating test
$\varphi_0(x)+\eps\varphi_1(x,x/\eps)$ vanishes throughout every boundary
cell and belongs to $V_\eps$.  Testing with this function and using
Proposition~\ref{prop:prelim-trace-mass-limit} yields
\[
 \int_{\Om\times\Ys}A(y)(\nabla_xu+\nabla_yu_1)\cdot
 (\nabla_x\varphi_0+\nabla_y\varphi_1)\,dx\,dy =\lambda m_{\mathrm{eff}}\int_\Om u\varphi_0\,dx.
\]
The choice $\varphi_0=0$ gives $u_1=\sum_{j=1}^N\chi^j\partial_ju$, and then $\varphi_1=0$ gives
\[
 \int_\Om\Ah\nabla u\cdot\nabla\varphi_0\,dx
 =\lambda m_{\mathrm{eff}}\int_\Om u\varphi_0\,dx.
\]
The mass normalization and Proposition~\ref{prop:prelim-trace-mass-limit} give
$1=m_{\mathrm{eff}}\int_\Om|u|^2\,dx$, so $u\ne0$.

We finally establish spectral counting and convergence of the associated eigenspaces.
Put $\tau_k^+=1/\lambda_k^0$, $\Sigma_0^+:=\{m_{\mathrm{eff}}/\mu_j^0:j\ge1\}$
and $I_0^+:=(\tau_k^+-d_0,\tau_k^++d_0)$, where
$0<d_0<\dist\bigl(\tau_k^+,(\{0\}\cup\Sigma_0^+)\setminus\{\tau_k^+\}\bigr)$;
thus $\overline{I_0^+}\subset(0,\infty)$ and
$\overline{I_0^+}\cap\Sigma_0^+=\{\tau_k^+\}$. Let
$(p_{\eps,1},\ldots,p_{\eps,s})_{0<\eps<1}$ be a family
of $a_\eps$-orthonormal eigenvectors of $\Keps$ with eigenvalues
$\tau_{\eps,i}\in I_0^+$. After extraction,
$\tau_{\eps,i}\to\tau_i^*\in\overline{I_0^+}$ and, by the preceding
compactness argument, $P_\eps p_{\eps,i}\to p_{0,i}$ in $L^2(\Om)$.
Moreover,
\begin{equation}
 b_\eps(p_{\eps,i},p_{\eps,j})
 =a_\eps(\Keps p_{\eps,i},p_{\eps,j})
 =\tau_{\eps,i}\delta_{ij},
 \label{eq:qual-positive-cross-mass-micro}
\end{equation}
and Proposition~\ref{prop:prelim-trace-mass-limit} yields
\begin{equation}
 m_{\mathrm{eff}}\int_\Om p_{0,i}p_{0,j}\,dx
 =\tau_i^*\delta_{ij}.
 \label{eq:qual-positive-cross-mass-limit}
\end{equation}
Since $\tau_i^*>0$, each $p_{0,i}$ is nonzero, and the preceding limit
identification, applied with $\lambda=1/\tau_i^*$, shows that $p_{0,i}$ is
an eigenfunction of $-\Div(\Ah\nabla\cdot)$ in $H_0^1(\Om)$ with
eigenvalue $m_{\mathrm{eff}}/\tau_i^*$. Hence
$\tau_i^*\in\Sigma_0^+\cap\overline{I_0^+}=\{\tau_k^+\}$ and
$p_{0,i}\in E_k^0$. Thus the limit vectors are linearly independent and
$s\le r$, which verifies condition (i).
For the abstract principle we take explicitly
\[
 X=L^2(\Om),\qquad \mathcal I_\eps=P_\eps.
\]

For the lower bound, we use the basis of the statement and write
$\phi_i:=\phi_{k+i-1}$, $1\le i\le r$. For each $i$ choose
$\phi_i^{(m)}\in C_c^\infty(\Om)$ such that
$\phi_i^{(m)}\to\phi_i$ in $H_0^1(\Om)$.  If
$\phi=\sum_{i=1}^rc_i\phi_i$, put
\[
 \phi^{(m)}:=\sum_{i=1}^rc_i\phi_i^{(m)},
 \qquad
 R_{\eps,m}^+\phi
 :=\left.\left(\phi^{(m)}
 +\eps\sum_{j=1}^N(\chi^j)^\eps\partial_j\phi^{(m)}\right)\right|_{\OmE}.
\]
For each fixed $m$, the map $R_{\eps,m}^+:E_k^0\to V_\eps$ is linear for all sufficiently small $\eps$; along the diagonal below this admissibility is ensured by Remark~\ref{rem:boundary-compatible-diagonal}.
We define the variational defect
\[
 \mathcal D_{\eps,m,\phi}(v)
 :=a_\eps(R_{\eps,m}^+\phi,v)
 -\lambda_k^0 b_\eps(R_{\eps,m}^+\phi,v).
\]
The required smallness is in the full dual energy norm, not merely against
fixed test functions. Indeed,
\begin{equation}
 \limsup_{\eps\to0}
 \sup_{\|v\|_{a_\eps}=1}
 |\mathcal D_{\eps,m,\phi}(v)|
 \le C\|\phi^{(m)}-\phi\|_{H_0^1(\Om)}.
 \label{eq:qual-positive-dual-defect}
\end{equation}
To prove this estimate, otherwise choose a family
$(v_\eps)_{0<\eps<1}$ with
$\|v_\eps\|_{a_\eps}=1$ that violates the bound. The extension estimate
makes $(P_\eps v_\eps)_{0<\eps<1}$ bounded in
$H_0^1(\Om)$; after extraction it converges
weakly in $H_0^1(\Om)$ and strongly in $L^2(\Om)$, while its gradients admit
the usual weak two-scale limit. Since the recovery gradient has the strong
two-scale limit determined by $\phi^{(m)}$, the cell equation
\eqref{eq:cell-chi} and Proposition~\ref{prop:prelim-trace-mass-limit} give
\[
 \lim_{\eps\to0}\mathcal D_{\eps,m,\phi}(v_\eps)
 =\int_\Om\Ah\nabla\phi^{(m)}\cdot\nabla v_0\,dx
 -\lambda_k^0m_{\mathrm{eff}}\int_\Om\phi^{(m)}v_0\,dx.
\]
Because $\phi$ satisfies the limit eigenvalue equation, the right-hand side
contains only $\phi^{(m)}-\phi$ and is bounded by
$C\|\phi^{(m)}-\phi\|_{H_0^1(\Om)}$. This proves
\eqref{eq:qual-positive-dual-defect}. Because the basis is fixed and
$\max_{1\le i\le r}\|\phi_i^{(m)}-\phi_i\|_{H_0^1}\to0$, the estimate is uniform on
the unit sphere of $E_k^0$. For each fixed $m$, the two-scale calculation
also gives convergence of every entry of the Gram matrix of
$\{R_{\eps,m}^+\phi_i\}_{i=1}^r$. A single diagonal choice $m=m(\eps)\to\infty$, common to all basis vectors and all matrix entries and satisfying Remark~\ref{rem:boundary-compatible-diagonal}, therefore gives linear maps
\[
 J_{\eps,k}^+:=R_{\eps,m(\eps)}^+:E_k^0\longrightarrow V_\eps
\]
whose Gram matrices converge to the positive-definite limit energy Gram
matrix. Hence $J_{\eps,k}^+$ is injective for all sufficiently small
$\eps$, and
\[
 \|(\Keps-\tau_k^+I)J_{\eps,k}^+\phi\|_{a_\eps}
 =o(1)\|J_{\eps,k}^+\phi\|_{a_\eps}.
\]
Moreover,
\[
 \|J_{\eps,k}^+\phi-\phi\|_{L^2(\OmE)}\longrightarrow0,
 \qquad
 P_\eps J_{\eps,k}^+\phi-\phi
 \quad\hbox{is bounded in }H^1_0(\Om).
\]
If, along a subsequence,
$P_\eps J_{\eps,k}^+\phi-\phi\to h$ strongly in $L^2(\Om)$, then periodic
averaging of $\ind_{\OmE}$ and the first convergence give
\[
 |Y^*|\int_\Om |h|^2\,dx
 =\lim_{\eps\to0}
   \int_{\OmE}|P_\eps J_{\eps,k}^+\phi-\phi|^2\,dx=0.
\]
Thus $P_\eps J_{\eps,k}^+\phi\to\phi$ strongly in $L^2(\Om)$.
Theorem~\ref{thm:abstract-fixed-cluster}, applied with the inner product
$(\phi,\psi)_0:=\int_\Om\Ah\nabla\phi\cdot\nabla\psi\,dx$ on $E_k^0$,
for which $\{\phi_i/(\lambda_k^0)^{1/2}\}_{i=1}^r$ is orthonormal by the
mass normalization of the statement, now gives exactly $r$ eigenvalues in
$I_0^+$, together with gap convergence and an orthogonal alignment of the
$a_\eps$-normalized eigenvectors
$u_{\eps,k+i-1}^+/(\lambda_{\eps,k+i-1}^+)^{1/2}$ with the vectors
$\phi_i/(\lambda_k^0)^{1/2}$. Since
$\lambda_{\eps,k+i-1}^+\to\lambda_k^0$, undoing these normalizations
preserves the $o(1)$ alignment. The same argument as for condition (i) shows that an open
interval whose closure lies in $(0,\infty)$ and does not meet
$\Sigma_0^+$ contains no eigenvalue of $\Keps$ for small $\eps$ (a limit
point would belong to $\Sigma_0^+$), so the final assertion of
Theorem~\ref{thm:abstract-fixed-cluster} gives
the count in an arbitrary isolating interval $\mathcal J_k^+$. Applying the
argument successively to the isolated levels proves \eqref{eq:qual-lambda}
with multiplicities; the aligned $L^2$ and two-scale statements follow from
the same recoveries and the preceding identification of the two-scale limit.
\end{proof}

The proof above is coefficient-generic: it uses only uniform ellipticity,
the extension and trace bounds, convergence of the bulk--surface mass with
positive mean, the periodic corrector problem, and
Theorem~\ref{thm:abstract-fixed-cluster}. We shall use this observation
below after the ground-state transform.

For the quantitative analysis we use the reciprocal parameters
$\tau_{\eps,k}^+:=1/\lambda_{\eps,k}^+$ and
$\tau_k^+:=1/\lambda_k^0=m_{\mathrm{eff}}/\mu_k^0$.

\subsection{Quantitative regular positive branch}
\label{sec:mass}

The first quantitative ingredient is an exact representation of the
centered bulk--surface mass. The same construction, with $A$ in place of
the identity and $m_{\mathrm{eff}}=0$, gives the compensation identity of the critical regime
(Section~\ref{subsec:critical-compensation}).

Define on the torus $Y$ the function
\begin{equation}
 q^\circ(y):=
 \begin{cases}
 q(y),&y\in\Ys,\\
 0,&y\in T.
 \end{cases}
 \label{eq:qcirc}
\end{equation}
The extension by zero is a device on the torus; no coefficient is assigned
inside the holes.

Since $\int_{\Ys}q\,dy+\int_\Gam\rho\,dS_y=m_{\mathrm{eff}}$, the linear form
\begin{equation}
 \mathfrak m(\psi)
 :=\int_{\Ys}\Bigl(q-\frac{m_{\mathrm{eff}}}{|\Ys|}\Bigr)\psi\,dy
 +\int_\Gam\rho\psi\,dS_y,
 \qquad \psi\in H^1_\#(\Ys),
 \notag
\end{equation}
satisfies
\begin{equation}
 \mathfrak m(1)=0.
 \notag
\end{equation}
The trace theorem shows that $\mathfrak m$ is continuous on
$H^1_\#(\Ys)$, and the Lax--Milgram lemma gives a unique
\begin{equation}
 \chi^{\mathrm m}\in H^1_\#(\Ys)/\R,
 \qquad
 \int_{\Ys}\nabla_y\chi^{\mathrm m}\cdot\nabla_y\psi\,dy
 =\mathfrak m(\psi)
 \qquad\forall\psi\in H^1_\#(\Ys).
 \label{eq:chi-mass}
\end{equation}
By Lemma~\ref{lem:cell-regularity-H4}\textup{(iii)}, applied with $A$
replaced by the identity, $g=q-m_{\mathrm{eff}}/|\Ys|\in L^\infty_\#(\Ys)$
and $h=\rho$, we have $\nabla_y\chi^{\mathrm m}\in L^\infty(\Ys)^N$.
Next, since $|Y|=1$, the function $\ind_{\Ys}/|\Ys|-1$ is bounded, periodic
and has zero mean on $Y$, so Lemma~\ref{lem:periodic-potentials}\textup{(b)} gives
$H^{\mathrm m}\in L^\infty_\#(Y)^N$ with
$\Div_yH^{\mathrm m}=\ind_{\Ys}/|\Ys|-1$. Set
\begin{equation}
 F^{\mathrm m}:=\ind_{\Ys}\nabla_y\chi^{\mathrm m}
 -m_{\mathrm{eff}}H^{\mathrm m}\in L^\infty_\#(Y)^N.
 \notag
\end{equation}
Since the zero extension of $\nabla_y\chi^{\mathrm m}$ satisfies
$-\Div_y(\ind_{\Ys}\nabla_y\chi^{\mathrm m})
=(q^\circ-m_{\mathrm{eff}}\ind_{\Ys}/|\Ys|)\,dy+\rho\,dS_\Gam$ on the torus,
by \eqref{eq:chi-mass}, we obtain, in the sense of periodic distributions,
\begin{equation}
 -\Div_yF^{\mathrm m}
 =(q^\circ-m_{\mathrm{eff}})\,dy
 +\rho\,dS_\Gam,
 \label{eq:Fmass-div}
\end{equation}
and
\begin{equation}
 F^{\mathrm m}\in L^\infty(Y)^N.
 \label{eq:Fmass-bounded}
\end{equation}
These two properties are the only ones used below.

Rescaling \eqref{eq:Fmass-div} produces a surface term on every periodic copy
of $\Gam$ meeting $\Om$; the copies not in $\GamE$ belong to $\Dep$ and do not
contribute when one factor vanishes there. Hence, if $U=P_\eps v$ and
$V=P_\eps w$ with $v,w\in V_\eps$,
\begin{equation}
\eps\int_\Om(F^{\mathrm m})^\eps\cdot\nabla(UV)\,dx
 =\int_\Om\bigl((q^\circ)^\eps-m_{\mathrm{eff}}\bigr)UV\,dx +\eps\int_{\GamE}\rho^\eps UV\,dS_x.
\label{eq:mass-div-scaled}
\end{equation}

Now take $v,w\in V_\eps$ and set
\[
 U=\Peps v,\qquad V=\Peps w.
\]
Inside the perforations, $(q^\circ)^\eps=0$, whereas in $\OmE$ it equals
$q^\eps$. Hence \eqref{eq:mass-div-scaled} rearranges into the exact global identity
\begin{equation}
\begin{aligned}
 \be(v,w)-m_{\mathrm{eff}}\int_\Om UV\,dx
 ={}\eps\int_\Om(F^{\mathrm m})^\eps\cdot\nabla(UV)\,dx.
\end{aligned}
 \label{eq:mass-exact}
\end{equation}
The product $UV$ belongs to $W_0^{1,1}(\Om)$, which justifies the density
argument when $U,V$ are merely in $H_0^1(\Om)$.

The right-hand side of \eqref{eq:mass-exact} is controlled by
\eqref{eq:Fmass-bounded}, Poincar\'e's inequality, and the uniform extension:
\begin{equation}
 \eps\abs{\int_\Om(F^{\mathrm m})^\eps\cdot\nabla(UV)\,dx}
 \le C\eps\bigl(
 \norm{U}_2\norm{\nabla V}_2
 +\norm{V}_2\norm{\nabla U}_2\bigr) \le C\eps\norm{v}_{V_\eps}\norm{w}_{V_\eps}.
 \label{eq:mass-F-bound}
\end{equation}
The preceding identity gives quantitative averaging.

\begin{proposition}
\label{prop:mass-average}
Assume \textup{(H2)} and, for some $\alpha\in(0,1)$,
$\Gamma\in C^{2,\alpha}$ and $\rho\in C^{0,\alpha}_\#(\Gamma)$. Then
\begin{equation}
 \abs{
 \be(v,w)
 -m_{\mathrm{eff}}\int_\Om(\Peps v)(\Peps w)\,dx}
 \le C\eps\norm{v}_{V_\eps}\norm{w}_{V_\eps}.
 \notag
\end{equation}
The constant is independent of $\eps$.
\end{proposition}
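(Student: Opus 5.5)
The proof will follow the construction made just before the statement: build a bounded periodic field whose divergence is the centered bulk--surface measure, rescale it, and integrate by parts against the product $(\Peps v)(\Peps w)$.

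\textbf{Step 1 (cell field).} The form $\mathfrak m$ satisfies $\mathfrak m(1)=0$ because $\int_{\Ys}q\,dy+\int_\Gam\rho\,dS_y=m_{\mathrm{eff}}$. Hence Lax--Milgram gives the corrector $\chi^{\mathrm m}$ of \eqref{eq:chi-mass}. Lemma~\ref{lem:cell-regularity-H4}\textup{(iii)}, with the identity matrix, $g=q-m_{\mathrm{eff}}/|\Ys|$ and $h=\rho\in C^{0,\alpha}$, yields $\chi^{\mathrm m}\in C^{1,\alpha'}(\overline{\Ys})$, so $\nabla_y\chi^{\mathrm m}$ is bounded. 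This is where $\Gamma\in C^{2,\alpha}$ and the Hölder regularity of $\rho$ are needed.

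We then add $-m_{\mathrm{eff}}H^{\mathrm m}$, where $H^{\mathrm m}$ comes from Lemma~\ref{lem:periodic-potentials}\textup{(b)}. This produces $F^{\mathrm m}\in L^\infty_\#(Y)^N$ satisfying \eqref{eq:Fmass-div}. To verify \eqref{eq:Fmass-div}, I will test against $\psi\in C^\infty_\#(Y)$ and use the weak cell equation. The orientation convention (the normal $n$ points into $T$) gives exactly the sign $+\rho\,dS_\Gam$.

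\textbf{Step 2 (rescaled identity).} Let $U=\Peps v$ and $V=\Peps w$, first for $v,w\in\mathcal D_\eps$. Since $(F^{\mathrm m})^\eps$ satisfies $-\eps\Div_x(F^{\mathrm m})^\eps=((q^\circ)^\eps-m_{\mathrm{eff}})\,dx+\eps\rho^\eps\,dS$ on all copies $\eps(k+\Gam)$, testing with $UV\in W^{1,1}_0(\Om)$ gives \eqref{eq:mass-div-scaled}, up to the surface contributions from holes not in $\GamE$.

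\textbf{Main obstacle.} The delicate point is justifying that these extra holes contribute nothing. They belong to $\Dep$, where $v$ and $w$ vanish capacitarily. For $v\in\mathcal D_\eps$ the extension $\Peps v$ is zero in a neighbourhood of every such hole, by the construction in Lemma~\ref{lem:cut-uniform-extension}, so these surface terms vanish identically. The interior of the holes contributes nothing to the bulk term because $q^\circ=0$ there.

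It follows that $\int_\Om (q^\circ)^\eps UV\,dx=\int_{\OmE}q^\eps vw\,dx$, and the surface term equals $\eps\int_{\GamE}\rho^\eps vw\,dS$. This gives \eqref{eq:mass-exact} on $\mathcal D_\eps$. The identity extends to all of $V_\eps$ by density. Both sides are continuous in the $V_\eps$ norm: the left side by \eqref{eq:b-bounded} and the extension bound, and the right side because $UV$ depends continuously on $(U,V)$ in $W^{1,1}_0$.

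\textbf{Step 3 (bound).} Apply Hölder to the right side of \eqref{eq:mass-exact}:
\[
\eps\|F^{\mathrm m}\|_\infty\bigl(\|U\|_2\|\nabla V\|_2+\|V\|_2\|\nabla U\|_2\bigr).
\]
The Poincaré inequality in $H^1_0(\Om)$ and \eqref{eq:common-cut-extension-gradient} then bound this by $C\eps\|v\|_{V_\eps}\|w\|_{V_\eps}$, with $C$ depending only on the cell data, which is the estimate \eqref{eq:mass-F-bound} claimed in the statement.
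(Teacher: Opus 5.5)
Your proposal is correct and follows essentially the same route as the paper: the bounded periodic field $F^{\mathrm m}$ whose divergence is the centered bulk--surface measure, the rescaled exact identity \eqref{eq:mass-exact}, and the H\"older--Poincar\'e bound \eqref{eq:mass-F-bound}. Your explicit treatment of the omitted holes spells out what the paper states in one line: on $\mathcal D_\eps$, $\Peps v$ vanishes near every hole belonging to $\Dep$, and the identity then extends to $V_\eps$ by density.
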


\begin{proof}
This is \eqref{eq:mass-exact} combined with \eqref{eq:mass-F-bound}.
\end{proof}

The same rescaled identity remains exact with $u\in H_0^1(\Om)$ as the first
factor, because the second factor $v\in V_\eps$ vanishes on every omitted
microscopic surface. This gives the version used in the residual estimate.

\begin{corollary}
\label{cor:mass-smooth-u}
Under the assumptions of Proposition~\ref{prop:mass-average}, for $u\in H_0^1(\Om)$,
$v\in V_\eps$, and $V=\Peps v$,
\begin{equation}
 \abs{
 \be(u|_{\OmE},v)
 -m_{\mathrm{eff}}\int_\Om uV\,dx}
 \le C\eps\norm{u}_{H^1(\Om)}\norm{v}_{V_\eps}.
 \label{eq:mass-smooth-u}
\end{equation}
\end{corollary}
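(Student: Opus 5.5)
The plan is to apply the exact rescaled identity \eqref{eq:mass-div-scaled} with $U=u$ and $V=\Peps v$. The justification of exactness is the point that differs from Proposition~\ref{prop:mass-average}. Starting from \eqref{eq:Fmass-div}, the rescaled distributional identity on $\R^N$ reads
\[
-\eps\Div_x\bigl((F^{\mathrm m})^\eps\bigr)=\bigl((q^\circ)^\eps-m_{\mathrm{eff}}\bigr)\,dx+\eps\rho^\eps\,dS_x \quad\text{on }\textstyle\bigcup_k\eps(k+\Gam).
\]
We pair it with $\varphi=uV\in W_0^{1,1}(\Om)$, extended by zero outside $\Om$. One checks that $uV\in W^{1,1}_0$: indeed $u,V\in H_0^1(\Om)$, so $uV\in L^1$ and $\nabla(uV)=V\nabla u+u\nabla V\in L^1$.

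The surface measure only charges periodic copies $\eps(k+\Gam)$ meeting $\Om$. For $k\in I_\eps^{\rm int}$ the trace of $V$ there is $v$. For every other copy, which lies in a boundary cell, the copy belongs to $\Dep$, where $v$ vanishes in the capacitary sense. The extension $\Peps$ was built by filling those holes with zero, so the trace of $V$ vanishes there. Hence only $\GamE$ contributes, with integrand $\rho^\eps u v$; the factor $u$ needs no vanishing there.

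To make the pairing of a measure with a $W^{1,1}$ function rigorous, I first take $u\in C_c^\infty(\Om)$ and $v\in\mathcal D_\eps$. Then $V$ is Lipschitz on each cell and vanishes near the boundary-cell holes, so the pairing is classical integration by parts cell by cell. Next I pass to general $u\in H_0^1(\Om)$ and $v\in V_\eps$ by density. Both sides are continuous under $H^1\times V_\eps$ convergence, using the scaled trace estimate \eqref{eq:scaled-trace} for $u$ on $\GamE$ and \eqref{eq:mass-energy-bound} for $v$; this density step is only needed for each fixed $\eps$.

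Since $(q^\circ)^\eps=q^\eps$ on $\OmE$ and $V=v$ there, while $(q^\circ)^\eps=0$ in the holes, the identity rearranges to
\[
\be(u|_{\OmE},v)-m_{\mathrm{eff}}\int_\Om uV\,dx=\eps\int_\Om(F^{\mathrm m})^\eps\cdot\bigl(V\nabla u+u\nabla V\bigr)\,dx.
\]
I then bound the right-hand side with \eqref{eq:Fmass-bounded} and Cauchy--Schwarz by
\[
C\eps\|F^{\mathrm m}\|_\infty\bigl(\|V\|_2\|\nabla u\|_2+\|u\|_2\|\nabla V\|_2\bigr).
\]
The extension bound \eqref{eq:common-cut-extension-gradient} together with Poincar\'e in $H_0^1(\Om)$ gives $\|V\|_{H^1}\le C\|v\|_{V_\eps}$, which yields \eqref{eq:mass-smooth-u}.

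The hypotheses $\Gamma\in C^{2,\alpha}$ and $\rho\in C^{0,\alpha}$ enter only through Lemma~\ref{lem:cell-regularity-H4}(iii), which guarantees that $F^{\mathrm m}$ is bounded.

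The main obstacle is the claim that the omitted copies of $\Gam$ contribute nothing when only $V$ is known to vanish there. I will argue it by the density step above, where the vanishing of $V$ near those copies is literal. I also need to be careful that the distributional identity \eqref{eq:Fmass-div} is tested against functions that are not periodic; this is fine because the rescaled identity holds on all of $\R^N$ after periodic extension.
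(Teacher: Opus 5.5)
Your proposal is correct and follows essentially the paper's argument. The paper also applies the rescaled identity \eqref{eq:mass-div-scaled} with $U=u$ and $V=\Peps v$. It notes that the identity stays exact because $V$ vanishes on every omitted copy of $\Gam$, so $u$ needs no vanishing there, and then bounds the right-hand side as in \eqref{eq:mass-F-bound}.

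One small remark on your density step: you do not need $\Peps v$ to be Lipschitz on each cell, which the extension of Lemma~\ref{lem:cut-uniform-extension} need not guarantee. Testing \eqref{eq:Fmass-div} with the compactly supported $W^{1,1}$ function $uV$ is already justified by mollification and the continuity of $W^{1,1}$ traces on the periodic surfaces.
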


For the cut-geometry reconstruction, assume
\eqref{hyp:cell-regularity}--\eqref{hyp:macro-regularity}.

For $u\in H^2(\Om)\cap H_0^1(\Om)$, set
\begin{equation}
 R_{\eps,\mathrm{int}}^+u
 :=u+\eps\sum_{j=1}^N(\chi^j)^\eps\partial_ju
 \quad\text{in }\OmE.
 \label{eq:Rint-plus}
\end{equation}
The functions $\chi^j$ are periodically defined in $\mathcal W$; no extension into
$T$ is needed. We have
\begin{equation}
 \nabla R_{\eps,\mathrm{int}}^+u
 =\sum_{j=1}^N(e_j+\nabla_y\chi^j)^\eps\partial_ju
 +\eps\sum_{j=1}^N(\chi^j)^\eps\nabla\partial_ju,
 \label{eq:gradRint-plus}
\end{equation}
and the last term is bounded in $L^2(\OmE)$ by
$C\eps\norm{u}_{H^2(\Om)}$.

The Dirichlet trace of the interior corrector on the whole set $D_\eps$
is removed by the common exact layer. In
Lemma~\ref{lem:common-boundary-layer}, take $B=A$, so that
$\chi_A^j=\chi^j$, and write $\beta_\eps[u]:=\beta_{\eps,A}[u]$. Thus
\begin{equation}
 \beta_\eps[u]+R_{\eps,\mathrm{int}}^+u\in V_\eps,\qquad \aep(\beta_\eps[u],v)=0
 \quad \forall v\in V_\eps,
 \notag
\end{equation}
so, in the capacitary sense,
$\beta_\eps[u]=-R_{\eps,\mathrm{int}}^+u$ on $D_\eps$; on
$\partial\Omega$ this reduces to the familiar oscillatory datum because
$u=0$ there.
The layer estimates give
\begin{align}
 \norm{\nabla\beta_\eps[u]}_{L^2(\OmE)}
 &\le C\eps^{1/2}\norm{u}_{H^2(\Om)},
 \label{eq:beta-energy}\\
 \norm{\beta_\eps[u]}_{L^2(\OmE)}
 +\eps^{1/2}\norm{\beta_\eps[u]}_{L^2(\GamE)}
 &\le C\eps\norm{u}_{H^2(\Om)}.
 \label{eq:BL}
\end{align}

We set
\begin{equation}
 \mathcal R_\eps^+u
 :=R_{\eps,\mathrm{int}}^+u
 +\beta_\eps[u].
 \notag
\end{equation}
By construction $\mathcal R_\eps^+u\in V_\eps$, and \eqref{eq:Rint-plus}, \eqref{eq:beta-energy}, \eqref{eq:BL} give
\begin{align}
 \norm{\mathcal R_\eps^+u-u}_{L^2(\OmE)}
 \le C\eps\norm{u}_{H^2(\Om)},
 \label{eq:Rplus-L2}\\
 \norm{\mathcal R_\eps^+u}_{V_\eps}
 \le C\norm{u}_{H^2(\Om)}.
 \label{eq:Rplus-H1-bound}
\end{align}
The relevant small quantity is the variational residual, not the raw gradient
difference, which contains the order-one oscillatory corrector gradient.

For the residual estimate, set
\[
 e_\eps[u]:=\mathcal R_\eps^+u-u|_{\OmE}.
\]
By \eqref{eq:Rplus-L2}--\eqref{eq:Rplus-H1-bound} and the rescaled trace
inequality \eqref{eq:scaled-trace},
\begin{equation}
 \left(\eps\int_{\GamE}|e_\eps[u]|^2\,dS_x\right)^{1/2}
 \le C\left(
 \norm{e_\eps[u]}_2
 +\eps\norm{\nabla e_\eps[u]}_2\right) \le C\eps\norm{u}_{H^2}.
 \notag
\end{equation}
For $v\in V_\eps$, \eqref{eq:Poincare} and \eqref{eq:scaled-trace} likewise
give
\begin{equation}
 \norm{v}_2
 +\left(\eps\int_{\GamE}|v|^2\,dS_x\right)^{1/2}
 \le C\norm{v}_{V_\eps}.
 \notag
\end{equation}
The boundedness of $q$ and $\rho$ therefore yields
\begin{equation}
 \abs{\be(e_\eps[u],v)}
 \le C\eps\norm{u}_{H^2}\norm{v}_{V_\eps}.
 \label{eq:b-delta}
\end{equation}

By \eqref{eq:b-delta} and \eqref{eq:mass-smooth-u},
\begin{equation}
 \abs{
 \be(\mathcal R_\eps^+u,v)
 -m_{\mathrm{eff}}\int_\Om uV\,dx}
 \le C\eps\norm{u}_{H^2}\norm{v}_{V_\eps},
 \qquad V=\Peps v.
 \label{eq:Mplus}
\end{equation}

For the diffusion residual, extend $B^j=A(e_j+\nabla_y\chi^j)$ by zero into $T$:
\begin{equation}
 B_0^j(y):=
 \begin{cases}
 B^j(y),&y\in\Ys,\\
 0,&y\in T.
 \end{cases}
 \notag
\end{equation}
Since $\Div_yB^j=0$ in $\Ys$ and $B^j\cdot n=0$ on $\Gam$,
\begin{equation}
 \Div_yB_0^j=0\quad\hbox{in }\mathcal D'_\#(Y),
 \qquad
 \int_YB_0^j\,dy=\Ah e_j.
 \notag
\end{equation}
Under $\mathrm{(H3}_A\mathrm{)}$ the fields $B_0^j$ are bounded, so
Lemma~\ref{lem:periodic-potentials}\textup{(a)} applies to
$B_0^j-\Ah e_j$.

We next control the diffusion residual.

\begin{proposition}
\label{prop:diffusion-residual}
Assume \textup{(H1)} and \textup{(H4)}, and suppose, for
some $\alpha\in(0,1)$, that
$\Gamma\in C^{2,\alpha}$ and
$A\in C^{0,\alpha}_\#(\overline{Y^*})^{N\times N}$.
For $u\in H^2(\Om)\cap H_0^1(\Om)$, $v\in V_\eps$, and every extension
$V\in H_0^1(\Om)$ such that $V=v$ in $\OmE$,
\begin{equation}
 \abs{
 \aep(\mathcal R_\eps^+u,v)
 -\int_\Om\Ah\nabla u\cdot\nabla V\,dx}
 \le C\eps\norm{u}_{H^2(\Om)}\norm{V}_{H^1(\Om)}.
 \label{eq:Aplus}
\end{equation}
\end{proposition}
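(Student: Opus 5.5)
We split $\mathcal R_\eps^+u=R_{\eps,\mathrm{int}}^+u+\beta_\eps[u]$. The layer term drops out immediately: $v\in V_\eps$ and $\aep(\beta_\eps[u],v)=0$ by construction (Lemma~\ref{lem:common-boundary-layer} with $B=A$). So it suffices to compare $\aep(R_{\eps,\mathrm{int}}^+u,v)=\int_\Om A_0^\eps\nabla R_{\eps,\mathrm{int}}^+u\cdot\nabla V\,dx$ with $\int_\Om\Ah\nabla u\cdot\nabla V\,dx$. Here $A_0=\ind_{\mathcal W}A$, and the identity holds for every extension $V$, because $A_0^\eps$ vanishes in the holes and $V=v$ in $\OmE$. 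This is exactly why the bound is stated with $\norm{V}_{H^1(\Om)}$.

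The main issue is that Lemma~\ref{lem:common-cutoff-stability} only gives a bound of size $\|\nabla u\|_{L^2(\{d<3M\eps\})}\sim\eps^{1/2}$. We avoid this by not cutting off. Using \eqref{eq:gradRint-plus} and adding and subtracting $\Ah\nabla u$, we write
\[
\int_\Om A_0^\eps\nabla R_{\eps,\mathrm{int}}^+u\cdot\nabla V-\int_\Om\Ah\nabla u\cdot\nabla V
=\sum_{j=1}^N\int_\Om (G^j)^\eps\cdot\nabla V\,\partial_ju\,dx+\eps\sum_{j=1}^N\int_\Om A_0^\eps(\chi^j)^\eps\nabla\partial_ju\cdot\nabla V\,dx ,
\]
with $G^j=B_0^j-\Ah e_j$. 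The second sum is bounded by $C\eps\norm{u}_{H^2}\norm{\nabla V}_2$, since $\chi^j\in L^\infty$ by Lemma~\ref{lem:cell-regularity-H4}(i). For the first sum, $G^j$ is bounded, divergence free on the torus and has zero mean, as recorded just before the proposition. Lemma~\ref{lem:periodic-potentials}(c) with $w=\partial_ju\in H^1(\Om)$ and $\psi=V\in H_0^1(\Om)$ then gives the bound $\eps\|\Psi^j\|_\infty\|\nabla\partial_ju\|_2\|\nabla V\|_2$. Here it matters that Lemma~\ref{lem:periodic-potentials}(c) requires only $w\in H^1(\Om)$, not zero trace: the antisymmetry kills the $\partial_{\ell i}\psi$ term, and no boundary term arises because $\psi$ has zero trace. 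This gives \eqref{eq:Aplus} with $C$ depending on $\|\Psi^j\|_\infty$ and $\|\chi^j\|_\infty$.

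The step I would check most carefully is the definition of $R_{\eps,\mathrm{int}}^+u$ near the cut holes. The term $(\chi^j)^\eps$ is only defined on $\eps\mathcal W$. This is harmless, because every integrand carries $A_0^\eps$ and so lives on $\OmE$; the harmonic extension \eqref{eq:common-corrector-extension} may be used purely notationally. The term $(G^j)^\eps\cdot\nabla V$ involves only $\partial_ju$ and $V$, which are globally defined on $\Om$. The density step in Lemma~\ref{lem:periodic-potentials}(c) applies directly with $w=\partial_ju$ fixed. Hypothesis (H4) enters only through $u\in H^2(\Om)$, which is assumed.
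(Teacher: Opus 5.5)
Your proof is correct and follows the paper's own argument essentially step for step. Like the paper, you drop the layer via $\aep(\beta_\eps[u],v)=0$, rewrite the flux term on all of $\Om$ using the zero extensions $B_0^j$ and the identity $V=v$ in $\OmE$, bound the $\eps(\chi^j)^\eps\nabla\partial_ju$ term directly, and apply Lemma~\ref{lem:periodic-potentials}(c) to $B_0^j-\Ah e_j$ with $w=\partial_ju$ and $\psi=V$; the paper also uses no cut-off here.
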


\begin{proof}
Formula \eqref{eq:gradRint-plus} gives
\begin{equation}
 \aep(R_{\eps,\mathrm{int}}^+u,v)
 =\sum_{j=1}^N\int_{\OmE}(B^j)^\eps\partial_ju\cdot\nabla v\,dx
 +\mathcal E_{\eps,1}(u,v),\qquad \abs{\mathcal E_{\eps,1}(u,v)}
 \le C\eps\norm{u}_{H^2(\Om)}\norm{v}_{V_\eps}.
 \label{eq:a-Rint-plus}
\end{equation}
Since $B_0^j=0$ in $T$ and $V=v$ in $\OmE$,
\begin{equation}
 \sum_{j=1}^N\int_{\OmE}(B^j)^\eps\partial_ju\cdot\nabla v\,dx
 =\sum_{j=1}^N\int_\Om(B_0^j)^\eps\partial_ju\cdot\nabla V\,dx.
 \label{eq:flux-extension-exact}
\end{equation}

Lemma~\ref{lem:periodic-potentials}\textup{(c)}, applied for each $j$ to
$G=B_0^j-\Ah e_j$ with $w=\partial_ju$ and $\psi=V$, and summed over
$j=1,\ldots,N$, gives
\begin{equation}
 \abs{
 \sum_{j=1}^N\int_\Om(B_0^j)^\eps\partial_ju\cdot\nabla V\,dx
 -\int_\Om\Ah\nabla u\cdot\nabla V\,dx}
 \le C\eps\norm{u}_{H^2(\Om)}\norm{V}_{H^1(\Om)}.
 \label{eq:B-estimate}
\end{equation}
Finally, $\aep(\beta_\eps[u],v)=0$. Combining
\eqref{eq:a-Rint-plus}, \eqref{eq:flux-extension-exact}, and
\eqref{eq:B-estimate} gives \eqref{eq:Aplus}. For $V=\Peps v$,
\eqref{eq:extension} replaces the last factor by
$C\norm{v}_{V_\eps}$.
\end{proof}

Together these estimates give the regular-branch residual.

\begin{proposition}
\label{prop:Qstar-plus}
Assume \textup{(H1)--(H4)}.
Let $u\in H^2(\Om)\cap H_0^1(\Om)$, $v\in V_\eps$,
$V=\Peps v$, and $|\lambda|\le R$. Then
\begin{equation}
\aep(\mathcal R_\eps^+u,v)
 -\lambda\be(\mathcal R_\eps^+u,v) =
 \int_\Om\Ah\nabla u\cdot\nabla V\,dx
 -\lambda m_{\mathrm{eff}}\int_\Om uV\,dx
 +\mathcal E_\eps^+(u,v),
 \label{eq:Qstar-plus}
\end{equation}
where
\begin{equation}
 \abs{\mathcal E_\eps^+(u,v)}
 \le C_R\eps\norm{u}_{H^2(\Om)}\norm{v}_{V_\eps}.
 \notag
\end{equation}
\end{proposition}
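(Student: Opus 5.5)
The identity \eqref{eq:Qstar-plus} is obtained by adding the two residual estimates already available. We define $\mathcal E_\eps^+(u,v)$ as the difference between the left-hand side of \eqref{eq:Qstar-plus} and the two macroscopic integrals. We then split it as
\[
 \mathcal E_\eps^+(u,v)
 =\Bigl[\aep(\mathcal R_\eps^+u,v)-\int_\Om\Ah\nabla u\cdot\nabla V\,dx\Bigr]
 -\lambda\Bigl[\be(\mathcal R_\eps^+u,v)-m_{\mathrm{eff}}\int_\Om uV\,dx\Bigr].
\]
With this definition the identity holds by construction. Only the bound on $\mathcal E_\eps^+$ needs an argument.

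For the first bracket, Proposition~\ref{prop:diffusion-residual} applies: (H1) holds, (H4) holds, and (H3$_A$) supplies $\Gamma\in C^{2,\alpha_Y}$ and $A\in C^{0,\alpha_Y}_\#$. We use the particular extension $V=\Peps v$, which lies in $H_0^1(\Om)$ and equals $v$ in $\OmE$ by Lemma~\ref{lem:cut-uniform-extension}. This gives the bound $C\eps\norm{u}_{H^2}\norm{\Peps v}_{H^1(\Om)}$. By \eqref{eq:common-cut-extension-gradient} together with the Poincar\'e inequality in $H_0^1(\Om)$, $\norm{\Peps v}_{H^1(\Om)}\le C\norm{v}_{V_\eps}$. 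Note that the full $H^1$ norm of $\Peps v$ is needed here, not only its gradient.

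The second bracket is exactly the quantity estimated in \eqref{eq:Mplus}. That estimate combines \eqref{eq:b-delta}, which relies on the layer bounds \eqref{eq:BL} of Lemma~\ref{lem:common-boundary-layer} with $B=A$, and Corollary~\ref{cor:mass-smooth-u}. The corollary requires (H2), $\Gamma\in C^{2,\alpha}$ and $\rho\in C^{0,\alpha}$; these follow from (H2) and (H3). Its $u$-dependence is through $\norm{u}_{H^1}\le\norm{u}_{H^2}$. Multiplying by $|\lambda|\le R$ gives the bound $R\,C\eps\norm{u}_{H^2}\norm{v}_{V_\eps}$.

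Summing the two contributions yields the claim with $C_R=C(1+R)$. No step is a genuine obstacle, since the analytic work lies in Proposition~\ref{prop:diffusion-residual} and \eqref{eq:Mplus}. Two points only need checking: that the constants in those results are independent of $\eps$ (they are), and that the same extension $V=\Peps v$ is used in both brackets, so that the macroscopic integrals match those in \eqref{eq:Qstar-plus}.
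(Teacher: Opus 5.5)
Your proposal is correct and matches the paper's proof, which consists of subtracting $\lambda$ times \eqref{eq:Mplus} from \eqref{eq:Aplus} with the extension $V=\Peps v$. Your checks of the hypotheses and of the bound $\|\Peps v\|_{H^1(\Om)}\le C\|v\|_{V_\eps}$ are the only details the paper leaves implicit.
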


\begin{proof}
Subtract $\lambda$ times \eqref{eq:Mplus} from \eqref{eq:Aplus}.
\end{proof}

If $u\in E_k^0$ and
\begin{equation}
 \lambda=\lambda_k^0=\frac{\mu_k^0}{m_{\mathrm{eff}}},
 \notag
\end{equation}
the macroscopic term in \eqref{eq:Qstar-plus} vanishes. We obtain the
quasimode residual
\begin{equation}
 \abs{
 \aep(\mathcal R_\eps^+u,v)
 -\lambda_k^0\be(\mathcal R_\eps^+u,v)}
 \le C_k\eps\norm{u}_{H^2}\norm{v}_{V_\eps}.
 \label{eq:Qplus}
\end{equation}

For the quasimode argument, use the energy norm
\begin{equation}
 \norm{v}_{a_\eps}:=a_\eps(v,v)^{1/2}.
 \notag
\end{equation}
By ellipticity and Poincar\'e's inequality, $\norm{\cdot}_{a_\eps}$ and
$\norm{\cdot}_{V_\eps}$ are uniformly equivalent.

Let $\mu^0$ be a distinct eigenvalue of \eqref{eq:hom-Dirichlet-mu}, of
multiplicity $r$, and let $E_k^0$ be its eigenspace. Set
\begin{equation}
 \lambda_k^0=\frac{\mu^0}{m_{\mathrm{eff}}},
 \qquad
 \tau_k^+=\frac1{\lambda_k^0}
 =\frac{m_{\mathrm{eff}}}{\mu^0}.
 \notag
\end{equation}
Fix a basis $\{\phi_1,\ldots,\phi_r\}$ of $E_k^0$ normalized by
\begin{equation}
 m_{\mathrm{eff}}\int_\Om\phi_i\phi_j\,dx=\delta_{ij}.
 \label{eq:cluster-limit-mass-normalization}
\end{equation}
It then satisfies
\begin{equation}
 \int_\Om\Ah\nabla\phi_i\cdot\nabla\phi_j\,dx
 =\lambda_k^0\delta_{ij}.
 \notag
\end{equation}
Introduce the basis normalized for the limit energy,
\begin{equation}
 \varphi_j:=\frac{\phi_j}{\sqrt{\lambda_k^0}},
 \qquad j=1,\ldots,r.
 \notag
\end{equation}
Then $a_0(\varphi_i,\varphi_j)=\delta_{ij}$.

Define the quasimode space
\begin{equation}
 \mathcal Q_{\eps,k}^+
 :=\mathcal R_\eps^+E_k^0.
 \notag
\end{equation}
Set
\begin{equation}
 r_{\eps,j}:=\mathcal R_\eps^+\varphi_j,
 \qquad
 G_{\eps,k}^+
 :=\bigl(\aep(r_{\eps,i},r_{\eps,j})\bigr)_{1\le i,j\le r}.
 \notag
\end{equation}

We next verify uniform reconstruction nondegeneracy.

\begin{lemma}
\label{lem:R-injective}
Assume \textup{(H1)}, \textup{(H2)}, $\mathrm{(H3}_A\mathrm{)}$, and
\textup{(H4)}, and $m_{\mathrm{eff}}>0$, with the eigenspace $E_k^0$ fixed as above. There
exists $C_k>0$ such that
\begin{equation}
 \norm{G_{\eps,k}^+-I_r}_{\R^{r\times r}}
 \le C_k\eps.
 \label{eq:Gram-rate}
\end{equation}
There exist $\eps_k>0$ and constants $0<c_k<C_k$ such that, for
$0<\eps<\eps_k$ and $\phi\in E_k^0$,
\begin{equation}
 c_k\norm{\phi}_{H^2(\Om)}
 \le\norm{\mathcal R_\eps^+\phi}_{a_\eps}
 \le C_k\norm{\phi}_{H^2(\Om)}.
 \label{eq:R-equivalence}
\end{equation}
In particular,
\begin{equation}
 \dim\mathcal Q_{\eps,k}^+=r.
 \notag
\end{equation}
\end{lemma}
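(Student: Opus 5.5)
The Gram estimate \eqref{eq:Gram-rate} follows from the diffusion residual, Proposition~\ref{prop:diffusion-residual}, with the test function taken to be a reconstruction itself. Fix $i,j$ and put $v=r_{\eps,j}=\mathcal R_\eps^+\varphi_j\in V_\eps$ and $V=\Peps r_{\eps,j}$. Then \eqref{eq:Aplus} gives
\[
 \aep(r_{\eps,i},r_{\eps,j})
 =\int_\Om\Ah\nabla\varphi_i\cdot\nabla(\Peps r_{\eps,j})\,dx+O\bigl(\eps\norm{\varphi_i}_{H^2}\norm{r_{\eps,j}}_{V_\eps}\bigr).
\]
By \eqref{eq:Rplus-H1-bound} and \eqref{eq:extension}, the error term is $O(\eps)$, since $E_k^0$ is finite-dimensional and its elements lie in $H^2$ by (H4).

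The main term requires care. A direct bound on $\nabla(\Peps r_{\eps,j}-\varphi_j)$ gives only $O(1)$, because that difference contains the oscillating corrector gradient. We therefore move the derivative. Since $\varphi_i\in E_k^0$ satisfies $-\Div(\Ah\nabla\varphi_i)=\mu^0\varphi_i$ and $\Peps r_{\eps,j}\in H_0^1(\Om)$, integration by parts gives
\[
 \int_\Om\Ah\nabla\varphi_i\cdot\nabla(\Peps r_{\eps,j})\,dx=\mu^0\int_\Om\varphi_i\,\Peps r_{\eps,j}\,dx .
\]
Lemma~\ref{lem:common-reconstructor-extension} (non-augmented case, $B=A$) gives $\norm{\Peps r_{\eps,j}-\varphi_j}_{L^2(\Om)}\le C\eps\norm{\varphi_j}_{H^2}$. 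Hence the main term equals
\[
 \mu^0\int_\Om\varphi_i\varphi_j\,dx+O(\eps)=\frac{\mu^0}{\lambda_k^0 m_{\mathrm{eff}}}\,\delta_{ij}+O(\eps)=\delta_{ij}+O(\eps),
\]
using \eqref{eq:cluster-limit-mass-normalization} and $\lambda_k^0=\mu^0/m_{\mathrm{eff}}$. The expected obstacle is exactly this point: the test function must be allowed to be the reconstructor, and the derivative must be shifted onto the limit eigenfunction. That shift is legitimate because the residual bound \eqref{eq:Aplus} holds for every $H_0^1$ extension $V$, in particular for $V=\Peps r_{\eps,j}$. Any cut-geometry subtlety is already absorbed in the $L^2$ extension estimate \eqref{eq:common-reconstructor-extension}.

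For \eqref{eq:R-equivalence}, write $\phi=\sum_i c_i\varphi_i$. By linearity, $\norm{\mathcal R_\eps^+\phi}_{a_\eps}^2=c^{\mathsf T}G_{\eps,k}^+c$. By \eqref{eq:Gram-rate}, this quantity lies in $[(1-C_k\eps)|c|^2,(1+C_k\eps)|c|^2]$. On the fixed finite-dimensional space $E_k^0$, the norms $|c|$ and $\norm{\phi}_{H^2(\Om)}$ are equivalent, with constants depending only on $k$. Choosing $\eps_k$ such that $C_k\eps_k\le1/2$ yields both inequalities in \eqref{eq:R-equivalence}. The lower bound shows that $\mathcal R_\eps^+$ is injective on $E_k^0$, so $\dim\mathcal Q_{\eps,k}^+=r$.
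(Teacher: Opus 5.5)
Your proposal is correct and follows the paper's proof essentially step by step. Like the paper, you apply Proposition~\ref{prop:diffusion-residual} with the extension $V=P_\eps r_{\eps,j}$ as test function, use the weak limit eigenvalue equation to move the derivative onto $\varphi_i$, invoke the $L^2$ extension bound of Lemma~\ref{lem:common-reconstructor-extension} (with $B=A$, $\Lambda=0$), and conclude via the eigenvalues of $G_{\eps,k}^+$ and norm equivalence on the finite-dimensional space $E_k^0$.
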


\begin{proof}
Fix $j$ and use the extension of the reconstructor,
\begin{equation}
 \widehat r_{\eps,j}:=P_\eps r_{\eps,j}\in H_0^1(\Omega).
 \notag
\end{equation}
Lemma~\ref{lem:common-reconstructor-extension}, applied with $B=A$ and
$\Lambda=0$, together with \eqref{eq:Rplus-H1-bound} and the uniform
extension estimate, gives
\begin{equation}
 \norm{\widehat r_{\eps,j}-\varphi_j}_{L^2(\Om)}\le C_k\eps,
 \qquad
 \norm{\widehat r_{\eps,j}}_{H^1(\Om)}\le C_k.
 \label{eq:special-extension-Gram-bounds}
\end{equation}

Apply Proposition~\ref{prop:diffusion-residual} with
$u=\varphi_i$, $v=r_{\eps,j}$, and the extension $V=\widehat r_{\eps,j}$. We get
\begin{equation}
 \aep(r_{\eps,i},r_{\eps,j})
 =\int_\Om\Ah\nabla\varphi_i\cdot\nabla \widehat r_{\eps,j}\,dx
 +O_k(\eps) =\mu^0\int_\Om\varphi_i\widehat r_{\eps,j}\,dx+O_k(\eps).
 \notag
\end{equation}
The second equality uses the limit eigenvalue equation. By
\eqref{eq:special-extension-Gram-bounds} and the energy normalization,
\begin{equation}
 \mu^0\int_\Om\varphi_i\widehat r_{\eps,j}\,dx
 =\mu^0\int_\Om\varphi_i\varphi_j\,dx+O_k(\eps)
 =\delta_{ij}+O_k(\eps).
 \label{eq:Gram-limit}
\end{equation}
The $r^2$ identities \eqref{eq:Gram-limit} yield \eqref{eq:Gram-rate}.

If $u=\sum_jc_j\varphi_j$, then
\begin{equation}
 \norm{\mathcal R_\eps^+u}_{a_\eps}^2
 =c^TG_{\eps,k}^+c.
 \notag
\end{equation}
The eigenvalues of $G_{\eps,k}^+$ lie in
$[1-C_k\eps,1+C_k\eps]$. Since $E_k^0$ is finite dimensional, the Euclidean
norm of the coefficients, the $H^2(\Om)$ norm, and the limit energy norm are
equivalent. This proves \eqref{eq:R-equivalence}, injectivity, and
$\dim\mathcal Q_{\eps,k}^+=r$.
\end{proof}

For $\phi\in E_k^0$, \eqref{eq:Qplus} and the definition of $\Keps$ give
\begin{equation}
 \norm{
 \mathcal R_\eps^+\phi
 -\lambda_k^0\Keps\mathcal R_\eps^+\phi}_{a_\eps}
 \le C_k\eps\norm{\phi}_{H^2}.
 \notag
\end{equation}
Since $\tau_k^+=1/\lambda_k^0$, Lemma~\ref{lem:R-injective} gives, for
every $z\in\mathcal Q_{\eps,k}^+$,
\begin{equation}
 \norm{(\Keps-\tau_k^+I)z}_{a_\eps}
 \le C_k\eps\norm{z}_{a_\eps}.
 \label{eq:quasimode}
\end{equation}

For the fixed-cluster estimates, set
\[
 \Sigma_0^+=\{m_{\mathrm{eff}}/\mu_j^0:j\ge1\},\qquad
 d_k^+=\dist\bigl(\tau_k^+,(\{0\}\cup\Sigma_0^+)\setminus\{\tau_k^+\}\bigr)>0,
\]
and choose
$I_k^+=(\tau_k^+-d_k^+/3,\tau_k^++d_k^+/3)\Subset(0,+\infty)$.
Let $\Pi_{\eps,k}^+=\ind_{I_k^+}(\Keps)$ and let $Q_{\eps,k}^+$ be the
$a_\eps$-orthogonal projector onto $\mathcal Q_{\eps,k}^+$.

These estimates yield the $O(\eps)$ regular-cluster result.

\begin{theorem}
\label{thm:quantitative-positive-cluster}
Assume \textup{(H1)--(H4)}, $m_{\mathrm{eff}}>0$, and let
$\lambda_k^0=\cdots=\lambda_{k+r-1}^0=\mu_k^0/m_{\mathrm{eff}}$ be an
isolated regular positive cluster. Choose the limit basis as in
\eqref{eq:cluster-limit-mass-normalization} and microscopic eigenfunctions
with $b_\eps(u_{\eps,k+i-1}^+,u_{\eps,k+j-1}^+)=\delta_{ij}$. Then, for all
small $\eps$, the cluster has rank $r$ and there exists a family
$(\mathsf O_{\eps,k}^+)_{0<\eps<1}\subset O(r)$, with
$\mathsf O_{\eps,k}^+=(\mathsf o_{ij}^{\eps,+})$, such that
\begin{align}
 \|\Pi_{\eps,k}^+-Q_{\eps,k}^+\|_{\mathcal L(V_\eps,a_\eps)}
 &\le C_k\eps,\qquad \max_{1\le j\le r}|\lambda_{\eps,k+j-1}^+-\lambda_k^0|
 \le C_k\eps,\notag\\
 \max_{1\le i\le r}
 \left\|u_{\eps,k+i-1}^+-\sum_{j=1}^r\mathsf o_{ij}^{\eps,+}
 \mathcal R_\eps^+\phi_{k+j-1}\right\|_{a_\eps}
 &\le C_k\eps,\qquad \max_{1\le i\le r}
 \left\|u_{\eps,k+i-1}^+-\sum_{j=1}^r\mathsf o_{ij}^{\eps,+}
 \phi_{k+j-1}\right\|_{L^2(\OmE)}
 \le C_k\eps.\notag
\end{align}
For the extensions $P_\eps u_{\eps,k+i-1}^+$ one has
\begin{equation}
 \max_{1\le i\le r}
 \left\|P_\eps u_{\eps,k+i-1}^+
 -\sum_{j=1}^r\mathsf o_{ij}^{\eps,+}\phi_{k+j-1}\right\|_{L^2(\Om)}
 \le C_k\eps.
 \label{eq:eigenfunction-extension-L2-rate}
\end{equation}
The constant $C_k$ depends only on the fixed cluster data and its
isolation distance.
\end{theorem}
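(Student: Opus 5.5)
The plan is to feed the quasimode space $\mathcal Q_{\eps,k}^+$ into Theorem~\ref{thm:abstract-fixed-cluster}. We work in $H_\eps=(V_\eps,a_\eps)$ with $\mathcal A_\eps=\Keps$, $\tau_0=\tau_k^+$, $d_0=d_k^+/3$, $X=L^2(\Om)$ and $\mathcal I_\eps=P_\eps$. The limit space is $E_0=E_k^0$, equipped with the inner product $(\phi,\psi)_0=\int_\Om\Ah\nabla\phi\cdot\nabla\psi\,dx$, and we take $J_\eps=\mathcal R_\eps^+$.

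\textbf{Step 1: the abstract hypotheses.} Non-pollution (condition (i)) was already verified in the proof of Theorem~\ref{thm:qual-positive}, and it needs only (H1)--(H2). The two-sided bound in \eqref{eq:abstract-rank-residual} is \eqref{eq:R-equivalence}. The residual bound with $\delta_\eps=C_k\eps$ is \eqref{eq:quasimode}. The Gram assumption \eqref{eq:abstract-Gram-assumption} is \eqref{eq:Gram-rate}, since $\{\varphi_j\}$ is $(\cdot,\cdot)_0$-orthonormal. The abstract theorem then gives rank $r$ and $\|\Pi_{\eps,k}^+-Q_{\eps,k}^+\|\le C_k\eps$. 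It also gives $|\tau_{\eps,k+j-1}^+-\tau_k^+|\le 2C_k\eps$, which converts to the eigenvalue bound via \eqref{eq:abstract-transformed-eigenvalues} with $g(\tau)=1/\tau$, smooth near $\overline{I_k^+}\subset(0,\infty)$. That the eigenvalues in $I_k^+$ are exactly those with indices $k,\dots,k+r-1$ follows from the qualitative counting of Theorem~\ref{thm:qual-positive} applied to all lower clusters.

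\textbf{Step 2: alignment.} The abstract theorem produces $\mathsf O_\eps\in O(r)$ aligning the $a_\eps$-orthonormal eigenvectors $p_{\eps,i}=u^+_{\eps,k+i-1}/(\lambda^+_{\eps,k+i-1})^{1/2}$ with $\mathcal R_\eps^+\Phi_{\eps,i}$, where $\Phi_{\eps,i}=\sum_j(\mathsf O_\eps)_{ij}\varphi_j$, up to error $C\eps$. Multiplying by $(\lambda^+_{\eps,k+i-1})^{1/2}$ and using $\varphi_j=\phi_{k+j-1}/(\lambda_k^0)^{1/2}$ leaves a factor $(\lambda^+_{\eps,k+i-1}/\lambda_k^0)^{1/2}=1+O(\eps)$. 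Since $\|\mathcal R_\eps^+\phi\|_{a_\eps}$ is bounded, this yields the $a_\eps$ estimate. One point to check is the normalization: $b_\eps(u,u)=1$ combined with $a_\eps(u,u)=\lambda$ matches the choice of $p_{\eps,i}$, and the eigenbasis within a degenerate microscopic eigenvalue is arbitrary, which the theorem allows.

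\textbf{Step 3: $L^2$ estimates.} The $L^2(\OmE)$ estimate follows from the $a_\eps$ estimate through Poincar\'e \eqref{eq:Poincare} together with \eqref{eq:Rplus-L2}. The extension estimate \eqref{eq:eigenfunction-extension-L2-rate} uses linearity of $P_\eps$. The difference $P_\eps(u-\sum\mathsf o\,\mathcal R_\eps^+\phi)$ is bounded in $L^2(\Om)$ by $C\|\cdot\|_{a_\eps}\le C\eps$, using \eqref{eq:common-cut-extension-gradient} and Poincar\'e on $H_0^1(\Om)$. The remaining term $P_\eps\mathcal R_\eps^+\phi-\phi$ is $O(\eps)$ by Lemma~\ref{lem:common-reconstructor-extension}.

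\textbf{Main obstacle.} The substantive estimate, the $O(\eps)$ dual residual, is already in \eqref{eq:quasimode}. The delicate point here is therefore bookkeeping. Constants must be uniform and depend only on the cluster and its isolation distance, and \eqref{eq:R-equivalence} must control $\|\phi\|_{H^2}$ by $\|\mathcal R_\eps^+\phi\|_{a_\eps}$ uniformly, so that the relative residual is $O(\eps)$. Finite dimensionality of $E_k^0$ and $H^2$ regularity from (H4) make this available.
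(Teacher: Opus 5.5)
Your proposal is correct and follows the paper's own proof essentially step by step. You feed Lemma~\ref{lem:R-injective}, the quasimode bound \eqref{eq:quasimode} and the non-pollution argument from Theorem~\ref{thm:qual-positive} into Theorem~\ref{thm:abstract-fixed-cluster} with $J_\eps=\mathcal R_\eps^+|_{E_k^0}$, undo the energy normalization, and obtain the $L^2$ estimates from Poincar\'e, \eqref{eq:Rplus-L2} and Lemma~\ref{lem:common-reconstructor-extension}, exactly as the paper does; your remark that the eigenvalues in $I_k^+$ carry the indices $k,\ldots,k+r-1$, by the qualitative counting of the lower clusters, is a detail the paper leaves implicit.
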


\begin{proof}
Lemma~\ref{lem:R-injective} gives injectivity and the Gram estimate, while
\eqref{eq:quasimode} gives the uniform residual on
$\mathcal Q_{\eps,k}^+$. Non-pollution is
\eqref{eq:qual-positive-cross-mass-micro}--\eqref{eq:qual-positive-cross-mass-limit}.
Apply Theorem~\ref{thm:abstract-fixed-cluster} with
$H_\eps=(V_\eps,a_\eps)$, $\mathcal A_\eps=\Keps$,
$J_\eps=\mathcal R_\eps^+|_{E_k^0}$ and
$\tau_0=1/\lambda_k^0$. It gives the rank, projector bound, reciprocal
eigenvalue estimate and orthogonal alignment. Returning from the
energy-normalized bases gives the third estimate. Poincar\'e's inequality and
\eqref{eq:Rplus-L2} give the physical-domain $L^2$ estimate.  Applying
$P_\eps$ to the energy error and using Lemma~\ref{lem:common-reconstructor-extension}
for the aligned reconstructors gives \eqref{eq:eigenfunction-extension-L2-rate}.
\end{proof}

The fast negative branch is treated by removing its $\eps^{-2}$ scale
through the first negative cell mode.


\subsection{Negative ground state and exact factorization}
\label{subsec:negative-ground-state}

Factorization through a cell ground state is a classical device in spectral
homogenization
\cite{AllaireCapdeboscqPiatnitskiSiessVanninathan2004,AllairePiatnitski2005,NazarovPankratovaPiatnitski2011}.
We extend it here to the combined bulk--surface mass, for which the cell
ground state must satisfy a spectral Robin condition on $\Gam$.

Assume now $m_{\mathrm{eff}}>0$. Consider the combined cell problem
\begin{equation}
 \int_{\Ys}A\nabla_y\theta\cdot\nabla_y\psi\,dy
 =\lambda\mathfrak b_Y(\theta,\psi)
 \qquad\forall\psi\in H^1_\#(\Ys).
 \label{neg:eq:cell-spectral}
\end{equation}
Equivalently,
\begin{equation}
\left\{
\begin{aligned}
 -\Div_y(A\nabla_y\theta)&=\lambda q\theta&&\text{in }\Ys,\\
 A\nabla_y\theta\cdot n&=\lambda\rho\theta&&\text{on }\Gam,\\
 \theta&\text{ is }Y\text{-periodic.}
\end{aligned}
\right.
 \notag
\end{equation}
The pair $(0,1)$ is an eigencouple, and Proposition~\ref{prop:prelim-two-sign-spectrum} provides a negative direction.
Since \cite{HessKato1980} does not cover spectral Robin mass, we first establish the positivity needed for factorization.

\begin{lemma}
\label{neg:lem:strict-positive-cell}
Assume \textup{(H1)--(H3)}. Let
$\lambda\in\mathbb R$ and let
$0\le\theta\in H^1_\#(\Ys)$, $\theta\not\equiv0$, satisfy
\begin{equation}
 \int_{\Ys}A\nabla\theta\cdot\nabla\psi\,dy
 =\lambda\left(\int_{\Ys}q\theta\psi\,dy
 +\int_\Gamma\rho\theta\psi\,dS_y\right)
 \qquad\forall\psi\in H^1_\#(\Ys).
 \label{neg:eq:positive-cell-generic}
\end{equation}
Then, for every $0<\alpha'<\alpha_Y$,
$\theta\in C^{1,\alpha'}(\overline{\Ys})$ and
\begin{equation}
 \theta>0\quad\hbox{on }\overline{\Ys}.
 \label{neg:eq:strict-positive-cell}
\end{equation}
Consequently there are constants $c_\theta,C_\theta>0$ such that
$c_\theta\le\theta\le C_\theta$ on $\overline{\Ys}$.
\end{lemma}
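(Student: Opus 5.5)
The plan has three parts: obtain regularity from Lemma~\ref{lem:cell-regularity-H4}\textup{(ii)}, prove interior positivity by a weak Harnack inequality, and prove positivity on $\Gam$ by a Hopf-type boundary argument that absorbs the spectral Robin term.

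\emph{Regularity.} Equation \eqref{neg:eq:positive-cell-generic} is exactly the weak periodic Robin cell equation. Lemma~\ref{lem:cell-regularity-H4}\textup{(ii)} therefore gives $\theta\in C^{1,\alpha'}(\overline{\Ys})$ for every $0<\alpha'<\alpha_Y$. In particular $\theta$ is continuous on the compact set $\overline{\Ys}$, understood periodically on $\overline{\mathcal W}$. Once strict positivity is known, the bounds $c_\theta\le\theta\le C_\theta$ follow by compactness.

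\emph{Interior positivity.} In $\mathcal W$ the function $\theta\ge0$ solves $-\Div(A\nabla\theta)+c\theta=0$ with $c:=\lambda q^-+\lambda^- |q|\ldots$. More simply, write the equation as
\[
-\Div(A\nabla\theta)+\lambda^- q^+\theta+\lambda^+q^-\theta=\lambda^+q^+\theta+\lambda^-q^-\theta\ge0 .
\]
Thus $\theta$ is a nonnegative supersolution of $-\Div(A\nabla w)+c_0w=0$ with $0\le c_0\in L^\infty$. The weak Harnack inequality (Trudinger; Gilbarg--Trudinger, Theorem~8.18) gives, on every ball $B_{2R}\subset\mathcal W$, the bound $\inf_{B_R}\theta\ge c\,R^{-N/p}\|\theta\|_{L^p(B_{2R})}$. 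Hence the zero set of $\theta$ in $\mathcal W$ is open. It is also closed by continuity. Since $\mathcal W$ is connected and $\theta\not\equiv0$ (periodicity transports nontriviality to every cell), we get $\theta>0$ in $\mathcal W$.

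\emph{Boundary positivity.} This is the main obstacle. Suppose $\theta(y_0)=0$ at some $y_0\in\Gam$. Because $\theta\in C^{1,\alpha'}$ attains its minimum $0$ there, the tangential gradient vanishes. The conormal condition then reads $A\nabla\theta\cdot n=\lambda\rho\theta=0$ at $y_0$, and symmetry and ellipticity of $A$ force $\partial_n\theta(y_0)=0$ along the transversal direction $An$ as well.

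On the other hand, $\Gam\in C^{2,\alpha_Y}$ satisfies the interior ball condition from the $\Ys$ side. I would apply the Hopf lemma for nonnegative supersolutions of $-\Div(A\nabla w)+c_0w\ge0$ with Hölder $A$. A suitable version is for divergence form with $C^{0,\alpha}$ coefficients, using the $C^{1,\alpha}$ regularity already established, e.g.\ Finn--Gilbarg or the $C^{1,\text{Dini}}$ Hopf lemma of Apushkinskaya--Nazarov. With $\theta>0$ inside, it yields $A\nabla\theta\cdot n(y_0)<0$ for the exterior normal $n$ to $\Ys$, i.e.\ strict inward growth. This contradicts $A\nabla\theta\cdot n(y_0)=0$, so $\theta>0$ on $\Gam$.

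The two delicate points are these. First, the Hopf lemma must be invoked in divergence form with merely Hölder coefficients; if a citation is unavailable, I would build a barrier $e^{-\kappa|y-z|^2}-e^{-\kappa r^2}$ on an annulus. That requires a nondivergence form, so alternatively I would compare with the solution of the equation in the annulus with constant boundary data and use $C^{1,\alpha}$ estimates. Second, the Robin term only enters through $\rho\theta(y_0)=0$, so no sign condition on $\rho$ is needed. Periodic faces $\partial Y$ are interior points of $\mathcal W$ and are already covered by the interior step.
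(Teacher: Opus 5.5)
Your proposal is correct and follows essentially the same route as the paper. It obtains regularity from Lemma~\ref{lem:cell-regularity-H4}\textup{(ii)}, proves interior positivity by a Harnack argument after moving the negative part of $\lambda q$ to the left-hand side, and proves boundary positivity by the divergence-form Hopf lemma with H\"older principal coefficients, which contradicts $A\nabla\theta\cdot n=\lambda\rho\theta=0$ at a boundary zero once the tangential gradient is known to vanish. The differences are cosmetic (you use the weak Harnack inequality with an open--closed argument in $\mathcal W$ where the paper uses a Harnack chain), and your barrier/comparison fallback is unnecessary, since the Hopf lemma you cite is precisely the one the paper invokes.
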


\begin{proof}
Lemma~\ref{lem:cell-regularity-H4}\textup{(ii)} gives
$\theta\in C^{1,\alpha'}(\overline{\Ys})$.  In the interior it satisfies
$-\operatorname{div}(A\nabla\theta)=V\theta$ with
$V=\lambda q\in L^\infty$.  The Harnack inequality for nonnegative solutions
with bounded zeroth-order coefficient, followed along a Harnack chain in the
connected periodic solid, implies $\theta>0$ in $\Ys$; see, for example,
\cite[Chapters~3 and~8]{GilbargTrudinger2001}.

Suppose that $\theta(y_0)=0$ at some $y_0\in\Gamma$.  Put
$V=\lambda q$ and write $V=V^+-V^-$, with $V^\pm\ge0$. Define
\[
 L\theta:=\operatorname{div}(A\nabla\theta)-V^-\theta.
\]
Then the interior equation gives
\[
 L\theta=-V^+\theta\le0\qquad\text{in }Y^*,
\]
so the zeroth-order coefficient of $L$ is $-V^-\le0$. Since
$\theta>0$ in $Y^*$ and $\theta(y_0)=0$, the point $y_0$ is a strict
boundary minimum. After flattening the $C^{2,\alpha_Y}$ boundary, the Hopf
boundary point lemma for weak uniformly elliptic divergence-form equations
with H\"older continuous principal coefficients applies to $L$ and yields a
strictly nonzero outward normal derivative at $y_0$; see \cite[Theorem~4.1]{Rosales2019}
and the survey \cite{ApushkinskayaNazarov2022}, where H\"older (indeed Dini)
continuity of the principal coefficients is shown to suffice for
divergence-form operators.  Since
$\theta|_\Gamma\ge0$ attains its minimum $0$ at $y_0$ and
$\theta\in C^1(\overline{\Ys})$, the tangential gradient of $\theta$
vanishes at $y_0$. Hence $\nabla\theta(y_0)$ is normal
to $\Gamma$, and uniform ellipticity implies
\[
 A\nabla\theta(y_0)\cdot n(y_0)\ne0.
\]
On the other hand, the Robin condition contained in
\eqref{neg:eq:positive-cell-generic} gives
\[
 A\nabla\theta(y_0)\cdot n(y_0)
 =\lambda\rho(y_0)\theta(y_0)=0,
\]
a contradiction.  Hence \eqref{neg:eq:strict-positive-cell} holds; the
uniform upper and lower bounds follow from continuity and compactness of the
periodic cell closure.
\end{proof}

We can now select the negative cell ground state.

\begin{proposition}
\label{neg:prop:ground-state}
Assume \textup{(H1)--(H3)}, and
$m_{\mathrm{eff}}>0$. Then the cell problem
\eqref{neg:eq:cell-spectral} possesses a first negative eigenvalue
$\lambdaone<0$, characterized by
\begin{equation}
 -\lambdaone
 =\min_{\substack{\phi\in H^1_\#(\Ys)\\ \mathfrak b_Y(\phi,\phi)<0}}
 \frac{\displaystyle\int_{\Ys}A\nabla\phi\cdot\nabla\phi\,dy}
 {\displaystyle-\mathfrak b_Y(\phi,\phi)}.
 \label{neg:eq:cell-Rayleigh}
\end{equation}
It is simple and its eigenfunction may be chosen strictly positive. We normalize it by
\begin{equation}
 \int_{\Ys}(\thetaone)^2\,dy=1.
 \label{neg:eq:theta-normalization}
\end{equation}
There exist $c_\theta,C_\theta>0$ such that
\begin{equation}
 0<c_\theta\le\thetaone(y)\le C_\theta<\infty
 \qquad\text{on }\overline{\Ys}.
 \label{neg:eq:theta-bounds}
\end{equation}
Moreover,
\begin{equation}
 \int_{\Ys}q(\thetaone)^2\,dy
 +\int_\Gam\rho(\thetaone)^2\,dS_y
 =\frac1{\lambdaone}\int_{\Ys}A\nabla\thetaone\cdot\nabla\thetaone\,dy<0.
 \label{neg:eq:theta-energy}
\end{equation}
\end{proposition}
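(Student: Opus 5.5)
We work with the compact self-adjoint operator $\mathcal K_Y$ on the Hilbert space $H:=H^1_\#(\Ys)$, equipped with the scalar product
\[
 \langle\phi,\psi\rangle:=\int_{\Ys}A\nabla\phi\cdot\nabla\psi\,dy+\int_{\Ys}\phi\psi\,dy .
\]
The operator $\mathcal K_Y$ represents $\mathfrak b_Y$ in this inner product. Since the energy form is only semi-definite (constants lie in its kernel), we first show that the minimization in \eqref{neg:eq:cell-Rayleigh} is well posed.

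\emph{Existence of the minimum.} By Proposition~\ref{prop:prelim-two-sign-spectrum} the admissible set $\{\mathfrak b_Y(\phi,\phi)<0\}$ is nonempty. We claim the infimum $\kappa$ of the quotient is strictly positive. If not, take $\phi_n$ with $-\mathfrak b_Y(\phi_n,\phi_n)=1$ and $\int A\nabla\phi_n\cdot\nabla\phi_n\to0$. Write $\phi_n=c_n+w_n$ with $w_n$ of zero mean. Poincar\'e--Wirtinger and the trace theorem give $w_n\to0$ in $H^1$. Hence
\[
 \mathfrak b_Y(\phi_n,\phi_n)=c_n^2m_{\rm eff}+2c_n\mathfrak b_Y(1,w_n)+\mathfrak b_Y(w_n,w_n).
\]
If $(c_n)$ is bounded, the last two terms tend to zero and the left side tends to a nonnegative limit, contradicting $-\mathfrak b_Y(\phi_n,\phi_n)=1$. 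If $|c_n|\to\infty$, the first term dominates and is positive, again a contradiction. This is exactly where $m_{\rm eff}>0$ enters.

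For a minimizing sequence normalized by $-\mathfrak b_Y=1$, the same decomposition bounds $c_n$, so the sequence is bounded in $H^1$. Weak compactness, compactness of $H^1(\Ys)\hookrightarrow L^2$ and of the trace into $L^2(\Gam)$, and weak lower semicontinuity of the energy then yield a minimizer $\theta$. The Euler--Lagrange equation gives \eqref{neg:eq:cell-spectral} with $\lambda_1^-:=-\kappa<0$. Testing this equation with $\theta$ gives \eqref{neg:eq:theta-energy}, and the negative sign follows from $\mathfrak b_Y(\theta,\theta)<0$.

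\emph{Positivity.} Since $\mathfrak b_Y(|\theta|,|\theta|)=\mathfrak b_Y(\theta,\theta)$ and $|\nabla|\theta||=|\nabla\theta|$ a.e., the function $|\theta|$ is also a minimizer, hence a weak eigenfunction. Lemma~\ref{neg:lem:strict-positive-cell} then gives $|\theta|\ge c>0$ on $\overline{\Ys}$. By continuity, $\theta$ does not change sign, so we may take $\theta_1^-=|\theta|$. The bounds \eqref{neg:eq:theta-bounds} come from the same lemma, and the normalization \eqref{neg:eq:theta-normalization} is a rescaling.

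\emph{Simplicity.} Let $\theta'$ be another eigenfunction for $\lambda_1^-$. Any eigenfunction $w$ for $\lambda_1^-$ satisfies $\int A\nabla w\cdot\nabla w=\lambda_1^-\mathfrak b_Y(w,w)$, so $w\ne0$ forces $\mathfrak b_Y(w,w)<0$ and $w$ is a minimizer. Choose $t$ so that $w:=\theta'-t\theta_1^-$ vanishes at a fixed point $y_0\in\overline{\Ys}$. If $w\ne0$, then $w$ is a minimizer, so by the positivity step it has a strict sign on $\overline{\Ys}$. This contradicts $w(y_0)=0$. Hence $w\equiv0$ and the eigenspace is one-dimensional.

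\emph{Eigenvalue ordering.} The value $\lambda_1^-$ is the first negative eigenvalue because, for any negative eigenvalue $\lambda$ with eigenfunction $u$, the quotient at $u$ equals $-\lambda$, so $-\lambda\ge\kappa$.

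The main obstacle is the coercivity argument $\kappa>0$ under a merely semi-definite energy; the splitting into constants plus mean-zero part, combined with $m_{\rm eff}>0$, is the step that replaces the classical positivity of the weight.
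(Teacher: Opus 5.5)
Your proof is correct. For existence, the Euler--Lagrange equation, positivity and \eqref{neg:eq:theta-energy} it is essentially the paper's argument: you split $\phi_n=c_n+w_n$ and use $m_{\rm eff}>0$ to bound the mean, use compactness of the embedding and of the trace, and replace $\theta$ by $|\theta|$ before invoking Lemma~\ref{neg:lem:strict-positive-cell}. One step is reordered. You prove $\kappa>0$ a priori by contradiction. The paper first produces a minimizer and then observes that zero energy would make it constant, which is incompatible with $\mathfrak b_Y(\thetaone,\thetaone)=-1$ and $m_{\rm eff}>0$.

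The genuine difference is the simplicity step. The paper tests the equation for $\thetaone$ with $w^2/\thetaone$ and obtains Picone's identity
\[
 \int_{\Ys}A\nabla w\cdot\nabla w\,dy-\lambdaone\mathfrak b_Y(w,w)
 =\int_{\Ys}(\thetaone)^2A\nabla\Bigl(\frac{w}{\thetaone}\Bigr)\cdot\nabla\Bigl(\frac{w}{\thetaone}\Bigr)\,dy .
\]
Its left side vanishes for an eigenfunction, which forces $w=C\thetaone$. You instead use the classical sign argument: every nonzero eigenfunction at $\lambdaone$ is a minimizer, minimizers have a strict sign, so $\theta'-t\thetaone$ vanishing at a point must vanish identically.

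What each route buys:
\begin{itemize}
 \item Your route is more elementary and stays inside the variational characterization. You also make explicit the ordering of $\lambdaone$ among the negative eigenvalues, which the paper leaves implicit.
 \item The Picone route does not need to know that eigenfunctions are minimizers. It is exactly the cell-level version of the factorization \eqref{neg:eq:global-factorization}, so it prepares the ground-state transform used afterwards.
 \item Both rely on the cell regularity: you need it for pointwise evaluation, the paper for admissibility of $w^2/\thetaone$.
\end{itemize}

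Two small points deserve a line each:
\begin{itemize}
 \item For anisotropic $A$, the equality of the energies of $\theta$ and $|\theta|$ follows from $\nabla|\theta|=\operatorname{sgn}(\theta)\nabla\theta$ a.e., not merely from $|\nabla|\theta||=|\nabla\theta|$.
 \item Your simplicity and ordering steps use the claim that a nonzero eigenfunction $w$ for a negative eigenvalue $\lambda$ has $\mathfrak b_Y(w,w)<0$. This needs nonzero constants excluded: if $w=c$, testing with $\psi=1$ gives $0=\lambda c\,m_{\rm eff}$, so $c=0$. Hence $w$ is nonconstant, has positive energy, and therefore $\mathfrak b_Y(w,w)<0$.
\end{itemize}
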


\begin{proof}
Set
\[
 a_Y(\phi,\psi):=\int_{\Ys}A\nabla\phi\cdot\nabla\psi\,dy
 \quad(\phi,\psi\in H^1_\#(\Ys)),
 \qquad b_0:=\mathfrak b_Y(1,1)=m_{\mathrm{eff}}>0.
\]

We first show that the Rayleigh minimum is attained. Normalize a minimizing sequence by
$\mathfrak b_Y(\phi_n,\phi_n)=-1$. Then $a_Y(\phi_n,\phi_n)$ is bounded.
Write
\[
 \phi_n=m_n+w_n,\qquad
 m_n:=\frac1{|\Ys|}\int_{\Ys}\phi_n\,dy,\qquad
 \int_{\Ys}w_n\,dy=0.
\]
The periodic Poincar\'e inequality and ellipticity show that $(w_n)$ is
bounded in $H^1_\#(\Ys)$. Boundedness of the volume and trace parts of
$\mathfrak b_Y$ gives
\[
 -1=b_0m_n^2+2m_n\mathfrak b_Y(1,w_n)
       +\mathfrak b_Y(w_n,w_n).
\]
Consequently,
\[
 b_0m_n^2\le 1+C|m_n|\|w_n\|_{H^1(\Ys)}
                    +C\|w_n\|_{H^1(\Ys)}^2,
\]
and Young's inequality bounds $(m_n)$. Hence $(\phi_n)$ is bounded in
$H^1_\#(\Ys)$. Up to a subsequence, it converges weakly in $H^1$ and
strongly both in $L^2(\Ys)$ and, by compactness of the trace, in
$L^2(\Gam)$. Thus $\mathfrak b_Y(\phi_n,\phi_n)$ converges to the mass of the
weak limit, while $a_Y$ is weakly lower semicontinuous. The limit
$\thetaone$ attains the minimum and may be rescaled later to satisfy
\eqref{neg:eq:theta-normalization}.

The minimum is strictly positive. Indeed, zero energy would force
$\thetaone$ to be constant on the connected set $\Ys$, which is incompatible
with $\mathfrak b_Y(\thetaone,\thetaone)=-1$ because $b_0>0$. The
Euler--Lagrange equation on this constraint therefore reads
\[
 a_Y(\thetaone,\psi)=\lambdaone\mathfrak b_Y(\thetaone,\psi)
 \quad\forall\psi\in H^1_\#(\Ys),
 \qquad \lambdaone=-a_Y(\thetaone,\thetaone)<0,
\]
and proves the variational characterization \eqref{neg:eq:cell-Rayleigh}.

We next show that the minimizer is strictly positive. Since
$a_Y(|\phi|,|\phi|)=a_Y(\phi,\phi)$ and
$\mathfrak b_Y(|\phi|,|\phi|)=\mathfrak b_Y(\phi,\phi)$, the minimizer may
be chosen nonnegative.  Lemma~\ref{neg:lem:strict-positive-cell}, applied to
the Euler--Lagrange equation obtained above, gives
$\thetaone\in C^{1,\alpha'}(\overline{\Ys})$ for every
$0<\alpha'<\alpha_Y$, strict positivity on the closed cell, and hence
\eqref{neg:eq:theta-bounds}.  In particular, both $\thetaone$ and
$1/\thetaone$ are bounded, which is the precise property needed by the
global ground-state transform below.

Next, simplicity follows from Picone's identity. Let $w$ be any real eigenfunction associated with $\lambdaone$. The same
elliptic regularity gives
$w\in C^{1,\alpha'}(\overline{\Ys})$ for every
$0<\alpha'<\alpha_Y$; together
with the lower bound in \eqref{neg:eq:theta-bounds}, this makes
$w^2/\thetaone$ an admissible $H^1_\#(\Ys)$ test function. Testing
the equation for $\thetaone$ with $w^2/\thetaone$ and using
\(\thetaone(w^2/\thetaone)=w^2\) in both the volume and surface mass
terms gives the exact identity
\begin{equation}
 a_Y(w,w)-\lambdaone\mathfrak b_Y(w,w)
 =\int_{\Ys}(\thetaone)^2
 A\nabla\!\left(\frac{w}{\thetaone}\right)\cdot
   \nabla\!\left(\frac{w}{\thetaone}\right)\,dy.
 \notag
\end{equation}
The left-hand side vanishes because $w$ has the same eigenvalue.
Ellipticity and connectedness imply
$\nabla(w/\thetaone)=0$, hence $w=C\thetaone$. Thus the eigenspace is
one-dimensional; the surface mass cancels in the Picone test through the
Robin part of $\mathfrak b_Y$.

Finally, testing \eqref{neg:eq:cell-spectral} with $\thetaone$ gives
\eqref{neg:eq:theta-energy}; its mass factor is negative because the energy
is positive and $\lambdaone<0$.
\end{proof}

Set
\begin{equation}
 \Theta_\eps:=(\thetaone)^\eps.
 \notag
\end{equation}

The ground state yields exact factorization.

\begin{theorem}
\label{neg:thm:global-factorization}
Assume \textup{(H1)--(H3)} and $m_{\mathrm{eff}}>0$. For all $v,w\in V_\eps$,
\begin{equation}
 a_\eps(\Theta_\eps v,\Theta_\eps w)
 -\frac{\lambdaone}{\eps^2} b_\eps(\Theta_\eps v,\Theta_\eps w)
 =\int_{\OmE}((\thetaone)^2A)^\eps\nabla v\cdot\nabla w\,dx.
 \tag{GF}\label{neg:eq:global-factorization}
\end{equation}
Define
\begin{equation}
 \tA=(\thetaone)^2A,
 \qquad \tq=(\thetaone)^2q,
 \qquad \trho=(\thetaone)^2\rho,
 \notag
\end{equation}
and
\begin{equation}
 q^-:=-\tq,\qquad \rho^-:=-\trho,
 \qquad
 \mminus:=-\left(\int_{\Ys}\tq\,dy+\int_\Gam\trho\,dS_y\right)>0.
 \label{neg:eq:minus-data}
\end{equation}
The factorized effective mass is
\begin{equation}
 \tmeff:=\int_{\Ys}\tq\,dy+\int_\Gam\trho\,dS_y=-\mminus<0.
 \notag
\end{equation}
Both $\tA$ and $\tmeff$ scale like the square of the normalization constant
in \eqref{neg:eq:theta-normalization}; hence the homogenized tensor of $\tA$
and $\tmeff$ scale in the same way, and the limit ratios
$\widetilde\mu_k^0/\tmeff$ below do not depend on this normalization.
If
\begin{align}
 \widetilde a_\eps(v,w)&:=\int_{\OmE}\tA^\eps\nabla v\cdot\nabla w\,dx,\qquad \widetilde b_\eps(v,w)
 :=b_\eps(\Theta_\eps v,\Theta_\eps w)
 =\int_{\OmE}\tq^\eps vw\,dx
  +\eps\int_{\GamE}\trho^\eps vw\,dS_x,\notag\\
 b_\eps^-(v,w)&:=-\widetilde b_\eps(v,w)
 =\int_{\OmE}(q^-)^\eps vw\,dx
  +\eps\int_{\GamE}(\rho^-)^\eps vw\,dS_x,\notag
\end{align}
for $v,w\in V_\eps$, then multiplication $u=\Theta_\eps v$ establishes a bijection, preserving multiplicities, between negative eigenvalues $\lambda_\eps$ of the original problem and positive eigenvalues $\eta_\eps$ of
\begin{equation}
 \widetilde a_\eps(v_\eps,w)=\eta_\eps b_\eps^-(v_\eps,w)
 \qquad\forall w\in V_\eps.
 \tag{$\mathscr P_\eps^-$}\label{neg:eq:positive-transformed}
\end{equation}
The parameters satisfy
\begin{equation}
 \lambda_\eps=\frac{\lambdaone}{\eps^2}-\eta_\eps,
 \notag
\end{equation}
so that
\begin{equation}
 \operatorname{Spec}(\mathscr P_\eps)\cap(\frac{\lambdaone}{\eps^2},0)=\varnothing,
 \qquad
 \lambda_{\eps,k}^-=\frac{\lambdaone}{\eps^2} -\eta_{\eps,k}^+,
 \quad k\ge1.
 \label{neg:eq:spectral-correspondence}
\end{equation}
\end{theorem}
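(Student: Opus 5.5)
The identity \eqref{neg:eq:global-factorization} is a rescaled Picone/ground-state computation. I would first check, by density, that $\Theta_\eps v\in V_\eps$ for $v\in V_\eps$. This holds because $\Theta_\eps$ is $W^{1,\infty}$ on $\OmE$, with $\|\nabla\Theta_\eps\|_\infty\le C\eps^{-1}$, and multiplication by it preserves $\mathcal D_\eps$. The same is true of multiplication by $1/\Theta_\eps$, thanks to \eqref{neg:eq:theta-bounds}.

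For $v,w\in\mathcal D_\eps$ I expand
\[
 \nabla(\Theta_\eps v)=\Theta_\eps\nabla v+v\nabla\Theta_\eps ,
\]
and use symmetry of $A$ to get
\[
 a_\eps(\Theta_\eps v,\Theta_\eps w)=\int_{\OmE}\bigl(\Theta_\eps^2A^\eps\nabla v\cdot\nabla w+A^\eps\nabla\Theta_\eps\cdot\nabla(\Theta_\eps vw)\bigr)\,dx .
\]
The second term is the microscopic cell equation tested with $\varphi=\Theta_\eps vw$. Rescaling \eqref{neg:eq:cell-spectral} cell by cell with $x=\eps(k+y)$ gives
\[
 -\Div(A^\eps\nabla\Theta_\eps)=\eps^{-2}\lambdaone q^\eps\Theta_\eps \ \text{in }\OmE,\qquad A^\eps\nabla\Theta_\eps\cdot n=\eps^{-1}\lambdaone\rho^\eps\Theta_\eps \ \text{on every hole}.
\]
Integrating by parts against $\varphi$ then yields
\[
 \int_{\OmE}A^\eps\nabla\Theta_\eps\cdot\nabla\varphi\,dx=\frac{\lambdaone}{\eps^2}\,b_\eps(\Theta_\eps v,\Theta_\eps w).
\]
Two points make this work. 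First, $\varphi$ vanishes near $\Dep$, so the holes of boundary cells and $\partial\Om$ contribute nothing, and only $\GamE$ carries surface terms. Second, the cell interfaces lie in the solid and $\thetaone$ is periodic and $C^{1,\alpha'}$, so no interface terms appear. I would justify this rigorously by testing the weak periodic cell equation with the rescaled pieces of $\varphi$ on each cell (a finite sum). The identity then passes to $V_\eps$ by continuity of both sides.

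For the spectral consequences, fix $v\in V_\eps$, set $u=\Theta_\eps v$, and let $w$ range over $V_\eps$ so that $\Theta_\eps w$ ranges over $V_\eps$. Then $a_\eps(u,\Theta_\eps w)=\lambda b_\eps(u,\Theta_\eps w)$ for all $w$ is equivalent, by (GF), to
\[
 \widetilde a_\eps(v,w)=\Bigl(\lambda-\frac{\lambdaone}{\eps^2}\Bigr)\widetilde b_\eps(v,w)=\eta\, b_\eps^-(v,w),\qquad \eta=\frac{\lambdaone}{\eps^2}-\lambda .
\]
Multiplication by $\Theta_\eps$ is a linear bijection and preserves the dimensions of eigenspaces.

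Since $\widetilde a_\eps$ is coercive, every eigenvalue $\eta$ of \eqref{neg:eq:positive-transformed} is nonzero and has sign equal to that of $b_\eps^-(v,v)$. If $\eta>0$ then $\lambda<\lambdaone/\eps^2<0$. Conversely, a negative $\lambda$ cannot lie in $[\lambdaone/\eps^2,0)$. The value $\lambda=\lambdaone/\eps^2$ would give $\eta=0$, which is impossible, and a value strictly between would give $\eta<0$. I expect the main obstacle to be excluding this last case, $\eta<0$ with $\lambda<0$. I would handle it by combining the two identities. From the original problem, $0<a_\eps(u,u)=\lambda b_\eps(u,u)$, so $\lambda<0$ forces $b_\eps(u,u)<0$. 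Since $\widetilde b_\eps(v,v)=b_\eps(u,u)$, we get $b_\eps^-(v,v)>0$. Then $\widetilde a_\eps(v,v)=\eta\,b_\eps^-(v,v)>0$ forces $\eta>0$.

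Hence negative eigenvalues correspond exactly to positive $\eta$, the gap $(\lambdaone/\eps^2,0)$ is free of spectrum, and monotone ordering gives $\lambda_{\eps,k}^-=\lambdaone/\eps^2-\eta_{\eps,k}^+$. Finally, $\tmeff=-\mminus<0$ is \eqref{neg:eq:theta-energy}, and the normalization invariance holds because both $\tA$ and $\tmeff$ are quadratic in $\thetaone$.
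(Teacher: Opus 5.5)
Your proposal is correct and follows essentially the same route as the paper. You expand $\nabla(\Theta_\eps v)$, identify the cross term as the rescaled cell equation tested with $\Theta_\eps vw$ (only the holes of $\GamE$ contribute because this test function vanishes near $\Dep$), extend by density, and then exclude $(\lambdaone/\eps^2,0)$ by the same chain $b_\eps(u,u)<0\Rightarrow b_\eps^-(v,v)>0\Rightarrow\eta>0$.

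Two small precisions in your added rigor. First, the weak periodic cell equation should be tested with the periodization $\sum_{k}\varphi(\eps(k+\cdot))$ rather than with individual single-cell pieces, which are not periodic. Second, $\Theta_\eps\phi$ is not literally in $\mathcal D_\eps$, so $\Theta_\eps V_\eps\subset V_\eps$ should be obtained via a Lipschitz extension of $\Theta_\eps$ followed by mollification.
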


\begin{proof}
It suffices to prove (GF) for $v,w\in\mathcal D_\eps$. Expanding the
gradients gives
\[
 a_\eps(\Theta_\eps v,\Theta_\eps w)
 =\int_{\OmE}\Theta_\eps^2A^\eps\nabla v\cdot\nabla w\,dx
 +\int_{\OmE}A^\eps\nabla\Theta_\eps\cdot\nabla(\Theta_\eps vw)\,dx.
\]
Rescaling \eqref{neg:eq:cell-spectral}, the periodic function $\Theta_\eps$
satisfies $-\Div(A^\eps\nabla\Theta_\eps)=\lambdaone\eps^{-2}q^\eps\Theta_\eps$
in the whole periodic solid $\eps\mathcal W$, with
$A^\eps\nabla\Theta_\eps\cdot n=\lambdaone\eps^{-1}\rho^\eps\Theta_\eps$ on
every periodic hole. The test function $\Theta_\eps vw$ vanishes in a
neighborhood of $\Dep$, which contains $\partial\Om\cap\overline{\OmE}$ and
every hole not in $\GamE$; hence only the holes of $\GamE$ contribute, and
the last integral equals $\lambdaone\eps^{-2}b_\eps(\Theta_\eps v,\Theta_\eps w)$.
This is \eqref{neg:eq:global-factorization} for $v,w\in\mathcal D_\eps$;
density of $\mathcal D_\eps$ in $V_\eps$, together with bounded multiplication
by $\Theta_\eps$ for fixed $\eps$, extends it to $V_\eps$.

The bounds \eqref{neg:eq:theta-bounds} and the cell regularity imply that
$\Theta_\eps,\Theta_\eps^{-1}\in W^{1,\infty}(\OmE)$ for each fixed
$\eps$.  Hence multiplication by $\Theta_\eps$ is a linear isomorphism
of $V_\eps$, so the change of unknown cannot alter eigenspace dimensions.

The identity \eqref{neg:eq:theta-energy} gives
\[
 \left(\int_{\Ys}\tq\,dy+\int_\Gam\trho\,dS_y\right)
 =\frac1{\lambdaone}\int_{\Ys}A\nabla\thetaone\cdot\nabla\thetaone\,dy<0,
\]
hence $\mminus>0$. If $u_\eps=\Theta_\eps v_\eps$ and $\lambda_\eps=\frac{\lambdaone}{\eps^2}+\xi_\eps$, \eqref{neg:eq:global-factorization} yields
\[
 \widetilde a_\eps(v_\eps,w)=\xi_\eps\widetilde b_\eps(v_\eps,w)
 =-\xi_\eps b_\eps^-(v_\eps,w)
 \qquad\forall w\in V_\eps.
\]
Thus $\eta_\eps=-\xi_\eps$. Conversely, any positive eigencouple of \eqref{neg:eq:positive-transformed} gives an original eigencouple with $\lambda_\eps=\frac{\lambdaone}{\eps^2} -\eta_\eps<0$. If an original eigenvalue is negative, testing with its eigenfunction shows $b_\eps(u_\eps,u_\eps)<0$; applying \eqref{neg:eq:global-factorization} then forces $\lambda_\eps<\frac{\lambdaone}{\eps^2}$. Hence no negative eigenvalue is omitted and the correspondence is complete.
\end{proof}

The factorization yields the following fixed-index bounds.

\begin{proposition}
\label{prop:prelim-negative-fixed-index-bound}
Assume \textup{(H1)--(H3)} and $m_{\mathrm{eff}}>0$.
For every fixed $k\ge1$ there exist
$0<c_{-,*}\le C_{-,k}<\infty$ and $\eps_k>0$ such that, for
$j=1,\ldots,k$ and $0<\eps<\eps_k$,
\begin{equation}
 c_{-,*}
 \le \frac{\lambdaone}{\eps^2}-\lambda_{\eps,j}^-
 =\eta_{\eps,j}^+
 \le C_{-,k}.
 \label{eq:prelim-negative-shift-bound}
\end{equation}
Equivalently,
\begin{equation}
 \frac{\lambdaone}{\eps^2}-C_{-,k}
 \le\lambda_{\eps,j}^-
 \le\frac{\lambdaone}{\eps^2}-c_{-,*}.
 \label{eq:prelim-negative-location}
\end{equation}
If $v_{\eps,j}$ is normalized by $b_\eps^-(v_{\eps,j},v_{\eps,j})=1$ and $u_{\eps,j}^-=\Theta_\eps v_{\eps,j}$, then
\begin{equation}
 \|v_{\eps,j}\|_{V_\eps}
 +\|P_\eps v_{\eps,j}\|_{H_0^1(\Om)}
 +\eps^{1/2}\|v_{\eps,j}\|_{L^2(\GamE)}
 \le C_{-,k},
 \label{eq:prelim-negative-factorized-bound}
\end{equation}
and, for the shifted energy norm
\begin{equation}
 \|z\|_{\Theta,\eps}^2
 :=a_\eps(z,z)-\frac{\lambdaone}{\eps^2}b_\eps(z,z),
 \notag
\end{equation}
one has
\begin{equation}
 \|u_{\eps,j}^-\|_{\Theta,\eps}^2
 =\eta_{\eps,j}^+\le C_{-,k}.
 \label{eq:prelim-negative-original-shifted-bound}
\end{equation}
\end{proposition}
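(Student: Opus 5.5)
The plan is to work entirely with the transformed problem \eqref{neg:eq:positive-transformed}. By Theorem~\ref{neg:thm:global-factorization}, $\lambda_{\eps,j}^-=\lambdaone/\eps^2-\eta_{\eps,j}^+$, so \eqref{eq:prelim-negative-shift-bound} and \eqref{eq:prelim-negative-location} are equivalent. Both reduce to a two-sided bound on the first $k$ positive eigenvalues $\eta_{\eps,j}^+$ of the pair $(\widetilde a_\eps,b_\eps^-)$. This pair has exactly the structure of the original problem. The coefficients are $\tA=(\thetaone)^2A$, which is uniformly elliptic with constants $c_\theta^2\alpha_0$ and $C_\theta^2\beta_0$ by \eqref{neg:eq:theta-bounds}. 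The densities are $q^-$ and $\rho^-$, bounded and periodic, with combined mean $\mminus>0$ by \eqref{neg:eq:minus-data}.

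\textbf{Lower bound.} I apply Proposition~\ref{prop:prelim-signed-normalization} to the data $(\tA,q^-,\rho^-)$; that proposition imposes no sign condition. It gives $|\eta|\ge c_{-,*}$ for every nonzero eigenvalue, with $c_{-,*}$ the reciprocal of the uniform bound on the operator norm of $b_\eps^-$ relative to $\widetilde a_\eps$. That bound comes from \eqref{eq:b-bounded} and ellipticity, and is independent of $\eps$.

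\textbf{Upper bound.} I repeat the proof of Proposition~\ref{prop:prelim-positive-fixed-index} with $\mminus$ in place of $m_{\rm eff}$. Fix a $k$-dimensional $L_k\subset C_c^\infty(\Om)$; its elements lie in $V_\eps$ for small $\eps$. Proposition~\ref{prop:prelim-trace-mass-limit}, applied to $q^-,\rho^-$, gives $b_\eps^-(\phi,\psi)\to\mminus\int_\Om\phi\psi\,dx$ uniformly on the unit sphere of $L_k$. So $b_\eps^-$ is uniformly positive definite on $L_k$. The energy $\widetilde a_\eps$ is bounded on $L_k$ because $\tA\in L^\infty$ and the basis is smooth. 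Lemma~\ref{lem:signed-courant-fischer} with $\sigma=+1$ then gives $\eta_{\eps,k}^+\le C_{-,k}$, and monotonicity extends this to $j\le k$. This also shows that $\eta_{\eps,j}^+$ exists for $j\le k$.

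\textbf{Eigenfunction bounds.} Normalize $b_\eps^-(v_{\eps,j},v_{\eps,j})=1$. Then $\widetilde a_\eps(v_{\eps,j},v_{\eps,j})=\eta_{\eps,j}^+\le C_{-,k}$, and ellipticity of $\tA$ bounds $\|v_{\eps,j}\|_{V_\eps}$. The bound on $\|P_\eps v_{\eps,j}\|_{H_0^1(\Om)}$ follows from \eqref{eq:common-cut-extension-gradient} and Poincar\'e. The trace bound follows from \eqref{eq:mass-energy-bound}. For \eqref{eq:prelim-negative-original-shifted-bound}, I apply (GF) with $v=w=v_{\eps,j}$:
\[
\|u_{\eps,j}^-\|_{\Theta,\eps}^2=\widetilde a_\eps(v_{\eps,j},v_{\eps,j})=\eta_{\eps,j}^+\,b_\eps^-(v_{\eps,j},v_{\eps,j})=\eta_{\eps,j}^+ .
\]

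\textbf{Main obstacle.} The only delicate point is justifying that Proposition~\ref{prop:prelim-trace-mass-limit} and the uniform estimates apply to the weighted data. These need only boundedness and periodicity of $q^-$ and $\rho^-$, which hold because $\thetaone$ is continuous and bounded on $\overline{\Ys}$. No regularity of $\tA$ is used, so (H4) is not needed.
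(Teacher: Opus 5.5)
Your proposal is correct and follows essentially the same route as the paper's proof. Both transfer to the pair $(\widetilde a_\eps,b_\eps^-)$, which has uniformly elliptic $\tA$ and positive mean $\mminus$. The upper bound comes from Lemma~\ref{lem:signed-courant-fischer} on fixed macroscopic trial spaces, the lower bound from the uniform bound on the transformed operator, the eigenfunction bounds from the normalized energy identity with extension and trace estimates, and the shifted-norm identity from \eqref{neg:eq:global-factorization} applied with $v=w=v_{\eps,j}$.
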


\begin{proof}
By \eqref{neg:eq:theta-bounds}, $\thetaone$ and its reciprocal are bounded; hence $\tA$ is uniformly elliptic. The transformed effective mass is $\mminus>0$. Apply the fixed-index argument of Proposition~\ref{prop:prelim-positive-fixed-index}, that is, Lemma~\ref{lem:signed-courant-fischer} with macroscopic trial spaces, to the transformed forms $(\widetilde a_\eps,b_\eps^-)$: the same finite-dimensional macroscopic trial spaces give $\eta_{\eps,j}^+\le C_{-,k}$ for $j\le k$, while uniform boundedness of the transformed compact operator excludes zero and gives $\eta_{\eps,j}^+\ge c_{-,*}$. The exact correspondence then gives \eqref{eq:prelim-negative-shift-bound}--\eqref{eq:prelim-negative-location}. The normalized transformed energy identity $\widetilde a_\eps(v_{\eps,j},v_{\eps,j})=\eta_{\eps,j}^+$, ellipticity, extension, and trace estimates give \eqref{eq:prelim-negative-factorized-bound}. Finally, \eqref{neg:eq:global-factorization} with $u_{\eps,j}^-=\Theta_\eps v_{\eps,j}$ gives \eqref{eq:prelim-negative-original-shifted-bound}.
\end{proof}

\subsection{Factorized branch: homogenization and quantitative transfer}
\label{neg:sec:homogenized}

The matrix $\tA=(\thetaone)^2A$ is symmetric and uniformly elliptic in
$\mathcal W$. For $j=1,\ldots,N$, let
$\widetilde\chi^j\in H^1_\#(\Ys)/\R$ solve
\begin{equation}
 \int_{\Ys}\tA(y)
 (e_j+\nabla_y\widetilde\chi^j)\cdot\nabla_y\psi\,dy=0
 \qquad\forall\psi\in H^1_\#(\Ys)/\R.
 \notag
\end{equation}
Set
\begin{equation}
 \widetilde B^j:=\tA(e_j+\nabla_y\widetilde\chi^j).
 \notag
\end{equation}
Then
\begin{equation}
 \Div_y\widetilde B^j=0\quad\hbox{in }\Ys,
 \qquad
 \widetilde B^j\cdot n=0\quad\hbox{on }\Gam.
 \notag
\end{equation}
The homogenized matrix is
\begin{equation}
 \tAh e_j
 =\int_{\Ys}\widetilde B^j(y)\,dy.
 \notag
\end{equation}
It is symmetric and positive definite.

The limit problem for \eqref{neg:eq:positive-transformed} is
\begin{equation}
 \int_\Om\tAh\nabla u\cdot\nabla\varphi\,dx
 =\eta\mminus\int_\Om u\varphi\,dx
 \qquad\forall\varphi\in H_0^1(\Om).
 \notag
\end{equation}
Let
\begin{equation}
 0<\widetilde\mu_1^0\le\widetilde\mu_2^0\le\cdots\to+\infty
 \notag
\end{equation}
be the spectrum of
\begin{equation}
 -\Div(\tAh\nabla u)=\widetilde\mu u\quad\hbox{in }\Om,
 \qquad u=0\quad\hbox{on }\partial\Om.
 \label{neg:eq:tilde-hom-dirichlet}
\end{equation}
Then
\begin{equation}
 \eta_k^0=\frac{\widetilde\mu_k^0}{\mminus},
 \qquad
 \xi_k^0=-\eta_k^0
 =\frac{\widetilde\mu_k^0}{\tmeff}.
 \notag
\end{equation}

Let $\widetilde\mu_k^0$ be a \emph{distinct} eigenvalue of
\eqref{neg:eq:tilde-hom-dirichlet}, of multiplicity $r$, occupying the
indices $k,\ldots,k+r-1$. We write
\begin{equation}
 E_k^-:=\left\{\phi\in H_0^1(\Om):
 -\Div(\tAh\nabla\phi)=\widetilde\mu_k^0\phi\right\},
 \qquad \dim E_k^-=r,
 \notag
\end{equation}
and fix a basis $\{\phi_1,\ldots,\phi_r\}$ such that
\begin{equation}
 \mminus\int_\Om\phi_i\phi_j\,dx=\delta_{ij}.
 \notag
\end{equation}
It then satisfies
\begin{equation}
 \int_\Om\tAh\nabla\phi_i\cdot\nabla\phi_j\,dx
 =\eta_k^0\delta_{ij},
 \qquad
 \eta_k^0=\frac{\widetilde\mu_k^0}{\mminus}.
 \notag
\end{equation}

We now homogenize the transformed problem.

\begin{theorem}
\label{neg:thm:qualitative}
Assume \textup{(H1)--(H3)} and $m_{\mathrm{eff}}>0$. Then, for every fixed $k\ge1$,
\begin{equation}
 \eta_{\eps,k}^+\longrightarrow
 \eta_k^0=\frac{\widetilde\mu_k^0}{\mminus}.
 \notag
\end{equation}
Consequently,
\begin{equation}
 \lambda_{\eps,k}^-
 -\frac{\lambdaone}{\eps^2}
 \longrightarrow
 \frac{\widetilde\mu_k^0}{\tmeff}
 =-\frac{\widetilde\mu_k^0}{\mminus}.
 \label{neg:eq:lambda-qualitative}
\end{equation}
More precisely, let $\mathcal J_k^-\Subset(-\infty,0)$ be any open interval such that
\begin{equation}
 -\eta_k^0\in \mathcal J_k^-,
 \qquad
 \overline{\mathcal J_k^-}\cap\{-\eta_j^0:j\ge1\}=\{-\eta_k^0\}.
 \notag
\end{equation}
For all sufficiently small $\eps$, exactly $r$ values in the shifted
negative spectrum
\(
 \{\lambda_{\eps,j}^- -\lambdaone/\eps^2:j\ge1\}
\), counted with multiplicity, belong to $\mathcal J_k^-$. Thus multiplicities are
preserved, and no additional shifted negative eigenvalue enters the isolated
limit cluster.

For the cluster $k,\ldots,k+r-1$, choose transformed eigenfunctions
$v_{\eps,k},\ldots,v_{\eps,k+r-1}$ such that
\begin{equation}
 b_\eps^-(v_{\eps,k+i-1},v_{\eps,k+j-1})=\delta_{ij},
 \qquad 1\le i,j\le r,
 \notag
\end{equation}
and set
\begin{equation}
 u_{\eps,k+i-1}^-:=\Theta_\eps v_{\eps,k+i-1}.
 \notag
\end{equation}
Thus
$b_\eps(u_{\eps,k+i-1}^-,u_{\eps,k+j-1}^-)=-\delta_{ij}$.
There exists a family of orthogonal matrices
$(\mathsf O_{\eps,k}^-)_{0<\eps<1}\subset O(r)$, with
$\mathsf O_{\eps,k}^-=(\mathsf o_{ij}^{\eps,-})_{1\le i,j\le r}$, such that
\begin{equation}
 \max_{1\le i\le r}\left\|P_\eps v_{\eps,k+i-1}-\sum_{j=1}^{r}\mathsf o_{ij}^{\eps,-}\phi_j\right\|_{L^2(\Om)}
 \longrightarrow0,\qquad \max_{1\le i\le r}\left\|u_{\eps,k+i-1}^--\Theta_\eps\sum_{j=1}^{r}\mathsf o_{ij}^{\eps,-}\phi_j\right\|_{L^2(\OmE)}
 \longrightarrow0.
 \label{neg:eq:qual-eigenvectors-original}
\end{equation}
After extraction of a subfamily, without relabeling, assume
$\mathsf O_{\eps,k}^-\to(\mathsf o_{ij}^-)$, and define
$\phi_i^*=\sum_{j=1}^{r}\mathsf o_{ij}^-\phi_j$.
\begin{equation}
 \ind_{\OmE}\nabla P_\eps v_{\eps,k+i-1}
 \stackrel{2}{\rightharpoonup}
 \ind_{Y^*}\left(\nabla_x\phi_i^*
 +\sum_{\ell=1}^N\nabla_y\widetilde\chi^\ell\,
 \partial_\ell\phi_i^*\right),\qquad \ind_{\OmE}P_\eps u_{\eps,k+i-1}^-
 \stackrel{2}{\rightharpoonup}
 \ind_{Y^*}\thetaone\phi_i^*.
 \label{neg:eq:qual-original-two-scale-cluster}
\end{equation}
\end{theorem}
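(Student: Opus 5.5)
The plan is to reduce Theorem~\ref{neg:thm:qualitative} to the regular-branch Theorem~\ref{thm:qual-positive}, applied to the transformed problem \eqref{neg:eq:positive-transformed}. That earlier proof was remarked to be coefficient-generic. Theorem~\ref{neg:thm:global-factorization} identifies the negative spectrum of $(\mathscr P_\eps)$ exactly with the positive spectrum $\eta_{\eps,j}^+$ of the pair $(\widetilde a_\eps,b_\eps^-)$, via $\lambda_{\eps,k}^-=\lambdaone/\eps^2-\eta_{\eps,k}^+$ and $u=\Theta_\eps v$. It therefore suffices to check that the data $(\tA,q^-,\rho^-)$ satisfy everything the proof of Theorem~\ref{thm:qual-positive} used:
\begin{itemize}
\item uniform ellipticity of $\tA$, which holds by \eqref{neg:eq:theta-bounds};
\item boundedness of $q^-=-(\thetaone)^2q$ and $\rho^-=-(\thetaone)^2\rho$;
\item positivity of the effective mass $\mminus>0$, from \eqref{neg:eq:minus-data};
\item the extension and trace bounds, which are coefficient-independent;
\item the mass limit of Proposition~\ref{prop:prelim-trace-mass-limit}, applied with $q^-,\rho^-$ in place of $q,\rho$. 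It gives $b^-_\eps(u_\eps,v_\eps)\to\mminus\int_\Om uv$.
\end{itemize}
Indefiniteness of $q^-\,dy+\rho^-\,dS$ is not needed on the positive branch. The fixed-index bounds are already in Proposition~\ref{prop:prelim-negative-fixed-index-bound}.

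I then rerun the three steps of that proof verbatim with $A,\chi^j,\Ah,m_{\rm eff}$ replaced by $\tA,\widetilde\chi^j,\tAh,\mminus$.

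First, compactness. Take eigenpairs $(\eta_\eps,v_\eps)$ with bounded index, normalized by $b_\eps^-(v_\eps,v_\eps)=1$. By Theorem~\ref{thm:two-scale-compactness-prelim}, after extraction $P_\eps v_\eps\to v$ in $L^2(\Om)$, and the gradients have two-scale limit $\ind_{Y^*}(\nabla_xv+\nabla_y v_1)$.

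Second, limit identification. Test with $\varphi_0+\eps\varphi_1(x,x/\eps)$, compactly supported so that it lies in $V_\eps$. This gives $v_1=\sum\widetilde\chi^j\partial_jv$ and $\int\tAh\nabla v\cdot\nabla\varphi_0=\eta\mminus\int v\varphi_0$. The normalization passes to the limit as $\mminus\|v\|^2=1$, so the limit is nonzero.

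Third, counting and alignment. Non-pollution follows from the cross-mass identities, exactly as in \eqref{eq:qual-positive-cross-mass-micro}--\eqref{eq:qual-positive-cross-mass-limit}, now for $\widetilde{\mathcal K}_\eps$ defined by $\widetilde a_\eps(\widetilde{\mathcal K}_\eps f,v)=b^-_\eps(f,v)$. Recovery uses the diagonal interior quasimodes $\phi^{(m)}+\eps\sum(\widetilde\chi^j)^\eps\partial_j\phi^{(m)}$, chosen per Remark~\ref{rem:boundary-compatible-diagonal}, together with the dual-defect estimate \eqref{eq:qual-positive-dual-defect}. Theorem~\ref{thm:abstract-fixed-cluster} then yields rank $r$ in any isolating interval for $\eta_k^0$, the convergence $\eta^+_{\eps,k}\to\eta_k^0$, and orthogonal matrices $\mathsf O^-_{\eps,k}$ with $P_\eps v_{\eps,k+i-1}\to\sum_j\mathsf o^{\eps,-}_{ij}\phi_j$ in $L^2(\Om)$. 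The shift then converts this into \eqref{neg:eq:lambda-qualitative}. Counting in $\mathcal J_k^-$ is the same statement for $-\eta$.

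It remains to transfer to the original modes. Since $\Theta_\eps$ is uniformly bounded, the first convergence in \eqref{neg:eq:qual-eigenvectors-original} gives
\[\|u^-_{\eps,k+i-1}-\Theta_\eps\textstyle\sum_j\mathsf o^{\eps,-}_{ij}\phi_j\|_{L^2(\OmE)}\le C_\theta\|P_\eps v_{\eps,k+i-1}-\sum_j \mathsf o^{\eps,-}_{ij}\phi_j\|_{L^2(\Om)}\to0.\]
For the two-scale statements \eqref{neg:eq:qual-original-two-scale-cluster}, the gradient limit is the identification from the second step. For $\ind_{\OmE}P_\eps u^-$ I use that $P_\eps u^-=\Theta_\eps v$ on $\OmE$. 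Moreover $\Theta_\eps v$ is a product of a strongly $L^2$-convergent sequence $P_\eps v\to\phi_i^*$ with the admissible oscillating function $\ind_{\mathcal W}\thetaone$, which is continuous on $\overline{\Ys}$. Hence it two-scale converges to $\ind_{Y^*}\thetaone\phi_i^*$.

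I expect the main obstacle to be the dual-norm defect estimate for the transformed recovery. Its two-scale argument needs the recovery gradient to converge strongly two-scale, which requires $\nabla_y\widetilde\chi^j$ to be an admissible test function. I would handle this by smooth approximation of $\widetilde\chi^j$ in $H^1_\#(\Ys)$, inserted into the same diagonal choice $m(\eps)$. Alternatively, under (H3), Lemma~\ref{lem:cell-regularity-H4}(i) applied to $\tA$ gives $\widetilde\chi^j\in C^{1,\alpha'}$, because $\tA$ is H\"older by the $C^{1,\alpha'}$ regularity of $\thetaone$; this makes the corrector gradients directly admissible.
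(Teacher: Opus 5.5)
Your proposal is correct and follows the paper's own proof. The paper likewise applies the coefficient-generic argument of Theorem~\ref{thm:qual-positive} to the transformed triple $(\tA,q^-,\rho^-)$, which is uniformly elliptic by \eqref{neg:eq:theta-bounds} and has $\mminus>0$; it reads off \eqref{neg:eq:lambda-qualitative} from the exact correspondence \eqref{neg:eq:spectral-correspondence}, and transfers the eigenvector and two-scale statements through the bounded periodic multiplier $\Theta_\eps$, exactly as you do. Your concern about the admissibility of $\nabla_y\widetilde\chi^j$ is harmless: either of your fixes works, and a continuous macroscopic factor times an $L^2_\#$ cell function is already an admissible two-scale test function.
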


\begin{proof}
By \eqref{neg:eq:theta-bounds} and \eqref{neg:eq:minus-data}, the transformed triple
$(\widetilde A,q^-,\rho^-)$ has uniformly elliptic diffusion and positive
combined mass $m_->0$. The coefficient-generic proof of
Theorem~\ref{thm:qual-positive}, applied to the transformed problem
\eqref{neg:eq:positive-transformed}, therefore gives
$\eta_{\eps,k}^+\to\widetilde\mu_k^0/m_-$, exact multiplicity, orthogonal
alignment and the two-scale gradient formula with the correctors
$\widetilde\chi^j$. The exact spectral correspondence
\eqref{neg:eq:spectral-correspondence} yields
\eqref{neg:eq:lambda-qualitative}. Finally,
$u_{\eps,k}^-=\Theta_\eps v_{\eps,k}$ and the bounded periodic multiplier
$\Theta_\eps$ give \eqref{neg:eq:qual-eigenvectors-original} and
\eqref{neg:eq:qual-original-two-scale-cluster}.
\end{proof}

For the quantitative transfer, under \textup{(H3)} the positive ground state
$\thetaone$ is regular, so $(\widetilde A,q^-,\rho^-)$ satisfies the same
quantitative hypotheses as the regular positive-mass problem (with any fixed
$0<\alpha'<\alpha_Y$), while $m_->0$ by
\eqref{neg:eq:minus-data}. Thus the complete positive-mass template applies under
\begin{equation}
 (A,q,\rho,m_{\mathrm{eff}},\chi^j,\Ah,a_\eps,b_\eps,\Keps)
 \longmapsto
 (\tA,q^-,\rho^-,\mminus,\widetilde\chi^j,\tAh,
  \widetilde a_\eps,b_\eps^-,\widetilde{\mathcal K}_\eps),
 \label{neg:eq:template-substitution}
\end{equation}
where $\widetilde a_\eps(\widetilde{\mathcal K}_\eps f,v)=b_\eps^-(f,v)$.
The corresponding exact-layer reconstructor is
\begin{equation}
 \mathcal R_\eps^-u
 :=u+\eps\sum_{j=1}^N(\widetilde\chi^j)^\eps\partial_j u
   +\widetilde\beta_\eps[u],
 \notag
\end{equation}
with $\widetilde\beta_\eps[u]$ given by
Lemma~\ref{lem:common-boundary-layer} for $B=\widetilde A$. Consequently,
for $u\in E_k^-$,
\begin{equation}
 \|\mathcal R_\eps^-u-u\|_{L^2(\OmE)}
 \le C\eps\|u\|_{H^2(\Om)},\qquad \left|\widetilde a_\eps(\mathcal R_\eps^-u,v)
 -\eta_k^0 b_\eps^-(\mathcal R_\eps^-u,v)\right|
 \le C_k\eps\|u\|_{H^2(\Om)}\|v\|_{V_\eps},
 \notag
\end{equation}
and the transferred Gram estimate gives injectivity on
$\mathcal R_\eps^-E_k^-$ for small $\eps$.

Let
\[
 \widetilde\tau_k^0=\frac1{\eta_k^0}=\frac{\mminus}{\widetilde\mu_k^0},\qquad
 d_k^-=\dist\!\left(\widetilde\tau_k^0,
 \bigl(\{0\}\cup\{\mminus/\widetilde\mu_j^0:j\ge1\}\bigr)
 \setminus\{\widetilde\tau_k^0\}\right),
\]
set
$I_k^-=(\widetilde\tau_k^0-d_k^-/3,\widetilde\tau_k^0+d_k^-/3)\Subset(0,\infty)$,
let $\Pi_{\eps,k}^-=\ind_{I_k^-}(\widetilde{\mathcal K}_\eps)$, and let $Q_{\eps,k}^-$ be the
$\widetilde a_\eps$-orthogonal projector onto $\mathcal R_\eps^-E_k^-$.

The transformed estimates yield the cluster rate.

\begin{theorem}
\label{neg:thm:quantitative-cluster}
Assume \textup{(H1)--(H4)}, $m_{\mathrm{eff}}>0$, and let
$\widetilde\mu_k^0=\cdots=\widetilde\mu_{k+r-1}^0$ be an isolated transformed limit eigenvalue cluster of
multiplicity $r$. Then, for all small $\eps$,
\begin{align}
 \operatorname{rank}\Pi_{\eps,k}^-&=r,
 &\|\Pi_{\eps,k}^--Q_{\eps,k}^-\|_{\mathcal L(V_\eps,\widetilde a_\eps)}
 &\le C_k\eps,\notag\\
 \max_{1\le j\le r}
 \left|\eta_{\eps,k+j-1}^+-\eta_k^0\right|
 &\le C_k\eps,
 &\max_{1\le j\le r}
 \left|\lambda_{\eps,k+j-1}^-
 -\frac{\lambdaone}{\eps^2}
 -\frac{\widetilde\mu_k^0}{\tmeff}\right|
 &\le C_k\eps.\notag
\end{align}
If $\{\phi_1,\ldots,\phi_r\}$ is a transformed limit basis and the
microscopic amplitudes are mass-normalized, there exists a family
$(\mathsf O_{\eps,k}^-)_{0<\eps<1}\subset O(r)$, with
$\mathsf O_{\eps,k}^-=(\mathsf o_{ij}^{\eps,-})$, such that
\begin{equation}
 \max_{1\le i\le r}\left\|v_{\eps,k+i-1}
 -\sum_{j=1}^r\mathsf o_{ij}^{\eps,-}\mathcal R_\eps^-\phi_j
 \right\|_{\widetilde a_\eps}
 \le C_k\eps,\qquad \max_{1\le i\le r}\left\|u_{\eps,k+i-1}^-
 -\Theta_\eps\sum_{j=1}^r\mathsf o_{ij}^{\eps,-}\phi_j
 \right\|_{L^2(\OmE)}
 \le C_k\eps.
 \notag
\end{equation}
The extensions of the transformed amplitudes satisfy
\begin{equation}
 \max_{1\le i\le r}\left\|P_\eps v_{\eps,k+i-1}
 -\sum_{j=1}^r\mathsf o_{ij}^{\eps,-}\phi_j\right\|_{L^2(\Om)}
 \le C_k\eps,
 \label{neg:eq:amplitude-extension-L2-rate}
\end{equation}
and multiplication by $\Theta_\eps$ transfers the energy estimate
to the shifted original energy norm. The constant $C_k$ depends only on the fixed transformed cluster data
and its isolation distance.
\end{theorem}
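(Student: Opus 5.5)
The plan is to transfer Theorem~\ref{thm:quantitative-positive-cluster} verbatim through the substitution \eqref{neg:eq:template-substitution}, and then translate the conclusions back to the original negative branch by means of the exact correspondence \eqref{neg:eq:spectral-correspondence} and multiplication by $\Theta_\eps$.

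\textbf{Step 1: the transformed triple satisfies the template hypotheses.} We check that $(\tA,q^-,\rho^-)$ satisfies every hypothesis used in Section~\ref{sec:mass}.
\begin{itemize}
\item $\tA$ is symmetric, and it is uniformly elliptic by \eqref{neg:eq:theta-bounds}.
\item $\tA\in C^{0,\alpha'}_\#(\overline{\Ys})$ and $\rho^-\in C^{0,\alpha'}(\Gam)$ for any fixed $0<\alpha'<\alpha_Y$. This holds because $\thetaone\in C^{1,\alpha'}$ by Lemma~\ref{neg:lem:strict-positive-cell}.
\item The combined mass is $\mminus>0$.
\item Hypothesis (H4) is untouched.
\end{itemize}
With these in hand, Lemma~\ref{lem:common-boundary-layer} and Lemma~\ref{lem:common-reconstructor-extension} apply with $B=\tA$. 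Proposition~\ref{prop:mass-average} and Corollary~\ref{cor:mass-smooth-u} apply with $(q^-,\rho^-,\mminus)$; the field $F^{\mathrm m}$ is rebuilt from these data, and its boundedness uses Lemma~\ref{lem:cell-regularity-H4}(iii) with $h=\rho^-$ Hölder. Proposition~\ref{prop:diffusion-residual} applies with $\tA$ and the correctors $\widetilde\chi^j$.

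Together these give the quasimode residual
\[
\bigl|\widetilde a_\eps(\mathcal R_\eps^-u,v)-\eta_k^0 b_\eps^-(\mathcal R_\eps^-u,v)\bigr|\le C_k\eps\|u\|_{H^2}\|v\|_{V_\eps},\qquad u\in E_k^-,
\]
which is the transform of \eqref{eq:Qplus}. The Gram estimate of Lemma~\ref{lem:R-injective} carries over for the basis $\phi_j/\sqrt{\eta_k^0}$.

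\textbf{Step 2: apply the abstract fixed-cluster principle.} Non-pollution follows from the transformed version of \eqref{eq:qual-positive-cross-mass-micro}--\eqref{eq:qual-positive-cross-mass-limit}, which is contained in Theorem~\ref{neg:thm:qualitative}. We then apply Theorem~\ref{thm:abstract-fixed-cluster} with
\[
H_\eps=(V_\eps,\widetilde a_\eps),\quad \mathcal A_\eps=\widetilde{\mathcal K}_\eps,\quad J_\eps=\mathcal R_\eps^-|_{E_k^-},\quad \tau_0=\widetilde\tau_k^0,\quad \delta_\eps=C\eps.
\]
This yields:
\begin{itemize}
\item the rank of $\Pi_{\eps,k}^-$ and the projector bound;
\item $|\widetilde\tau_{\eps,i}-\widetilde\tau_k^0|\le C\eps$, which converts to $|\eta_{\eps,k+j-1}^+-\eta_k^0|\le C_k\eps$ through \eqref{eq:abstract-transformed-eigenvalues} with $g(\tau)=1/\tau$;
\item the orthogonal alignment in the $\widetilde a_\eps$ norm.
\end{itemize}
The eigenvalue bound on $\lambda^-$ is then immediate from $\lambda_{\eps,j}^-=\lambdaone/\eps^2-\eta_{\eps,j}^+$ and $\eta_k^0=-\widetilde\mu_k^0/\tmeff$. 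The key point is that this identity is exact, so the $\eps^{-2}$ shift introduces no error term at all.

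\textbf{Step 3: transfer the eigenfunction estimates.}
\begin{itemize}
\item The $L^2(\OmE)$ estimate for $v$ follows from Poincaré's inequality \eqref{eq:Poincare} together with $\|\mathcal R_\eps^-\phi-\phi\|_{L^2(\OmE)}\le C\eps$.
\item Multiplying by $\Theta_\eps$, which is bounded by $C_\theta$, gives the estimate for $u^-=\Theta_\eps v$.
\item The extension bound \eqref{neg:eq:amplitude-extension-L2-rate} follows by applying $P_\eps$ to the energy error, using \eqref{eq:common-cut-extension-gradient} and Poincaré on $H_0^1(\Om)$, and Lemma~\ref{lem:common-reconstructor-extension} with $B=\tA$ for the aligned reconstructors.
\item The shifted original energy statement follows from (GF): we have $\|\Theta_\eps z\|_{\Theta,\eps}^2=\widetilde a_\eps(z,z)$, so the $\widetilde a_\eps$ error equals the $\|\cdot\|_{\Theta,\eps}$ error of $\Theta_\eps z$.
\end{itemize}

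\textbf{Main obstacle.} The delicate point is Step~1, namely that every constant in Section~\ref{sec:mass} depends only on quantities preserved by the ground-state transform. The relevant quantities are the ellipticity bounds, the Hölder norms of $\tA$ and $\rho^-$, and, for the mass compensator, $\|q^-\|_\infty$ and $\|\rho^-\|_{C^{0,\alpha'}}$. We would first verify the Hölder regularity of $\rho^-=-(\thetaone)^2\rho$ and the $C^{0,\alpha'}$ regularity of $\tA$ directly from $\thetaone\in C^{1,\alpha'}$. We would also check that the cell regularity lemma is stated with a generic Hölder exponent, so that using $\alpha'$ in place of $\alpha_Y$ is harmless.
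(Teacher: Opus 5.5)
Your proposal is correct and follows the paper's own proof. Both arguments transfer the positive-mass template through the substitution \eqref{neg:eq:template-substitution} and apply Theorem~\ref{thm:abstract-fixed-cluster} to $\widetilde{\mathcal K}_\eps$, with non-pollution taken from Theorem~\ref{neg:thm:qualitative}. Both then use the exact shift $\lambda_{\eps,j}^-=\lambdaone/\eps^2-\eta_{\eps,j}^+$, and obtain the eigenfunction statements via \eqref{neg:eq:global-factorization}, the bounds on $\Theta_\eps$, and Lemma~\ref{lem:common-reconstructor-extension} with $B=\tA$; your Step~1 merely spells out the hypothesis check (H\"older regularity of $\tA$ and $\rho^-$ from $\thetaone\in C^{1,\alpha'}$) that the paper states just before the theorem.
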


\begin{proof}
The substitution \eqref{neg:eq:template-substitution} transfers the
centering identity, the exact Dirichlet-layer estimate, the flux-corrector
residual, the Gram bound and the operator quasi-mode estimate from the
regular positive template. Non-pollution and qualitative multiplicity are
already contained in Theorem~\ref{neg:thm:qualitative}.
Theorem~\ref{thm:abstract-fixed-cluster} applied to $\widetilde{\mathcal K}_\eps$ therefore
gives the rank, projector, reciprocal transformed-eigenvalue rate and
orthogonal alignment. Since
$\lambda_{\eps,j}^-=\lambdaone/\eps^2-\eta_{\eps,j}^+$ and
$-\eta_k^0=\widetilde\mu_k^0/\tmeff$, the original eigenvalue estimate follows.
Finally, \eqref{neg:eq:global-factorization} and the uniform upper and lower
bounds on $\Theta_\eps$ transfer the aligned amplitude estimates to the
original eigenfunctions.  Applying $P_\eps$ to the transformed energy error
and using Lemma~\ref{lem:common-reconstructor-extension} with
$B=\widetilde A$ yields \eqref{neg:eq:amplitude-extension-L2-rate}.
\end{proof}

\subsection[Negative effective mass by exact spectral symmetry]
{The case of negative effective mass by exact spectral symmetry}
\label{subsec:negative-effective-mass}

For $m_{\rm eff}<0$, sign symmetry transfers the analysis.

\begin{proposition}
\label{sym:prop:exact-involution}
Assume \textup{(H1)}, with real
$q\in L^\infty_\#(\Ys)$ and $\rho\in L^\infty_\#(\Gam)$. For every fixed
$\eps$,
\begin{equation}
 (\lambda,u)\text{ is an eigencouple for }(q,\rho)
 \quad\Longleftrightarrow\quad
 (-\lambda,u)\text{ is an eigencouple for }(-q,-\rho),
 \notag
\end{equation}
with the same eigenspace and multiplicity. Under
\textup{(H2)}, the ordered branches therefore satisfy
\begin{equation}
 \lambda_{\eps,k}^{+}(q,\rho)=-\lambda_{\eps,k}^{-}(-q,-\rho),
 \qquad
 \lambda_{\eps,k}^{-}(q,\rho)=-\lambda_{\eps,k}^{+}(-q,-\rho).
 \label{sym:eq:ordered-branch-exchange}
\end{equation}
\end{proposition}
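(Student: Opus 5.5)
The plan is to work directly with the variational formulation \eqref{eq:spectral-micro} and the bilinear forms \eqref{eq:ae}--\eqref{eq:be}. The point is that $a_\eps$ does not involve $(q,\rho)$ at all, while $b_\eps$ is linear in the pair $(q,\rho)$. Write $b_\eps^{(q,\rho)}$ to record this dependence. Then for all $u,v\in V_\eps$
\[
 b_\eps^{(-q,-\rho)}(u,v)=-b_\eps^{(q,\rho)}(u,v),
\]
because both the bulk integral and the rescaled surface integral change sign. Hence $a_\eps(u,v)=\lambda b_\eps^{(q,\rho)}(u,v)$ for all $v$ holds if and only if $a_\eps(u,v)=(-\lambda)b_\eps^{(-q,-\rho)}(u,v)$ for all $v$. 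This gives the stated equivalence of eigencouples, with the same $u$, for every fixed $\eps$. The eigenspaces of $\lambda$ for $(q,\rho)$ and of $-\lambda$ for $(-q,-\rho)$ are literally the same subspace of $V_\eps$, so the multiplicities agree.

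Equivalently, on the operator level, the Riesz operator of Section~\ref{subsec:prelim-spectral-existence} satisfies $\Keps^{(-q,-\rho)}=-\Keps^{(q,\rho)}$ in $(V_\eps,a_\eps)$, since $a_\eps$ is unchanged. The compact self-adjoint operators $\pm\Keps$ have opposite spectra with identical eigenspaces. Through $\lambda=\tau^{-1}$ from \eqref{eq:prelim-lambda-tau}, this recovers the same conclusion, and I would mention it as the cleanest justification of the multiplicity claim.

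For the ordered branches \eqref{sym:eq:ordered-branch-exchange}, I would first note that (H2) is symmetric under $(q,\rho)\mapsto(-q,-\rho)$: the two positivity conditions are simply exchanged. So Proposition~\ref{prop:prelim-two-sign-spectrum} applies to both pairs whenever $I_\eps^{\rm int}\ne\varnothing$, and both branches exist as infinite sequences. The map $\lambda\mapsto-\lambda$ is a multiplicity-preserving bijection from the positive spectrum for $(q,\rho)$ onto the negative spectrum for $(-q,-\rho)$. It reverses the order, taking increasing positive values to decreasing negative ones. Listing eigenvalues with multiplicity in the conventions of \eqref{eq:double-spectrum}, the $k$-th positive eigenvalue for $(q,\rho)$ is therefore the negative of the $k$-th negative eigenvalue for $(-q,-\rho)$. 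The second identity follows by applying the first with $(q,\rho)$ replaced by $(-q,-\rho)$, or by the same argument.

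There is no real obstacle here. The only point needing care is the bookkeeping of multiplicities in the ordering. I would make it rigorous through the counting functions: for every $t>0$, the number of eigenvalues in $(0,t]$ for $(q,\rho)$ equals the number in $[-t,0)$ for $(-q,-\rho)$. Two nondecreasing enumerations with equal counting functions coincide term by term.
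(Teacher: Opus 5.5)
Your proposal is correct and follows essentially the same route as the paper: $a_\eps$ does not depend on $(q,\rho)$ while $b_\eps$ changes sign, so the eigenvalue equation is invariant under $(q,\rho,\lambda)\mapsto(-q,-\rho,-\lambda)$ with literally the same eigenspaces, and negation reverses the order and exchanges the two branches. Your additional remarks (the operator identity $\Keps^{(-q,-\rho)}=-\Keps^{(q,\rho)}$, the symmetry of (H2), and the counting-function bookkeeping) are accurate refinements of the paper's one-line argument.
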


\begin{proof}
Since the variational equation is $a_\eps(u,v)=\lambda b_\eps(u,v)$,
replacing $(q,\rho,\lambda)$ by $(-q,-\rho,-\lambda)$ leaves it unchanged.
The identity map preserves eigenspaces and multiplicities, and negation
exchanges the two ordered branches.
\end{proof}

\begin{corollary}[The case $m_{\mathrm{eff}}<0$]
\label{sym:cor:negative-effective-mass}
Assume \textup{(H1)--(H3)} and $m_{\mathrm{eff}}<0$. Then the regular
branch is negative, and the fast positive branch is generated by
\[
 \lambda_1^+:=-\lambda_1^-(-q,-\rho)>0,
 \qquad \theta_1^+:=\theta_1^-(-q,-\rho)>0.
\]
If \textup{(H4)} also holds, then for every isolated cluster
$\{k,\ldots,k+r-1\}$ of $(\mu_j^0)_{j\ge1}$,
\begin{equation}
 \lambda_{\eps,k+j-1}^{-}
 =\frac{\mu_k^0}{m_{\mathrm{eff}}}+O_k(\eps),
 \qquad 1\le j\le r,
 \notag
\end{equation}
and, with $\widetilde A^+=(\theta_1^+)^2A$,
$\widetilde m_{\mathrm{eff}}^+:=\int_{\Ys}(\theta_1^+)^2q\,dy
+\int_\Gam(\theta_1^+)^2\rho\,dS_y>0$ and corresponding homogenized
eigenvalues $\widetilde\mu_j^{0,+}$, for every isolated cluster
$\{k,\ldots,k+r-1\}$ of $(\widetilde\mu_j^{0,+})_{j\ge1}$,
\begin{equation}
 \lambda_{\eps,k+j-1}^{+}
 =\frac{\lambda_1^+}{\eps^2}
  +\frac{\widetilde\mu_k^{0,+}}{\widetilde m_{\mathrm{eff}}^+}
  +O_k(\eps),
 \qquad 1\le j\le r.
 \notag
\end{equation}
The projector, multiplicity and aligned-eigenspace conclusions transfer by
\eqref{sym:eq:ordered-branch-exchange}; the fast original modes carry the
factor $(\theta_1^+)^\eps$.
\end{corollary}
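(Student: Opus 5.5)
The proof is a transfer of the $m_{\mathrm{eff}}>0$ results through the involution of Proposition~\ref{sym:prop:exact-involution}. I apply the already proved positive-mass theorems to the reflected data $(\check q,\check\rho):=(-q,-\rho)$ and read off the conclusions for $(q,\rho)$ via \eqref{sym:eq:ordered-branch-exchange}.

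First, I check that the reflected data satisfy the hypotheses. \textup{(H1)}, \textup{(H3)} and \textup{(H4)} do not involve the sign of the densities. \textup{(H2)} is symmetric under $(q,\rho)\mapsto(-q,-\rho)$, since the positive and negative parts are simply exchanged. Hölder regularity of $\rho$ is preserved. The reflected combined mass is $\check m_{\mathrm{eff}}=-m_{\mathrm{eff}}>0$. Hence Theorems~\ref{thm:qual-positive}, \ref{thm:quantitative-positive-cluster}, Proposition~\ref{neg:prop:ground-state}, and Theorems~\ref{neg:thm:global-factorization}, \ref{neg:thm:qualitative} and \ref{neg:thm:quantitative-cluster} all hold for $(\check q,\check\rho)$. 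The corrector problem \eqref{eq:cell-chi} and $\Ah$ do not depend on $(q,\rho)$, so the numbers $\mu_j^0$ are the same.

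Regular branch. Theorem~\ref{thm:quantitative-positive-cluster} for the reflected data gives
\[
\lambda_{\eps,k+j-1}^+(\check q,\check\rho)=\frac{\mu_k^0}{-m_{\mathrm{eff}}}+O_k(\eps).
\]
By \eqref{sym:eq:ordered-branch-exchange}, $\lambda^-_{\eps,k+j-1}(q,\rho)=-\lambda^+_{\eps,k+j-1}(\check q,\check\rho)$, which equals $\mu_k^0/m_{\mathrm{eff}}+O_k(\eps)$. Since the eigenspaces of $(\lambda,u)$ and $(-\lambda,u)$ coincide, the projector, rank and alignment statements carry over verbatim. The only change is that $b_\eps$-normalizations become $b_\eps(u,u)=-1$, matching \eqref{eq:prelim-signed-normalization}.

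Fast branch. Proposition~\ref{neg:prop:ground-state} for the reflected data provides $\lambda_1^-(\check q,\check\rho)<0$ and a positive $\theta_1^-(\check q,\check\rho)$. I set $\lambda_1^+:=-\lambda_1^-(\check q,\check\rho)>0$ and $\theta_1^+:=\theta_1^-(\check q,\check\rho)$. Theorem~\ref{neg:thm:quantitative-cluster} gives
\[
\lambda^-_{\eps,k+j-1}(\check q,\check\rho)=\frac{\lambda_1^-(\check q,\check\rho)}{\eps^2}+\frac{\widetilde\mu_k^0}{\check{\widetilde m}_{\mathrm{eff}}}+O_k(\eps).
\]
Here $\widetilde\mu_k^0$ is built from $(\theta_1^+)^2A$, so it coincides with $\widetilde\mu_k^{0,+}$. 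The reflected factorized mass is $\check{\widetilde m}_{\mathrm{eff}}=\int(\theta_1^+)^2(-q)+\int(\theta_1^+)^2(-\rho)=-\widetilde m^+_{\mathrm{eff}}<0$, hence $\widetilde m^+_{\mathrm{eff}}>0$. Negating and using \eqref{sym:eq:ordered-branch-exchange} again gives
\[
\lambda^+_{\eps,k+j-1}=\frac{\lambda_1^+}{\eps^2}+\frac{\widetilde\mu_k^{0,+}}{\widetilde m_{\mathrm{eff}}^+}+O_k(\eps).
\]
The eigenfunctions are the same as for the reflected problem, so they carry the factor $(\theta_1^+)^\eps$.

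The main point needing care is sign bookkeeping. I must confirm that $\theta_1^+$, $\widetilde A^+$ and $\widetilde\mu^{0,+}_j$ as defined in the statement coincide with the reflected objects. The relation $\widetilde m^+_{\mathrm{eff}}=-\check{\widetilde m}_{\mathrm{eff}}$ also has to appear with the correct sign, so that the ratio is normalization-independent and has the stated sign. There is no analytic obstacle beyond this.
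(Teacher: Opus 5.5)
Your proposal is correct and follows the paper's route: the corollary comes from applying the $m_{\mathrm{eff}}>0$ results to the reflected data $(-q,-\rho)$, whose combined mass is $-m_{\mathrm{eff}}>0$. These are the regular and factorized branches, in both qualitative and quantitative form; eigenvalues, eigenspaces and projectors then transfer through Proposition~\ref{sym:prop:exact-involution} and \eqref{sym:eq:ordered-branch-exchange}. Your sign bookkeeping makes the stated formulas come out with the right signs: you identify $\widetilde A^+$ and $\widetilde\mu_j^{0,+}$ with the reflected transformed objects, and you note that $\widetilde m^+_{\mathrm{eff}}$ is minus the negative factorized mass of the reflected problem.
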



\section[Critical zero-effective-mass regime]
{Critical regime with zero effective mass}\label{sec:critical-regime}

\subsection{Critical compensation and fixed-index bounds}
\label{subsec:critical-compensation}

Throughout this section we assume $m_{\mathrm{eff}}=0$. Let $\ell$ be the
linear form on $H^1_\#(\Ys)$ defined by
\begin{equation}
 \ell(\psi):=\int_{\Ys}q\psi\,dy+\int_\Gam\rho\psi\,dS_y
 =\mathfrak b_Y(1,\psi),
 \qquad \psi\in H^1_\#(\Ys).
 \notag
\end{equation}
Then $\ell(1)=m_{\mathrm{eff}}$, so the assumption $m_{\mathrm{eff}}=0$ means
that $\ell$ vanishes on constants and is therefore well defined on
$H^1_\#(\Ys)/\R$. This compatibility condition permits us to define the critical cell corrector $\chi^0\in H^1_\#(\Ys)/\R$ by
\begin{equation}
 \int_{\Ys}A\nabla_y\chi^0\cdot\nabla_y\psi\,dy
 =\ell(\psi)
 \qquad\forall\psi\in H^1_\#(\Ys)/\R,
 \label{crit:eq:cell-chi0}
\end{equation}
with $\int_{\Ys}\chi^0\,dy=0$. Set
\begin{equation}
 F^0:=A\nabla_y\chi^0,
 \qquad
 \nu^2:=\int_{\Ys}A\nabla_y\chi^0\cdot\nabla_y\chi^0\,dy
 =\ell(\chi^0).
 \label{crit:eq:F0-nu}
\end{equation}

When $m_{\mathrm{eff}}=0$, hypothesis \textup{(H2)} reduces to
$(q,\rho)\not\equiv(0,0)$: a nonzero signed measure with vanishing total
mass has nontrivial positive and negative parts.

The critical corrector is nontrivial by the following observation.

\begin{lemma}
\label{crit:lem:nu-positive}
Assume \textup{(H1)}, $m_{\mathrm{eff}}=0$ and $(q,\rho)\not\equiv(0,0)$.
Then $\nu^2>0$.
\end{lemma}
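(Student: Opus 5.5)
We argue by contradiction, using only the variational definition \eqref{crit:eq:cell-chi0} and the identity $\nu^2=\ell(\chi^0)$ from \eqref{crit:eq:F0-nu}. Suppose $\nu^2=0$. Then $\int_{\Ys}A\nabla_y\chi^0\cdot\nabla_y\chi^0\,dy=0$, and uniform ellipticity \textup{(H1)} forces $\nabla_y\chi^0=0$ a.e.\ in $\Ys$. Inserting this into \eqref{crit:eq:cell-chi0} yields $\ell(\psi)=0$ for every $\psi\in H^1_\#(\Ys)$. In other words, the signed measure $q\,dy+\rho\,dS_\Gam$ annihilates all of $H^1_\#(\Ys)$. (Connectedness of $\Ys$ would make $\chi^0$ constant, but this is not even needed.)

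It remains to show that $\ell\equiv0$ on $H^1_\#(\Ys)$ forces $q=0$ a.e.\ in $\Ys$ and $\rho=0$ a.e.\ on $\Gam$, contradicting $(q,\rho)\not\equiv(0,0)$. We separate the two components with test functions.

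\emph{Bulk part.} Take first $\psi\in C_c^\infty(\Ys)$. Such functions have zero trace on $\Gam$, so $\int_{\Ys}q\psi\,dy=0$ for all of them. Hence $q=0$ a.e.\ in $\Ys$, by density of $C_c^\infty(\Ys)$ in $L^2(\Ys)$.

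\emph{Surface part.} Knowing $q=0$, we now have $\int_\Gam\rho\,h\,dS_y=0$ for every $h$ that is the trace of an element of $H^1_\#(\Ys)$. For any smooth ambient function $\Phi\in C_c^\infty(\R^N)$, cut it off near $\overline T$ (which is compactly contained in $Y$) and extend periodically. The result belongs to $H^1_\#(\Ys)$ and has trace $\Phi|_\Gam$. Such traces are dense in $L^2(\Gam)$, a fact already used in the proof of Proposition~\ref{prop:prelim-two-sign-spectrum}. Since $\rho\in L^\infty(\Gam)\subset L^2(\Gam)$, it follows that $\rho=0$ a.e.\ on $\Gam$.

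Together these give the contradiction, so $\nu^2>0$. The only step needing any care is the density of traces in $L^2(\Gam)$, and it is handled by the same argument as in Proposition~\ref{prop:prelim-two-sign-spectrum}. Nothing beyond \textup{(H1)}, $m_{\mathrm{eff}}=0$ (which makes $\chi^0$ well defined) and boundedness of $q,\rho$ is used.
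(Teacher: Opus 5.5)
Your proof is correct and follows the paper's argument essentially step for step. Ellipticity kills $\nabla_y\chi^0$, and the cell equation then gives $\ell\equiv0$ on all of $H^1_\#(\Ys)$, where the passage from the quotient to the full space uses $\ell(1)=m_{\rm eff}=0$, as the paper states explicitly. Testing with $C_c^\infty(\Ys)$ and then with traces dense in $L^2(\Gam)$ forces $q=0$ and $\rho=0$. The only difference is cosmetic: the paper cites density of $H^{1/2}(\Gam)$ plus a bounded right inverse of the trace, whereas you construct the periodic $H^1$ lift of smooth ambient traces by hand.
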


\begin{proof}
If $\nu=0$, ellipticity gives $\nabla_y\chi^0=0$. Since
$m_{\mathrm{eff}}=\ell(1)=0$, equation \eqref{crit:eq:cell-chi0} then implies
\[
 \ell(\psi)=0\qquad\forall\psi\in H^1_\#(\Ys).
\]
Testing first with $C_c^\infty(\Ys)$ gives $q=0$ a.e. in $\Ys$. Hence
\[
 \int_\Gam\rho\,\gamma\psi\,dS_y=0
 \qquad\forall\psi\in H^1_\#(\Ys),
\]
where $\gamma$ is the trace operator. The trace space
$H^{1/2}(\Gam)$ is dense in $L^2(\Gam)$ and the trace map admits a bounded
right inverse; therefore $\rho=0$ a.e. on $\Gam$. Thus both spectral
densities vanish, a contradiction.
\end{proof}

Extend $F^0$ by zero into $T$ and denote the extension by $\mathring F^0$. With $q^\circ$ as in \eqref{eq:qcirc},
\begin{equation}
 -\Div_y\mathring F^0=q^\circ\,dy+\rho\,dS_\Gam
 \quad\text{in }\mathcal D'_\#(Y).
 \label{crit:eq:F0-div}
\end{equation}
By Lemma~\ref{lem:cell-regularity-H4}\textup{(iii)} with $g=q$ and $h=\rho$,
$\mathring F^0\in L^\infty_\#(Y)^N$. For $u,v\in V_\eps$,
$U=P_\eps u$, $V=P_\eps v$, rescaling the distributional identity gives
\begin{equation}
 b_\eps(u,v)
 =\eps\int_\Om(\mathring F^0)^\eps\cdot\nabla(UV)\,dx.
 \notag
\end{equation}
Define
\begin{equation}
 c_\eps(u,v):=\eps^{-1}b_\eps(u,v),\qquad u,v\in H^1(\OmE).
 \notag
\end{equation}
Then, for $u,v\in V_\eps$,
\begin{equation}
 c_\eps(u,v)
 =\int_\Om(\mathring F^0)^\eps\cdot\bigl(U\nabla V+V\nabla U\bigr)\,dx,
 \label{crit:eq:ce-F0}
\end{equation}
and therefore
\begin{equation}
 |c_\eps(u,v)|\le C\|u\|_{V_\eps}\|v\|_{V_\eps}.
 \label{crit:eq:ce-bound}
\end{equation}
For later boundary-layer calculations we also need one unconstrained
factor. If $r\in H^1(\OmE)$ and $v\in V_\eps$, then
\begin{equation}
 c_\eps(r,v)=\int_{\OmE}(F^0)^\eps\cdot\nabla(rv)\,dx.
 \label{crit:eq:ce-F0-boundary}
\end{equation}
For $v\in\mathcal D_\eps$ this follows from the rescaled cell equation
\eqref{crit:eq:cell-chi0}, which holds in the whole periodic solid
$\eps\mathcal W$ with the conormal condition on every periodic hole: the test
function $rv$ vanishes near $\Dep$, so only the holes of $\GamE$ contribute.
Since $F^0\in L^\infty$, density of $\mathcal D_\eps$ in $V_\eps$ gives the
formula for arbitrary $v$.

The critical spectral equation is
\begin{equation}
 a_\eps(u_\eps,v)=\Lambda_\eps c_\eps(u_\eps,v)
 \quad\forall v\in V_\eps,
 \qquad
 \Lambda_\eps:=\eps\lambda_\eps.
 \label{crit:eq:renormalized-spectral}
\end{equation}
Define the compact self-adjoint operator $\Keps^{\rm c}$ on $(V_\eps,a_\eps)$ by
\begin{equation}
 a_\eps(\Keps^{\rm c}f,v)=c_\eps(f,v),
 \qquad f,v\in V_\eps.
 \label{crit:eq:Teps}
\end{equation}
Its nonzero eigenvalues are $\tau_\eps=1/\Lambda_\eps$.

We first control the rescaled critical spectrum.

\begin{proposition}
\label{prop:prelim-critical-estimates}
Assume \textup{(H1)--(H2)} and $m_{\mathrm{eff}}=0$. For every sufficiently small $\eps>0$, the renormalized problem \eqref{crit:eq:renormalized-spectral} has two infinite sequences
\begin{equation}
 0<\Lambda_{\eps,1}^+\le\Lambda_{\eps,2}^+\le\cdots\to+\infty,
 \qquad
 0>\Lambda_{\eps,1}^-\ge\Lambda_{\eps,2}^-\ge\cdots\to-\infty,
 \label{eq:prelim-critical-sequences}
\end{equation}
where $\Lambda_{\eps,j}^\sigma=\eps\lambda_{\eps,j}^\sigma$. If, in
addition, the cell regularity conditions in \eqref{hyp:cell-regularity} hold,
then for every fixed $k\ge1$ there exist $c_{0,*}>0$, $C_{0,k}>0$, and
$\eps_k>0$ such that
\begin{equation}
 c_{0,*}\le |\Lambda_{\eps,j}^\sigma|\le C_{0,k},
 \qquad j=1,\ldots,k,\quad \sigma\in\{-1,+1\},
 \quad 0<\eps<\eps_k.
 \notag
\end{equation}
If the corresponding eigenfunctions are normalized by
\begin{equation}
 c_\eps(u_{\eps,j}^\sigma,u_{\eps,j}^\sigma)=\sigma,
 \label{eq:prelim-critical-normalization}
\end{equation}
then
\begin{equation}
 \|u_{\eps,j}^\sigma\|_{V_\eps}
 +\|P_\eps u_{\eps,j}^\sigma\|_{H_0^1(\Om)}
 +\eps^{1/2}\|u_{\eps,j}^\sigma\|_{L^2(\GamE)}
 \le C_{0,k}.
 \label{eq:prelim-critical-eigenfunction-bound}
\end{equation}
\end{proposition}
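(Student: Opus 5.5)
The first assertion, the existence of the two sequences \eqref{eq:prelim-critical-sequences}, follows directly from Proposition~\ref{prop:prelim-two-sign-spectrum}. Since $c_\eps=\eps^{-1}b_\eps$, the operator $\Keps^{\rm c}$ equals $\eps^{-1}\Keps$. It is compact and self-adjoint, and it has the same eigenvectors as $\Keps$. Hence the positive and negative eigenvalues of \eqref{crit:eq:renormalized-spectral} are exactly $\Lambda_{\eps,j}^\sigma=\eps\lambda_{\eps,j}^\sigma$, with the same multiplicities. For $\eps<\eps_\Om$ both branches are infinite and accumulate only at $\pm\infty$.

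The lower bound uses \eqref{crit:eq:ce-bound}, which needs $\mathring F^0\in L^\infty$ and hence the cell regularity (H3). Combined with ellipticity it gives $\|\Keps^{\rm c}\|_{\mathcal L(V_\eps,a_\eps)}\le C_0$ uniformly in $\eps$. Every eigenvalue therefore satisfies $|\Lambda_\eps|=|\tau_\eps|^{-1}\ge C_0^{-1}=:c_{0,*}$, exactly as in Proposition~\ref{prop:prelim-signed-normalization}.

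The upper bound comes from Lemma~\ref{lem:signed-courant-fischer} applied to the pair $(a_\eps,c_\eps)$. We need, for each sign $\sigma$, a $k$-dimensional trial space $L\subset V_\eps$ on which $\sigma c_\eps$ is uniformly positive definite and $a_\eps$ is uniformly bounded. Macroscopic functions do not work: $c_\eps(\phi,\phi)\to0$ for smooth $\phi$, since the limit of $c_\eps$ on slowly varying profiles is the zero form because $\int_Y\mathring F^0\,dy$ enters only via $\nabla(\phi^2)$. The trial functions must therefore carry the critical corrector. I would take
\[
 w_\eps=\phi+\sigma\eps(\chi^0)^\eps\phi,\qquad \phi\in L_k\subset C_c^\infty(\Om),
\]
with $\dim L_k=k$, and admissibility for small $\eps$ as in Remark~\ref{rem:boundary-compatible-diagonal}.

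Using \eqref{crit:eq:ce-F0-boundary}, or directly $c_\eps=\eps^{-1}b_\eps$, the cross terms give
\[
 c_\eps(w_\eps,w_\eps)=\eps^{-1}b_\eps(\phi,\phi)+2\sigma\,b_\eps\bigl((\chi^0)^\eps\phi,\phi\bigr)+O(\eps).
\]
The first term equals $\int(\mathring F^0)^\eps\cdot\nabla(\phi^2)\,dx$. By two-scale averaging it tends to $\bigl(\int_Y\mathring F^0\,dy\bigr)\cdot\int_\Om\nabla(\phi^2)\,dx=0$, since $\phi^2$ has compact support. The second term tends to $2\sigma\,\mathfrak b_Y(1,\chi^0)\int\phi^2=2\sigma\nu^2\int\phi^2$, by Proposition~\ref{prop:prelim-trace-mass-limit} adapted to the oscillating factor $\chi^0\in C^{1}$ via two-scale convergence. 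Thus $\sigma c_\eps(w_\eps,w_\eps)\ge \nu^2\|\phi\|_2^2$ uniformly on the finite-dimensional space $L_k$ for small $\eps$, and by polarization the Gram matrix converges. This uses $\nu^2>0$ from Lemma~\ref{crit:lem:nu-positive}.

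The energy $a_\eps(w_\eps,w_\eps)$ is bounded because $\nabla_y\chi^0\in L^\infty$. Lemma~\ref{lem:signed-courant-fischer} then gives $\sigma\Lambda_{\eps,k}^\sigma\le C_{0,k}$, and by monotonicity the same bound holds for all $j\le k$.

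I expect the main obstacle to be checking that the $O(1)$ part of $c_\eps(w_\eps,w_\eps)$ is exactly $2\sigma\nu^2\|\phi\|^2$ and not spoiled by other terms. The $\eps^{-1}$ term has to vanish to leading order, which relies on the identity for $c_\eps$ with exact zero-mean structure. I would handle it by the exact representation \eqref{crit:eq:ce-F0}, writing $\mathring F^0=\int_Y\mathring F^0\,dy+G$ with $G$ having zero mean. The oscillating part $G$ is then controlled by Lemma~\ref{lem:periodic-potentials}(b) applied to $\Div(\phi^2)$-type terms; if $G$ is not divergence free, I would use a weak two-scale limit instead.

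Finally, the eigenfunction bound \eqref{eq:prelim-critical-eigenfunction-bound} follows from the normalization \eqref{eq:prelim-critical-normalization}. It gives $a_\eps(u,u)=|\Lambda_{\eps,j}^\sigma|\le C_{0,k}$, and the remaining terms are controlled by the extension bound \eqref{eq:extension} and the trace bound \eqref{eq:mass-energy-bound}.
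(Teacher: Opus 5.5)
Your proposal is correct and follows the paper's proof essentially step by step. You use the sign-preserving rescaling $c_\eps=\eps^{-1}b_\eps$ for the two sequences, the uniform bound \eqref{crit:eq:ce-bound} on $\Keps^{\rm c}$ for the lower bound, and the corrected trial space $\phi+\sigma\eps(\chi^0)^\eps\phi$, whose mass tends to $2\sigma\nu^2\int_\Om\phi\psi\,dx$, in Lemma~\ref{lem:signed-courant-fischer} for the upper bound; the $\eps^{-1}$ term is disposed of exactly as you propose, by weak convergence of $(\mathring F^0)^\eps$ to its mean against $\nabla(\phi\psi)$, whose integral vanishes for compactly supported $\phi\psi$.
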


\begin{proof}
Because multiplication by the positive number $\eps^{-1}$ does not change signs, $c_\eps=\eps^{-1}b_\eps$ has the same positive and negative subspaces as $b_\eps$. Proposition~\ref{prop:prelim-two-sign-spectrum} therefore gives the two infinite renormalized sequences \eqref{eq:prelim-critical-sequences}. We now assume the cell regularity conditions in \eqref{hyp:cell-regularity}. Estimate \eqref{crit:eq:ce-bound} implies
\[
 \|\Keps^{\rm c}\|_{\mathcal L(V_\eps,a_\eps)}\le C,
\]
so every nonzero $\tau_\eps=1/\Lambda_\eps$ satisfies $|\tau_\eps|\le C$. Thus $|\Lambda_\eps|\ge c_{0,*}>0$ uniformly.

It remains to prove the fixed-index upper bound. Fix a $k$-dimensional space $L_k\subset C_c^\infty(\Om)$. The union of the supports of a fixed basis has positive distance from $\partial\Omega$; hence, for all sufficiently small $\eps$, every function below vanishes throughout the boundary cells and the corrected trial spaces are contained in $V_\eps$. For $\sigma\in\{-1,+1\}$ define the corrected trial map
\begin{equation}
 J_{\eps}^\sigma\phi
 :=\phi+\sigma\eps(\chi^0)^\eps\phi
 \quad\text{in }\OmE,
 \qquad \phi\in L_k.
 \notag
\end{equation}
For small $\eps$ this map is injective. We claim that, uniformly for $\phi,\psi$ in bounded subsets of $L_k$,
\begin{equation}
 c_\eps(J_\eps^\sigma\phi,J_\eps^\sigma\psi)
 \longrightarrow
 2\sigma\nu^2\int_\Om\phi\psi\,dx.
 \label{eq:prelim-critical-trial-mass-limit}
\end{equation}
Indeed,
\[
 c_\eps(J_\eps^\sigma\phi,J_\eps^\sigma\psi)
 =c_\eps(\phi,\psi)
 +\sigma b_\eps(\phi,(\chi^0)^\eps\psi)
 +\sigma b_\eps((\chi^0)^\eps\phi,\psi) +\eps b_\eps((\chi^0)^\eps\phi,(\chi^0)^\eps\psi).
\]
The first term tends to zero by \eqref{crit:eq:ce-F0} and periodic weak convergence of $(\mathring F^0)^\eps$, because the integral of a constant vector against $\nabla(\phi\psi)$ vanishes for compactly supported $\phi\psi$. The last term tends to zero by boundedness. The two middle terms converge, by the volume--surface Riemann-sum formula, to
\[
 \ell(\chi^0)\int_\Om\phi\psi\,dx
 =\nu^2\int_\Om\phi\psi\,dx,
\]
which proves \eqref{eq:prelim-critical-trial-mass-limit}. Hence $c_\eps$ is positive definite on $J_\eps^+L_k$ and negative definite on $J_\eps^-L_k$, with a uniform lower bound for the absolute value of the mass matrix.

Furthermore,
\[
 \nabla J_\eps^\sigma\phi
 =\nabla\phi+\sigma(\nabla_y\chi^0)^\eps\phi
 +\sigma\eps(\chi^0)^\eps\nabla\phi,
\]
so the cell regularity conditions in \eqref{hyp:cell-regularity} and finite
dimensionality of $L_k$ give constants $\gamma_k,M_k>0$, independent of
small $\eps$, such that
\begin{equation}
 \sigma c_\eps(J_\eps^\sigma\phi,J_\eps^\sigma\phi)
 \ge\gamma_k\|\phi\|_{L^2(\Om)}^2,
 \qquad
 a_\eps(J_\eps^\sigma\phi,J_\eps^\sigma\phi)
 \le M_k\|\phi\|_{L^2(\Om)}^2.
 \notag
\end{equation}
In particular, $J_\eps^\sigma$ is injective and
$J_\eps^\sigma L_k$ is a $k$-dimensional subspace on which
$\sigma c_\eps$ is positive definite. Lemma~\ref{lem:signed-courant-fischer},
applied on $(V_\eps,a_\eps)$ with $c=c_\eps$ and $L=J_\eps^\sigma L_k$, gives
$\sigma\Lambda_{\eps,k}^\sigma\le M_k/\gamma_k$.
Monotonicity of each signed sequence yields
$|\Lambda_{\eps,j}^\sigma|\le C_{0,k}$ for every $j\le k$.
Finally, under normalization \eqref{eq:prelim-critical-normalization},
testing the critical equation with the eigenfunction yields
\[
 a_\eps(u_{\eps,j}^\sigma,u_{\eps,j}^\sigma)
 =|\Lambda_{\eps,j}^\sigma|.
\]
Ellipticity, extension, and trace estimates give \eqref{eq:prelim-critical-eigenfunction-bound}.
\end{proof}

The classical correctors $\chi^j$, fluxes $B^j$, and homogenized tensor $\Ah$
remain those introduced in \eqref{eq:cell-chi}--\eqref{eq:Ah}.  With the
critical compensation and fixed-index bounds now in place, we turn to the
macroscopic limit.

\subsection[Qualitative homogenization at zero effective mass]
{Qualitative homogenization at zero effective mass}
\label{crit:sec:qualitative}

Let $(\lambda_\eps,u_\eps)_{0<\eps<1}$ be critical eigencouples with
\[
 \Lambda_\eps=\eps\lambda_\eps\longrightarrow\Lambda\ne0.
\]
Set $\sigma:=\operatorname{sign}\Lambda\in\{-1,+1\}$. Since
$\Lambda_\eps\to\Lambda\ne0$, for all sufficiently small
$0<\eps<1$ one has
$\operatorname{sign}\Lambda_\eps=\sigma$. We normalize by
\begin{equation}
 \ce(u_\eps,u_\eps)=\sigma.
 \notag
\end{equation}
The spectral identity then gives
\[
 \aep(u_\eps,u_\eps)=\Lambda_\eps \sigma=|\Lambda_\eps|,
\]
hence $(u_\eps)_{0<\eps<1}$ is uniformly bounded in
\(V_\eps\). After extraction,
\begin{align}
 \Peps u_\eps&\rightharpoonup u_0
 &&\text{weakly in }H_0^1(\Om),\qquad \Peps u_\eps\longrightarrow u_0
 \quad \text{strongly in }L^2(\Om),\notag\\
 \ind_{\OmE}\nabla P_\eps u_\eps
 &\overset{2}{\rightharpoonup}
 \ind_{Y^*}(\nabla_xu_0+\nabla_yu_1)
 &&\text{in }L^2(\Om\times Y)^N,\notag
\end{align}
where
\[
 u_1\in L^2\bigl(\Om;H^1_\#(\Ys)/\R\bigr).
\]
No surface two-scale convergence is needed here, since the compensated form
\eqref{crit:eq:ce-F0} contains no surface integral. We test with
$v_0(x)+\eps v_1(x,x/\eps)$, where $v_0\in C_c^\infty(\Omega)$ and
$v_1\in C_c^\infty(\Omega;C_\#^\infty(Y^*))$; for small $\eps$ these
functions vanish throughout every boundary cell and belong to $V_\eps$,
and the general formulation below follows by density.

Set
\[
 \mathscr H
 :=H_0^1(\Om)\times
 L^2\bigl(\Om;H^1_\#(\Ys)/\R\bigr).
\]
For \(U=(u_0,u_1)\) and \(V=(v_0,v_1)\) in $\mathscr H$, define
\begin{align}
 \mathscr A(U,V)
 &: =
 \int_{\Om\times\Ys}
 A(y)(\nabla_xu_0+\nabla_yu_1)\cdot
 (\nabla_xv_0+\nabla_yv_1)\,dx\,dy,\notag\\
 \mathscr M(U,V)
 &: =
 \int_{\Om\times\Ys}F^0(y)\cdot
 \Bigl[
 u_0(\nabla_xv_0+\nabla_yv_1)
 +v_0(\nabla_xu_0+\nabla_yu_1)
 \Bigr]\,dx\,dy.\notag
\end{align}
Identity \eqref{crit:eq:ce-F0}, strong $L^2$ convergence, and two-scale
convergence of the gradients give
\begin{equation}
 \mathscr A(U,V)=\Lambda\mathscr M(U,V)
 \qquad\forall V\in\mathscr H.
 \label{crit:eq:two-scale-pencil}
\end{equation}

Taking \(v_0=0\) in \eqref{crit:eq:two-scale-pencil}, and using
\eqref{eq:cell-chi} and \eqref{crit:eq:cell-chi0}, gives
\begin{equation}
 u_1(x,y)
 =\sum_{j=1}^N\chi^j(y)\partial_ju_0(x)
 +\Lambda u_0(x)\chi^0(y).
 \label{crit:eq:u1}
\end{equation}
The term \(\Lambda u_0\chi^0\) is specific to the critical regime and
changes sign between the two branches. Set
\begin{equation}
 D_j:=F^0\cdot(e_j+\nabla_y\chi^j),
 \qquad
 \mathbf c:=\int_{\Ys}F^0\,dy.
 \label{crit:eq:Dj}
\end{equation}
The symmetry of \(A\) and the cell equation for \(\chi^j\), tested with
\(\chi^0\), give
\begin{equation}
 \int_{\Ys}D_j\,dy
 =\int_{\Ys}A(e_j+\nabla_y\chi^j)\cdot\nabla_y\chi^0\,dy=0,
 \qquad\text{that is,}\qquad
 \int_{\Ys}F^0\cdot\nabla_y\chi^j\,dy=-\mathbf c_j.
 \label{crit:eq:Dmean0}
\end{equation}
Now take \(v_1=0\) in \eqref{crit:eq:two-scale-pencil} and insert
\eqref{crit:eq:u1}. Since
\(\int_{\Ys}A\nabla_y\chi^0\,dy=\mathbf c\) and
\(\int_{\Ys}F^0\cdot\nabla_y\chi^0\,dy=\nu^2\),
\begin{align*}
 \mathscr A(U,V)
 &=\int_\Om\Ah\nabla u_0\cdot\nabla v_0\,dx
 +\Lambda\int_\Om u_0\,\mathbf c\cdot\nabla v_0\,dx,\\
 \mathscr M(U,V)
 &=\int_\Om\mathbf c\cdot\bigl(u_0\nabla v_0+v_0\nabla u_0\bigr)\,dx
 +\int_\Om v_0\Bigl(-\mathbf c\cdot\nabla u_0+\Lambda\nu^2u_0\Bigr)\,dx =\int_\Om u_0\,\mathbf c\cdot\nabla v_0\,dx
 +\Lambda\nu^2\int_\Om u_0v_0\,dx.
\end{align*}
The terms linear in \(\Lambda\) cancel in
\(\mathscr A(U,V)-\Lambda\mathscr M(U,V)\), and
\eqref{crit:eq:two-scale-pencil} becomes the quadratic pencil
\begin{equation}
 \int_\Om\Ah\nabla u_0\cdot\nabla v\,dx
 =\Lambda^2\nu^2\int_\Om u_0v\,dx
 \qquad\forall v\in H_0^1(\Om).
 \label{crit:eq:quadratic-weak}
\end{equation}
Thus
\begin{equation}
 \left\{
 \begin{aligned}
 -\Div(\Ah\nabla u_0)&=\Lambda^2\nu^2u_0&&\text{in }\Om,\\
 u_0&=0&&\text{on }\partial\Om.
 \end{aligned}
 \right.
 \notag
\end{equation}

Let
\begin{equation}
 -\Div(\Ah\nabla\phi_k)=\mu_k^0\phi_k,
 \qquad
 0<\mu_1^0\le\mu_2^0\le\cdots\to+\infty.
 \notag
\end{equation}
The two critical roots are
\begin{equation}
 \Lambda_k^\pm
 =\pm\frac{\sqrt{\mu_k^0}}{\nu}.
 \notag
\end{equation}
The macroscopic profile is the same on both branches, but
\begin{equation}
 u_{1,k}^{\pm}
 =\sum_{j=1}^N\chi^j\partial_j\phi_k+\Lambda_k^\pm\phi_k\chi^0
 \notag
\end{equation}
depends on the sign.

Fix a macroscopic eigenvalue cluster
\begin{equation}
 \mu_k^0=\cdots=\mu_{k+r-1}^0=:\mu^0,
 \qquad
 E_k^0:=\ker(-\Div(\Ah\nabla\cdot)-\mu^0I),
 \quad\dim E_k^0=r,
 \label{crit:eq:E0}
\end{equation}
and, for \(\sigma\in\{-1,+1\}\), set
\begin{equation}
 \Lambda_k^\sigma:=\sigma\frac{\sqrt{\mu^0}}{\nu}.
 \notag
\end{equation}
Thus \(\Lambda_{k+j-1}^\sigma=\Lambda_k^\sigma\) for \(j=1,\ldots,r\).

The natural microscopic normalization is
\begin{equation}
 \ce(u_{\eps,k+i-1}^{\sigma},u_{\eps,k+j-1}^{\sigma})
 =\sigma\delta_{ij},
 \qquad 1\le i,j\le r.
 \label{crit:eq:critical-normalization-micro}
\end{equation}
It is equivalent to
\(b_\eps(u_{\eps,k+i-1}^{\sigma},u_{\eps,k+j-1}^{\sigma})
=\sigma\eps\delta_{ij}\).

The factor \(2\) is intrinsic to the symmetric form
\eqref{crit:eq:ce-F0}. For \(U=(u_0,u_1)\), with \(u_1\) given by
\eqref{crit:eq:u1}, \eqref{crit:eq:Dmean0} yields
\begin{equation}
 \mathscr M(U,U)
 =2\Lambda\nu^2\int_\Om|u_0|^2\,dx.
 \notag
\end{equation}
Hence, under \eqref{crit:eq:critical-normalization-micro},
\begin{equation}
 \int_\Om|u_0|^2\,dx
 =\frac{\sigma}{2\Lambda\nu^2}
 =\frac1{2|\Lambda|\nu^2}.
 \notag
\end{equation}
We therefore choose a basis \(\{\phi_1,\ldots,\phi_r\}\) of \(E_k^0\) such that
\begin{equation}
 2|\Lambda_k^\sigma|\nu^2
 \int_\Om\phi_i\phi_j\,dx=\delta_{ij}.
 \label{crit:eq:limit-critical-normalization}
\end{equation}
This normalization is the same for the two signs because
\(|\Lambda_k^+|=|\Lambda_k^-|\).

These identities identify the critical clusters with multiplicity.

\begin{theorem}
\label{crit:thm:qual-critical}
Assume \textup{(H1)--(H3)}, $m_{\mathrm{eff}}=0$, and
\eqref{crit:eq:E0}. Then, for \(\sigma\in\{-1,+1\}\),
\begin{equation}
 \eps\lambda_{\eps,k+j-1}^{\sigma}
 \longrightarrow
 \Lambda_k^\sigma,
 \qquad j=1,\ldots,r.
 \label{crit:eq:qual-eigenvalue}
\end{equation}
More precisely, let $\mathcal J_k^\sigma$ be any open interval with compact closure
in the half-line $\{\sigma t:t>0\}$ and such that
\begin{equation}
 \Lambda_k^\sigma\in \mathcal J_k^\sigma,
 \qquad
 \overline{\mathcal J_k^\sigma}\cap
 \{\Lambda_j^\varsigma:j\ge1,\ \varsigma\in\{-1,+1\}\}
 =\{\Lambda_k^\sigma\}.
 \label{crit:eq:qual-isolating-interval}
\end{equation}
For all sufficiently small $\eps$, exactly $r$ values in the rescaled
signed spectrum
\(
 \{\eps\lambda_{\eps,j}^\sigma:j\ge1\}
\), counted with multiplicity, belong to $\mathcal J_k^\sigma$. This is the critical
macroscopic cluster isolated in the spectrum of \(\Keps^{\rm c}\); modes
on a deeper scale are sent to \(0\) by the transformation
\(\tau=(\eps\lambda)^{-1}\).

Choose the microscopic eigenfunctions according to
\eqref{crit:eq:critical-normalization-micro} and the limit basis $\{\phi_1,\ldots,\phi_r\}$ according to \eqref{crit:eq:limit-critical-normalization}. There exists a family of orthogonal matrices
$(\mathsf O_{\eps,k}^{\sigma})_{0<\eps<1}\subset O(r)$, with
$\mathsf O_{\eps,k}^{\sigma}=(\mathsf o_{ij}^{\eps,\sigma})_{1\le i,j\le r}$, such that
\begin{equation}
 \max_{1\le i\le r}\left\|P_\eps u_{\eps,k+i-1}^{\sigma}-\sum_{j=1}^{r}\mathsf o_{ij}^{\eps,\sigma}\phi_j\right\|_{L^2(\Om)}\longrightarrow0.
 \label{crit:eq:qual-eigenvectors-L2}
\end{equation}
After extraction of a subfamily, without relabeling, such that
$\mathsf O_{\eps,k}^{\sigma}\to(\mathsf o_{ij}^{\sigma})$, set
$\phi_i^*=\sum_{j=1}^{r}\mathsf o_{ij}^{\sigma}\phi_j$. Then, with the
physical gradients extended by zero from the solid phase,
\begin{equation}
 \ind_{\OmE}\nabla P_\eps u_{\eps,k+i-1}^\sigma
 \overset{2}{\rightharpoonup}
 \ind_{Y^*}\left[
 \nabla_x\phi_i^*+\nabla_y\!\left(
 \sum_{j=1}^N\chi^j\partial_j\phi_i^*+\Lambda_k^\sigma\phi_i^*\chi^0\right)
 \right]
 \quad\text{in }L^2(\Om\times Y)^N,
 \qquad 1\le i\le r.
 \label{crit:eq:qual-eigenvectors-gradient}
\end{equation}
\end{theorem}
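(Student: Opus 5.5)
We will apply Theorem~\ref{thm:abstract-fixed-cluster} on $H_\eps=(V_\eps,a_\eps)$ with $\mathcal A_\eps=\Keps^{\rm c}$, $X=L^2(\Om)$, $\mathcal I_\eps=P_\eps$, $\tau_0=1/\Lambda_k^\sigma$ and $E_0=E_k^0$. The limit set is $\Sigma_0=\{1/\Lambda_j^\varsigma\}$, and the half-line $\mathcal U$ is $\{\sigma t:t>0\}$. This set is closed and discrete in $\R\setminus\{0\}$, because $\mu_j^0\to\infty$ forces $1/\Lambda_j^\varsigma\to0$. The argument has three steps: non-pollution, quasimodes with Gram control, and a final translation back to $\Lambda=1/\tau$.

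\textbf{Non-pollution.} Let $p_{\eps,1},\dots,p_{\eps,s}$ be $a_\eps$-orthonormal eigenvectors with $\tau_{\eps,i}\in I_0$, where $I_0$ is bounded away from $0$. Proposition~\ref{prop:prelim-critical-estimates} gives energy bounds, so we may extract $P_\eps p_{\eps,i}\to p_{0,i}$ strongly in $L^2$, with two-scale gradient limits $(p_{0,i},p_{1,i})$ and $\tau_{\eps,i}\to\tau_i^*$. The two-scale identification preceding the theorem, \eqref{crit:eq:two-scale-pencil}--\eqref{crit:eq:quadratic-weak} with $\Lambda=1/\tau_i^*$, gives \eqref{crit:eq:u1}. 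For the cross terms we use the compensated formula \eqref{crit:eq:ce-F0} and \eqref{crit:eq:Dmean0}. The product of the strong $L^2$ factor with the weak two-scale gradient passes to the limit, which yields
\[
c_\eps(p_{\eps,i},p_{\eps,j})=\tau_{\eps,i}\delta_{ij}\longrightarrow\nu^2\bigl(\Lambda_i+\Lambda_j\bigr)\int_\Om p_{0,i}p_{0,j}\,dx .
\]
Here one uses $\mathscr M$ polarized with two possibly different values $\Lambda_i,\Lambda_j$ in $u_1$; the linear $\mathbf c$-terms cancel in the same way as before. All the $\Lambda$'s have the sign $\sigma$, so $\Lambda_i+\Lambda_j\neq0$. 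Hence the $p_{0,i}$ are orthogonal and nonzero, since $\tau_i^*\neq0$. Each $p_{0,i}$ solves the quadratic pencil, so $\tau_i^*\in\Sigma_0\cap\overline I_0=\{\tau_0\}$ and $p_{0,i}\in E_k^0$. This verifies condition (i). The same argument with $E_0=\{0\}$ handles intervals that avoid $\Sigma_0$.

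\textbf{Quasimodes.} For $\phi\in E_k^0$ we take $\phi^{(m)}\in C_c^\infty(\Om)$ as in the proof of Theorem~\ref{thm:qual-positive}, and set
\[
J_\eps\phi=\phi^{(m)}+\eps\sum_j(\chi^j)^\eps\partial_j\phi^{(m)}+\eps\Lambda_k^\sigma(\chi^0)^\eps\phi^{(m)} .
\]
The diagonal choice $m(\eps)$ follows Remark~\ref{rem:boundary-compatible-diagonal}, so that $J_\eps\phi\in V_\eps$. We then bound the dual defect $\sup_{\|v\|_{a_\eps}=1}|a_\eps(J_\eps\phi,v)-\Lambda_k^\sigma c_\eps(J_\eps\phi,v)|$ by contradiction, as in \eqref{eq:qual-positive-dual-defect}. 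The recovery gradient converges strongly two-scale, since $\chi^j,\chi^0\in C^{1,\alpha'}$ by Lemma~\ref{lem:cell-regularity-H4}. Against weakly converging test functions $v_\eps$ we use \eqref{crit:eq:ce-F0}, and the limit equals $\mathscr A(U,V)-\Lambda_k^\sigma\mathscr M(U,V)$ with $U$ built from $\phi^{(m)}$. This vanishes up to $O(\|\phi^{(m)}-\phi\|_{H^1})$: the $v_1$ part vanishes by the cell equations, and the $v_0$ part reduces to the pencil residual, which is zero for $\phi$. The Gram matrices $a_\eps(J_\eps\phi_i,J_\eps\phi_j)$ converge to $\mathscr A(U_i,U_j)$. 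Testing the pencil relation gives $\mathscr A(U_i,U_j)=\Lambda_k^\sigma\mathscr M(U_i,U_j)=2(\Lambda_k^\sigma)^2\nu^2\int\phi_i\phi_j$, which is $|\Lambda_k^\sigma|\delta_{ij}$ by \eqref{crit:eq:limit-critical-normalization}. We therefore apply the abstract theorem with $(\cdot,\cdot)_0=|\Lambda_k^\sigma|(\cdot,\cdot)_{\rm that\ basis}$. We also need $P_\eps J_\eps\phi\to\phi$ in $L^2$, which follows as in Theorem~\ref{thm:qual-positive}. The qualitative version of Theorem~\ref{thm:abstract-fixed-cluster} then gives rank $r$, $\tau\to\tau_0$, and an orthogonal alignment in energy.

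\textbf{Translation and main obstacle.} Passing to $\Lambda=1/\tau$, the count in $\mathcal J_k^\sigma$ follows from the last assertion of the abstract theorem. The energy alignment of $u/|\Lambda_\eps|^{1/2}$ with $J_\eps\phi_i/|\Lambda_k^\sigma|^{1/2}$ gives \eqref{crit:eq:qual-eigenvectors-L2} after applying $P_\eps$. The gradient formula \eqref{crit:eq:qual-eigenvectors-gradient} comes from \eqref{crit:eq:u1}. The indices: by Proposition~\ref{prop:prelim-critical-estimates} the signed sequence $\Lambda^\sigma_{\eps,j}$ is ordered and bounded for fixed $j$, and applying the cluster count successively to all levels $\le k$ identifies indices $k,\dots,k+r-1$ with the cluster. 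I expect the main obstacle to be the non-pollution cross-mass limit. The form $c_\eps$ is only compensated, not positive, so its limit of $c_\eps(p_{\eps,i},p_{\eps,j})$ depends on the correctors, and orthogonality of the limits requires the polarized identity with distinct but same-signed $\Lambda_i,\Lambda_j$. I would verify this carefully from \eqref{crit:eq:Dmean0}.
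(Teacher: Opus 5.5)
Your proposal is correct and follows essentially the paper's route: the same application of Theorem~\ref{thm:abstract-fixed-cluster} with $X=L^2(\Om)$ and $\mathcal I_\eps=P_\eps$, non-pollution via the compensated form \eqref{crit:eq:ce-F0} together with \eqref{crit:eq:Dmean0}, and the same diagonal interior recoveries built from $\phi^{(m)}$, $\chi^j$ and $\Lambda_k^\sigma\chi^0$, with Gram limit $2(\Lambda_k^\sigma)^2\nu^2\int_\Om\phi\psi\,dx$. The differences are minor: you bound the dual defect by the two-scale contradiction argument of \eqref{eq:qual-positive-dual-defect} instead of the explicit interior averaging \eqref{crit:eq:core-expansion}--\eqref{crit:eq:CQ-Rint}, and you obtain orthogonality of the limits from the polarized cross-mass limit with distinct same-sign $\Lambda_i,\Lambda_j$, whereas the paper first fixes $\tau_i^*=\tau_k^\sigma$ from the diagonal entries and then takes the bilinear limit.
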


\begin{proof}
Set
\[
 \tau_k^\sigma=\frac1{\Lambda_k^\sigma},\qquad
 I_{0,k}^\sigma=(\tau_k^\sigma-d_0,\tau_k^\sigma+d_0),
\]
where, with $\Sigma_0:=\{\pm\nu/\sqrt{\mu_j^0}:j\ge1\}$,
$0<d_0<\dist\bigl(\tau_k^\sigma,(\{0\}\cup\Sigma_0)\setminus\{\tau_k^\sigma\}\bigr)$;
thus $\overline{I_{0,k}^\sigma}$ lies in the half-line containing
$\tau_k^\sigma$ and $\overline{I_{0,k}^\sigma}\cap\Sigma_0=\{\tau_k^\sigma\}$.

We first verify condition (i).  Let
$(p_{\eps,1},\ldots,p_{\eps,s})_{0<\eps<1}$ be a family of
$a_\eps$-orthonormal eigenvectors of $\Keps^{\rm c}$ with eigenvalues in
$I_{0,k}^\sigma$.  After a common extraction,
$\tau_{\eps,i}\to\tau_i^*\in\overline{I_{0,k}^\sigma}$, and the
compactness and two-scale identification
\eqref{crit:eq:two-scale-pencil}--\eqref{crit:eq:quadratic-weak}, applied
with $\Lambda=\Lambda_i^*:=1/\tau_i^*$, give
$P_\eps p_{\eps,i}\to p_{0,i}$ in $L^2(\Omega)$, where $p_{0,i}$
satisfies the limit pencil with $\Lambda_i^*$. Moreover,
\begin{equation}
 c_\eps(p_{\eps,i},p_{\eps,j})
 =\tau_{\eps,i}\delta_{ij},
 \label{crit:eq:cross-mass-micro}
\end{equation}
while the limit of the compensated form gives
$c_\eps(p_{\eps,i},p_{\eps,i})\to2\Lambda_i^*\nu^2\int_\Omega p_{0,i}^2\,dx$.
Hence $\int_\Omega p_{0,i}^2\,dx=(\tau_i^*)^2/(2\nu^2)>0$, so
$p_{0,i}\ne0$ and $(\Lambda_i^*)^2\nu^2$ is an eigenvalue of
$-\Div(\Ah\nabla\cdot)$; that is,
$\tau_i^*\in\Sigma_0\cap\overline{I_{0,k}^\sigma}=\{\tau_k^\sigma\}$
and $p_{0,i}\in E_k^0$. With $\Lambda_i^*=\Lambda_k^\sigma$ for all $i$,
the bilinear limit of the compensated form gives
\begin{equation}
 \int_\Omega p_{0,i}p_{0,j}\,dx
 =\frac{\delta_{ij}}{2(\Lambda_k^\sigma)^2\nu^2},
 \label{crit:eq:limit-independent-family}
\end{equation}
so the limits are linearly independent; in particular $s\le r$.
We use $X=L^2(\Omega)$ and $\mathcal I_\eps=P_\eps$ in
Theorem~\ref{thm:abstract-fixed-cluster}.

To verify condition (ii), choose $\phi_i^{(m)}\in C_c^\infty(\Omega)$ converging to the
normalized basis $\phi_i$ in $H_0^1(\Omega)$ and set
\[
 R_{\eps,m}^\sigma\phi_i
 =\left.\left(\phi_i^{(m)}
 +\eps\sum_{j=1}^N(\chi^j)^\eps\partial_j\phi_i^{(m)}
 +\eps\Lambda_k^\sigma(\chi^0)^\eps\phi_i^{(m)}\right)\right|_{\Omega_\eps}.
\]
For fixed $m$ and all sufficiently small $\eps$, this is the interior
reconstruction $R_{\eps,\rm int}^{\Lambda_k^\sigma}\phi_i^{(m)}$ of
Section~\ref{crit:sec:reconstructor}, and it belongs to $V_\eps$ because
$\phi_i^{(m)}$ has compact support. The interior computation
\eqref{crit:eq:core-expansion}--\eqref{crit:eq:CQ-Rint} uses only
\textup{(H1)--(H3)}, Lemma~\ref{lem:periodic-potentials} and the
compensated identity, and neither the boundary layer nor \textup{(H4)}.
Applied with $u=\phi_i^{(m)}$ and $\Lambda=\Lambda_k^\sigma$, and since
$(\Lambda_k^\sigma)^2\nu^2=\mu^0$, it yields a bound that is uniform over
$\|v\|_{a_\eps}=1$:
\[
 \limsup_{\eps\to0}\sup_{\|v\|_{a_\eps}=1}
 |a_\eps(R_{\eps,m}^\sigma\phi_i,v)
 -\Lambda_k^\sigma c_\eps(R_{\eps,m}^\sigma\phi_i,v)|
 \le C\| -\Div(\Ah\nabla\phi_i^{(m)})-\mu^0\phi_i^{(m)}\|_{H^{-1}}.
\]
The right-hand side tends to zero with $m$. Choose the same diagonal index $m=m(\eps)$ for all elements of the fixed basis, sufficiently slowly to satisfy Remark~\ref{rem:boundary-compatible-diagonal}, and extend the resulting
construction linearly to $E_k^0$: if
$\phi=\sum_{i=1}^r c_i\phi_i$, set
\[
 J_{\eps,k}^\sigma\phi
 :=\sum_{i=1}^r c_i R_{\eps,m(\eps)}^\sigma\phi_i.
\]
The finite dimensionality of $E_k^0$ makes the preceding estimates uniform
on its unit sphere, and hence
\begin{equation}
 a_\eps(J_{\eps,k}^\sigma\phi,J_{\eps,k}^\sigma\psi)
 \longrightarrow
 2(\Lambda_k^\sigma)^2\nu^2\int_\Omega\phi\psi\,dx,\qquad \|(\Keps^{\rm c}-\tau_k^\sigma I)J_{\eps,k}^\sigma\phi\|_{a_\eps}
 =o(1)\|J_{\eps,k}^\sigma\phi\|_{a_\eps}.
 \notag
\end{equation}
Thus condition (ii) and the Gram hypothesis hold uniformly on the finite-dimensional
limit eigenspace.  Theorem~\ref{thm:abstract-fixed-cluster} now gives exact
rank, convergence of the isolated reciprocal eigenvalues and orthogonal
alignment; the same argument as for condition (i) shows that an open interval whose
closure lies in the half-line containing $\tau_k^\sigma$ and does not meet
$\Sigma_0$ contains no eigenvalue of $\Keps^{\rm c}$ for small $\eps$, so
the final assertion of Theorem~\ref{thm:abstract-fixed-cluster} gives the
count in an arbitrary isolating interval $\mathcal J_k^\sigma$.  Returning through $\Lambda=1/\tau$, and using the two-scale
identification \eqref{crit:eq:u1}, yields
\eqref{crit:eq:qual-eigenvalue}--\eqref{crit:eq:qual-eigenvectors-gradient}.
\end{proof}

For quantitative rates, the qualitative interior expansion must be modified
so that the Dirichlet condition on $\Dep$ is satisfied.

\subsection{Quantitative critical branch}
\label{crit:sec:reconstructor}

We assume from now on
\eqref{hyp:cell-regularity}--\eqref{hyp:macro-regularity}.  The cut-boundary
resolvent estimate and augmented exact layer are the only additional
boundary ingredients needed for the quantitative critical analysis.

For \(u\in H^2(\Om)\cap H_0^1(\Om)\) and \(|\Lambda|\le R\), set in
\(\OmE\)
\begin{equation}
 R_{\eps,\rm int}^{\Lambda}u
 :=u+\eps\sum_{j=1}^N(\chi^j)^\eps\partial_ju
 +\eps\Lambda(\chi^0)^\eps u.
 \notag
\end{equation}
The oscillatory correctors are evaluated only at points of \(\OmE\),
equivalently at fast variables in \(\mathcal W\); no extension into \(T\) is
needed to define the physical problem.  For the sole purpose of
distributional integrations on the torus, the periodic Sobolev extension
theorem provides auxiliary $W^{1,\infty}_\#(Y)$ extensions of
$\chi^j$ and $\chi^0$.  Their values in \(T\) are immaterial and never enter
a physical integral, since the fields
\(B_0^j,D_{0,j},G_0,\mathring F^0\) vanish there. We have
\begin{equation}
 \nabla R_{\eps,\rm int}^{\Lambda}u
 =\sum_{j=1}^N(e_j+\nabla_y\chi^j)^\eps\partial_ju
 +\Lambda(\nabla_y\chi^0)^\eps u
 +\eps r_\eps^\Lambda(u),
 \label{crit:eq:gradRint}
\end{equation}
where
\begin{equation}
 \norm{r_\eps^\Lambda(u)}_{L^2(\OmE)}
 \le C_R\norm{u}_{H^2(\Om)}.
 \label{crit:eq:rRint}
\end{equation}
Moreover,
\begin{equation}
 \norm{R_{\eps,\rm int}^{\Lambda}u-u}_{L^2(\OmE)}
 \le C_R\eps\norm{u}_{H^2(\Om)}.
 \label{crit:eq:Rint-L2}
\end{equation}

Its full Dirichlet defect on $D_\eps$ is controlled by the augmented
exact layer.  Apply
Corollary~\ref{cor:common-boundary-layer-augmented} with $B=A$,
$\chi_A^0=\chi^0$, and write
$\beta_\eps^\Lambda[u]:=\beta_{\eps,A}^\Lambda[u]$.  Then
\begin{equation}
 \beta_\eps^\Lambda[u]+R_{\eps,\rm int}^{\Lambda}u\in V_\eps,\qquad \aep(\beta_\eps^\Lambda[u],v)=0
 \quad \forall v\in V_\eps,
 \label{crit:eq:beta-harmonic}
\end{equation}
so the complete interior expansion is cancelled on every Dirichlet
boundary piece, including the cut perforations.
The augmented layer estimates give
\begin{align}
 \norm{\nabla\beta_\eps^\Lambda[u]}_{L^2(\OmE)}
 &\le C_R\eps^{1/2}\norm{u}_{H^2(\Om)},\notag\\
 \norm{\beta_\eps^\Lambda[u]}_{L^2(\OmE)}
 +\norm{d\nabla\beta_\eps^\Lambda[u]}_{L^2(\OmE)}
 &\le C_R\eps\norm{u}_{H^2(\Om)},
 \label{crit:eq:BL}\\
 \eps^{1/2}\norm{\beta_\eps^\Lambda[u]}_{L^2(\GamE)}
 &\le C_R\eps\norm{u}_{H^2(\Om)}.
 \label{crit:eq:beta-trace}
\end{align}
The weighted estimate is the part needed specifically in the critical mass
argument; its proof is already contained in the common layer lemma.

Applying the exact one-factor identity \eqref{crit:eq:ce-F0-boundary} to $\beta_\eps^\Lambda[u]$ and $v$, and then using \eqref{crit:eq:BL} and Hardy's inequality, gives for \(v\in V_\eps\),
\begin{equation}
 \abs{\ce(\beta_\eps^\Lambda[u],v)}
 \le C\norm{\beta_\eps^\Lambda[u]}_2\norm{\nabla v}_2
 +C\norm{d\nabla\beta_\eps^\Lambda[u]}_2\norm{v/d}_2 \le C\eps\norm{u}_{H^2}\norm{v}_{V_\eps}.
 \label{crit:eq:beta-c-estimate}
\end{equation}
This estimate explains why the weighted bound in \eqref{crit:eq:BL}, rather
than the energy bound \(O(\eps^{1/2})\) alone, is necessary.

We define
\begin{equation}
 \mathcal R_\eps^\Lambda u
 :=R_{\eps,\rm int}^{\Lambda}u+\beta_\eps^\Lambda[u].
 \notag
\end{equation}
Then \(\mathcal R_\eps^\Lambda u\in V_\eps\) and, uniformly for
\(|\Lambda|\le R\),
\begin{align}
 \norm{\mathcal R_\eps^\Lambda u-u}_{L^2(\OmE)}
 \le C_R\eps\norm{u}_{H^2(\Om)},
 \label{crit:eq:Rcritical-L2}\\
 \norm{\mathcal R_\eps^\Lambda u}_{V_\eps}
 \le C_R\norm{u}_{H^2(\Om)}.
 \label{crit:eq:Rcritical-H1}
\end{align}
The reconstructor has three components: the elliptic corrector
\(\sum_{j=1}^N\chi^j\partial_ju\), the critical spectral corrector
\(\Lambda u\chi^0\), and the single exact Dirichlet layer
\(\beta_\eps^\Lambda[u]\).

We now establish the critical $O(\eps)$ residual.
\begin{proposition}
\label{crit:prop:CQstar}
Assume \textup{(H1)--(H4)} and
$m_{\mathrm{eff}}=0$. Let \(|\Lambda|\le R\),
\(u\in H^2(\Om)\cap H_0^1(\Om)\), \(v\in V_\eps\), and
\(V=\Peps v\). Then
\begin{equation}
\aep(\mathcal R_\eps^\Lambda u,v)
 -\Lambda\ce(\mathcal R_\eps^\Lambda u,v) =
 \int_\Om\Ah\nabla u\cdot\nabla V\,dx
 -\Lambda^2\nu^2\int_\Om uV\,dx
 +\mathcal E_\eps^\Lambda(u,v),
 \label{crit:eq:CQstar}
\end{equation}
where
\begin{equation}
 \abs{\mathcal E_\eps^\Lambda(u,v)}
 \le C_R\eps\norm{u}_{H^2(\Om)}\norm{v}_{V_\eps}.
 \label{crit:eq:CQstar-error}
\end{equation}
\end{proposition}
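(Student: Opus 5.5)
We split $\mathcal R_\eps^\Lambda u=R_{\eps,\rm int}^\Lambda u+\beta_\eps^\Lambda[u]$ and treat the diffusion and the compensated-mass parts separately. The diffusion part is handled as in Proposition~\ref{prop:diffusion-residual}. By \eqref{crit:eq:beta-harmonic}, $\aep(\beta_\eps^\Lambda[u],v)=0$. By \eqref{crit:eq:gradRint}--\eqref{crit:eq:rRint},
\[
\aep(R_{\eps,\rm int}^\Lambda u,v)=\sum_{j}\int_\Om (B_0^j)^\eps\partial_ju\cdot\nabla V\,dx+\Lambda\int_\Om(\mathring F^0)^\eps u\cdot\nabla V\,dx+O(\eps)\|u\|_{H^2}\|v\|_{V_\eps}.
\]
Lemma~\ref{lem:periodic-potentials}(c), applied to $B_0^j-\Ah e_j$, replaces the first sum by $\int_\Om\Ah\nabla u\cdot\nabla V\,dx$ up to $O(\eps)$. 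For the second term, Lemma~\ref{lem:periodic-potentials}(a), applied to $\mathring F^0-\mathbf c$, is not available directly, since $\Div_y\mathring F^0\neq0$. It is cleaner to keep this term and let it cancel against a matching term coming from the mass part.

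The mass part is $\Lambda\ce(\mathcal R_\eps^\Lambda u,v)$. The layer contribution $\Lambda\ce(\beta_\eps^\Lambda[u],v)$ is $O(\eps)$ by \eqref{crit:eq:beta-c-estimate}. For the interior part we use the one-factor identity \eqref{crit:eq:ce-F0-boundary}:
\[
\ce(R_{\eps,\rm int}^\Lambda u,v)=\int_{\OmE}(F^0)^\eps\cdot\bigl(v\nabla R_{\eps,\rm int}^\Lambda u+R_{\eps,\rm int}^\Lambda u\,\nabla v\bigr)\,dx.
\]
\begin{itemize}
\item In the first product we insert \eqref{crit:eq:gradRint}. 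This produces $\sum_j\int (D_{0,j})^\eps\partial_ju\,V$ and $\Lambda\int (F^0\cdot\nabla_y\chi^0)^\eps uV$, plus an $O(\eps)$ error.
\item In the second product, \eqref{crit:eq:Rint-L2} gives $\int(\mathring F^0)^\eps u\cdot\nabla V$ plus $O(\eps)$.
\end{itemize}
The functions $D_{0,j}$ (the zero extensions of $D_j$) have zero mean by \eqref{crit:eq:Dmean0}. Moreover, $\int_Y F^0\cdot\nabla_y\chi^0\,dy=\nu^2$, so $h:=\ind_{\Ys}F^0\cdot\nabla_y\chi^0-\nu^2$ has zero mean. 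Lemma~\ref{lem:periodic-potentials}(b),(d), applied with $\varphi=\partial_ju\,V$, respectively $\varphi=uV$ (both in $W^{1,1}_0$), replaces these oscillating weights by their means $0$ and $\nu^2$ at cost $O(\eps)\|u\|_{H^2}\|V\|_{H^1}$. Hence
\[
\Lambda\ce(\mathcal R_\eps^\Lambda u,v)=\Lambda^2\nu^2\int_\Om uV\,dx+\Lambda\int_\Om(\mathring F^0)^\eps u\cdot\nabla V\,dx+O(\eps).
\]

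Subtracting the two expansions, the terms $\Lambda\int(\mathring F^0)^\eps u\cdot\nabla V\,dx$ cancel exactly. This is the microscopic counterpart of the cancellation of linear-in-$\Lambda$ terms in the derivation of \eqref{crit:eq:quadratic-weak}, and it yields \eqref{crit:eq:CQstar}. The extension bound \eqref{eq:extension} converts $\|V\|_{H^1}$ into $\|v\|_{V_\eps}$, and all constants are uniform for $|\Lambda|\le R$.

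We expect the main obstacle to be the boundary handling. First, \eqref{crit:eq:ce-F0-boundary} must be applied with the non-$V_\eps$ factor $R_{\eps,\rm int}^\Lambda u$, so that the omitted cut holes contribute nothing; this relies on $v$ vanishing on $\Dep$. Second, the layer term must be $O(\eps)$ rather than $O(\eps^{1/2})$, and this is exactly where the weighted estimate $\|d\nabla\beta\|\le C\eps$ and Hardy's inequality enter through \eqref{crit:eq:beta-c-estimate}. One also has to check that the integrands $\partial_ju\,V$ and $uV$ lie in $W_0^{1,1}(\Om)$ with norms bounded by $\|u\|_{H^2}\|V\|_{H^1}$, which holds by Sobolev embedding in the relevant dimensions.
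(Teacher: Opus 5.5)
Your proposal is correct and follows the paper's proof step by step. It uses the same split into interior reconstruction plus exact layer, the one-factor identity \eqref{crit:eq:ce-F0-boundary} together with \eqref{crit:eq:beta-c-estimate}, the exact cancellation of the $\Lambda(F^0)^\eps u\cdot\nabla v$ terms, and the averaging of $B_0^j$, $D_{0,j}$ and $G_0-\nu^2$ via Lemma~\ref{lem:periodic-potentials}(c),(d). The only differences are that you average before cancelling rather than after, and a small slip: the bound $\|\nabla(\partial_ju\,V)\|_{L^1}+\|\nabla(uV)\|_{L^1}\le C\|u\|_{H^2}\|V\|_{H^1}$ follows from Cauchy--Schwarz alone, so no Sobolev embedding or dimensional condition is needed.
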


\begin{proof}
Recall $B^j=A(e_j+\nabla_y\chi^j)$ and $D_j$ from \eqref{crit:eq:Dj}, and
set $G:=F^0\cdot\nabla_y\chi^0$. Denote by $B_0^j$, $D_{0,j}$, $G_0$ the
extensions of these fields by zero into $T$. They are bounded by
Lemma~\ref{lem:cell-regularity-H4}, and
\begin{equation}
 \Div_yB_0^j=0\ \text{in }\mathcal D'_\#(Y),\qquad
 \int_YB_0^j\,dy=\Ah e_j,\qquad
 \int_YD_{0,j}\,dy=0,\qquad
 \int_YG_0\,dy=\nu^2,
 \notag
\end{equation}
by the cell equation for $\chi^j$ (whose conormal flux vanishes on $\Gam$),
\eqref{crit:eq:Dmean0} and \eqref{crit:eq:F0-nu}.

We first expand the interior reconstruction. By \eqref{crit:eq:gradRint},
\begin{equation}
 \aep(R_{\eps,\rm int}^{\Lambda}u,v)
 =\sum_{j=1}^N\int_{\OmE}(B^j)^\eps\partial_ju\cdot\nabla v\,dx
 +\Lambda\int_{\OmE}(F^0)^\eps u\cdot\nabla v\,dx
 +\eps\int_{\OmE}A^\eps r_\eps^\Lambda(u)\cdot\nabla v\,dx,
 \label{crit:eq:a-Rint}
\end{equation}
with
$r_\eps^\Lambda(u)=\sum_{j=1}^N(\chi^j)^\eps\nabla\partial_ju+\Lambda(\chi^0)^\eps\nabla u$.
Applying \eqref{crit:eq:ce-F0-boundary} to $R_{\eps,\rm int}^{\Lambda}u$ and
$v$, and using \eqref{crit:eq:gradRint} once more,
\begin{equation}
\begin{aligned}
 \ce(R_{\eps,\rm int}^{\Lambda}u,v)
 ={}&\int_{\OmE}(F^0)^\eps u\cdot\nabla v\,dx
 +\sum_{j=1}^N\int_{\OmE}D_j^\eps\partial_ju\,v\,dx
 +\Lambda\int_{\OmE}G^\eps uv\,dx\\
 &+\int_{\OmE}(F^0)^\eps\bigl(R_{\eps,\rm int}^\Lambda u-u\bigr)\cdot\nabla v\,dx
 +\eps\int_{\OmE}(F^0)^\eps\cdot r_\eps^\Lambda(u)\,v\,dx.
\end{aligned}
\label{crit:eq:c-Rint-start}
\end{equation}
The last integrals in \eqref{crit:eq:a-Rint} and
\eqref{crit:eq:c-Rint-start}, and the next-to-last one in
\eqref{crit:eq:c-Rint-start}, are bounded by
$C_R\eps\|u\|_{H^2}\|v\|_{V_\eps}$ by the $L^\infty$ bounds on the cell
fields, \eqref{crit:eq:rRint}, \eqref{crit:eq:Rint-L2} and Poincar\'e's
inequality. Subtracting $\Lambda$ times \eqref{crit:eq:c-Rint-start} from
\eqref{crit:eq:a-Rint}, the two terms containing
$\Lambda(F^0)^\eps u\cdot\nabla v$ cancel exactly. Since the three fields
vanish in $T$ and $V=v$ in $\OmE$, the remaining integrals may be written on
$\Om$ without any boundary-cell remainder:
\begin{equation}
\begin{aligned}
 &\aep(R_{\eps,\rm int}^{\Lambda}u,v)
 -\Lambda\ce(R_{\eps,\rm int}^{\Lambda}u,v)\\
 &\quad=
 \sum_{j=1}^N\int_\Om(B_0^j)^\eps\partial_ju\cdot\nabla V\,dx
 -\Lambda\sum_{j=1}^N\int_\Om D_{0,j}^\eps\partial_ju\,V\,dx
 -\Lambda^2\int_\Om G_0^\eps uV\,dx
 +O_R(\eps)\|u\|_{H^2}\|v\|_{V_\eps}.
\end{aligned}
\label{crit:eq:core-expansion}
\end{equation}

We next average the oscillating coefficients. Lemma~\ref{lem:periodic-potentials}\textup{(c)} with $G=B_0^j-\Ah e_j$,
$w=\partial_ju$ and $\psi=V$, and
Lemma~\ref{lem:periodic-potentials}\textup{(d)} with $h=D_{0,j}$,
$\varphi=\partial_ju\,V$, and with $h=G_0-\nu^2$, $\varphi=uV$ (both in
$W_0^{1,1}(\Om)$), applied for each $j$ and summed over $j=1,\ldots,N$, give
\begin{equation}
\begin{aligned}
 \Bigl|\sum_{j=1}^N\int_\Om(B_0^j)^\eps\partial_ju\cdot\nabla V\,dx
 -\int_\Om\Ah\nabla u\cdot\nabla V\,dx\Bigr|
 +\Bigl|\sum_{j=1}^N\int_\Om D_{0,j}^\eps\partial_ju\,V\,dx\Bigr|&\\
 +\Bigl|\int_\Om G_0^\eps uV\,dx-\nu^2\int_\Om uV\,dx\Bigr|
 &\le C\eps\|u\|_{H^2}\|V\|_{H^1}.
\end{aligned}
\label{crit:eq:averaging}
\end{equation}
Inserting \eqref{crit:eq:averaging} into \eqref{crit:eq:core-expansion} and
using \eqref{eq:extension} yields
\begin{equation}
 \aep(R_{\eps,\rm int}^{\Lambda}u,v)
 -\Lambda\ce(R_{\eps,\rm int}^{\Lambda}u,v)
 =\int_\Om\Ah\nabla u\cdot\nabla V\,dx
 -\Lambda^2\nu^2\int_\Om uV\,dx
 +O_R(\eps)\norm{u}_{H^2}\norm{v}_{V_\eps}.
\label{crit:eq:CQ-Rint}
\end{equation}

It remains to add the exact layer. By \eqref{crit:eq:beta-harmonic}, $\aep(\beta_\eps^\Lambda[u],v)=0$, and
\eqref{crit:eq:beta-c-estimate} gives
$|\ce(\beta_\eps^\Lambda[u],v)|\le C\eps\|u\|_{H^2}\|v\|_{V_\eps}$. Since
$\mathcal R_\eps^\Lambda u=R_{\eps,\rm int}^{\Lambda}u+\beta_\eps^\Lambda[u]$,
\eqref{crit:eq:CQ-Rint} becomes
\eqref{crit:eq:CQstar}--\eqref{crit:eq:CQstar-error}.
\end{proof}

If \(u\in E_k^0\) and
\(\Lambda=\Lambda_k^\sigma\), the macroscopic term vanishes by
\eqref{crit:eq:quadratic-weak}. Hence
\begin{equation}
 \abs{
 \aep(\mathcal R_\eps^{\Lambda_k^\sigma}u,v)
 -\Lambda_k^\sigma
 \ce(\mathcal R_\eps^{\Lambda_k^\sigma}u,v)}
 \le C_k\eps\norm{u}_{H^2}\norm{v}_{V_\eps}.
 \label{crit:eq:CQ}
\end{equation}
Since \(\ce=\be/\eps\), the original form also satisfies
\begin{equation}
 \abs{
 \eps\aep(\mathcal R_\eps^{\Lambda_k^\sigma}u,v)
 -\Lambda_k^\sigma
 \be(\mathcal R_\eps^{\Lambda_k^\sigma}u,v)}
 \le C_k\eps^2\norm{u}_{H^2}\norm{v}_{V_\eps}.
 \notag
\end{equation}

We also need the bilinear mass expansion.

\begin{proposition}
\label{crit:prop:critical-mass-reconstructors}
Assume \textup{(H1)--(H4)} and
$m_{\mathrm{eff}}=0$. For \(|\Lambda|\le R\) and
\(u,w\in H^2(\Om)\cap H_0^1(\Om)\),
\begin{equation}
 \abs{
 \ce(\mathcal R_\eps^\Lambda u,\mathcal R_\eps^\Lambda w)
 -2\Lambda\nu^2\int_\Om uw\,dx}
 \le C_R\eps\norm{u}_{H^2}\norm{w}_{H^2}.
 \label{crit:eq:critical-mass-reconstructors}
\end{equation}
\end{proposition}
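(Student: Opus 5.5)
We split each reconstructor as $\mathcal R_\eps^\Lambda u=R_{\eps,\rm int}^\Lambda u+\beta_\eps^\Lambda[u]$, and likewise for $w$, and expand $\ce$ bilinearly. The two cross terms and the layer--layer term carry a factor $\beta_\eps^\Lambda$. For these we use \eqref{crit:eq:beta-c-estimate}, whose second argument may be any element of $V_\eps$. We would apply it with second argument $\mathcal R_\eps^\Lambda w\in V_\eps$, bounded by \eqref{crit:eq:Rcritical-H1}. This gives
\[
|\ce(\beta_\eps^\Lambda[u],\mathcal R_\eps^\Lambda w)|\le C_R\eps\norm{u}_{H^2}\norm{w}_{H^2}.
\]
We then write $\ce(R^\Lambda_{\eps,\rm int}u,\mathcal R_\eps^\Lambda w)=\ce(\mathcal R_\eps^\Lambda w,R^\Lambda_{\eps,\rm int}u)$. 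To handle the remaining layer term $\ce(R^\Lambda_{\eps,\rm int}u,\beta_\eps^\Lambda[w])$, we use the one-factor formula \eqref{crit:eq:ce-F0-boundary} with $r=R^\Lambda_{\eps,\rm int}u\in H^1(\OmE)$ and $v=\beta$. This requires care, because $\beta_\eps^\Lambda[w]\notin V_\eps$. It is better to rewrite the term as $\ce(R^\Lambda_{\eps,\rm int}u,\mathcal R_\eps^\Lambda w)-\ce(R^\Lambda_{\eps,\rm int}u,R^\Lambda_{\eps,\rm int}w)$, and to treat $\ce(\mathcal R_\eps^\Lambda u,\mathcal R_\eps^\Lambda w)=\ce(\mathcal R_\eps^\Lambda u,\mathcal R_\eps^\Lambda w)$ directly through \eqref{crit:eq:ce-F0-boundary} with $r=\mathcal R^\Lambda_\eps u$ and $v=\mathcal R_\eps^\Lambda w\in V_\eps$.

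Concretely, the plan is the following. By \eqref{crit:eq:ce-F0-boundary},
\[
\ce(\mathcal R_\eps^\Lambda u,\mathcal R_\eps^\Lambda w)=\int_{\OmE}(F^0)^\eps\cdot\bigl(\mathcal R_\eps^\Lambda u\,\nabla\mathcal R_\eps^\Lambda w+\mathcal R_\eps^\Lambda w\,\nabla\mathcal R_\eps^\Lambda u\bigr)dx .
\]
In each product we replace the undifferentiated factor $\mathcal R_\eps^\Lambda u$ by $u$. The error is $\int (F^0)^\eps(\mathcal R u-u)\cdot\nabla\mathcal R w$, which is $O(\eps)$ by \eqref{crit:eq:Rcritical-L2}, \eqref{crit:eq:Rcritical-H1} and $F^0\in L^\infty$. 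In the differentiated factor we replace $\nabla\mathcal R_\eps^\Lambda w$ by $\nabla R^\Lambda_{\eps,\rm int}w$. The error is $\int(F^0)^\eps u\cdot\nabla\beta_\eps^\Lambda[w]$, which is controlled by $\norm{d\nabla\beta}_2\norm{u/d}_2\le C\eps\norm{w}_{H^2}\norm{u}_{H^2}$ using \eqref{crit:eq:BL} and Hardy's inequality for $u\in H_0^1(\Om)$. This is exactly where the weighted layer bound enters.

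We are left with
\[
\int_{\OmE}(F^0)^\eps\cdot\bigl(u\,\nabla R^\Lambda_{\eps,\rm int}w+w\,\nabla R^\Lambda_{\eps,\rm int}u\bigr)dx .
\]
We insert \eqref{crit:eq:gradRint}, with the $\eps r_\eps^\Lambda$ term being $O(\eps)$. This produces the fields $F^0\cdot e_j+D_j-F^0\cdot e_j=D_j$ and $\Lambda G$, in the form
\[
\int(\mathring F^0)^\eps\cdot\nabla(uw)+\sum_j\int D_{0,j}^\eps(u\partial_jw+w\partial_ju)+2\Lambda\int G_0^\eps uw .
\]
Here we have used that $F^0\cdot(e_j+\nabla_y\chi^j)=D_j$, so the constant-vector part combines to $(\mathring F^0)^\eps\cdot\nabla(uw)$. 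The fields vanish in $T$, so the integrals may be taken over $\Om$. We then average with Lemma~\ref{lem:periodic-potentials}(d) applied to $h=D_{0,j}$ (mean zero), to $h=G_0-\nu^2$, and to $h=\mathring F^0_i-\mathbf c_i$. Each application has a test function in $W^{1,1}_0(\Om)$ with $W^{1,1}$ norm bounded by $\norm{u}_{H^2}\norm{w}_{H^2}$, and gives an $O(\eps)$ error. The leftover constant term $\int\mathbf c\cdot\nabla(uw)=0$ because $uw\in W^{1,1}_0$. This yields $2\Lambda\nu^2\int uw$.

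We expect the main obstacle to be the differentiated layer term. Its naive energy bound is only $O(\eps^{1/2})$, so the replacement $\nabla\mathcal R w\to\nabla R_{\rm int}w$ works only because the undifferentiated partner is $u\in H_0^1$ (through Hardy) rather than $\mathcal R u$. This is why the undifferentiated factor must be replaced first. We must also check that $\int(F^0)^\eps(\mathcal Ru-u)\cdot\nabla\beta[w]$ is already absorbed, which holds since $\norm{\mathcal R u-u}_2=O(\eps)$ and $\norm{\nabla\beta}_2$ is bounded.
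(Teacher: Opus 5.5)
Your argument is correct, and it is organized differently from the paper's.

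\textbf{The paper's route.} The paper first splits $\ce(\mathcal R_\eps^\Lambda u,\mathcal R_\eps^\Lambda w)$ bilinearly into an interior--interior term, two cross terms and a layer--layer term.
\begin{itemize}
\item Neither $R_{\eps,\rm int}^\Lambda u$ nor $R_{\eps,\rm int}^\Lambda w$ lies in $V_\eps$, so \eqref{crit:eq:ce-F0-boundary} is unavailable for the interior--interior term. The paper therefore localizes with a cutoff $\kappa_\eps$ vanishing in $\{d<C_0\eps\}$ and controls a strip remainder $\mathcal B_\eps=O(\eps^2)$; see \eqref{crit:eq:CM-interior-boundary}--\eqref{crit:eq:CM-boundary-rate}.
\item The cross terms are handled by \eqref{crit:eq:beta-c-estimate}.
\item The layer--layer term is bounded directly from $\ce=\be/\eps$, using \eqref{crit:eq:BL} and the trace bound \eqref{crit:eq:beta-trace}.
\end{itemize}

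\textbf{Your route.} You apply \eqref{crit:eq:ce-F0-boundary} once, to the two admissible functions $\mathcal R_\eps^\Lambda u,\mathcal R_\eps^\Lambda w\in V_\eps$, so no boundary remainder ever appears. You then remove the layer in two steps. First you replace the undifferentiated factors by $u,w$, using \eqref{crit:eq:Rcritical-L2}--\eqref{crit:eq:Rcritical-H1}. Then you remove $\nabla\beta_\eps^\Lambda$ from the differentiated factor via Hardy and the weighted bound, which is exactly the mechanism inside \eqref{crit:eq:beta-c-estimate}. This avoids the cutoff localization, the layer--layer term and the trace estimate for $\beta_\eps^\Lambda$, so it is shorter and uses fewer ingredients. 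The final averaging step is the same as the paper's.

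\textbf{Two small remarks.}
\begin{itemize}
\item The tentative detour in your first paragraph is unnecessary once you take the direct route.
\item Your intermediate display double counts. With $D_j=F^0\cdot(e_j+\nabla_y\chi^j)$ as in \eqref{crit:eq:Dj}, the expansion is
\[
\sum_j\int_\Om D_{0,j}^\eps(u\partial_jw+w\partial_ju)\,dx+2\Lambda\int_\Om G_0^\eps uw\,dx+O(\eps),
\]
with no separate term $\int_\Om(\mathring F^0)^\eps\cdot\nabla(uw)\,dx$. That extra term is itself $O(\eps)$, by Lemma~\ref{lem:periodic-potentials}\textup{(d)} applied to $\mathring F^0_i-\mathbf c_i$ together with $\int_\Om\mathbf c\cdot\nabla(uw)\,dx=0$. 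The conclusion is therefore unaffected.
\end{itemize}
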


\begin{proof}
We first analyze the interior correctors.  To avoid any normal-trace
issue where periodic interfaces meet $\partial\Omega$, we localize the
periodic compensation away from the exterior boundary.

For this localization, write $R_u=R_{\eps,\rm int}^\Lambda u$ and
$R_w=R_{\eps,\rm int}^\Lambda w$.  Fix $C_0>2\operatorname{diam}Y$ and
choose $\kappa_\eps\in W^{1,\infty}(\Omega)$ such that
\[
 \kappa_\eps=0\quad\text{if }d<C_0\eps,
 \qquad
 \kappa_\eps=1\quad\text{if }d>2C_0\eps,
 \qquad
 |\nabla\kappa_\eps|\le C\eps^{-1}.
\]
Use the fixed periodic $W^{1,\infty}$ extensions of $\chi^j$ and $\chi^0$
to define corresponding auxiliary reconstructions
$\widehat R_u,\widehat R_w$ on the whole of $\Omega$.  Applying
\eqref{crit:eq:F0-div} to
$\kappa_\eps\widehat R_u\widehat R_w$ produces no exterior boundary term.
Moreover, every periodic surface meeting the support of $\kappa_\eps$ belongs
to a complete cell contained in $\Omega$, hence is part of $\Gamma_\eps$.
Consequently,
\begin{equation}
 \ce(R_u,R_w)
 =\int_{\OmE}(F^0)^\eps\cdot\nabla(R_uR_w)\,dx
 +\mathcal B_{\eps}(u,w),
 \label{crit:eq:CM-interior-boundary}
\end{equation}
where the remainder is supported in $\{d<2C_0\eps\}$.

The boundary-strip estimates, the bounded cell fields, and
\eqref{crit:eq:gradRint} give
\[
 \|R_u\|_{L^2(\{d<2C_0\eps\}\cap\OmE)}
 \le C_R\eps^{3/2}\|u\|_{H^2},
 \qquad
 \|\nabla R_u\|_{L^2(\{d<2C_0\eps\}\cap\OmE)}
 \le C_R\eps^{1/2}\|u\|_{H^2},
\]
and the analogous estimates for $R_w$.  For the retained surface contribution, every complete cell that
intersects $\{d<2C_0\eps\}$ is contained in the enlarged strip
$\{d<(2C_0+C_Y)\eps\}$, where $C_Y$ depends only on the cell diameter.
The scaled cell trace estimate on these complete cells, together with the
same strip estimates on the enlarged strip, therefore gives the same
bounds.  The term involving $\nabla\kappa_\eps$ is bounded by
$C\eps^{-1}\|R_u\|_2\|R_w\|_2$.  Hence
\begin{equation}
 |\mathcal B_{\eps}(u,w)|
 \le C_R\eps^2\|u\|_{H^2(\Omega)}\|w\|_{H^2(\Omega)}.
 \label{crit:eq:CM-boundary-rate}
\end{equation}

In the volume integral in \eqref{crit:eq:CM-interior-boundary}, replace the
undifferentiated factors $R_u,R_w$ by $u,w$. By
\eqref{crit:eq:Rint-L2}, \eqref{crit:eq:gradRint}, and the cell bounds,
\begin{equation}
 \abs{\int_{\OmE}(F^0)^\eps\cdot
 \bigl[(R_u-u)\nabla R_w+(R_w-w)\nabla R_u\bigr] \,dx} \le
 C_R\eps\norm{u}_{H^2}\norm{w}_{H^2}.
 \notag
\end{equation}
Using \eqref{crit:eq:gradRint} once more, the remaining principal terms are
\begin{equation}
 \int_{\OmE}(F^0)^\eps\cdot
 \sum_{j=1}^N\bigl[u(e_j+\nabla_y\chi^j)^\eps\partial_jw+w(e_j+\nabla_y\chi^j)^\eps\partial_ju\bigr] \,dx
 +2\Lambda\int_{\OmE}G^\eps uw\,dx.
 \label{crit:eq:CM-principal-interior}
\end{equation}
The two terms containing $\eps r_\eps^\Lambda$ are bounded by
$C_R\eps\norm{u}_{H^2}\norm{w}_{H^2}$. After extension by zero, the first
group in \eqref{crit:eq:CM-principal-interior} is
\begin{equation}
 \sum_{j=1}^N\int_\Om D_{0,j}^\eps
 \bigl(u\partial_jw+w\partial_ju\bigr)\,dx
 =O(\eps)\norm{u}_{H^2}\norm{w}_{H^2}
 \notag
\end{equation}
by Lemma~\ref{lem:periodic-potentials}\textup{(d)}, since
$u\partial_jw+w\partial_ju\in W_0^{1,1}(\Om)$ for each $j$; the second equals
\begin{equation}
 2\Lambda\nu^2\int_\Om uw\,dx
 +O_R(\eps)\norm{u}_{H^2}\norm{w}_{H^2}
 \notag
\end{equation}
by the same lemma applied to $G_0-\nu^2$ and $uw$. Together with \eqref{crit:eq:CM-boundary-rate}, we have thus proved
\begin{equation}
 \abs{\ce(R_u,R_w)-2\Lambda\nu^2\int_\Om uw\,dx}
 \le C_R\eps\norm{u}_{H^2}\norm{w}_{H^2}.
 \label{crit:eq:CM-interior-final}
\end{equation}

We next treat the cross terms involving the boundary layer. Set $\mathcal R_u=\mathcal R_\eps^\Lambda u$ and
$\mathcal R_w=\mathcal R_\eps^\Lambda w$. Bilinearity gives
\begin{equation}
 \ce(\mathcal R_u,\mathcal R_w)
 =\ce(R_u,R_w)
 +\ce(\beta_\eps^\Lambda[u],\mathcal R_w)
 +\ce(\beta_\eps^\Lambda[w],\mathcal R_u)
 -\ce(\beta_\eps^\Lambda[u],\beta_\eps^\Lambda[w]).
 \label{crit:eq:CM-beta-decomposition}
\end{equation}
Both final reconstructors belong to $V_\eps$; hence
\eqref{crit:eq:beta-c-estimate} and \eqref{crit:eq:Rcritical-H1} yield
\begin{equation}
 \abs{\ce(\beta_\eps^\Lambda[u],\mathcal R_w)}
 +\abs{\ce(\beta_\eps^\Lambda[w],\mathcal R_u)}
 \le C_R\eps\norm{u}_{H^2}\norm{w}_{H^2}.
 \label{crit:eq:CM-beta-cross}
\end{equation}

It remains to control the boundary-layer--boundary-layer term, which we estimate from the definition $c_\eps=b_\eps/\eps$. Bounds
\eqref{crit:eq:BL} and \eqref{crit:eq:beta-trace} give
\begin{align}
 \abs{\ce(\beta_\eps^\Lambda[u],\beta_\eps^\Lambda[w])}
 &\le \frac{\norm{q}_\infty}{\eps}
 \norm{\beta_\eps^\Lambda[u]}_2\norm{\beta_\eps^\Lambda[w]}_2
 +\norm{\rho}_\infty\norm{\beta_\eps^\Lambda[u]}_{L^2(\GamE)}
 \norm{\beta_\eps^\Lambda[w]}_{L^2(\GamE)}\notag\\
 &\le C\eps\norm{u}_{H^2}\norm{w}_{H^2}.
 \label{crit:eq:CM-beta-beta}
\end{align}
Combining \eqref{crit:eq:CM-interior-final},
\eqref{crit:eq:CM-beta-decomposition}--\eqref{crit:eq:CM-beta-beta} proves
\eqref{crit:eq:critical-mass-reconstructors}.
\end{proof}

For the critical quasimodes, recall from \eqref{crit:eq:Teps} that
$\Keps^{\rm c}$ is the compact self-adjoint
operator associated with $c_\eps$ and that its nonzero eigenvalues are
$\tau_\eps=1/\Lambda_\eps$. The nonzero limit spectrum identified by the
quadratic pencil is
\begin{equation}
 \Sigma_0
 =\left\{
 \tau_k^\pm:=\frac1{\Lambda_k^\pm}
 =\pm\frac{\nu}{\sqrt{\mu_k^0}}:k\ge1
 \right\}.
 \notag
\end{equation}
We use the indexing convention
\begin{equation}
 \tau_{\eps,1}^+\ge\tau_{\eps,2}^+\ge\cdots>0,
 \qquad
 \tau_{\eps,1}^-\le\tau_{\eps,2}^-\le\cdots<0,
 \notag
\end{equation}
with multiplicities repeated. Hence
$\Lambda_{\eps,k}^\sigma=1/\tau_{\eps,k}^\sigma$ and
$\lambda_{\eps,k}^\sigma=\Lambda_{\eps,k}^\sigma/\eps$.

Fix an eigenvalue \(\mu^0\) of multiplicity \(r\), fix
\(\sigma\in\{-1,+1\}\), and set
\begin{equation}
 \Lambda_k^\sigma
 =\sigma\frac{\sqrt{\mu^0}}{\nu},
 \qquad
 \tau_k^\sigma=\frac1{\Lambda_k^\sigma}.
 \notag
\end{equation}
Let \(\{\phi_1,\ldots,\phi_r\}\) be the basis of \(E_k^0\) normalized by
\eqref{crit:eq:limit-critical-normalization}. Define
\begin{equation}
 \varphi_j:=\frac{\phi_j}{\sqrt{|\Lambda_k^\sigma|}},
 \qquad
 r_{\eps,j}^\sigma
 :=\mathcal R_\eps^{\Lambda_k^\sigma}\varphi_j,
 \notag
\end{equation}
and the matrix
\begin{equation}
 G_{\eps,k}^\sigma
 :=\bigl(\aep(r_{\eps,i}^\sigma,r_{\eps,j}^\sigma)\bigr)_{i,j=1}^r.
 \notag
\end{equation}

This expansion yields critical reconstruction nondegeneracy.

\begin{lemma}
\label{crit:lem:Gram-critical}
Assume \textup{(H1)--(H4)}, $m_{\mathrm{eff}}=0$, and let the eigenspace
$E_k^0$ and the sign $\sigma\in\{-1,+1\}$ be fixed as above. There exists
\(C_k>0\) such that
\begin{equation}
 \norm{G_{\eps,k}^\sigma-I_r}_{\R^{r\times r}}
 \le C_k\eps.
 \label{crit:eq:Gram-rate}
\end{equation}
In particular, for sufficiently small \(\eps\), the reconstruction operator is
injective on \(E_k^0\), and
\begin{equation}
 \mathcal Q_{\eps,k}^\sigma
 :=\mathcal R_\eps^{\Lambda_k^\sigma}E_k^0
 \quad\text{satisfies}\quad
 \dim\mathcal Q_{\eps,k}^\sigma=r.
 \notag
\end{equation}
\end{lemma}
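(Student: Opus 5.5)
The plan follows the proof of Lemma~\ref{lem:R-injective}, with Proposition~\ref{crit:prop:CQstar} in place of Proposition~\ref{prop:diffusion-residual} and Proposition~\ref{crit:prop:critical-mass-reconstructors} supplying the mass term. For fixed $i,j$ let $\widehat r_{\eps,j}:=P_\eps r_{\eps,j}^\sigma\in H_0^1(\Om)$. Lemma~\ref{lem:common-reconstructor-extension}, applied with $B=A$, $\chi_B^0=\chi^0$ and $\Lambda=\Lambda_k^\sigma$, together with \eqref{crit:eq:Rcritical-H1} and the extension estimate \eqref{eq:extension}, gives
\[
\norm{\widehat r_{\eps,j}-\varphi_j}_{L^2(\Om)}\le C_k\eps,\qquad \norm{\widehat r_{\eps,j}}_{H^1(\Om)}\le C_k .
\]

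Next apply Proposition~\ref{crit:prop:CQstar} with $u=\varphi_i$, $v=r_{\eps,j}^\sigma$, $V=\widehat r_{\eps,j}$ and $\Lambda=\Lambda_k^\sigma$. Since $\varphi_i\in E_k^0$ and $(\Lambda_k^\sigma)^2\nu^2=\mu^0$, the macroscopic term vanishes. This yields
\[
a_\eps(r_{\eps,i}^\sigma,r_{\eps,j}^\sigma)=\Lambda_k^\sigma\,c_\eps(r_{\eps,i}^\sigma,r_{\eps,j}^\sigma)+O_k(\eps),
\]
which is just \eqref{crit:eq:CQ} tested with $v=r_{\eps,j}^\sigma$, whose $V_\eps$-norm is bounded by \eqref{crit:eq:Rcritical-H1}. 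Proposition~\ref{crit:prop:critical-mass-reconstructors}, with $u=\varphi_i$ and $w=\varphi_j$, then gives
\[
c_\eps(r_{\eps,i}^\sigma,r_{\eps,j}^\sigma)=2\Lambda_k^\sigma\nu^2\int_\Om\varphi_i\varphi_j\,dx+O_k(\eps).
\]
Combining the two displays,
\[
a_\eps(r_{\eps,i}^\sigma,r_{\eps,j}^\sigma)=2(\Lambda_k^\sigma)^2\nu^2\int_\Om\varphi_i\varphi_j\,dx+O_k(\eps)=\frac{2|\Lambda_k^\sigma|\nu^2}{1}\int_\Om\phi_i\phi_j\,dx\cdot\frac{|\Lambda_k^\sigma|}{|\Lambda_k^\sigma|}+O_k(\eps)=\delta_{ij}+O_k(\eps),
\]
where the last equality uses $\varphi_j=\phi_j/|\Lambda_k^\sigma|^{1/2}$ and \eqref{crit:eq:limit-critical-normalization}. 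Taking the maximum over the $r^2$ entries proves \eqref{crit:eq:Gram-rate}.

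For injectivity, write $u=\sum_j c_j\varphi_j\in E_k^0$. Then $\norm{\mathcal R_\eps^{\Lambda_k^\sigma}u}_{a_\eps}^2=c^TG_{\eps,k}^\sigma c\ge(1-C_k\eps)|c|^2$. For $\eps$ small this is positive whenever $c\neq 0$, so the reconstructor is injective on $E_k^0$ and $\dim\mathcal Q_{\eps,k}^\sigma=r$. Equivalence with the $H^2$ norm then follows from finite dimensionality, as in Lemma~\ref{lem:R-injective}.

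The main obstacle is that the Gram matrix has to be computed from the quasimode residual together with the bilinear critical mass expansion. A direct extension argument would need the $O(\eps)$ control of the $c_\eps$-cross terms with the boundary layer, which is why I route the computation through Proposition~\ref{crit:prop:critical-mass-reconstructors}. Both propositions already hold uniformly for $|\Lambda|\le R$, so with $R=|\Lambda_k^\sigma|$ the argument is bookkeeping; the one point to verify is that the constants stay uniform.
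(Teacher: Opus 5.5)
Your proposal is correct and is essentially the paper's proof. The paper likewise gets the $c_\eps$-Gram matrix $\sigma\delta_{ij}+O_k(\eps)$ from Proposition~\ref{crit:prop:critical-mass-reconstructors}, then converts it to the $a_\eps$-Gram matrix through the quasimode residual \eqref{crit:eq:CQ} tested with $v=\mathcal R_\eps^{\Lambda_k^\sigma}\phi_j$ and bounded by \eqref{crit:eq:Rcritical-H1}; the only difference is that it works with $\phi_j$ and divides by $|\Lambda_k^\sigma|$ at the end rather than using $\varphi_j$ from the start.

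Your first paragraph, the extension bound from Lemma~\ref{lem:common-reconstructor-extension}, is never used. The macroscopic term in Proposition~\ref{crit:prop:CQstar} vanishes for every $V\in H_0^1(\Omega)$, so that paragraph can simply be dropped.
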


\begin{proof}
Proposition~\ref{crit:prop:critical-mass-reconstructors} and
\eqref{crit:eq:limit-critical-normalization} give
\begin{equation}
 \ce(\mathcal R_\eps^{\Lambda_k^\sigma}\phi_i,
     \mathcal R_\eps^{\Lambda_k^\sigma}\phi_j)
 =\sigma\delta_{ij}+O_k(\eps).
 \notag
\end{equation}
Apply \eqref{crit:eq:CQ} with \(u=\phi_i\) and
\(v=\mathcal R_\eps^{\Lambda_k^\sigma}\phi_j\). Estimate
\eqref{crit:eq:Rcritical-H1} gives
\begin{equation}
 \aep(\mathcal R_\eps^{\Lambda_k^\sigma}\phi_i,
       \mathcal R_\eps^{\Lambda_k^\sigma}\phi_j)
 =\Lambda_k^\sigma
 \ce(\mathcal R_\eps^{\Lambda_k^\sigma}\phi_i,
     \mathcal R_\eps^{\Lambda_k^\sigma}\phi_j)
 +O_k(\eps) =|\Lambda_k^\sigma|\delta_{ij}+O_k(\eps).
 \notag
\end{equation}
After division by \(|\Lambda_k^\sigma|\), we obtain
\eqref{crit:eq:Gram-rate}. The eigenvalues of \(G_{\eps,k}^\sigma\) therefore
belong to \([1-C_k\eps,1+C_k\eps]\), which proves injectivity and the stated
dimension.
\end{proof}

For \(\phi\in E_k^0\), \eqref{crit:eq:CQ} and the definition of \(\Keps^{\rm c}\)
give
\begin{equation}
 \norm{
 \mathcal R_\eps^{\Lambda_k^\sigma}\phi
 -\Lambda_k^\sigma \Keps^{\rm c}
  \mathcal R_\eps^{\Lambda_k^\sigma}\phi}_{a_\eps}
 \le C_k\eps\norm{\phi}_{H^2}.
 \notag
\end{equation}
The finite dimensionality of \(E_k^0\) and Lemma~\ref{crit:lem:Gram-critical}
allow \(\norm{\phi}_{H^2}\) to be replaced by the energy norm of the
quasimode. Thus, for every
\(z\in\mathcal Q_{\eps,k}^\sigma\),
\begin{equation}
 \norm{(\Keps^{\rm c}-\tau_k^\sigma I)z}_{a_\eps}
 \le C_k\eps\norm{z}_{a_\eps}.
 \label{crit:eq:quasimode}
\end{equation}

For the critical fixed-cluster estimates, set
\[
 \Sigma_0=\{\pm\nu/\sqrt{\mu_j^0}:j\ge1\},\qquad
 d_k^\sigma=\dist\bigl(\tau_k^\sigma,
 (\{0\}\cup\Sigma_0)\setminus\{\tau_k^\sigma\}\bigr)>0,
\]
and
$I_k^\sigma=(\tau_k^\sigma-d_k^\sigma/3,
\tau_k^\sigma+d_k^\sigma/3)$
and define $\Pi_{\eps,k}^\sigma=\ind_{I_k^\sigma}(\Keps^{\rm c})$. Let
$Q_{\eps,k}^\sigma$ be the $a_\eps$-orthogonal projector onto
$\mathcal Q_{\eps,k}^\sigma$.

These estimates close the critical cluster argument.

\begin{theorem}
\label{crit:thm:quantitative-cluster}
Assume \textup{(H1)--(H4)}, $m_{\mathrm{eff}}=0$, let
$\mu_k^0=\cdots=\mu_{k+r-1}^0$ be an isolated eigenvalue cluster of multiplicity $r$, and fix $\sigma\in\{-1,+1\}$. Then,
for all small $\eps$,
\begin{equation}
 \operatorname{rank}\Pi_{\eps,k}^\sigma=r,\qquad
 \|\Pi_{\eps,k}^\sigma-Q_{\eps,k}^\sigma\|_{\mathcal L(V_\eps,a_\eps)}
 \le C_k\eps,\qquad \max_{1\le j\le r}
 |\eps\lambda_{\eps,k+j-1}^\sigma-\Lambda_k^\sigma|
 \le C_k\eps.
 \notag
\end{equation}
Choose the limit basis $\{\phi_1,\ldots,\phi_r\}$ according to
\eqref{crit:eq:limit-critical-normalization} and microscopic eigenfunctions
normalized by
\[
 c_\eps(u_{\eps,k+i-1}^\sigma,u_{\eps,k+j-1}^\sigma)
 =\sigma\delta_{ij}.
\]
There exists a family of orthogonal matrices
$(\mathsf O_{\eps,k}^\sigma)_{0<\eps<1}\subset O(r)$, with
$\mathsf O_{\eps,k}^\sigma=(\mathsf o_{ij}^{\eps,\sigma})$, such that
\begin{equation}
 \max_{1\le i\le r}\left\|u_{\eps,k+i-1}^\sigma-
 \sum_{j=1}^r\mathsf o_{ij}^{\eps,\sigma}
 \mathcal R_\eps^{\Lambda_k^\sigma}\phi_j\right\|_{a_\eps}
 \le C_k\eps,\qquad \max_{1\le i\le r}\left\|u_{\eps,k+i-1}^\sigma-
 \sum_{j=1}^r\mathsf o_{ij}^{\eps,\sigma}\phi_j\right\|_{L^2(\OmE)}
 \le C_k\eps.
 \notag
\end{equation}
The extensions satisfy
\begin{equation}
 \max_{1\le i\le r}\left\|P_\eps u_{\eps,k+i-1}^\sigma
 -\sum_{j=1}^r\mathsf o_{ij}^{\eps,\sigma}\phi_j\right\|_{L^2(\Om)}
 \le C_k\eps.
 \label{crit:eq:eigenfunction-extension-L2-rate}
\end{equation}
Equivalently,
$\lambda_{\eps,k+j-1}^\sigma=\Lambda_k^\sigma/\eps+O_k(1)$. The constant $C_k$ depends only on the fixed critical cluster data
and its isolation distance.
\end{theorem}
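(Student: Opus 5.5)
The plan is to apply Theorem~\ref{thm:abstract-fixed-cluster} with $H_\eps=(V_\eps,a_\eps)$, $\mathcal A_\eps=\Keps^{\rm c}$, $X=L^2(\Om)$, $\mathcal I_\eps=P_\eps$, $E_0=E_k^0$, $\tau_0=\tau_k^\sigma$ and interval $I_0=I_k^\sigma$. For the quasimode map we take $J_\eps=\mathcal R_\eps^{\Lambda_k^\sigma}|_{E_k^0}$, with $\delta_\eps=C_k\eps$. The hypotheses are then checked in the same order as in the proof of Theorem~\ref{thm:quantitative-positive-cluster}.

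Condition~(i), non-pollution, is already established in the proof of Theorem~\ref{crit:thm:qual-critical}. There, \eqref{crit:eq:cross-mass-micro} and \eqref{crit:eq:limit-independent-family} show that $L^2$-limits of $P_\eps$ applied to $a_\eps$-orthonormal eigenvectors with eigenvalues in $I_k^\sigma$ are linearly independent elements of $E_k^0$. For condition~(ii) we use the energy-normalized basis $\varphi_j=\phi_j/\sqrt{|\Lambda_k^\sigma|}$, which is orthonormal for the limit inner product $(\phi,\psi)_0:=2(\Lambda_k^\sigma)^2\nu^2\int_\Om\phi\psi\,dx/|\Lambda_k^\sigma|$. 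Lemma~\ref{crit:lem:Gram-critical} gives the Gram estimate \eqref{eq:abstract-Gram-assumption} at rate $C_k\eps$, and hence the two-sided bound $c_0\|\phi\|_0\le\|J_\eps\phi\|_{a_\eps}\le C_0\|\phi\|_0$. The relative residual bound is exactly \eqref{crit:eq:quasimode}. The abstract theorem therefore yields $\operatorname{rank}\Pi_{\eps,k}^\sigma=r$, the projector estimate $\|\Pi_{\eps,k}^\sigma-Q_{\eps,k}^\sigma\|\le C_k\eps$, the bound $|\tau_{\eps,k+j-1}^\sigma-\tau_k^\sigma|\le C_k\eps$, and the orthogonal alignment \eqref{eq:abstract-orthogonal-alignment}.

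Two identifications remain. First, the eigenvalues of $\Keps^{\rm c}$ in $I_k^\sigma$ must be $\tau_{\eps,k}^\sigma,\ldots,\tau_{\eps,k+r-1}^\sigma$. This follows from the counting statement of Theorem~\ref{crit:thm:qual-critical}, or, applying the last assertion of Theorem~\ref{thm:abstract-fixed-cluster} successively to the earlier clusters, from the fact that exactly $k-1$ eigenvalues of sign $\sigma$ lie farther from $0$. Second, the reciprocal bound must be transferred. Since $g(\tau)=1/\tau$ is $C^1$ near $\overline{I_k^\sigma}\subset\R\setminus\{0\}$, \eqref{eq:abstract-transformed-eigenvalues} gives $|\eps\lambda_{\eps,k+j-1}^\sigma-\Lambda_k^\sigma|\le C_k\eps$, and dividing by $\eps$ gives the $O_k(1)$ form.

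For the eigenfunctions, the alignment holds for $a_\eps$-normalized eigenvectors $p_{\eps,i}=u_{\eps,k+i-1}^\sigma/|\Lambda_{\eps,k+i-1}^\sigma|^{1/2}$. This is consistent because the normalization $c_\eps(u,u)=\sigma$ gives $a_\eps(u,u)=|\Lambda_\eps|$. Multiplying by $|\Lambda_{\eps,k+i-1}^\sigma|^{1/2}$ and using $\bigl||\Lambda_{\eps,k+i-1}^\sigma|^{1/2}-|\Lambda_k^\sigma|^{1/2}\bigr|\le C\eps$ together with \eqref{crit:eq:Rcritical-H1} converts this into the energy estimate for $u-\sum_j\mathsf o_{ij}^{\eps,\sigma}\mathcal R_\eps^{\Lambda_k^\sigma}\phi_j$. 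The $L^2(\OmE)$ estimate then follows from \eqref{eq:Poincare} and \eqref{crit:eq:Rcritical-L2}. For \eqref{crit:eq:eigenfunction-extension-L2-rate}, apply $P_\eps$ to the energy error and use \eqref{eq:common-cut-extension-gradient} with the Poincaré inequality in $H_0^1(\Om)$. Lemma~\ref{lem:common-reconstructor-extension}, applied with $B=A$ and $\Lambda=\Lambda_k^\sigma$ in the augmented version, then replaces $P_\eps\mathcal R_\eps^{\Lambda_k^\sigma}\phi_j$ by $\phi_j$ at cost $C_k\eps$.

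The main point needing care is the normalization bookkeeping. The Gram matrix in Lemma~\ref{crit:lem:Gram-critical} is built from energy-normalized vectors, whereas the microscopic eigenfunctions are $c_\eps$-normalized. Converting between the two requires the eigenvalue rate already proven, so the alignment statement has to be derived after the eigenvalue estimate rather than simultaneously. The analytic difficulty has already been absorbed into Propositions~\ref{crit:prop:CQstar} and \ref{crit:prop:critical-mass-reconstructors}.
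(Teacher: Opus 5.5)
Your proposal is correct and follows the paper's proof. Like the paper, you apply Theorem~\ref{thm:abstract-fixed-cluster} to $\Keps^{\rm c}$ with $J_\eps=\mathcal R_\eps^{\Lambda_k^\sigma}|_{E_k^0}$, using Lemma~\ref{crit:lem:Gram-critical}, \eqref{crit:eq:quasimode} and the non-pollution argument from Theorem~\ref{crit:thm:qual-critical}, and then undo the energy normalization and invoke Poincar\'e and Lemma~\ref{lem:common-reconstructor-extension}.

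One bookkeeping correction is needed. The inner product on $E_k^0$ for which the $\varphi_j$ are orthonormal (the limit of $a_\eps(J_\eps\phi,J_\eps\psi)$) is $(\phi,\psi)_0=2(\Lambda_k^\sigma)^2\nu^2\int_\Om\phi\psi\,dx$, not this quantity divided by $|\Lambda_k^\sigma|$. With your formula, $(\varphi_i,\varphi_j)_0=\delta_{ij}/|\Lambda_k^\sigma|$, so the Gram hypothesis \eqref{eq:abstract-Gram-assumption} fails unless $|\Lambda_k^\sigma|=1$.
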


\begin{proof}
Lemma~\ref{crit:lem:Gram-critical} gives the Gram bound and injectivity, while
\eqref{crit:eq:quasimode} gives a residual $C_k\eps$ on the reconstructed
space. Non-pollution follows from
\eqref{crit:eq:cross-mass-micro}--\eqref{crit:eq:limit-independent-family}.
Theorem~\ref{thm:abstract-fixed-cluster}, applied to
$\Keps^{\rm c}$ and $J_\eps=\mathcal R_\eps^{\Lambda_k^\sigma}|_{E_k^0}$,
gives the rank, projector bound, reciprocal eigenvalue rate and orthogonal
alignment. Since $\Lambda_{\eps,j}^\sigma=1/\tau_{\eps,j}^\sigma$, the
second estimate follows by the map $t\mapsto t^{-1}$. Returning from the
energy-normalized frames and using Poincar\'e's inequality together with
\eqref{crit:eq:Rcritical-L2} gives the physical-domain $L^2$ estimate.
Applying $P_\eps$ to the energy error and using
Lemma~\ref{lem:common-reconstructor-extension} for the aligned critical
reconstructors gives \eqref{crit:eq:eigenfunction-extension-L2-rate}.
\end{proof}

\section{Concluding remarks}
\label{sec:conclusion}

We have shown that, for the bulk--surface problem \eqref{eq:intro-problem},
the sign of the combined mean $m_{\rm eff}$ selects the spectral scales:
$O(1)$ and $\eps^{-2}$ when $m_{\rm eff}\ne0$, and $\eps^{-1}$ for both
branches when $m_{\rm eff}=0$. Three questions remain open.

The first concerns higher-order terms: in the critical regime our estimates give
$\lambda_\eps=\Lambda_k^\sigma/\eps+O(1)$; identifying the $O(1)$ term
requires a second-order two-scale expansion together with the
corresponding boundary-layer contributions.

The second concerns weaker regularity. The quantitative results use H\"older regularity
of $A$ and $\rho$ and a $C^{2,\alpha}$ interface. For the qualitative
factorized branch, one may expect the ground-state transform to remain valid
under weaker coefficient assumptions at the quadratic-form level, provided
uniform boundary positivity and two-sided bounds for the cell ground state
can be established.

The third concerns uniform estimates. All rates are for fixed clusters. Estimates uniform
in the spectral index, or norm-resolvent estimates for the indefinite pencil,
would require a different approach.

\section*{Acknowledgments}

The author is grateful to Professor Andrea Braides for suggesting this problem in 2013, after the author's PhD defense.

\section*{Statements and declarations}

\noindent\textbf{Competing interests.} The author declares no competing
interests.

\noindent\textbf{Data availability.} No data were used for the research
described in this article.

\end{document}